\newtheorem{theorem}{Theorem}[section]
\newtheorem{corollary}[theorem]{Corollary}
\theoremstyle{definition}
\newtheorem{definition}[theorem]{Definition}
\newtheorem{example}[theorem]{Example}
\theoremstyle{remark}
\newtheorem{remark}[theorem]{Remark}
\numberwithin{equation}{section}
\begin{document}

\title{The grand picture behind Jensen's inequality}

\author{Jun Liu}
\address{School of Mathematics, Taiyuan University of Technology, Jinzhong 030600, Shanxi, China}
\email{liujun03@tyut.edu.cn}

\subjclass[2010]{Primary 26D15; Secondary 26A51}

\keywords{fundamental theorem of weighted means, fundamental theorem of systems of point masses, Jensen's inequality, Jensen gap, convex at a point, weighted convex at a point, Hermite-Hadamard integral inequality, squeeze theorem for convex functions, conditional convex hull, union of intervals is an interval test}

\begin{abstract}

Let $I$ and $J$ be two intervals, and let $f, g: I \rightarrow \mathbb{R}$.
If for any points $a$ and $b$ in $I$ and any positive numbers $p$ and $q$ such that $p + q = 1$, we have
\begin{align}
\nonumber
p f(a) + q f(b) + g(pa + qb) \in J,
\end{align}
then for any points $x_{1}, \ldots, x_{n}$ in $I$ and any positive numbers $\lambda_{1}, \ldots, \lambda_{n}$ such that $\sum_{i=1}^{n}\lambda_{i} = 1$, we have
\begin{align}
\nonumber
\sum_{i=1}^{n}\lambda_{i} f(x_{i}) + g\left( \sum_{i=1}^{n}\lambda_{i}x_{i} \right) \in J.
\end{align}
If we take $g = -f$ and $J = [0, +\infty)$, then the Jensen's inequality.

The conclusion is only a short glimpse of the grand picture behind Jensen's inequality shows in this paper.
\end{abstract}

\maketitle

\section{Introduction}

The history behind Jensen's inequality is quite a long and difficult story \cite{08,07}.
As early as 1875, J. Grolous \cite{06} had proved the following inequality:
\begin{align}
\nonumber
\frac{ \Phi(x_{1}) + \cdots + \Phi(x_{n}) }{n} \geq \Phi\left( \frac{ x_{1} + \cdots + x_{n} }{n} \right)
\end{align}
for all convex functions $\Phi: [a, b] \rightarrow \mathbb{R}$ and real numbers $x_{1}, \ldots, x_{n} \in [a, b]$.
In 1889 the following inequality was proven by O. H{\"o}lder \cite{09}:
\begin{align}
\nonumber
p_{1}\Phi(x_{1}) + \cdots + p_{n}\Phi(x_{n}) \geq \Phi( p_{1}x_{1} + \cdots + p_{n}x_{n} )
\end{align}
for all convex functions $\Phi: [a, b] \rightarrow \mathbb{R}$, and real numbers $x_{1}, \ldots, x_{n} \in [a, b]$ and positive numbers $p_{1}, \ldots, p_{n}$ such that $p_{1} + \cdots + p_{n} = 1$.
The 1906 paper of J.L.W.V. Jensen \cite{10} shows the following result on integrals, in addition to the result on sums:
\begin{align}
\nonumber
\frac{1}{b - a}\int_{a}^{b}\Phi( u(x) )\mathrm{d}x \geq \Phi\left( \frac{1}{b - a}\int_{a}^{b} u(x) \mathrm{d}x \right)
\end{align}
with $\Phi: I \rightarrow \mathbb{R}$ an integrable convex function and $u: [a, b] \rightarrow I$ an integrable function, where $I$ is an interval.

For almost a century and a half, Jensen's inequality and convex functions have been thought to be closely related.
However, this is just a coincidence.
There exists a more fundamental rule behind this, we call it the {\it fundamental theorem of weighted means}, which is an inherent property of real numbers.
(See Theorems \ref{0227002}, \ref{0406077} and \ref{0902001}.)


Suppose that $a_{1}, \ldots, a_{n} \in \mathbb{R}$ and $\lambda_{1}, \ldots, \lambda_{n} > 0, \lambda_{1} + \cdots + \lambda_{n} = 1$.

Let $f: \mathrm{dom}f \subseteq \mathbb{R} \rightarrow \mathbb{R}$ such that $a_{1}, \ldots, a_{n} \in \mathrm{dom}f$, and let $g: \mathrm{dom}g \subseteq \mathbb{R} \rightarrow \mathbb{R}$ such that $\sum_{k=1}^{n}\lambda_{k}a_{k} \in \mathrm{dom}g$. Then (See Theorem \ref{0227002})
\begin{align}
\nonumber
\left\{\begin{array}{c}
\sum_{k=1}^{n}\lambda_{k} f(a_{k}) + g( \sum_{k=1}^{n}\lambda_{k}a_{k} ) \\
= \sum_{s=1}^{m} w_{s}\Big[ p_{s} f(a_{ i_{s} }) + q_{s} f(a_{ j_{s} }) + g( p_{s}a_{ i_{s} } + q_{s}a_{ j_{s} } ) \Big], \\
p_{s}a_{ i_{s} } + q_{s}a_{ j_{s} } = \sum_{k=1}^{n}\lambda_{k}a_{k}, \text{where}\ p_{s}, q_{s} > 0, p_{s} + q_{s} = 1, \\
\{ i_{s} \mid s = 1, \ldots, m \} \cup \{ j_{s} \mid s = 1, \ldots, m \} = \{ 1, \ldots, n \}, \\
w_{1}, \ldots, w_{m} > 0, w_{1} + \cdots + w_{m} = 1.
\end{array}\right.
\end{align}
Therefore, the conclusion in the abstract is clear. (See Theorem \ref{0209009}.)

We also established the {\it fundamental theorem of systems of point masses}:
Every system of $n > 3$ point masses in three-dimensional space can be decomposed into at most $n-3$ systems of $4$ point masses, and all these systems of $4$ point masses have the same center of mass as the original one.
(See Theorems \ref{0219001}-\ref{0227002} and \ref{0907002}-\ref{0908009}.)

\section{The fundamental theorem of weighted means}

\begin{definition}\label{0710001}
Let $a_{1}, \ldots, a_{n}$ be real numbers and let $m_{1}, \ldots, m_{n}$ be positive numbers.
Define $\left[\begin{array}{ccc}
m_{1} & \cdots & m_{n} \\
a_{1} & \cdots & a_{n}
\end{array}\right]$ to be a system of $n$ point masses on the real line, where $m_{i}$ is the mass of point $i$, and $a_{i}$ is the displacement of point $i$ ($i = 1, \ldots, n$).
The center of mass (or {\it CM} for short) is given as:
\begin{align}
\nonumber
\left[\begin{array}{ccc}
m_{1} & \cdots & m_{n} \\
a_{1} & \cdots & a_{n}
\end{array}\right]_{CM} = \frac{ m_{1}a_{1} + \cdots + m_{n}a_{n} }{ m_{1} + \cdots + m_{n} }.
\end{align}
\end{definition}


\begin{theorem}\label{0219001}
Let $n \in \mathbb{N}$ and $n > 1$, and let $\lambda_{1}, \ldots, \lambda_{n} > 0$ such that
\begin{align}
\nonumber
a_{1} < \cdots < a_{z} < \frac{ \lambda_{1}a_{1} + \cdots + \lambda_{n}a_{n} }{ \lambda_{1} + \cdots + \lambda_{n} } < a_{z+1} < \cdots < a_{n}.
\end{align}
Then there are unique
\begin{align}
\begin{split}
\left\{\begin{array}{c}
\begin{minipage}[t]{10.6cm}
\begin{enumerate}[$\bm{\arabic{enumi}.}$]
\item $m \in \mathbb{N}$ such that $n/2 + \vert z - n/2 \vert \leq m \leq n-1$,
\item surjective functions
$$\begin{aligned}
i: \{ 1, \ldots, m \} \rightarrow &\{1, \ldots, z\}, \\
                        s \mapsto &i_{s}
\end{aligned}
\begin{gathered}
\ \ \text{and}\ \
\end{gathered}
\begin{aligned}
j: \{ 1, \ldots, m \} \rightarrow &\{z+1, \ldots, n\}, \\
                        s \mapsto &j_{s}
\end{aligned}$$ \\
where $i_{s}$ is decreasing\footnotemark[1] and $j_{s}$ is increasing such that $j_{s} - i_{s}$ is strictly increasing,
\item $p_{s}, q_{s} > 0, s = 1, \ldots, m$,
\end{enumerate}
\end{minipage}
\end{array}\right.
\end{split}\label{0228001}
\end{align}
such that
\begin{align}
\begin{split}
\left\{\begin{array}{cc}
\text{\begin{blockarray}{c}
\begin{block}{[c]}
$\lambda_{1}$ \\
$\vdots$ \\
$\lambda_{n}$ \\
\end{block}
\end{blockarray}} = \sum_{s=1}^{m}\text{\begin{blockarray}{cc}
\begin{block}{[c]c}
$0$      & \\
$\vdots$ & \\
$0$      & \\
$p_{s}$  & $\hspace{-4.5mm}\rightarrow$the $i_{s}$th row \\
$0$      & \\
$\vdots$ & \\
$0$      & \\
$q_{s}$  & $\hspace{-4.5mm}\rightarrow$the $j_{s}$th row \\
$0$      & \\
$\vdots$ & \\
$0$      & \\
\end{block}
\end{blockarray}}, \\
\frac{ p_{s}a_{ i_{s} } + q_{s}a_{ j_{s} } }{ p_{s} + q_{s} } = \frac{ \lambda_{1}a_{1} + \cdots + \lambda_{n}a_{n} }{ \lambda_{1} + \cdots + \lambda_{n} }, s = 1, \ldots, m.
\end{array}\right.
\end{split}\label{0226001}
\end{align}
\end{theorem}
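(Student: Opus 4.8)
The plan is to recast both conditions in \eqref{0226001} in terms of ``moments'' and then recognise the conclusion as the common refinement of two partitions of a segment. Write $c = \frac{\lambda_{1}a_{1} + \cdots + \lambda_{n}a_{n}}{\lambda_{1} + \cdots + \lambda_{n}}$ for the center of mass and set $L_{k} = \lambda_{k}(c - a_{k})$ for $k = 1, \ldots, z$ and $R_{k} = \lambda_{k}(a_{k} - c)$ for $k = z+1, \ldots, n$; the hypothesis $a_{k} < c$ for $k \leq z$ and $a_{k} > c$ for $k > z$ makes all of these positive, and the definition of $c$ gives $\sum_{k=1}^{z} L_{k} = \sum_{k=z+1}^{n} R_{k} =: M$. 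Because every $p_{s}$ sits in a row $i_{s} \leq z$ and every $q_{s}$ in a row $j_{s} \geq z+1$, the vector identity in \eqref{0226001} separates into $\lambda_{k} = \sum_{s: i_{s} = k} p_{s}$ for $k \leq z$ and $\lambda_{k} = \sum_{s: j_{s} = k} q_{s}$ for $k \geq z+1$. Setting $\mu_{s} := p_{s}(c - a_{i_{s}})$, the center-of-mass condition $\frac{p_{s}a_{i_{s}} + q_{s}a_{j_{s}}}{p_{s}+q_{s}} = c$ is equivalent to $\mu_{s} = q_{s}(a_{j_{s}} - c)$, so given the pairs $(i_{s}, j_{s})$ the positive numbers $p_{s}, q_{s}$ are recovered from $\mu_{s} > 0$ by $p_{s} = \mu_{s}/(c - a_{i_{s}})$ and $q_{s} = \mu_{s}/(a_{j_{s}} - c)$. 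Multiplying the two separated identities by the respective distances turns them into $\sum_{s: i_{s} = k} \mu_{s} = L_{k}$ and $\sum_{s: j_{s} = k} \mu_{s} = R_{k}$. Thus the entire statement is equivalent to finding positive weights $\mu_{s}$ attached to index pairs $(i_{s}, j_{s})$ whose left-sums reproduce $L_{1}, \ldots, L_{z}$ and whose right-sums reproduce $R_{z+1}, \ldots, R_{n}$.

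For existence I would lay the pieces out on $[0, M]$. Place the left blocks of lengths $L_{z}, L_{z-1}, \ldots, L_{1}$ consecutively from left to right (decreasing index), and independently the right blocks of lengths $R_{z+1}, R_{z+2}, \ldots, R_{n}$ (increasing index); both sequences of cumulative sums terminate at $M$. Superimposing the two subdivisions cuts $[0, M]$ into $m$ consecutive subintervals; let the $s$-th one have length $\mu_{s}$, lie inside the $i_{s}$-th left block and the $j_{s}$-th right block, and define $p_{s}, q_{s}$ as above. Reading off from the layout, $s \mapsto i_{s}$ is decreasing and surjective onto $\{1, \ldots, z\}$, $s \mapsto j_{s}$ is increasing and surjective onto $\{z+1, \ldots, n\}$, and since passing from one subinterval to the next always crosses at least one cut, at least one of $i_{s}, j_{s}$ strictly advances, so $j_{s} - i_{s}$ is strictly increasing. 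The bounds on $m$ are a count of cut points: the $z$ nonempty left blocks and the $n-z$ nonempty right blocks force $m \geq z$ and $m \geq n-z$, i.e. $m \geq \max(z, n-z) = n/2 + \vert z - n/2 \vert$, while the left subdivision has $z-1$ interior cuts and the right one $n-z-1$, so $m \leq (z-1) + (n-z-1) + 1 = n-1$.

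For uniqueness I would show that any data satisfying \eqref{0226001} together with items (1)--(3) must reproduce exactly this common refinement. Monotonicity and surjectivity of $s \mapsto i_{s}$ force the indices with $i_{s} = z$ to be an initial block of $\{1, \ldots, m\}$, those with $i_{s} = z-1$ the next block, and so on; hence the partial sums $\Sigma_{t} = \mu_{1} + \cdots + \mu_{t}$ pass through exactly the left cumulative values $L_{z}, L_{z}+L_{z-1}, \ldots, M$, and symmetrically through the right cumulative values $R_{z+1}, R_{z+1}+R_{z+2}, \ldots, M$. The strict monotonicity of $j_{s} - i_{s}$ says every step advances $i$ or $j$, so conversely each $\Sigma_{t}$ with $t < m$ equals one of these left or right cumulative values. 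Therefore $\{\Sigma_{1}, \ldots, \Sigma_{m}\}$ coincides with the union of the two cumulative sets, which pins down $m$, all the $\mu_{s}$ (as consecutive gaps), all the pairs $(i_{s}, j_{s})$, and finally $p_{s}, q_{s}$ — exactly the data built above.

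The routine parts — the positivity bookkeeping and the distance multiplications — are immediate; the step needing care, and the one I would write out in full, is uniqueness, specifically the claim that the three ordering requirements in (2) are precisely what forces the partial sums $\Sigma_{t}$ to be the breakpoints of the common refinement. Induction on $n$ (peeling off the subinterval adjacent to $c$, i.e. the pair $(z, z+1)$, and renormalising) is a viable alternative, but the common-refinement picture makes both the extremal values of $m$ and the uniqueness transparent in a single stroke.
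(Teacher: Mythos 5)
Your argument is correct, and it takes a genuinely different route from the paper's. The paper proceeds by induction on $n$, peeling off the \emph{outermost} pair $(a_{1},a_{k+1})$: depending on whether the two--point system at $a_{1},a_{k+1}$ with full masses balances at, to the left of, or to the right of the global center of mass, it splits $\lambda_{1}$ or $\lambda_{k+1}$ into two parts so that one part pairs with the opposite endpoint to balance at $c$, and the induction hypothesis handles the remainder; the monotonicity constraints in item (2), the bounds on $m$, and the uniqueness clause are left essentially implicit in that induction (note the decomposition is built from the outside in, i.e.\ your suggested inductive variant peels from the inside, the paper's from the outside). Your substitution $\mu_{s}=p_{s}(c-a_{i_{s}})=q_{s}(a_{j_{s}}-c)$ instead converts \eqref{0226001} exactly into the problem of matching two partitions of a segment of common length $M=\sum_{k\le z}\lambda_{k}(c-a_{k})=\sum_{k>z}\lambda_{k}(a_{k}-c)$, and the common refinement then yields existence, both bounds $n/2+\vert z-n/2\vert\le m\le n-1$ (by counting blocks and interior cut points), and uniqueness in a single picture. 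The step you rightly flag as the one to write out in full does go through: surjectivity together with monotonicity of $i$ (resp.\ $j$) forces consecutive values to differ by at most one, so each level set is a nonempty block of consecutive indices whose boundary partial sums are exactly the left (resp.\ right) cumulative values, and strict monotonicity of $j_{s}-i_{s}$ forces every interior partial sum $\Sigma_{t}$ to be such a breakpoint; hence $\{\Sigma_{1},\dots,\Sigma_{m-1}\}$ is precisely the union of the two interior breakpoint sets, which determines all the data. What your approach buys is a transparent, non-inductive proof of precisely the parts the paper's proof glosses over --- the extremal values of $m$ and uniqueness; what the paper's induction buys is brevity and a construction that generalizes directly to the higher-dimensional decomposition results in its Appendix~B.
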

\footnotetext[1]{In this paper increasing means nondecreasing; decreasing means nonincreasing.}
\begin{proof}Clearly, the theorem holds for $n=2$.
If the theorem holds for $2 \leq n \leq k$, then for $n=k+1$, we have
\begin{enumerate}[$\bm{\arabic{enumi}.}$]
\item $\frac{ \lambda_{1}a_{1} + \lambda_{k+1}a_{k+1} }{ \lambda_{1} + \lambda_{k+1} } = \frac{ \lambda_{1}a_{1} + \lambda_{2}a_{2} + \cdots + \lambda_{k}a_{k} + \lambda_{k+1}a_{k+1} }{ \lambda_{1} + \lambda_{2} + \cdots + \lambda_{k} + \lambda_{k+1} }$.

We have
\begin{align}
\nonumber
\left\{\begin{array}{c}
\text{\begin{blockarray}{c}
\begin{block}{[c]}
$\lambda_{1}$ \\
$\lambda_{2}$ \\
$\vdots$ \\
$\lambda_{k}$ \\
$\lambda_{k+1}$ \\
\end{block}
\end{blockarray}} = \text{\begin{blockarray}{c}
\begin{block}{[c]}
$0$ \\
$\lambda_{2}$ \\
$\vdots$ \\
$\lambda_{k}$ \\
$0$ \\
\end{block}
\end{blockarray}} + \text{\begin{blockarray}{c}
\begin{block}{[c]}
$\lambda_{1}$ \\
$0$ \\
$\vdots$ \\
$0$ \\
$\lambda_{k+1}$ \\
\end{block}
\end{blockarray}}, \\
\frac{ \lambda_{1}a_{1} + \lambda_{2}a_{2} + \cdots + \lambda_{k}a_{k} + \lambda_{k+1}a_{k+1} }{ \lambda_{1} + \lambda_{2} + \cdots + \lambda_{k} + \lambda_{k+1} } = \frac{ \lambda_{2}a_{2} + \cdots + \lambda_{k}a_{k} }{ \lambda_{2} + \cdots + \lambda_{k} } = \frac{ \lambda_{1}a_{1} + \lambda_{k+1}a_{k+1} }{ \lambda_{1} + \lambda_{k+1} }.
\end{array}\right.
\end{align}
\item $\frac{ \lambda_{1}a_{1} + \lambda_{k+1}a_{k+1} }{ \lambda_{1} + \lambda_{k+1} } < \frac{ \lambda_{1}^{\prime\prime} a_{1} + \lambda_{k+1} a_{k+1} }{ \lambda_{1}^{\prime\prime} + \lambda_{k+1} } := \frac{ \lambda_{1}a_{1} + \lambda_{2}a_{2} + \cdots + \lambda_{k}a_{k} + \lambda_{k+1}a_{k+1} }{ \lambda_{1} + \lambda_{2} + \cdots + \lambda_{k} + \lambda_{k+1} }$.

Clearly, $0 < \lambda_{1}^{\prime\prime} < \lambda_{1}$. Let $\lambda_{1}^{\prime} = \lambda_{1} - \lambda_{1}^{\prime\prime}$. Then
\begin{align}
\nonumber
\left\{\begin{array}{c}
\text{\begin{blockarray}{c}
\begin{block}{[c]}
$\lambda_{1}$ \\
$\lambda_{2}$ \\
$\vdots$ \\
$\lambda_{k}$ \\
$\lambda_{k+1}$ \\
\end{block}
\end{blockarray}} = \text{\begin{blockarray}{c}
\begin{block}{[c]}
$\lambda_{1}^{\prime}$ \\
$\lambda_{2}$ \\
$\vdots$ \\
$\lambda_{k}$ \\
$0$ \\
\end{block}
\end{blockarray}} + \text{\begin{blockarray}{c}
\begin{block}{[c]}
$\lambda_{1}^{\prime\prime}$ \\
$0$ \\
$\vdots$ \\
$0$ \\
$\lambda_{k+1}$ \\
\end{block}
\end{blockarray}}, \\
\frac{ \lambda_{1}a_{1} + \lambda_{2}a_{2} + \cdots + \lambda_{k}a_{k} + \lambda_{k+1}a_{k+1} }{ \lambda_{1} + \lambda_{2} + \cdots + \lambda_{k} + \lambda_{k+1} } = \frac{ \lambda_{1}^{\prime}a_{1} + \lambda_{2}a_{2} + \cdots + \lambda_{k}a_{k} }{ \lambda_{1}^{\prime} + \lambda_{2} + \cdots + \lambda_{k} } = \frac{ \lambda_{1}^{\prime\prime}a_{1} + \lambda_{k+1}a_{k+1} }{ \lambda_{1}^{\prime\prime} + \lambda_{k+1} }.
\end{array}\right.
\end{align}
\item $\frac{ \lambda_{1}a_{1} + \lambda_{k+1}a_{k+1} }{ \lambda_{1} + \lambda_{k+1} } > \frac{ \lambda_{1} a_{1} + \lambda_{k+1}^{\prime\prime} a_{k+1} }{ \lambda_{1} + \lambda_{k+1}^{\prime\prime} } := \frac{ \lambda_{1}a_{1} + \lambda_{2}a_{2} + \cdots + \lambda_{k}a_{k} + \lambda_{k+1}a_{k+1} }{ \lambda_{1} + \lambda_{2} + \cdots + \lambda_{k} + \lambda_{k+1} }$.

Clearly, $0 < \lambda_{k+1}^{\prime\prime} < \lambda_{k+1}$. Let $\lambda_{k+1}^{\prime} = \lambda_{k+1} - \lambda_{k+1}^{\prime\prime}$. Then
\begin{align}
\nonumber
\left\{\begin{array}{c}
\text{\begin{blockarray}{c}
\begin{block}{[c]}
$\lambda_{1}$ \\
$\lambda_{2}$ \\
$\vdots$ \\
$\lambda_{k}$ \\
$\lambda_{k+1}$ \\
\end{block}
\end{blockarray}} = \text{\begin{blockarray}{c}
\begin{block}{[c]}
$0$ \\
$\lambda_{2}$ \\
$\vdots$ \\
$\lambda_{k}$ \\
$\lambda_{k+1}^{\prime}$ \\
\end{block}
\end{blockarray}} + \text{\begin{blockarray}{c}
\begin{block}{[c]}
$\lambda_{1}$ \\
$0$ \\
$\vdots$ \\
$0$ \\
$\lambda_{k+1}^{\prime\prime}$ \\
\end{block}
\end{blockarray}}, \\
\frac{ \lambda_{1}a_{1} + \lambda_{2}a_{2} + \cdots + \lambda_{k}a_{k} + \lambda_{k+1}a_{k+1} }{ \lambda_{1} + \lambda_{2} + \cdots + \lambda_{k} + \lambda_{k+1} } = \frac{ \lambda_{2}a_{2} + \cdots + \lambda_{k}a_{k} + \lambda_{k+1}^{\prime}a_{k+1} }{ \lambda_{2} + \cdots + \lambda_{k} + \lambda_{k+1}^{\prime} } = \frac{ \lambda_{1}a_{1} + \lambda_{k+1}^{\prime\prime}a_{k+1} }{ \lambda_{1} + \lambda_{k+1}^{\prime\prime} }.
\end{array}\right.
\end{align}
\end{enumerate}
By the way, $\max\{z, n-z\} = n/2 + \vert z-n/2 \vert$.
Thus, the assertion of the theorem has been verified by mathematical induction.
\end{proof}
\begin{flushleft}
$1^{0}$. A physical interpretation. \\
\begin{blockarray}{ccc}
\begin{block}{[ccc]}
$\lambda_{1}$  &  $\ldots$  &  $\lambda_{n}$ \\
      $a_{1}$  &  $\ldots$  &        $a_{n}$ \\
\end{block}
\end{blockarray}
can be decomposed into
\begin{blockarray}{cc}
\begin{block}{[cc]}
    $p_{s}$    &      $q_{s}$  \\
$a_{ i_{s} }$  &  $a_{ j_{s} }$ \\
\end{block}
\end{blockarray}, $s = 1, \ldots, m$, such that
\begin{blockarray}{cc}
\begin{block}{[cc]}
    $p_{s}$    &      $q_{s}$  \\
$a_{ i_{s} }$  &  $a_{ j_{s} }$ \\
\end{block}
\end{blockarray}$_{CM}$ = \begin{blockarray}{ccc}
\begin{block}{[ccc]}
$\lambda_{1}$  &  $\ldots$  &  $\lambda_{n}$ \\
      $a_{1}$  &  $\ldots$  &        $a_{n}$ \\
\end{block}
\end{blockarray}$_{CM}$, $s = 1, \ldots, m$. \\
$2^{0}$. $p_{s} \leq \lambda_{ i_{s} }$ and $q_{s} \leq \lambda_{ j_{s} }$, $s = 1, \ldots, m$. \\
$3^{0}$. $\sum_{k=1}^{n}\lambda_{k} = \sum_{s=1}^{m}( p_{s} + q_{s} )$. \\
$4^{0}$. $\frac{ \lambda_{1}a_{1} + \cdots + \lambda_{n}a_{n} }{ \lambda_{1} + \cdots + \lambda_{n} } \in ( a_{z}, a_{z+1} ) = ( a_{ i_{1} }, a_{ j_{1} } ) \subset \cdots \subset ( a_{ i_{m} }, a_{ j_{m} } ) = ( a_{1}, a_{n} )$. \\
$5^{0}$. The following statements are equivalent to (\ref{0226001}).
\begin{enumerate}[$\bm{\arabic{enumi}.}$]
\item $\left\{\begin{array}{c}
\lambda_{k} = \sum_{s=1}^{m}( p_{s} 1_{ i_{s} = k } + q_{s} 1_{ j_{s} = k } ), k = 1, \ldots, n, \\
\frac{ p_{s}a_{ i_{s} } + q_{s}a_{ j_{s} } }{ p_{s} + q_{s} } = \frac{ \lambda_{1}a_{1} + \cdots + \lambda_{n}a_{n} }{ \lambda_{1} + \cdots + \lambda_{n} }, s = 1, \ldots, m.
\end{array}\right.$
\item Let $f: \mathrm{dom}f \subseteq \mathbb{R} \rightarrow \mathbb{R}$ such that $a_{1}, \ldots, a_{n} \in \mathrm{dom}f$. Then
\begin{align}
\nonumber
\left\{\begin{array}{c}
\sum_{k=1}^{n}\lambda_{k} f(a_{k}) = \sum_{s=1}^{m}\Big( p_{s} f(a_{ i_{s} }) + q_{s} f(a_{ j_{s} }) \Big), \\
\frac{ p_{s}a_{ i_{s} } + q_{s}a_{ j_{s} } }{ p_{s} + q_{s} } = \frac{ \lambda_{1}a_{1} + \cdots + \lambda_{n}a_{n} }{ \lambda_{1} + \cdots + \lambda_{n} }, s = 1, \ldots, m.
\end{array}\right.
\end{align}
Let $w_{s} = \frac{ p_{s} + q_{s} }{ ( p_{1} + q_{1} ) + \cdots + ( p_{m} + q_{m} ) }, s = 1, \ldots, m$. Then
\begin{align}
\nonumber
\left\{\begin{array}{c}
\frac{ \lambda_{1} f(a_{1}) + \cdots + \lambda_{n} f(a_{n}) }{ \lambda_{1} + \cdots + \lambda_{n} } = \sum_{s=1}^{m} w_{s} \cdot \frac{ p_{s} f(a_{ i_{s} }) + q_{s} f(a_{ j_{s} }) }{ p_{s} + q_{s} }, \\
\frac{ p_{s}a_{ i_{s} } + q_{s}a_{ j_{s} } }{ p_{s} + q_{s} } = \frac{ \lambda_{1}a_{1} + \cdots + \lambda_{n}a_{n} }{ \lambda_{1} + \cdots + \lambda_{n} }, s = 1, \ldots, m, \\
w_{1}, \ldots, w_{m} > 0, w_{1} + \cdots + w_{m} = 1.
\end{array}\right.
\end{align}
Let $g: \mathrm{dom}g \subseteq \mathbb{R} \rightarrow \mathbb{R}$ such that $\frac{ \lambda_{1}a_{1} + \cdots + \lambda_{n}a_{n} }{ \lambda_{1} + \cdots + \lambda_{n} } \in \mathrm{dom}g$. Then
\begin{align}
\nonumber
\left\{\begin{array}{c}
\frac{ \lambda_{1} f(a_{1}) + \cdots + \lambda_{n} f(a_{n}) }{ \lambda_{1} + \cdots + \lambda_{n} } + g\left( \frac{ \lambda_{1}a_{1} + \cdots + \lambda_{n}a_{n} }{ \lambda_{1} + \cdots + \lambda_{n} } \right) \\
= \sum_{s=1}^{m} w_{s}\left[ \frac{ p_{s} f(a_{ i_{s} }) + q_{s} f(a_{ j_{s} }) }{ p_{s} + q_{s} } + g\left( \frac{ p_{s}a_{ i_{s} } + q_{s}a_{ j_{s} } }{ p_{s} + q_{s} } \right) \right], \\
\frac{ p_{s}a_{ i_{s} } + q_{s}a_{ j_{s} } }{ p_{s} + q_{s} } = \frac{ \lambda_{1}a_{1} + \cdots + \lambda_{n}a_{n} }{ \lambda_{1} + \cdots + \lambda_{n} }, s = 1, \ldots, m, \\
w_{1}, \ldots, w_{m} > 0, w_{1} + \cdots + w_{m} = 1.
\end{array}\right.
\end{align}
\end{enumerate}
\end{flushleft}


\begin{theorem}[The fundamental theorem of systems of point masses]\label{0710002}\hfill \\
Every system of $n > 1$ point masses on the real line can be decomposed into at most $n-1$ systems of two point masses, and all these systems of two point masses have the same center of mass as the original one.
\end{theorem}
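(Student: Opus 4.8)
The plan is to read the statement as the physical content already packaged in Theorem \ref{0219001}. Write the system as masses $\lambda_1,\dots,\lambda_n>0$ sitting at positions $a_1,\dots,a_n\in\mathbb{R}$, with center of mass $\bar a=\frac{\lambda_1 a_1+\cdots+\lambda_n a_n}{\lambda_1+\cdots+\lambda_n}$. What must be produced is a decomposition of the mass vector $(\lambda_1,\dots,\lambda_n)$ as a sum $\sum_{s=1}^m v_s$ of vectors each having exactly two positive entries, located at some $a_{i_s}$ and $a_{j_s}$, so that the corresponding two-point system (in the sense of Definition \ref{0710001}) has center of mass $\bar a$, and with $m\le n-1$. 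This is precisely the physical interpretation recorded immediately after Theorem \ref{0219001}; that theorem already supplies such a decomposition, but only under the strict configuration $a_1<\cdots<a_z<\bar a<a_{z+1}<\cdots<a_n$. Hence the entire task is to drop the genericity hypotheses — distinct positions, and no point located exactly at $\bar a$ — while keeping the count at $n-1$.

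I would run the same extreme-pair induction that drives the proof of Theorem \ref{0219001}, now permitting equalities. If the positions are not all equal, then $a_{\min}<\bar a<a_{\max}$ strictly, so one may split off a single two-point system on the two extreme positions: choose masses $p\le\lambda_{\min}$ and $q\le\lambda_{\max}$ with $p(\bar a-a_{\min})=q(a_{\max}-\bar a)$ and at least one of the two points fully consumed. This system has center of mass $\bar a$, and the moment-balance identity $\sum_{k}\lambda_k(a_k-\bar a)=0$ guarantees that the remaining masses again have center of mass $\bar a$, on strictly fewer points; one then recurses. This is exactly the three-case dichotomy in the proof of Theorem \ref{0219001}, and coincident positions are absorbed without change because only the separation of the two sides of $\bar a$ is ever used. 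The one genuinely new phenomenon is a mass stranded exactly at $\bar a$: since a two-point system whose two points both sit at $\bar a$ automatically has center of mass $\bar a$, all masses located at $\bar a$ can be strung together into coincident two-point systems, a lone stranded mass being split into two halves at $\bar a$.

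The main obstacle is the bookkeeping that keeps the number of systems at $n-1$, because a stranded mass at $\bar a$ forces an extra system. Each peel creates one system and deletes at least one point, while a final remaining pair is consumed by a single system that deletes two points; a lone mass left at $\bar a$ then costs one further self-split system. Letting $T(n)$ be the number of two-point systems needed, these moves give the recursion $T(n)\le 1+\max\{T(n-1),T(n-2)\}$ together with $T(2)=1$ and the internal value $T(1)=1$ for a single mass at $\bar a$, from which $T(n)\le n-1$ follows by induction for all $n\ge 2$. The delicate point I would check carefully is that whenever a peel strands a lone mass at $\bar a$ it has \emph{simultaneously} consumed both extreme points in one system, so the resulting ``$-2$'' in the recursion exactly pays for the extra self-split and the bound $n-1$ is never exceeded. (The same count can be seen at a glance by viewing the cross-$\bar a$ pairings as a balanced moment-transportation problem, whose basic feasible solutions are acyclic and hence use at most $|L|+|R|-1$ edges, with the block at $\bar a$ contributing at most $|E|-1$; summing over the cases on $|E|=\#\{k:a_k=\bar a\}$ reproduces $\le n-1$.) This establishes the decomposition and completes the proof.
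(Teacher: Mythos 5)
Your proposal is correct, and it follows the route the paper itself intends: Theorem \ref{0710002} is stated with no proof of its own, immediately after Theorem \ref{0219001} and its physical interpretation, so the implicit argument is exactly the extreme-pair peeling you describe. The genuine content of your write-up is the part the paper skips: Theorem \ref{0219001} is proved only under the strict configuration $a_{1}<\cdots<a_{z}<\bar a<a_{z+1}<\cdots<a_{n}$, which excludes coincident positions and, more importantly, masses sitting exactly at the center of mass $\bar a$. Your two observations close that gap correctly. Coincident positions are harmless because the peeling uses only the sign of $a_{k}-\bar a$; and a mass at $\bar a$ can only be paired with another mass at $\bar a$ (a two-point system with positive masses, one point at $\bar a$ and one elsewhere, cannot have center of mass $\bar a$), so it must be consumed by a degenerate pair, which Definition \ref{0710001} permits since the displacements need not be distinct. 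Your count is also right, and the delicate point you flag resolves for exactly the reason you give: a lone stranded mass at $\bar a$ can only be reached from three points by a peel consuming both extremes at once (from two points a peel always consumes both), so the recursion never passes through $T(2)\le 1+T(1)$. An equivalent, slightly cleaner bookkeeping: with $L$, $R$, $E$ the index sets left of, right of, and at $\bar a$, the cross systems number at most $|L|+|R|-1$ (the subsystem on $L\cup R$ has center of mass $\bar a$, so its last peel consumes two points), and the block at $\bar a$ needs at most $\max\{\lceil |E|/2\rceil,\,1\}$ further systems when $E\neq\emptyset$, which is $\le n-1$ in every case. It is also worth noting that the paper's Theorem \ref{0907002}, proved via Carath\'eodory's theorem, imposes no genericity hypotheses and yields Theorem \ref{0710002} directly as its one-dimensional case, so that is an alternative way to obtain the degenerate configurations without reworking the induction.
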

\begin{flushleft}
$1^{0}$. We also call the theorem the {\it system of point masses decomposition theorem}.
\end{flushleft}


\begin{theorem}[The fundamental theorem of weighted means, part 1]\label{0227002}\hfill \\
Let $n \in \mathbb{N}$ and $n > 1$, and let $\lambda_{1}, \ldots, \lambda_{n} > 0, \lambda_{1} + \cdots + \lambda_{n} = 1$ such that
\begin{align}
\nonumber
a_{1} < \cdots < a_{z} < \lambda_{1}a_{1} + \cdots + \lambda_{n}a_{n} < a_{z+1} < \cdots < a_{n}.
\end{align}
Then there are unique
\begin{align}
\begin{split}
\left\{\begin{array}{c}
\begin{minipage}[t]{10.6cm}
\begin{enumerate}[$\bm{\arabic{enumi}.}$]
\item $m \in \mathbb{N}$ such that $n/2 + \vert z - n/2 \vert \leq m \leq n-1$,
\item $w_{1}, \ldots, w_{m} > 0, w_{1} + \cdots + w_{m} = 1$,
\item surjective functions
$$\begin{aligned}
i: \{ 1, \ldots, m \} \rightarrow &\{1, \ldots, z\}, \\
                        s \mapsto &i_{s}
\end{aligned}
\begin{gathered}
\ \ \text{and}\ \
\end{gathered}
\begin{aligned}
j: \{ 1, \ldots, m \} \rightarrow &\{z+1, \ldots, n\}, \\
                        s \mapsto &j_{s}
\end{aligned}$$ \\
where $i_{s}$ is decreasing and $j_{s}$ is increasing such that $j_{s} - i_{s}$ is strictly increasing,
\item $p_{s}, q_{s} > 0, p_{s} + q_{s} = 1, s = 1, \ldots, m$,
\end{enumerate}
\end{minipage}
\end{array}\right.
\end{split}\label{0228002}
\end{align}
such that
\begin{align}
\begin{split}
\left\{\begin{array}{cc}
\text{\begin{blockarray}{c}
\begin{block}{[c]}
$\lambda_{1}$ \\
$\vdots$ \\
$\lambda_{n}$ \\
\end{block}
\end{blockarray}} = \sum_{s=1}^{m} w_{s}\text{\begin{blockarray}{cc}
\begin{block}{[c]c}
$0$      & \\
$\vdots$ \\
$0$      & \\
$p_{s}$  &  $\hspace{-4.5mm}\rightarrow$the $i_{s}$th row \\
$0$      & \\
$\vdots$ & \\
$0$      & \\
$q_{s}$  &  $\hspace{-4.5mm}\rightarrow$the $j_{s}$th row \\
$0$      & \\
$\vdots$ & \\
$0$      & \\
\end{block}
\end{blockarray}}, \\
p_{s}a_{ i_{s} } + q_{s}a_{ j_{s} } = \lambda_{1}a_{1} + \cdots + \lambda_{n}a_{n}, s = 1, \ldots, m.
\end{array}\right.
\end{split}\label{0227003}
\end{align}
\end{theorem}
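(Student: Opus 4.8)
The plan is to obtain Theorem \ref{0227002} as the normalized form of Theorem \ref{0219001}, so no new induction is needed. The hypotheses here are precisely those of Theorem \ref{0219001} together with the single extra assumption $\lambda_{1} + \cdots + \lambda_{n} = 1$; under this assumption the weighted mean $\frac{\lambda_{1}a_{1} + \cdots + \lambda_{n}a_{n}}{\lambda_{1} + \cdots + \lambda_{n}}$ collapses to $\lambda_{1}a_{1} + \cdots + \lambda_{n}a_{n}$. First I would invoke Theorem \ref{0219001} to produce the unique data $m$, the surjections $i$ and $j$ (with $i_{s}$ decreasing, $j_{s}$ increasing, and $j_{s} - i_{s}$ strictly increasing), and positive numbers $P_{s}, Q_{s}$ playing the role of the $p_{s}, q_{s}$ there, satisfying (\ref{0226001}): the weight vector $(\lambda_{1}, \ldots, \lambda_{n})^{\mathsf T}$ splits as $\sum_{s=1}^{m}$ of the two-point vectors carrying $P_{s}$ in row $i_{s}$ and $Q_{s}$ in row $j_{s}$, and $\frac{P_{s}a_{i_{s}} + Q_{s}a_{j_{s}}}{P_{s} + Q_{s}}$ equals the common mean for each $s$.

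For existence in the normalized form, set $w_{s} = P_{s} + Q_{s}$, $p_{s} = P_{s}/w_{s}$, and $q_{s} = Q_{s}/w_{s}$. Then $p_{s}, q_{s} > 0$ and $p_{s} + q_{s} = 1$ by construction, while remark $3^{0}$ following Theorem \ref{0219001} gives $\sum_{s=1}^{m} w_{s} = \sum_{s=1}^{m}(P_{s} + Q_{s}) = \sum_{k=1}^{n}\lambda_{k} = 1$. Factoring $w_{s}$ out of the $s$-th summand turns the vector identity of (\ref{0226001}) into the weighted identity of (\ref{0227003}), and the center-of-mass condition survives verbatim, since $p_{s}a_{i_{s}} + q_{s}a_{j_{s}} = \frac{P_{s}a_{i_{s}} + Q_{s}a_{j_{s}}}{P_{s} + Q_{s}} = \lambda_{1}a_{1} + \cdots + \lambda_{n}a_{n}$. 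The combinatorial data $m$, $i$, $j$ and the monotonicity constraints are carried over unchanged, and the bound $n/2 + \vert z - n/2 \vert \leq m \leq n-1$ is the same as in Theorem \ref{0219001}.

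For uniqueness I would exhibit the inverse passage: given any tuple $(m, w_{s}, i, j, p_{s}, q_{s})$ satisfying (\ref{0227003}), put $P_{s} = w_{s}p_{s}$ and $Q_{s} = w_{s}q_{s}$. These are positive, split the weight vector as in (\ref{0226001}), and satisfy $\frac{P_{s}a_{i_{s}} + Q_{s}a_{j_{s}}}{P_{s} + Q_{s}} = p_{s}a_{i_{s}} + q_{s}a_{j_{s}} = \lambda_{1}a_{1} + \cdots + \lambda_{n}a_{n}$, so they form a solution of the Theorem \ref{0219001} problem with the same $m, i, j$. The two correspondences $(P_{s}, Q_{s}) \mapsto (w_{s}, p_{s}, q_{s})$ and $(w_{s}, p_{s}, q_{s}) \mapsto (P_{s}, Q_{s})$ are mutually inverse once the normalization $p_{s} + q_{s} = 1$ is imposed, whence the uniqueness in Theorem \ref{0219001} forces uniqueness here. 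The step demanding the most care is exactly this transfer: I must verify that the constraint $p_{s} + q_{s} = 1$ pins down $w_{s} = P_{s} + Q_{s}$ with no freedom, so that distinct normalized tuples cannot collapse to a single unnormalized one, and that the structural constraints on $i$ and $j$ are literally identical in the two statements, so that the unique $m, i, j$ of Theorem \ref{0219001} remain the only admissible choice in the present setting.
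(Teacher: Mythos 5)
Your proposal is correct and matches the paper's intent exactly: the paper gives no separate proof of Theorem \ref{0227002}, treating it as the normalized restatement of Theorem \ref{0219001}, which is precisely the passage you carry out (setting $w_{s} = P_{s} + Q_{s}$, $p_{s} = P_{s}/w_{s}$, $q_{s} = Q_{s}/w_{s}$, using remark $3^{0}$ to get $\sum_{s} w_{s} = 1$, and transferring uniqueness through the mutually inverse correspondences). Nothing is missing.
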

\begin{flushleft}
$1^{0}$. We also call the theorem the {\it first decomposition principle for weighted means}. \\
$2^{0}$. The following statements are equivalent to (\ref{0227003}).
\begin{enumerate}[$\bm{\arabic{enumi}.}$]
\item $\left\{\begin{array}{c}
\lambda_{k} = \sum_{s=1}^{m} w_{s}( p_{s} 1_{ i_{s} = k } + q_{s} 1_{ j_{s} = k } ), k = 1, \ldots, n, \\
p_{s}a_{ i_{s} } + q_{s}a_{ j_{s} } = \sum_{k=1}^{n}\lambda_{k}a_{k}, s = 1, \ldots, m.
\end{array}\right.$
\item Let $f: \mathrm{dom}f \subseteq \mathbb{R} \rightarrow \mathbb{R}$ such that $a_{1}, \ldots, a_{n} \in \mathrm{dom}f$. Then
\begin{align}
\left\{\begin{array}{c}
\sum_{k=1}^{n}\lambda_{k} f(a_{k}) = \sum_{s=1}^{m} w_{s}\Big( p_{s} f(a_{ i_{s} }) + q_{s} f(a_{ j_{s} }) \Big), \\
p_{s}a_{ i_{s} } + q_{s} a_{ j_{s} } = \sum_{k=1}^{n}\lambda_{k}a_{k}, s = 1, \ldots, m.
\end{array}\right.\label{1123001}
\end{align}
Let $g: \mathrm{dom}g \subseteq \mathbb{R} \rightarrow \mathbb{R}$ such that $\sum_{k=1}^{n}\lambda_{k}a_{k} \in \mathrm{dom}g$. Then
\begin{align}
\left\{\begin{array}{c}
\sum_{k=1}^{n}\lambda_{k} f(a_{k}) + g( \sum_{k=1}^{n}\lambda_{k}a_{k} ) \\
= \sum_{s=1}^{m} w_{s}\Big[ p_{s} f(a_{ i_{s} }) + q_{s} f(a_{ j_{s} }) + g( p_{s}a_{ i_{s} } + q_{s}a_{ j_{s} } ) \Big], \\
p_{s}a_{ i_{s} } + q_{s}a_{ j_{s} } = \sum_{k=1}^{n}\lambda_{k}a_{k}, s = 1, \ldots, m.
\end{array}\right.\label{0103001}
\end{align}
\end{enumerate}
\end{flushleft}


\begin{example}\label{0406042}
Let $f: \mathrm{dom}f \subseteq \mathbb{R} \rightarrow \mathbb{R}$ and $g: \mathrm{dom}g \subseteq \mathbb{R} \rightarrow \mathbb{R}$. \\
Suppose that $1, 2, 6 \in \mathrm{dom}f$ and $3 \in \mathrm{dom}g$. Then
\begin{align}
\nonumber
  &\frac{ f(1) + f(2) + f(6) }{3} = \frac{4}{9}\left( \frac{3}{4} f(2) + \frac{1}{4} f(6) \right) + \frac{5}{9}\left( \frac{3}{5} f(1) + \frac{2}{5} f(6) \right),  \\
\nonumber
  &\frac{ f(1) + f(2) + f(6) }{3} + g\left( \frac{1 + 2 + 6}{3} \right)  \\
\nonumber
= &\frac{4}{9}\left[ \frac{3}{4} f(2) + \frac{1}{4} f(6) + g\left( \frac{3}{4} \times 2 + \frac{1}{4} \times 6 \right) \right] + \frac{5}{9}\left[ \frac{3}{5} f(1) + \frac{2}{5} f(6) + g\left( \frac{3}{5} \times 1 + \frac{2}{5} \times 6 \right) \right].
\end{align}
\end{example}


\begin{theorem}[The fundamental theorem of weighted means, part 2]\label{0406077}\hfill \\
Let $c_{i} \in (a, b), w_{i} > 0, \sum_{i=1}^{n} w_{i} = 1$ and $p, q > 0, p + q = 1$ such that $\sum_{i=1}^{n} w_{i}c_{i} = pa + qb$.
Then the system of equations
\begin{align}
\nonumber
\left\{\begin{array}{c}
\sum_{i=1}^{n} p_{i} = p, p_{i} > 0, \sum_{i=1}^{n} q_{i} = q, q_{i} > 0, \\
p_{i} + q_{i} = w_{i}, \frac{ p_{i}a + q_{i}b }{ p_{i} + q_{i} } = c_{i}, i = 1, \ldots, n
\end{array}\right.
\end{align}
has a unique solution
\begin{align}
\nonumber
p_{i} = w_{i}\theta_{i}, q_{i} = w_{i}( 1 - \theta_{i} ), i = 1, \ldots, n,
\end{align}
where $\theta_{i}a + ( 1 - \theta_{i} )b = c_{i}$.
\end{theorem}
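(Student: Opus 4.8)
The plan is to decouple the system into $n$ independent per-index subsystems, solve each one to obtain uniqueness, and then show that the two global summation constraints $\sum_{i} p_{i} = p$ and $\sum_{i} q_{i} = q$ follow automatically from the hypothesis $\sum_{i=1}^{n} w_{i}c_{i} = pa + qb$ rather than imposing anything new. First I would record that, since $c_{i} \in (a, b)$, there is a unique $\theta_{i} = \frac{b - c_{i}}{b - a} \in (0, 1)$ satisfying $\theta_{i}a + (1 - \theta_{i})b = c_{i}$; this is exactly the $\theta_{i}$ named in the statement.

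For uniqueness, I would fix $i$ and isolate the two equations involving only $p_{i}$ and $q_{i}$: from $p_{i} + q_{i} = w_{i}$ and $\frac{p_{i}a + q_{i}b}{p_{i} + q_{i}} = c_{i}$, clearing the denominator (which equals $w_{i}$) gives the linear system $p_{i} + q_{i} = w_{i}$ and $p_{i}a + q_{i}b = w_{i}c_{i}$. Its coefficient determinant is $b - a \neq 0$, so $(p_{i}, q_{i})$ is uniquely determined, and solving yields $p_{i} = w_{i}\theta_{i}$, $q_{i} = w_{i}(1 - \theta_{i})$. Hence any solution of the full system must be the claimed one, which already settles uniqueness without invoking the summation or positivity requirements.

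For existence, I would verify that these values satisfy every equation. The per-index equations hold by construction, and positivity $p_{i}, q_{i} > 0$ follows from $w_{i} > 0$ together with $\theta_{i} \in (0, 1)$. The summation constraints are the only place the hypothesis enters: substituting $p_{i} = w_{i}\theta_{i}$ and summing,
$$\sum_{i=1}^{n} p_{i} = \frac{1}{b - a}\Big( b\sum_{i=1}^{n} w_{i} - \sum_{i=1}^{n} w_{i}c_{i} \Big) = \frac{1}{b - a}\big( b - (pa + qb) \big) = p,$$
using $\sum_{i} w_{i} = 1$, the hypothesis $\sum_{i} w_{i}c_{i} = pa + qb$, and $q = 1 - p$. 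Then $\sum_{i} q_{i} = \sum_{i} w_{i} - \sum_{i} p_{i} = 1 - p = q$ follows at once.

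I expect no serious obstacle here: after the reduction, the content is a $2 \times 2$ linear solve repeated across indices plus a single bookkeeping sum. The one point that deserves attention is confirming that $\sum_{i} p_{i} = p$ and $\sum_{i} q_{i} = q$ are \emph{consequences} of the compatibility hypothesis $\sum_{i} w_{i}c_{i} = pa + qb$ and not extra conditions to be separately enforced; the displayed computation is precisely where that hypothesis does its work, so it is the crux of the argument such as it is.
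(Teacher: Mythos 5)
Your proof is correct. The paper states Theorem \ref{0406077} without giving any proof, so there is nothing to diverge from: your argument --- solving the decoupled $2\times 2$ linear system $p_{i}+q_{i}=w_{i}$, $p_{i}a+q_{i}b=w_{i}c_{i}$ for each $i$ (determinant $b-a\neq 0$) to get uniqueness, reading off positivity from $\theta_{i}\in(0,1)$ and $w_{i}>0$, and deriving the summation constraints from $\sum_{i}w_{i}c_{i}=pa+qb$ --- is exactly the evident computation the paper leaves to the reader, and you correctly identify the derivation of $\sum_{i}p_{i}=p$ from the compatibility hypothesis as the only step with any content.
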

\begin{flushleft}
$1^{0}$. We also call the theorem the {\it second decomposition principle for weighted means}. \\
$2^{0}$. Let $f: \mathrm{dom}f \subseteq \mathbb{R} \rightarrow \mathbb{R}$ such that $a, b \in \mathrm{dom}f$. Then
\begin{align}
\nonumber
\left\{\begin{array}{c}
p f(a) + q f(b) = \sum_{i=1}^{n} w_{i} \cdot \frac{ p_{i} f(a) + q_{i} f(b) }{ p_{i} + q_{i} }, \\
\frac{ p_{i}a + q_{i}b }{ p_{i} + q_{i} } = c_{i}, i = 1, \ldots, n.
\end{array}\right.
\end{align}
Let $g: \mathrm{dom}g \subseteq \mathbb{R} \rightarrow \mathbb{R}$ such that $c_{1}, \ldots, c_{n} \in \mathrm{dom}g$. Then
\begin{align}
\nonumber
\left\{\begin{array}{c}
p f(a) + q f(b) + \sum_{i=1}^{n} w_{i} g(c_{i}) = \sum_{i=1}^{n} w_{i}\left[ \frac{ p_{i} f(a) + q_{i} f(b) }{ p_{i} + q_{i} } + g\left( \frac{ p_{i}a + q_{i}b }{ p_{i} + q_{i} } \right) \right], \\
\frac{ p_{i}a + q_{i}b }{ p_{i} + q_{i} } = c_{i}, i = 1, \ldots, n.
\end{array}\right.
\end{align}
\end{flushleft}


\begin{theorem}[The fundamental theorem of weighted means, part 3]\label{0902001}\hfill \\
Let $a_{i} \in \mathbb{R}, p_{i} > 0, \sum_{i=1}^{m} p_{i} = 1$ and $b_{j} \in \mathbb{R}, q_{j} > 0, \sum_{j=1}^{n} q_{j} = 1$
such that $\sum_{i=1}^{m} p_{i}a_{i} = \sum_{j=1}^{n} q_{j}b_{j}$. Suppose that $a_{i} \not\in ( \min_{j} b_{j}, \max_{j} b_{j} ), i = 1, \ldots, m$.
Then there are $\lambda_{ij} \geq 0$ satisfy $\lambda_{i1} + \cdots + \lambda_{in} = p_{i}$ and $\lambda_{1j} + \cdots + \lambda_{mj} = q_{j}$ such that
\begin{align}
\nonumber
\frac{ \lambda_{1j}a_{1} + \cdots + \lambda_{mj}a_{m} }{ \lambda_{1j} + \cdots + \lambda_{mj} } = b_{j}, j = 1, \ldots, n.
\end{align}
\end{theorem}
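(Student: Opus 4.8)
The plan is to recognize the statement as a \emph{martingale-coupling} (equivalently, convex-order) realization problem and to prove it by an elementary peeling induction driven by a single comparison inequality. Writing $\mu=\sum_i p_ia_i=\sum_j q_jb_j$ for the common center of mass, the matrix $(\lambda_{ij})$ we seek is exactly a joint mass distribution with row marginals $p_i$ and column marginals $q_j$ whose conditional mean in each column $j$ equals $b_j$ (indeed $\sum_i\lambda_{ij}=q_j$ turns $\frac{\sum_i\lambda_{ij}a_i}{\sum_i\lambda_{ij}}=b_j$ into $\sum_i\lambda_{ij}a_i=q_jb_j$, and summing over $j$ recovers $\mu$, so the $n$ mean-constraints are consistent). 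The guiding principle is that such a coupling exists precisely when the $a$-distribution is a mean-preserving spread of the $b$-distribution, and the hypothesis $a_i\notin(\beta_-,\beta_+)$, with $\beta_-=\min_jb_j$ and $\beta_+=\max_jb_j$, is exactly what forces this. To make everything elementary I would work with the discrete ``excess'' functions $C_a(t)=\sum_i p_i\max\{a_i-t,0\}$ and $C_b(t)=\sum_j q_j\max\{b_j-t,0\}$, which are convex, piecewise affine, and agree with $\mu-t$ for $t$ small and with $0$ for $t$ large.

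The conceptual heart, which I would isolate as a lemma, is the pointwise inequality $C_a(t)\ge C_b(t)$ for all $t$. For $t\le\beta_-$ both equal $\mu-t$ up to the obvious slack ($C_a(t)\ge\sum_i p_i(a_i-t)=\mu-t=C_b(t)$), and for $t\ge\beta_+$ we have $C_b(t)=0\le C_a(t)$. The only interval needing work is $[\beta_-,\beta_+]$, and this is exactly where the hypothesis enters: since $\mu_a$ places \emph{no} mass in the open interval $(\beta_-,\beta_+)$, the function $C_a$ is \emph{affine} on $[\beta_-,\beta_+]$, while $C_b$ is convex there. Checking the two endpoints — $C_a(\beta_-)=\sum_{a_i\ge\beta_+}p_i(a_i-\beta_-)\ge\mu-\beta_-=C_b(\beta_-)$ (because the left cluster subtracts a nonpositive amount) and $C_a(\beta_+)\ge 0=C_b(\beta_+)$ — an affine function dominating the convex $C_b$ at both endpoints dominates the chord of $C_b$, hence $C_b$ itself, on the whole interval. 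This gives $C_a\ge C_b$ everywhere, i.e.\ convex order.

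The induction then peels off the extreme atom $\beta_+$ (of weight $q_{j_0}$) at each step. I must produce $\lambda_{ij_0}\in[0,p_i]$ with $\sum_i\lambda_{ij_0}=q_{j_0}$ and $\sum_i\lambda_{ij_0}a_i=q_{j_0}\beta_+$. Because $\{\lambda:0\le\lambda_i\le p_i,\ \sum_i\lambda_i=q_{j_0}\}$ is a nonempty polytope and $\lambda\mapsto\sum_i\lambda_ia_i$ is linear, the attainable weighted sums fill an interval $[S_{\min},S_{\max}]$; it suffices to show $S_{\min}\le q_{j_0}\beta_+\le S_{\max}$ and invoke the intermediate value property. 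The lower bound is immediate since the mean of any $q_{j_0}$-chunk is at most $\mu\le\beta_+$, so $S_{\min}\le q_{j_0}\mu\le q_{j_0}\beta_+$. For the upper bound I would use the standard linear-programming duality for the top-mass sum, $S_{\max}=\min_{t}\bigl(q_{j_0}t+C_a(t)\bigr)$, together with $C_a\ge C_b$, to get $S_{\max}\ge\min_t\bigl(q_{j_0}t+C_b(t)\bigr)=q_{j_0}\beta_+$, the last equality because the top $q_{j_0}$ mass of $\mu_b$ sits at level $\beta_+$. \textbf{This upper bound is the main obstacle}: it is the only place the full strength of convex order is needed, and it is where the ``outside the interval'' hypothesis ultimately pays off.

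Finally I would close the induction. After peeling, the residual left masses $p_i'=p_i-\lambda_{ij_0}\ge 0$ and the residual targets $\{q_j\}_{j\ne j_0}$ again share a common mean $\frac{\mu-q_{j_0}\beta_+}{1-q_{j_0}}$, and the outside-interval condition persists automatically: the residual $a$-support lies in the original support, hence avoids $(\beta_-,\beta_+)\supseteq(\beta_-,\beta_+')$ where $\beta_+'$ is the new maximum. Thus the reduced data satisfy the theorem's hypotheses with $n$ replaced by $n-1$, and the induction bottoms out at $n=1$, where $\beta_-=\beta_+$, the interval is empty, and $\lambda_{i1}=p_i$ works trivially. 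Boundary subtleties — ties at $\beta_+$, points $a_i$ exactly equal to $\beta_-$ or $\beta_+$, and residual masses that vanish — are routine and do not affect the argument.
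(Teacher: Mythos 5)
Your proof is correct, but it takes a genuinely different route from the paper's. The paper derives this theorem in one line from its two earlier decomposition principles: Theorem \ref{0227002} first splits the source system $\{(p_i,a_i)\}$ into two-point systems, each having the common barycenter $\mu=\sum_i p_ia_i$ as its center of mass; since every $a_i$ avoids $(\min_j b_j,\max_j b_j)$ while $\mu$ lies in $[\min_j b_j,\max_j b_j]$, each straddling pair $[a_{i_s},a_{j_s}]$ contains all the $b_j$, so Theorem \ref{0406077} then redistributes each pair onto the targets $b_j$ with masses proportional to $q_j$; collecting the sub-pairs over $s$ gives $\lambda_{ij}$, with the row, column, and conditional-mean constraints read off from the two decomposition identities. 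You instead prove the statement as a discrete martingale-coupling (Strassen-type) result: you establish convex order via the call functions $C_a\ge C_b$ --- correctly isolating that the only nontrivial range is $[\beta_-,\beta_+]$, where the hypothesis makes $C_a$ affine so an endpoint check suffices --- and then peel off the extremal target atom by an intermediate-value argument on the polytope of admissible columns, using the Hardy--Littlewood/LP-duality identity $S_{\max}=\min_t\bigl(q_{j_0}t+C_a(t)\bigr)$ for the upper bound. All steps check out, including the point that the outside-the-interval hypothesis survives peeling, which lets you re-derive the key lemma for the residual system at each stage instead of proving that convex order itself is preserved. What each approach buys: the paper's is shorter given its machinery and yields a coupling with the explicit pair structure of Theorem \ref{0227002}; yours is self-contained, makes transparent that the hypothesis is used only to force $C_a\ge C_b$, and is the natural template for generalization beyond this hypothesis. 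One phrasing slip, not a gap: ``the mean of any $q_{j_0}$-chunk is at most $\mu$'' is false for an arbitrary chunk; what you need (and what your next clause actually supplies) is that the proportional chunk $\lambda_i=q_{j_0}p_i$ achieves mean exactly $\mu$, whence $S_{\min}\le q_{j_0}\mu\le q_{j_0}\beta_+$.
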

\begin{proof}By Theorems \ref{0227002} and \ref{0406077}, we conclude the proof.
\end{proof}
\begin{flushleft}
\begin{flushleft}
$1^{0}$. We also call the theorem the {\it third decomposition principle for weighted means}. \\
$2^{0}$. Let $f: \mathrm{dom}f \subseteq \mathbb{R} \rightarrow \mathbb{R}$ such that $a_{1}, \ldots, a_{m} \in \mathrm{dom}f$. Then
\begin{align}
\nonumber
\left\{\begin{array}{c}
\sum_{i=1}^{m} p_{i} f(a_{i}) = \sum_{j=1}^{n} q_{j} \cdot \frac{ \lambda_{1j} f(a_{1}) + \cdots + \lambda_{mj} f(a_{m}) }{ \lambda_{1j} + \cdots + \lambda_{mj} }, \\
\frac{ \lambda_{1j}a_{1} + \cdots + \lambda_{mj}a_{m} }{ \lambda_{1j} + \cdots + \lambda_{mj} } = b_{j}, j = 1, \ldots, n.
\end{array}\right.
\end{align}
Let $g: \mathrm{dom}g \subseteq \mathbb{R} \rightarrow \mathbb{R}$ such that $b_{1}, \ldots, b_{n} \in \mathrm{dom}g$. Then
\begin{align}
\nonumber
\left\{\begin{array}{c}
\sum_{i=1}^{m} p_{i} f(a_{i}) + \sum_{j=1}^{n} q_{j}g(b_{j}) \\
= \sum_{j=1}^{n} q_{j}\left[ \frac{ \lambda_{1j} f(a_{1}) + \cdots + \lambda_{mj} f(a_{m}) }{ \lambda_{1j} + \cdots + \lambda_{mj} } + g\left( \frac{ \lambda_{1j}a_{1} + \cdots + \lambda_{mj}a_{m} }{ \lambda_{1j} + \cdots + \lambda_{mj} } \right) \right], \\
\frac{ \lambda_{1j}a_{1} + \cdots + \lambda_{mj}a_{m} }{ \lambda_{1j} + \cdots + \lambda_{mj} } = b_{j}, j = 1, \ldots, n.
\end{array}\right.
\end{align}
\end{flushleft}
\end{flushleft}

\section{Convex at a point}

\begin{definition}\label{0330014}
Let $D \neq \emptyset$ be a subset of $\mathbb{R}$, and let $f: D \rightarrow \mathbb{R}$.
Suppose that $c \in D$ and $\emptyset \neq B \subseteq D$.
\begin{enumerate}[$\bm{\arabic{enumi}.}$]
\item If for any $a, b \in D$ and any $p, q > 0, p + q = 1$ such that $pa + qb = c$, we have
\begin{align}
\nonumber
p f(a) + q f(b) \geq f(pa + qb),
\end{align}
we say that $f$ is convex at $c$ over $D$.
If the equality holds if and only if $a = b$, we say that $f$ is strictly convex at $c$ over $D$.

\item If $f$ is (strictly) convex at every $x \in B$ over $D$, we say that $f$ is (strictly) convex on $B$ over $D$.
\item If $f$ is (strictly) convex on $D$ over $D$, we say that $f$ is (strictly) convex on $D$.
\end{enumerate}
\end{definition}


\begin{theorem}\label{0331009}
Let $D \neq \emptyset$ be a subset of $\mathbb{R}$, and let $f: D \rightarrow \mathbb{R}$. Then \\
$\bm{1.}$ $f$ is convex on $D$ if and only if for any $x_{1}, x_{2}, x_{3} \in D, x_{1} < x_{2} < x_{3}$, we have
\begin{align}
\nonumber
\frac{ f(x_{2}) - f(x_{1}) }{ x_{2} - x_{1} } \leq \frac{ f(x_{3}) - f(x_{1}) }{ x_{3} - x_{1} } \leq \frac{ f(x_{3}) - f(x_{2}) }{ x_{3} - x_{2} }.
\end{align}
$\bm{2.}$ $f$ is strictly convex on $D$ if and only if the inequalities above are strict.
\end{theorem}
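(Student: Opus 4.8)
The plan is to reduce the whole statement to one algebraic identity relating the defining inequality of convexity at a point to the two slope inequalities, and then to exploit the exact correspondence between triples $x_1<x_2<x_3$ in $D$ and single instances of the convexity condition at the middle point. Writing $S_{12}=\frac{f(x_2)-f(x_1)}{x_2-x_1}$, $S_{13}=\frac{f(x_3)-f(x_1)}{x_3-x_1}$ and $S_{23}=\frac{f(x_3)-f(x_2)}{x_3-x_2}$ for the three difference quotients in the statement, the goal of Part $\bm{1}$ is the chain $S_{12}\le S_{13}\le S_{23}$, and of Part $\bm{2}$ its strict analogue.

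First I would record the parametrization that links the two formulations. Given $x_1<x_2<x_3$ in $D$, set $p=\frac{x_3-x_2}{x_3-x_1}$ and $q=\frac{x_2-x_1}{x_3-x_1}$; then $p,q>0$, $p+q=1$, and $px_1+qx_3=x_2$. Conversely, whenever $a,b\in D$, $p,q>0$, $p+q=1$, and $c:=pa+qb\in D$ with $a\neq b$, the point $c$ lies strictly between $a$ and $b$, so after relabeling to make $a<b$ the triple $a<c<b$ recovers exactly this parametrization with $p=\frac{b-c}{b-a}$, $q=\frac{c-a}{b-a}$. Thus the quantifier ``for all $a,b\in D$ and $p,q>0$, $p+q=1$ with $pa+qb=c$,'' ranging over all $c\in D$, is — apart from the trivial case $a=b$ — the same as ``for all triples $x_1<x_2<x_3$ in $D$.''

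Next I would establish the key equivalence by direct manipulation. Clearing the positive denominators, each of the inequalities $S_{12}\le S_{13}$ and $S_{13}\le S_{23}$ rearranges to one and the same inequality $(x_3-x_1)f(x_2)\le(x_3-x_2)f(x_1)+(x_2-x_1)f(x_3)$, which, dividing by $x_3-x_1$, is precisely $f(x_2)\le pf(x_1)+qf(x_3)$: the instance of the convexity-at-$x_2$ inequality with $a=x_1$, $b=x_3$. Hence both slope inequalities are individually equivalent to this one instance, so they are equivalent to each other, and $S_{12}\le S_{13}\le S_{23}$ holds if and only if $pf(x_1)+qf(x_3)\ge f(x_2)$. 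With this in hand Part $\bm{1}$ is immediate: if $f$ is convex on $D$, then at every triple it is convex at $x_2$, so the slope chain holds; conversely, if the slope chain holds for every triple, then for any $c\in D$ and admissible $a,b,p,q$ with $pa+qb=c$, the case $a=b$ gives equality, while $a\neq b$ gives, via the parametrization and equivalence, exactly $pf(a)+qf(b)\ge f(c)$, so $f$ is convex at every $c\in D$. Part $\bm{2}$ follows by running the same argument with every ``$\le$'' replaced by ``$<$'': the parametrization and the algebraic collapse are unchanged, and the case $a=b$ is precisely the equality case excluded in the definition of strict convexity.

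I expect the only real care to be in the third paragraph, namely keeping the algebra honest so that both slope inequalities genuinely collapse to the same midpoint inequality, together with the bookkeeping that matches the universally quantified pair $(a,b)$ of Definition \ref{0330014} to the ordered triple of the slope condition. Once that dictionary is fixed, no interval structure on $D$ is needed — each triple in $D$ supplies exactly one instance of convexity at a point — and nothing else in the proof is substantive.
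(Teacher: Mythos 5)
Your proposal is correct, and since the paper states this theorem without any proof, your argument supplies exactly what is missing: the parametrization $p=\frac{x_3-x_2}{x_3-x_1}$, $q=\frac{x_2-x_1}{x_3-x_1}$ is right, both slope inequalities do clear denominators to the single inequality $(x_3-x_1)f(x_2)\le(x_3-x_2)f(x_1)+(x_2-x_1)f(x_3)$, and the handling of the $a=b$ case and of the strict version against Definition \ref{0330014} is careful and complete. Your key observation is precisely the content of the paper's own remark $1^{0}$ following the theorem (that the three pairwise slope inequalities are mutually equivalent), so this is evidently the intended route.
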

\begin{flushleft}
$1^{0}$. All of these inequalities $\frac{ f(x_{2}) - f(x_{1}) }{ x_{2} - x_{1} } \leq \frac{ f(x_{3}) - f(x_{1}) }{ x_{3} - x_{1} }$, $\frac{ f(x_{2}) - f(x_{1}) }{ x_{2} - x_{1} } \leq \frac{ f(x_{3}) - f(x_{2}) }{ x_{3} - x_{2} }$, and $\frac{ f(x_{3}) - f(x_{1}) }{ x_{3} - x_{1} } \leq \frac{ f(x_{3}) - f(x_{2}) }{ x_{3} - x_{2} }$ are equivalent.
\end{flushleft}


\begin{theorem}\label{0605001}
Let $D \neq \emptyset$ be a subset of $\mathbb{R}$, and let $f$ be convex at $c \in D$ over $D$.
Then for any $x_{i} \in D$ and any $\lambda_{i} > 0, \sum_{i=1}^{n}\lambda_{i} = 1$ such that $\sum_{i=1}^{n}\lambda_{i}x_{i} = c$, we have
\begin{align}
\nonumber
\sum_{i=1}^{n}\lambda_{i} f(x_{i}) \geq f\left( \sum_{i=1}^{n}\lambda_{i}x_{i} \right).
\end{align}
If $f$ is strictly convex at $c$ over $D$, then the equality holds if and only if $x_{1} = \cdots = x_{n}$.
\end{theorem}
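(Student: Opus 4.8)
The plan is to reduce the general $n$-point weighted mean to the two-point situation that the definition of convexity at $c$ governs, using the decomposition furnished by Theorem \ref{0227002}. First I would clear away the two degeneracies that the hypothesis of Theorem \ref{0227002} excludes. If several of the $x_i$ coincide I merge their weights into one; because $f$ agrees at equal points, neither $\sum_i \lambda_i x_i$ nor $\sum_i \lambda_i f(x_i)$ is affected, so I may assume the points are pairwise distinct and, after relabelling, that $a_1 < \cdots < a_n$ with $\lambda_k > 0$, $\sum_k \lambda_k = 1$, and $c = \sum_k \lambda_k a_k$. If $n = 1$ then $c = a_1$ and the inequality is a trivial equality, so I assume $n \geq 2$, in which case $c$ is a proper convex combination of distinct points and therefore $a_1 < c < a_n$.

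The second degeneracy is that $c$ may equal one of the interior points $a_{k_0}$, which forces $2 \leq k_0 \leq n-1$ and hence $n \geq 3$. Here I would delete $a_{k_0} = c$: from $\sum_{k \neq k_0} \lambda_k a_k = c\,\sum_{k \neq k_0}\lambda_k$ the remaining $n-1 \geq 2$ points still have normalized weighted mean $c$, and now $c$ differs from each of them. Establishing the inequality for this reduced configuration (which is the non-degenerate case treated next) gives $\sum_{k \neq k_0} \lambda_k f(a_k) \geq (1-\lambda_{k_0})\,f(c)$, and adding the identity $\lambda_{k_0} f(a_{k_0}) = \lambda_{k_0} f(c)$ restores the full inequality. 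It therefore remains to handle the configuration $a_1 < \cdots < a_z < c < a_{z+1} < \cdots < a_n$, which is precisely the hypothesis of Theorem \ref{0227002}.

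For that configuration I would invoke the equivalent form (\ref{1123001}) of Theorem \ref{0227002}: there exist $w_s > 0$ with $\sum_s w_s = 1$, numbers $p_s, q_s > 0$ with $p_s + q_s = 1$, and indices $i_s \in \{1,\ldots,z\}$, $j_s \in \{z+1,\ldots,n\}$ satisfying $p_s a_{i_s} + q_s a_{j_s} = c$, such that $\sum_k \lambda_k f(a_k) = \sum_s w_s\bigl(p_s f(a_{i_s}) + q_s f(a_{j_s})\bigr)$. Since $a_{i_s}, a_{j_s} \in D$ and $p_s a_{i_s} + q_s a_{j_s} = c$, convexity of $f$ at $c$ over $D$ yields $p_s f(a_{i_s}) + q_s f(a_{j_s}) \geq f(c)$ for every $s$; summing against the weights $w_s$ gives $\sum_k \lambda_k f(a_k) \geq f(c)$.

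For the strict assertion I would observe that $a_{i_s} \leq a_z < c < a_{z+1} \leq a_{j_s}$, so $a_{i_s} \neq a_{j_s}$ always; strict convexity at $c$ then makes every bracket strictly exceed $f(c)$, and as there is at least one term with $w_s > 0$ the sum is strictly larger than $f(c)$ whenever at least two of the original points are distinct. Hence equality can occur only when all the $x_i$ coincide, and in that case it plainly does. I expect the only genuine work to be the bookkeeping of the two degeneracies; once the points are distinct and $c$ avoids them, the result is an immediate pairing of Theorem \ref{0227002} with the two-point definition of convexity at a point.
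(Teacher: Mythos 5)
Your proof is correct and follows essentially the same route as the paper, whose entire proof is the single line ``By Theorem \ref{0227002}, we conclude the proof'': decompose the $n$-point weighted mean into two-point means centered at $c$ via (\ref{1123001}) and apply the definition of convexity at $c$ to each pair. Your explicit treatment of the degeneracies (coincident points and $c$ landing on one of the $a_k$), which the hypothesis of Theorem \ref{0227002} silently excludes, is a welcome addition that the paper leaves to the reader.
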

\begin{proof}By Theorem \ref{0227002}, we conclude the proof.
\end{proof}


\begin{theorem}\label{0401009}
Let $f \in \mathcal{R}[a, b]$ be convex at $c \in (a, b)$ over $[a, b]$.
Then for any $\lambda(x) \geq 0, \int_{a}^{b} \lambda(x) \mathrm{d}x = 1$ such that $\int_{a}^{b} \lambda(x) x \mathrm{d}x = c$, we have
\begin{align}
\nonumber
\int_{a}^{b} \lambda(x) f(x) \mathrm{d}x \geq f\left( \int_{a}^{b} \lambda(x) x \mathrm{d}x \right).
\end{align}
If $f$ is strictly convex at $c$ over $[a, b]$, then the inequality is strict.
\end{theorem}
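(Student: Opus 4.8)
The plan is to reduce the integral inequality to the single most useful consequence of convexity at $c$, the existence of a \emph{supporting line} there, and then integrate. First I would extract the driving pointwise estimate: ``$f$ is convex at $c$ over $[a,b]$'' is equivalent to the existence of a slope $k\in\mathbb{R}$ with $f(x)\ge f(c)+k(x-c)$ for all $x\in[a,b]$. Indeed, for $a'<c<b'$ the defining inequality $pf(a')+qf(b')\ge f(c)$ with $p=\frac{b'-c}{b'-a'}$, $q=\frac{c-a'}{b'-a'}$ rearranges to $\frac{f(c)-f(a')}{c-a'}\le\frac{f(b')-f(c)}{b'-c}$. Since $c\in(a,b)$, both families of difference quotients are nonempty and each left quotient bounds each right quotient, so $\sup_{a'<c}\frac{f(c)-f(a')}{c-a'}\le\inf_{b'>c}\frac{f(b')-f(c)}{b'-c}$ with both quantities finite; any $k$ lying between them yields the supporting line.

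Second, I would simply integrate this estimate against the weight. As $f\in\mathcal{R}[a,b]$ and $\lambda$ is integrable with $\lambda\ge 0$, the product $\lambda f$ is Riemann integrable, so $\int_a^b\lambda(x)f(x)\,\mathrm{d}x$ exists. Multiplying $f(x)\ge f(c)+k(x-c)$ by $\lambda(x)\ge 0$ and integrating gives $\int_a^b\lambda(x)f(x)\,\mathrm{d}x\ge f(c)\int_a^b\lambda(x)\,\mathrm{d}x+k\big(\int_a^b\lambda(x)x\,\mathrm{d}x-c\int_a^b\lambda(x)\,\mathrm{d}x\big)=f(c)+k(c-c)=f(c)$, and since $c=\int_a^b\lambda(x)x\,\mathrm{d}x$ the right side is exactly $f\big(\int_a^b\lambda(x)x\,\mathrm{d}x\big)$. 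This monotone one-line step is the heart of the matter; it uses only the supporting line together with the two normalizations $\int_a^b\lambda\,\mathrm{d}x=1$ and $\int_a^b\lambda(x)x\,\mathrm{d}x=c$.

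For the strict case, strict convexity at $c$ upgrades the supporting line to $f(x)>f(c)+k(x-c)$ for every $x\neq c$, since equality in $pf(a')+qf(b')\ge f(c)$ forces $a'=b'=c$. Setting $g(x)=f(x)-f(c)-k(x-c)$, the function $g$ is Riemann integrable with $g\ge 0$ and $g(x)>0$ for $x\neq c$, and the goal becomes $\int_a^b\lambda(x)g(x)\,\mathrm{d}x>0$. Because $\int_a^b\lambda\,\mathrm{d}x=1$, the set $\{\lambda>0\}$ has positive measure, and deleting the single point $c$ leaves a positive-measure set on which $\lambda g>0$; since a nonnegative Riemann integrable function with vanishing integral must be zero off a null set, $\int_a^b\lambda(x)g(x)\,\mathrm{d}x$ cannot be zero, and strictness follows.

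The hard part is not the inequality but the two soft-analysis points the supporting-line route quietly relies on: first, turning ``convex at $c$'' into a genuine supporting line, where one must check that the relevant supremum and infimum of difference quotients are finite (this is exactly where $c\in(a,b)$, rather than an endpoint, is used); and second, justifying strictness for Riemann rather than Lebesgue integrals, which forces an argument through points of continuity and null sets instead of a direct appeal to measure theory. An alternative closer in spirit to Theorem \ref{0605001} would approximate $\int_a^b\lambda(x)x\,\mathrm{d}x$ by Riemann sums and invoke the discrete result, but that path is obstructed by the exact-barycenter requirement—a Riemann sum's weighted mean only converges to $c$, whereas convexity at $c$ demands the mean equal $c$—so I would keep the supporting-line argument as the main line and use the Riemann-sum idea only as a sanity check.
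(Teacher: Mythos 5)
Your argument for the non-strict inequality is correct and takes a genuinely different route from the paper. The paper never writes a proof for this theorem; its integral statements of this kind (Theorems \ref{0811010}, \ref{0811008}, \ref{0709001}) are proved by Riemann sums whose tag in each subinterval $[x_{i-1},x_i]$ is the local barycenter $c_i$ of $\lambda$ there, so that $\sum_i \lambda_i c_i$ equals $\int_a^b\lambda(x)x\,\mathrm{d}x=c$ \emph{exactly} and the discrete theorem applies to every partition. So the obstruction you cite against the Riemann-sum route (``a Riemann sum's weighted mean only converges to $c$'') is precisely the point the paper's tag choice removes. Your supporting-line derivation --- that convexity at an interior point $c$ forces every left difference quotient at $c$ to lie below every right one, hence a finite separating slope $k$ exists --- is valid and gives the weak inequality in one integration; it is arguably more self-contained than the paper's machinery.

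The strict case, however, has a genuine gap: strict convexity at $c$ does \textbf{not} upgrade the supporting line to $f(x)>f(c)+k(x-c)$ for all $x\neq c$. Take $[a,b]=[-1,1]$, $c=0$, $f(x)=x+x^{2}$ for $x<0$ and $f(x)=x$ for $x\geq 0$. For $a'<0<b'$ with $pa'+qb'=0$ one gets $pf(a')+qf(b')=p a'^{2}>0=f(0)$, so $f$ is strictly convex at $0$ over $[-1,1]$; but the left quotients are $1+a'<1$ with supremum $1$ and the right quotients are identically $1$, so the only admissible slope is $k=1$ and $f(x)-f(0)-kx\equiv 0$ on $(0,1]$. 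Your function $g$ then vanishes on an entire half-interval, not just at the single point $c$, and the step ``deleting the single point $c$ leaves a positive-measure set on which $\lambda g>0$'' fails. The strict conclusion is still true, but it needs a different finish: first show $\lambda$ carries positive mass on both $[a,c)$ and $(c,b]$ (otherwise $\int_a^b\lambda(x)(x-c)\,\mathrm{d}x=0$ with $\lambda\geq0$ would force $\lambda=0$ a.e.); then, if $\int_a^b\lambda g=0$, the nonnegative integrable function $\lambda g$ vanishes off a null set, so there exist points $x_{1}<c<x_{2}$ with $g(x_{1})=g(x_{2})=0$, and applying strict convexity at $c$ to the pair $(x_{1},x_{2})$ with weights making $px_{1}+qx_{2}=c$ gives $pg(x_{1})+qg(x_{2})>0$, a contradiction. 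In short: run the two-point strictness against a pair straddling $c$ extracted from the zero set of $\lambda g$, not against individual points via a pointwise strict supporting line that need not exist.
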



\begin{theorem}\label{0608009}
Let $D \neq \emptyset$ be a subset of $\mathbb{R}$, and let $f$ be convex at $c \in D$ over $D$.
Then for any random variable $\nu \in D$ such that $\operatorname{\bf E}\nu = c$, we have
\begin{align}
\nonumber
\operatorname{\bf E} f(\nu) \geq f( \operatorname{\bf E}\nu ).
\end{align}
If $f$ is strictly convex at $c$ over $D$, then the equality holds if and only if $\operatorname{P}( \operatorname{\bf E}\nu ) = 1$.
\end{theorem}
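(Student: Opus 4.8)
The plan is to use the \emph{tangent line trick}: construct an affine function that lies below $f$ on all of $D$ and agrees with it at $c$, then integrate it against the law of $\nu$. The crux is therefore the following support-line lemma, which I would isolate first: \emph{if $f$ is convex at $c$ over $D$ and $D$ contains points on both sides of $c$, then there exists $k \in \mathbb{R}$ with}
\begin{align}
\nonumber
f(x) \ge f(c) + k(x - c), \qquad x \in D.
\end{align}
This plays the same role here that Theorem \ref{0227002} plays in the proof of Theorem \ref{0605001}, but for an arbitrary distribution a linear minorant is the cleanest device.

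To prove the lemma I would translate convexity at $c$ into a statement about secant slopes. For $a \in D$ with $a < c$ set $L(a) = \frac{f(c)-f(a)}{c-a}$, and for $b \in D$ with $b > c$ set $R(b) = \frac{f(b)-f(c)}{b-c}$. Writing the defining decomposition $c = pa+qb$ with $p=\frac{b-c}{b-a}$ and $q=\frac{c-a}{b-a}$, a one-line rearrangement shows that $pf(a)+qf(b)\ge f(c)$ is exactly $L(a) \le R(b)$. Hence convexity at $c$ says precisely that every left slope is at most every right slope, so $\sup_{a<c} L(a) \le \inf_{b>c} R(b)$; since both sides of $c$ are populated, a fixed left point bounds the infimum from below and a fixed right point bounds the supremum from above, so both extrema are finite and any $k$ between them gives the affine minorant (the inequalities $f(b)\ge f(c)+k(b-c)$ and $f(a)\ge f(c)+k(a-c)$ being $R(b)\ge k$ and $L(a)\le k$ rewritten, and $x=c$ giving equality).

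With the lemma in hand I would finish the main inequality as follows. If $\operatorname{P}(\nu = c) = 1$ the claim is immediate, so assume not; since $\operatorname{\bf E}\nu = c$, a variable concentrated on one side of $c$ with mean $c$ is almost surely $c$, so necessarily $\operatorname{P}(\nu<c)>0$ and $\operatorname{P}(\nu>c)>0$, and in particular $D$ meets both sides of $c$ and the lemma applies. Then $f(\nu) \ge f(c) + k(\nu - c)$ almost surely; the right-hand side is integrable, so $\operatorname{\bf E}f(\nu)$ is well defined in $(-\infty,+\infty]$, and taking expectations yields $\operatorname{\bf E}f(\nu) \ge f(c) + k(\operatorname{\bf E}\nu - c) = f(c) = f(\operatorname{\bf E}\nu)$. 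For the strict case, suppose $f$ is strictly convex at $c$ and $\operatorname{\bf E}f(\nu) = f(\operatorname{\bf E}\nu)$ yet $\operatorname{P}(\nu=c)<1$. Equality of expectations applied to the almost surely nonnegative quantity $f(\nu)-f(c)-k(\nu-c)$ forces $f(\nu) = f(c)+k(\nu-c)$ almost surely, so $\nu$ is supported on the contact set $S = \{x\in D : f(x)=f(c)+k(x-c)\}$. As above $\nu$ charges both sides of $c$, so $S$ contains some $a<c$ and some $b>c$; but $a,b\in S$ mean exactly $L(a)=k=R(b)$, contradicting the strict inequality $L(a)<R(b)$ that strict convexity at $c$ supplies. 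Hence $\operatorname{P}(\nu=c)=1$.

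The main obstacle I anticipate is not a single computation but the careful handling of the degenerate geometry: guaranteeing a \emph{finite} support slope $k$, which can fail when $D$ lies entirely on one side of $c$, and arranging the argument so those one-sided configurations are disposed of by the observation that a random variable with mean $c$ supported on one side of $c$ is almost surely constant. Keeping the integrability bookkeeping and the equality-case contradiction resting only on ``convex at $c$'' rather than on global convexity is where the write-up must be most careful.
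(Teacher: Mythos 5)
Your proof is correct, but it takes a genuinely different route from the one the paper relies on. The paper in fact prints no proof for this theorem; its discrete analogue (Theorem \ref{0605001}) is proved by invoking Theorem \ref{0227002}, i.e.\ by decomposing the weighted mean into two-point systems with barycenter $c$ and applying the definition of convexity at $c$ to each pair, and the random-variable and integral versions are handled elsewhere through the conditional convex hull machinery (Theorems \ref{0529007}, \ref{0811010}, \ref{0811008}). You instead extract a supporting line at $c$: the identity $pf(a)+qf(b)\geq f(c)$ with $pa+qb=c$ is exactly the slope inequality $L(a)\leq R(b)$, so $\sup_{a<c}L(a)\leq\inf_{b>c}R(b)$ and any $k$ in between gives $f(x)\geq f(c)+k(x-c)$ on $D$; taking expectations finishes the inequality, and the equality case follows by localizing $\nu$ on the contact set and contradicting strictness. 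What your route buys is a proof that works directly for an arbitrary law of $\nu$ with no discretization or convex-hull argument, an explicit treatment of integrability ($\operatorname{\bf E}f(\nu)$ well defined in $(-\infty,+\infty]$), and a clean equality analysis; what the paper's route buys is uniformity, since the same decomposition principle drives every result in the paper. Your handling of the degenerate one-sided case (a variable with mean $c$ supported on one side of $c$ is a.s.\ equal to $c$) is exactly the point where the support line could fail to exist, and you dispose of it correctly before invoking the lemma. I see no gap.
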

\begin{flushleft}
$1^{0}$. In this paper, $\nu \in D$ means random variable $\nu$ taking values in $D$.
\end{flushleft}

\section{Weighted convex at a point}

\begin{definition}\label{0426001}
Let $D \neq \emptyset$ be a subset of $\mathbb{R}$, and let $f: D \rightarrow \mathbb{R}$.
Suppose that $c \in D^{*}$ and $\emptyset \neq B \subseteq D^{*}$.
\begin{enumerate}[$\bm{\arabic{enumi}.}$]
\item If for any $a_{i}, b_{i} \in D$ and any $p_{i}, q_{i} > 0, p_{i} + q_{i} = 1$ such that
\begin{align}
\nonumber
p_{0}a_{0} + q_{0}b_{0} = p_{1}a_{1} + q_{1}b_{1} = c, \text{where}\ [a_{0}, b_{0}] \subseteq [a_{1}, b_{1}],
\end{align}
we have
\begin{align}
\nonumber
p_{0} f(a_{0}) + q_{0} f(b_{0}) \leq p_{1} f(a_{1}) + q_{1} f(b_{1}),
\end{align}
we say that $f$ is weighted convex at $c$ over $D$. If the equality holds if and only if $[a_{0}, b_{0}] = [a_{1}, b_{1}]$, we say that $f$ is strictly weighted convex at $c$ over $D$.
\item If $f$ is (strictly) weighted convex at every $x \in B$ over $D$, we say that $f$ is (strictly) weighted convex on $B$ over $D$.
\item If $f$ is (strictly) weighted convex on $D^{*}$ over $D$, we say that $f$ is (strictly) weighted convex on $D$.
\end{enumerate}
\end{definition}
\begin{flushleft}
$1^{0}$. In this paper, $D^{*}$ is the convex hull of $D$. For more details, see \cite{03}.
\end{flushleft}


\begin{theorem}\label{0718001}
Let $D \neq \emptyset$ be a subset of $\mathbb{R}$, and let $f: D \rightarrow \mathbb{R}$. Suppose that $c \in D$.
If $f$ is (strictly) weighted convex at $c$ over $D$, then $f$ is (strictly) convex at $c$ over $D$.
\end{theorem}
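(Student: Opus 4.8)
The plan is to derive convexity at $c$ (Definition \ref{0330014}) from weighted convexity at $c$ (Definition \ref{0426001}) by specializing the \emph{inner} interval in the weighted-convexity hypothesis to the degenerate interval $\{c\}$. This is precisely the point at which the extra assumption $c \in D$ (rather than merely $c \in D^{*}$) enters, since it is what makes the degenerate interval admissible.

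First I would fix arbitrary $a, b \in D$ and $p, q > 0$ with $p + q = 1$ and $pa + qb = c$, the data appearing in Definition \ref{0330014}, and seek $pf(a) + qf(b) \geq f(c)$. If $a = b$, then necessarily $a = b = c$ and $pf(a) + qf(b) = f(c)$, so the inequality holds with equality; I would dispose of this case at the outset. Otherwise, after possibly interchanging $(a, p) \leftrightarrow (b, q)$, I may assume $a < b$, and then $c = pa + qb \in (a, b)$ because $p, q > 0$.

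Next I would instantiate the weighted-convexity hypothesis at $c$ with outer data $(a_{1}, b_{1}, p_{1}, q_{1}) = (a, b, p, q)$ and inner data $a_{0} = b_{0} = c$, $p_{0} = q_{0} = 1/2$. Here $a_{0}, b_{0} \in D$ exactly because $c \in D$; moreover $p_{0}a_{0} + q_{0}b_{0} = c = p_{1}a_{1} + q_{1}b_{1}$, and $[a_{0}, b_{0}] = \{c\} \subseteq [a, b] = [a_{1}, b_{1}]$ since $c \in (a, b)$. Weighted convexity then gives $f(c) = p_{0}f(a_{0}) + q_{0}f(b_{0}) \leq p_{1}f(a_{1}) + q_{1}f(b_{1}) = pf(a) + qf(b)$, which is the desired convexity inequality.

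For the strict statement I would rerun the same instantiation under the strict weighted-convexity hypothesis. If $pf(a) + qf(b) = f(c)$, then the two sides of the weighted-convexity inequality coincide, so strictness forces $[a_{0}, b_{0}] = [a_{1}, b_{1}]$, i.e. $\{c\} = [a, b]$, whence $a = b$; conversely $a = b$ yields equality as in the trivial case. This is exactly the ``equality iff $a = b$'' condition defining strict convexity at $c$. I do not expect a genuine obstacle here: the only points requiring care are that the degenerate inner interval is a legitimate choice in Definition \ref{0426001} (it is, since $a_{0} = b_{0}$ is permitted and $p_{0}a_{0} + q_{0}b_{0} = c$ holds automatically), and that it is the hypothesis $c \in D$ that places $a_{0}, b_{0}$ in the domain of $f$.
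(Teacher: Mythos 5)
Your proof is correct. The paper states Theorem \ref{0718001} without giving a proof, and your argument --- specializing the inner pair in Definition \ref{0426001} to the degenerate interval $[a_{0}, b_{0}] = [c, c]$ with $p_{0} = q_{0} = 1/2$ (admissible precisely because $c \in D$), then reading off both the inequality and the strict-equality condition --- is the evidently intended route and handles the only delicate points (the $a = b$ case and the orientation $a < b$) correctly.
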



\begin{example}\label{0427002}
Let $\operatorname{sinc}x$ equals $1$ when $x=0$ and $\sin x / x$ otherwise, then $y = -\operatorname{sinc}x$ is strictly convex at $x = 0$ over $\mathbb{R}$, but not weighted convex at $x = 0$ over $\mathbb{R}$.
\end{example}


\begin{theorem}\label{0716001}
Let $D \neq \emptyset$ be a subset of $\mathbb{R}$, and let $f: D \rightarrow \mathbb{R}$.
Then the following statements are equivalent.
\begin{enumerate}[$\bm{\arabic{enumi}.}$]
\item $f$ is (strictly) convex on $D$.
\item $f$ is (strictly) weighted convex on $D$.
\end{enumerate}
\end{theorem}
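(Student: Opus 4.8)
The plan is to prove the two implications separately. The direction $\bm{2}\Rightarrow\bm{1}$ should be immediate from the single-point result Theorem~\ref{0718001}, while $\bm{1}\Rightarrow\bm{2}$ requires a short chord-comparison argument built from convexity at a point.

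For $\bm{2}\Rightarrow\bm{1}$, I would argue as follows. By definition, weighted convexity on $D$ means weighted convexity on $D^{*}$ over $D$, and since $D\subseteq D^{*}$ this in particular includes weighted convexity at every $c\in D$ over $D$. Feeding each such $c$ into Theorem~\ref{0718001} turns this into convexity at every $c\in D$ over $D$, i.e.\ convexity on $D$; the strict versions match up the same way, since Theorem~\ref{0718001} already carries the strictness through.

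For $\bm{1}\Rightarrow\bm{2}$, I would fix $c\in D^{*}$ and a configuration $a_{0},b_{0},a_{1},b_{1}\in D$ with weights $p_{i},q_{i}>0$, $p_{i}+q_{i}=1$, satisfying $p_{0}a_{0}+q_{0}b_{0}=p_{1}a_{1}+q_{1}b_{1}=c$ and $[a_{0},b_{0}]\subseteq[a_{1},b_{1}]$, taking $a_{i}\le b_{i}$ so that $a_{1}\le a_{0}\le b_{0}\le b_{1}$. After disposing of the trivial case $a_{1}=b_{1}$ (where the inclusion forces $a_{0}=b_{0}=a_{1}$ and both sides equal $f(a_{1})$), I would write the inner points as convex combinations of the outer ones, $a_{0}=\alpha a_{1}+(1-\alpha)b_{1}$ and $b_{0}=\beta a_{1}+(1-\beta)b_{1}$ with $\alpha=\frac{b_{1}-a_{0}}{b_{1}-a_{1}}\ge\beta=\frac{b_{1}-b_{0}}{b_{1}-a_{1}}$. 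Because $a_{0},b_{0}\in D$ and $f$ is convex at each of them over $D$, testing with the pair $(a_{1},b_{1})$ gives
\begin{align}
\nonumber
f(a_{0})\le\alpha f(a_{1})+(1-\alpha)f(b_{1}),\qquad f(b_{0})\le\beta f(a_{1})+(1-\beta)f(b_{1}),
\end{align}
the boundary cases $\alpha,\beta\in\{0,1\}$ being trivial identities. Weighting these by $p_{0}$ and $q_{0}$ and summing yields an upper bound for $p_{0}f(a_{0})+q_{0}f(b_{0})$ that is a convex combination of $f(a_{1})$ and $f(b_{1})$, with coefficients $p_{0}\alpha+q_{0}\beta$ and $p_{0}(1-\alpha)+q_{0}(1-\beta)$ summing to $p_{0}+q_{0}=1$.

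The crux is then to recognise these coefficients as exactly $p_{1}$ and $q_{1}$. For this I would insert the same convex-combination expressions into $p_{0}a_{0}+q_{0}b_{0}=c$, obtaining $c=(p_{0}\alpha+q_{0}\beta)a_{1}+(p_{0}(1-\alpha)+q_{0}(1-\beta))b_{1}$, and compare with $c=p_{1}a_{1}+q_{1}b_{1}$; since $a_{1}<b_{1}$, the barycentric representation of $c$ is unique, forcing $p_{0}\alpha+q_{0}\beta=p_{1}$ and $p_{0}(1-\alpha)+q_{0}(1-\beta)=q_{1}$. This collapses the upper bound to $p_{1}f(a_{1})+q_{1}f(b_{1})$, which is precisely weighted convexity at $c$; as $c\in D^{*}$ was arbitrary, $f$ is weighted convex on $D$. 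For the strict case I expect the equality analysis to be the only delicate part: the per-point inequalities are strict exactly when $a_{0},b_{0}\in(a_{1},b_{1})$, so equality in the weighted sum (with $p_{0},q_{0}>0$) forces $a_{0},b_{0}\in\{a_{1},b_{1}\}$, and since $c=p_{1}a_{1}+q_{1}b_{1}\in(a_{1},b_{1})$ rules out the degenerate collapses $a_{0}=b_{0}$, this leaves only $a_{0}=a_{1}$, $b_{0}=b_{1}$, i.e.\ $[a_{0},b_{0}]=[a_{1},b_{1}]$. The main obstacle is this endpoint/degenerate bookkeeping in the strict version; the rest is routine barycentric algebra.
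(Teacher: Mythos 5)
Your proof is correct and follows essentially the same route as the paper: the paper's one-line proof invokes Theorem~\ref{0406077} (the second decomposition principle), whose content with $n=2$, $(c_1,c_2)=(a_0,b_0)$, $(w_1,w_2)=(p_0,q_0)$ is exactly your barycentric identity $p_1 = p_0\alpha + q_0\beta$, $q_1 = p_0(1-\alpha)+q_0(1-\beta)$, while the converse direction is Theorem~\ref{0718001} applied pointwise, just as you do. Your explicit handling of the boundary cases $\alpha,\beta\in\{0,1\}$ and of the strict-equality bookkeeping fills in details the paper leaves implicit, but it is the same argument.
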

\begin{proof}By Theorem \ref{0406077}, we conclude the proof.
\end{proof}


\begin{theorem}\label{0605002}
Let $D \neq \emptyset$ be a subset of $\mathbb{R}$, and let $f$ be weighted convex at $c \in D^{*}$ over $D$.
Then for any
\begin{enumerate}[(1)]
\item $a_{i} \in D$ and $p_{i} > 0, \sum_{i=1}^{m} p_{i} = 1$,
\item $b_{j} \in D$ and $q_{j} > 0, \sum_{j=1}^{n} q_{j} = 1$,
\end{enumerate}
such that $\sum_{i=1}^{m} p_{i}a_{i} = \sum_{j=1}^{n} q_{j}b_{j} = c$, where $a_{i} \not\in ( \min_{j} b_{j}, \max_{j} b_{j} )$, we have
\begin{align}
\nonumber
\sum_{i=1}^{m} p_{i} f(a_{i}) \geq \sum_{j=1}^{n} q_{j} f(b_{j}).
\end{align}
If $f$ is strictly weighted convex at $c$ over $D$, then the equality holds if and only if $b_{j} = \min_{i} a_{i}\ \text{or}\ \max_{i} a_{i}, j = 1, \ldots, n$.
\end{theorem}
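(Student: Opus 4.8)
The plan is to decompose each side into two-point configurations that all share the common center of mass $c$, and then to invoke weighted convexity at $c$ by comparing these configurations pairwise. The whole argument hinges on the hypothesis $a_i \notin (\min_j b_j, \max_j b_j)$, which will force every two-point configuration coming from the $a$'s to have an interval at least as large as every one coming from the $b$'s.

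First I would dispose of degenerate cases. If all the $b_j$ coincide, then $c = b_1$ and the claim reduces to $\sum_i p_i f(a_i) \ge f(c)$, which holds because weighted convexity at $c$ implies convexity at $c$ (Theorem \ref{0718001}), so Theorem \ref{0605001} applies; likewise any mass located exactly at $c$ can be split off and absorbed the same way. So I may assume $a_i \ne c \ne b_j$ for all $i,j$, and set $\alpha = \min_j b_j$, $\beta = \max_j b_j$, so that $\alpha < c < \beta$. The hypothesis then says that every $a_i$ satisfies $a_i \le \alpha$ or $a_i \ge \beta$. Next I would apply the decomposition principle (Theorem \ref{0227002}, in the form \eqref{1123001}) to each side separately. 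After merging equal positions, the $a$'s are distinct points straddling $c$, and Theorem \ref{0227002} writes $\sum_i p_i f(a_i) = \sum_s w_s V^a_s$, where each $V^a_s = p_s f(a_{i_s}) + q_s f(a_{j_s})$ is the value of a two-point configuration with $p_s a_{i_s} + q_s a_{j_s} = c$, the $w_s$ are positive and sum to $1$, and the intervals $[a_{i_s}, a_{j_s}]$ are nested with innermost member $[a_z, a_{z+1}]$. The crucial observation is that, because every $a_i$ below $c$ is $\le \alpha$ and every $a_i$ above $c$ is $\ge \beta$, one has $a_z \le \alpha$ and $a_{z+1} \ge \beta$, so \emph{every} $a$-interval contains $[\alpha,\beta]$. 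Dually, applying Theorem \ref{0227002} to the $b$'s gives $\sum_j q_j f(b_j) = \sum_t v_t V^b_t$, where each $V^b_t$ is the value of a two-point configuration of center of mass $c$ whose interval lies inside $[\alpha,\beta]$.

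Thus every $b$-interval is contained in $[\alpha,\beta]$, which is contained in every $a$-interval, and all of these configurations have the same center of mass $c$. Weighted convexity at $c$ (Definition \ref{0426001}) applied to each containment yields $V^b_t \le V^a_s$ for all $s$ and $t$, hence $\max_t V^b_t \le \min_s V^a_s$. Since the weights are positive and sum to $1$ on each side, I conclude
\[
\sum_j q_j f(b_j) = \sum_t v_t V^b_t \le \max_t V^b_t \le \min_s V^a_s \le \sum_s w_s V^a_s = \sum_i p_i f(a_i),
\]
which is the desired inequality.

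For the strict statement I would trace equality through this chain. Under strict weighted convexity, the strict nesting of intervals furnished by Theorem \ref{0227002} makes each of the equalities $\sum_t v_t V^b_t = \max_t V^b_t$ and $\sum_s w_s V^a_s = \min_s V^a_s$ possible only when there is a single configuration on each side, while the middle equality $\max_t V^b_t = \min_s V^a_s$ forces the two surviving intervals to coincide, i.e. $[\alpha,\beta] = [\min_i a_i, \max_i a_i]$; together these say precisely that every $b_j$ equals $\min_i a_i$ or $\max_i a_i$. I expect the main obstacle to be exactly this bookkeeping for the equality case — confirming that the interval containments are strict precisely when some $b_j$ lies strictly inside $[\alpha,\beta]$ or some $a_i$ strictly outside it — together with the careful but routine handling of masses sitting at $c$ or at the endpoints $\alpha,\beta$ in the reduction; the main inequality itself is then immediate once the containment of intervals is in place.
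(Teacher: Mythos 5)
Your proposal is correct and follows essentially the same route as the paper, whose entire proof is the citation ``By Theorem \ref{0227002}'': you decompose both the $a$-system and the $b$-system via Theorem \ref{0227002} into two-point configurations sharing the center of mass $c$, observe that the hypothesis $a_{i} \notin (\min_{j} b_{j}, \max_{j} b_{j})$ forces every $a$-interval to contain every $b$-interval, and apply weighted convexity at $c$ pairwise. Your write-up merely supplies the details (including the degenerate cases and the equality bookkeeping) that the paper leaves implicit.
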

\begin{proof}By Theorem \ref{0227002}, we conclude the proof.
\end{proof}


\begin{theorem}\label{0711003}
Suppose $[ a_{0}, b_{0} ] \subset ( a_{1}, b_{1} )$, and let $f \in \mathcal{R}[ a_{1}, b_{1} ]$ be weighted convex at $c \in ( a_{0}, b_{0} )$ over $[ a_{1}, b_{1} ]$.
Then for any $\lambda(x) \geq 0, \int_{ [ a_{1}, b_{1} ] \setminus ( a_{0}, b_{0} ) } \lambda(x) \mathrm{d}x = 1$ and any $w(x) \geq 0, \int_{ a_{0} }^{ b_{0} } w(x) \mathrm{d}x = 1$ such that
\begin{align}
\nonumber
\int_{ [ a_{1}, b_{1} ] \setminus ( a_{0}, b_{0} ) } \lambda(x) x \mathrm{d}x = \int_{ a_{0} }^{ b_{0} } w(x)x \mathrm{d}x = c,
\end{align}
we have
\begin{align}
\nonumber
\int_{ [ a_{1}, b_{1} ] \setminus ( a_{0}, b_{0} ) } \lambda(x) f(x) \mathrm{d}x \geq \int_{ a_{0} }^{ b_{0} } w(x) f(x) \mathrm{d}x.
\end{align}
If $f$ is strictly weighted convex at $c$ over $[ a_{1}, b_{1} ]$, then the inequality is strict.
\end{theorem}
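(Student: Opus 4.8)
The plan is to bypass any discretization and instead compare both sides of the inequality against the single affine function $h$ that interpolates $f$ at the two endpoints, i.e.\ $h(a_0)=f(a_0)$, $h(b_0)=f(b_0)$, with $h$ linear. Setting
$$V_0 = \frac{b_0-c}{b_0-a_0}f(a_0) + \frac{c-a_0}{b_0-a_0}f(b_0),$$
the affineness of $h$ together with $\int\lambda = \int w = 1$ and $\int\lambda(x)x\,\mathrm{d}x = \int w(x)x\,\mathrm{d}x = c$ yields
\[
\int_{[a_1,b_1]\setminus(a_0,b_0)} \lambda(x)h(x)\,\mathrm{d}x = h(c) = V_0 = \int_{a_0}^{b_0} w(x)h(x)\,\mathrm{d}x .
\]
Hence it suffices to prove the two pointwise estimates $f\ge h$ on the outer set $[a_1,b_1]\setminus(a_0,b_0)$ and $f\le h$ on $[a_0,b_0]$; monotonicity of the integral then gives $\int\lambda f \ge V_0 \ge \int w f$ at once.

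Both estimates come from testing the definition of weighted convexity at $c$ against the fixed two-point system $\{a_0,b_0\}$ (with the weights $\frac{b_0-c}{b_0-a_0},\frac{c-a_0}{b_0-a_0}$, whose mean is $c$ because $c\in(a_0,b_0)$). For $a_0<x<c$, the two-point system $\{x,b_0\}$ carrying the weights that make its mean equal $c$ satisfies $[x,b_0]\subseteq[a_0,b_0]$, so weighted convexity gives $\tfrac{b_0-c}{b_0-x}f(x)+\tfrac{c-x}{b_0-x}f(b_0)\le V_0$; since $h$ is affine the identical convex combination of $h$ equals $h(c)=V_0$ and $h(b_0)=f(b_0)$, so cancelling the common $f(b_0)=h(b_0)$ term and dividing by the positive weight $\tfrac{b_0-c}{b_0-x}$ leaves $f(x)\le h(x)$. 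The symmetric system $\{a_0,x\}$ handles $c<x<b_0$, while $f(c)\le V_0=h(c)$ is convexity at $c$ (Theorem \ref{0718001}). For $x\le a_0$ or $x\ge b_0$ the same two systems are now \emph{wider} than $\{a_0,b_0\}$ (one has $[x,b_0]\supseteq[a_0,b_0]$, the other $[a_0,x]\supseteq[a_0,b_0]$), so weighted convexity reverses the inequality and delivers $f(x)\ge h(x)$ throughout the outer set.

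For the strict statement, I repeat these computations with $f$ strictly weighted convex: the comparison is strict whenever the nested intervals differ, and for every $x\in(a_0,b_0)$ one has $[x,b_0]\ne[a_0,b_0]$ (resp.\ $[a_0,x]\ne[a_0,b_0]$), while strict convexity at $c$ gives $f(c)<V_0$. Thus $f(x)<h(x)$ for \emph{every} $x\in(a_0,b_0)$. Consequently $\int_{a_0}^{b_0} w(x)\bigl(h(x)-f(x)\bigr)\,\mathrm{d}x$ has a nonnegative integrand that is strictly positive wherever $w>0$; as $\int w = 1>0$ forces $\{w>0\}$ to have positive measure, this integral is strictly positive, so $\int w f < V_0 \le \int\lambda f$, which is the strict inequality.

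The remaining work is bookkeeping rather than anything deep. I must confirm that $\lambda h,\lambda f,wh,wf$ are Riemann integrable so the monotonicity $f\ge h\Rightarrow\int\lambda f\ge\int\lambda h$ is legitimate; this holds because $f\in\mathcal{R}[a_1,b_1]$, $h$ is affine, and $\lambda,w$ are exactly the bounded integrable weights for which the hypotheses' integrals already exist. The one point to handle honestly is that the comparisons hold \emph{pointwise} on $[a_0,b_0]$, not merely almost everywhere, so that the strict-positivity argument for $\int w(h-f)$ applies on the precise set $\{w>0\}$ — and the derivation above does give the pointwise inequalities. Were one instead to follow the discretization route mirroring Theorem \ref{0605002}, the main obstacle would be keeping the inequality strict through the limit of Riemann sums; the chord comparison is designed to sidestep this, since it exhibits a fixed positive gap $\int w(h-f)$ that owes nothing to any approximation.
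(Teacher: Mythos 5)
Your argument is correct, and it follows a genuinely different route from the one the paper intends. The paper states Theorem \ref{0711003} without proof; its evident template elsewhere is to discretize --- the discrete analogue, Theorem \ref{0605002}, is credited to the decomposition theorem \ref{0227002}, and the integral statements in Theorems \ref{0811010} and \ref{0811008} are obtained from their discrete counterparts via Riemann sums, with a separate localization argument needed to preserve strictness in the limit. You instead compare both integrals to the secant line $h$ through $(a_0,f(a_0))$ and $(b_0,f(b_0))$: testing weighted convexity at $c$ with the two-point systems $\{x,b_0\}$ (for $x<c$) and $\{a_0,x\}$ (for $x>c$), whose intervals are contained in $[a_0,b_0]$ when $x$ is interior and contain $[a_0,b_0]$ when $x$ lies in $[a_1,b_1]\setminus(a_0,b_0)$, yields the pointwise bounds $f\le h$ on $[a_0,b_0]$ and $f\ge h$ on the outer set; since $\int\lambda h=\int w h=h(c)$ by affineness and the barycenter hypotheses, monotonicity of the integral finishes the proof. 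I checked the weight computations, the interval inclusions, and the case $x=c$ (covered by Theorem \ref{0718001}); all are in order, and the integrability bookkeeping is harmless since products of Riemann integrable functions are Riemann integrable. The real payoff of your approach is the strict case: the gap $\int_{a_0}^{b_0}w(x)\bigl(h(x)-f(x)\bigr)\,\mathrm{d}x$ is a fixed positive quantity --- a nonnegative Riemann-integrable integrand that is positive on $(a_0,b_0)\cap\{w>0\}$, a set of positive measure because $\int w=1$ --- whereas strict inequalities do not survive a naive passage to the limit of Riemann sums and would otherwise require the kind of localization argument deployed in Theorem \ref{0811008}.
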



\begin{theorem}\label{0711004}
Let $D \neq \emptyset$ be a subset of $\mathbb{R}$, and let $f$ be weighted convex at $c \in D^{*}$ over $D$.
Then for any random variables $\mu, \nu \in D$ such that
\begin{enumerate}[(1)]
\item $\operatorname{\bf E}\mu = \operatorname{\bf E}\nu = c$,
\item there are real numbers $m$ and $M$ such that $\mu \not\in (m, M)$ and $\nu \in [m, M]$,
\end{enumerate}
we have
\begin{align}
\nonumber
\operatorname{\bf E} f(\mu) \geq \operatorname{\bf E} f(\nu).
\end{align}
If $f$ is strictly weighted convex at $c$ over $D$, then the equality holds if and only if $\operatorname{P}( \mu = m, M ) = 1$ and $\operatorname{P}( \nu = m, M ) = 1$.
\end{theorem}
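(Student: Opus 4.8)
The plan is to reduce the random-variable inequality to the two-point comparison that \emph{defines} weighted convexity at $c$, exactly as the discrete Theorem \ref{0605002} is reduced to Theorem \ref{0227002}. Throughout I assume the means and the expectations $\operatorname{\bf E}f(\mu)$, $\operatorname{\bf E}f(\nu)$ exist, and I treat the generic case $c\in(m,M)$ first; the boundary cases $c=m$ or $c=M$ force $\nu=\delta_{c}$ and then follow from Jensen at a point (Theorems \ref{0718001} and \ref{0608009}, noting $c\in D$ there). Observe first that $c=\operatorname{\bf E}\nu\in[m,M]$, and introduce the \emph{pivot} two-point law $\pi_{0}$ on $\{m,M\}$ with barycenter $c$, i.e. $\operatorname{P}(\pi_{0}=m)=\frac{M-c}{M-m}$ and $\operatorname{P}(\pi_{0}=M)=\frac{c-m}{M-m}$; write $W:=\frac{M-c}{M-m}f(m)+\frac{c-m}{M-m}f(M)$ for its value. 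The entire proof consists of squeezing $\operatorname{\bf E}f(\mu)\ge W\ge\operatorname{\bf E}f(\nu)$.

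The engine is a continuous analogue of Theorem \ref{0227002}: any random variable $\xi\in D$ with $\operatorname{\bf E}\xi=c$ can be represented as a mixture of two-point laws, each having barycenter $c$ and support inside the support of $\xi$. Equivalently $\delta_{c}\preceq\xi$ in the convex order, so by a Strassen-type dilation $\xi$ is the endpoint of a one-step martingale started at $c$, and each dilation refines into barycenter-$c$ two-point pieces (the extreme points of the mean-$c$ measures). Granting this, decompose $\nu$ into two-point laws on pairs $\{s',t'\}\subseteq[m,M]$ with $s'\le c\le t'$, and decompose $\mu$ into two-point laws on pairs $\{s,t\}$ with $s\le c\le t$; since the support of $\mu$ avoids $(m,M)$ while $c\in(m,M)$, each such pair must straddle, so $s\le m<M\le t$ and $[m,M]\subseteq[s,t]$. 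Every inner pair satisfies $[s',t']\subseteq[m,M]$ and every outer pair satisfies $[m,M]\subseteq[s,t]$, while all pairs, together with $\{m,M\}$, share the barycenter $c$. Hence weighted convexity at $c$ applied to the nestings $[s',t']\subseteq[m,M]$ and $[m,M]\subseteq[s,t]$ gives a piece-value $\le W$ on the $\nu$-side and a piece-value $\ge W$ on the $\mu$-side. Integrating the pieces against their mixing weights (Fubini) yields $\operatorname{\bf E}f(\nu)\le W\le\operatorname{\bf E}f(\mu)$, which is the claim.

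For the equality statement under strict weighted convexity at $c$, I would trace back the two squeezes. Equality in $\operatorname{\bf E}f(\nu)\le W$ forces almost every inner piece to attain $W$, and strictness forces $[s',t']=[m,M]$ for almost every piece, i.e. $\operatorname{P}(\nu\in\{m,M\})=1$; symmetrically, equality in $W\le\operatorname{\bf E}f(\mu)$ forces $[s,t]=[m,M]$ for almost every outer piece, i.e. $\operatorname{P}(\mu\in\{m,M\})=1$. This is exactly the stated condition $\operatorname{P}(\mu=m,M)=1$ and $\operatorname{P}(\nu=m,M)=1$.

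The main obstacle is the decomposition lemma, specifically for $\mu$: its support $(-\infty,m]\cup[M,\infty)$ is unbounded, so the clean Choquet/extreme-point argument that settles the compactly supported $\nu$ does not apply verbatim. I would secure it by a direct construction under the standing integrability $\operatorname{\bf E}|\mu|<\infty$: split $\mu$ at $c$ into its left and right parts, which carry equal first moments about $c$, pair left mass with right mass by a monotone (quantile) coupling, and assign to each pair the unique weights making its barycenter $c$; a potential-function bookkeeping argument then shows these two-point pieces reassemble to $\mu$. The remaining routine points, namely measurability of the mixing kernel, the degenerate $\delta_{c}$ piece, and the boundary cases $c\in\{m,M\}$, are handled as indicated above, and everything else is just the pairwise inequality from Definition \ref{0426001}.
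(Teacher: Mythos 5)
The paper states Theorem \ref{0711004} without any proof, so there is no official argument to compare against; the evident intent is the measure-theoretic analogue of the reduction used for Theorem \ref{0605002}, namely decomposition into barycenter-$c$ two-point pieces in the spirit of Theorems \ref{0227002} and \ref{0406077}. Your proposal carries out exactly that extension, and its engine --- every law on $\mathbb{R}$ with finite mean $c$ is a mixture of (at most) two-point laws, each with barycenter $c$ and support inside the support of the original law --- is true and is secured by your left/right pairing; an even cleaner realization is the explicit kernel that mixes $\pi_{s,t}=\frac{t-c}{t-s}\delta_{s}+\frac{c-s}{t-s}\delta_{t}$ against the product of the reweighted measures $(c-x)\,dP^{-}(x)$ and $(x-c)\,dP^{+}(x)$, which handles unbounded support, the total-mass bookkeeping, and measurability in one stroke. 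The Fubini step, the degenerate $\delta_{c}$ piece (covered either by taking $a_{0}=b_{0}=c$ in Definition \ref{0426001} or by Theorem \ref{0718001}), the boundary cases $c\in\{m,M\}$, and the equality analysis are all handled correctly under the integrability conventions you state, which the paper also leaves implicit.

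The one concrete gap is the pivot $W=\frac{M-c}{M-m}f(m)+\frac{c-m}{M-m}f(M)$: the hypotheses do not put $m$ and $M$ in $D$, so $f(m)$ and $f(M)$ need not be defined (take $D=\{0,\,1.5,\,2.5,\,4\}$, $c=2$, $m=1$, $M=3$, $\nu$ uniform on $\{1.5,2.5\}$, $\mu$ uniform on $\{0,4\}$). The repair is immediate and in fact shortens the argument: drop the pivot and apply Definition \ref{0426001} directly to each pair consisting of one inner piece and one outer piece, since every inner interval $[s',t']\subseteq[m,M]$ is nested in every outer interval $[s,t]\supseteq[m,M]$ and all pieces share the barycenter $c$; integrating this pointwise inequality against the product of the two mixing measures gives $\operatorname{\bf E}f(\nu)\le\operatorname{\bf E}f(\mu)$, and the equality analysis goes through verbatim, forcing $[s',t']=[s,t]=[m,M]$ almost everywhere (in the equality case $m$ and $M$ are then automatically in $D$, so the stated condition is consistent). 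With that adjustment your proof is complete.
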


\section{The Jensen gap}

\begin{theorem}\label{0328016}
Let $f$ be continuous on $[a, b]$, and let $f^{\prime}$ be increasing on $(a, b)$.
Suppose that $f^{\prime}(pa + qb) = \frac{ f(b) - f(a) }{b - a}$, where $p, q > 0, p + q = 1$.
Then for any $x_{i} \in [a, b]$ and any $\lambda_{i} > 0, \sum_{i=1}^{n}\lambda_{i} = 1$, we have
\begin{align}
\nonumber
0 \leq \sum_{i=1}^{n}\lambda_{i} f(x_{i}) - f\left( \sum_{i=1}^{n}\lambda_{i}x_{i} \right) \leq p f(a) + q f(b) - f(pa + qb).
\end{align}
If $f^{\prime}$ is strictly increasing on $(a, b)$, then the left equality holds if and only if $x_{1} = \cdots = x_{n}$, and the right equality holds if and only if $\sum_{ x_{i} = a }\lambda_{i} = p$ and $\sum_{ x_{i} = b }\lambda_{i} = q$.
\end{theorem}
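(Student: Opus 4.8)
The plan is to treat the two inequalities separately: the left one reduces immediately to the Jensen-type result already established, while the right one (the genuine Jensen-gap bound) comes from a single geometric observation about the secant line.

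Since $f'$ is increasing on $(a,b)$, the mean value theorem turns the monotonicity of $f'$ into the three-slopes inequality of Theorem \ref{0331009}, so $f$ is convex on $[a,b]$, and strictly convex when $f'$ is strictly increasing. Writing $\bar{x} = \sum_{i=1}^{n}\lambda_{i}x_{i} \in [a,b]$, the left inequality $\sum_{i=1}^{n}\lambda_{i}f(x_{i}) \geq f(\bar{x})$, together with its equality case (all $x_{i}$ equal, under strict convexity), is then exactly Theorem \ref{0605001} applied at $c = \bar{x}$. Thus the only new content lies in the right inequality.

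For the right inequality I would introduce the secant line $\ell(x) = f(a) + \frac{f(b)-f(a)}{b-a}(x-a)$ and set $h = f - \ell$. Because $\ell$ is affine, $h$ is convex on $[a,b]$ with $h(a) = h(b) = 0$, so $h \leq 0$ there; and, crucially, $\sum_{i}\lambda_{i}\ell(x_{i}) = \ell(\bar{x})$. Subtracting this identity collapses the Jensen gap:
\begin{align}
\nonumber
\sum_{i=1}^{n}\lambda_{i}f(x_{i}) - f(\bar{x}) = \sum_{i=1}^{n}\lambda_{i}h(x_{i}) - h(\bar{x}).
\end{align}
The first sum is $\leq 0$ term by term, while $-h(\bar{x}) \leq \max_{[a,b]}(-h)$, so it remains to locate that maximum.

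The key step, and the place where the hypothesis on $f'$ is spent, is this maximization. Since $h$ is convex, $-h$ is concave and attains its maximum at any point where $h'(x) = f'(x) - \frac{f(b)-f(a)}{b-a} = 0$; the assumption $f'(pa+qb) = \frac{f(b)-f(a)}{b-a}$ says precisely that $c_{0} = pa+qb \in (a,b)$ is such a point. Using $c_{0} - a = q(b-a)$ one computes $\ell(c_{0}) = pf(a) + qf(b)$, hence $\max_{[a,b]}(-h) = -h(c_{0}) = pf(a)+qf(b) - f(pa+qb)$, which is exactly the asserted bound. For the equality case, under strict convexity $h < 0$ on $(a,b)$ and $c_{0}$ is the unique minimizer of $h$, so equality forces $h(x_{i}) = 0$ for every $i$ (each $x_{i} \in \{a,b\}$) and $\bar{x} = c_{0}$; writing the weight at $a$ as $\sum_{x_{i}=a}\lambda_{i}$ and at $b$ as $\sum_{x_{i}=b}\lambda_{i}$, the relation $\bar{x} = pa+qb$ with $a \neq b$ pins these to $p$ and $q$. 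The main obstacle is the maximization, but it dissolves once the hypothesis is read as the statement that the tangent slope at $pa+qb$ matches the secant slope, so that $pa+qb$ is exactly where the convex function drops farthest below its chord.
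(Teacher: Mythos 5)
Your proof is correct and is essentially the paper's own argument: the paper's identity (\ref{0719001}) is precisely your observation that the secant line $\ell$ satisfies $\sum_{i}\lambda_{i}\ell(x_{i}) = \ell(\bar{x})$, so both proofs reduce the Jensen gap to $-h(\bar{x})$ with $h = f - \ell$ and then use the hypothesis $f^{\prime}(pa+qb) = \frac{f(b)-f(a)}{b-a}$ to identify $pa+qb$ as the maximizer of the chord gap. The only difference is one of exposition: you spell out the final maximization and the equality analysis, which the paper's proof leaves implicit.
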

\begin{proof}Let $x_{i} = \theta_{i} a + ( 1 - \theta_{i} ) b, i = 1, \ldots, n$. By Theorem \ref{0406077},
\begin{align}\label{0719001}
\begin{split}
  &\left( \sum_{i=1}^{n}\lambda_{i}\theta_{i} \right) f(a) + \left( 1 - \sum_{i=1}^{n}\lambda_{i}\theta_{i} \right) f(b) - \sum_{i=1}^{n}\lambda_{i} f(x_{i}) \\
= &\sum_{i=1}^{n}\lambda_{i}\Big[ \theta_{i} f(a) + ( 1 - \theta_{i} ) f(b) - f( \theta_{i}a + ( 1 - \theta_{i} )b ) \Big].
\end{split}
\end{align}
Note that $\theta a + ( 1 - \theta ) b = \sum_{i=1}^{n}\lambda_{i} x_{i}$, where $\theta = \sum_{i=1}^{n}\lambda_{i}\theta_{i}$. This concludes the proof.
\end{proof}
\begin{flushleft}
$1^{0}$. S. Simi$\acute{ \text{c} }$ \cite{02} proved the theorem by using the following tricks:
\begin{align}
\nonumber
     &\sum_{i=1}^{n}\lambda_{i} f(x_{i}) - f\left( \sum_{i=1}^{n}\lambda_{i}x_{i} \right) \\
\nonumber
\leq &\sum_{i=1}^{n}\lambda_{i} \Big( \theta_{i} f(a) + ( 1 - \theta_{i} ) f(b) \Big) - f\left( \sum_{i=1}^{n}\lambda_{i}x_{i} \right) \\
\nonumber
=    &\theta f(a) + (1 - \theta) f(b) - f( \theta a + (1 - \theta) b ),
\end{align}
which depends on the convexity of $f$, while (\ref{0719001}) always holds regardless of the convexity.
\end{flushleft}


\begin{example}\label{0328005}
For any $x_{i} \in [a, b], \lambda_{i} > 0, \sum_{i=1}^{n}\lambda_{i} = 1$ and $p, q > 0, p + q = 1$ such that $\sum_{i=1}^{n}\lambda_{i}x_{i} = pa + qb$, we have
\begin{align}
\nonumber
0 \leq \sum_{i=1}^{n}\lambda_{i}x_{i}^{2} - \left( \sum_{i=1}^{n}\lambda_{i}x_{i} \right)^{2} = \sum_{1 \leq i < j \leq n}\lambda_{i}\lambda_{j}( x_{i} - x_{j} )^{2} \leq pq(b - a)^{2} \leq \left. (b-a)^{2} \right/ 4.
\end{align}
\end{example}


\begin{theorem}[Inequality of arithmetic and geometric means]\label{0328018}\hfill \\
Let $b > a > 0$, $b/a = 1 + r$ and $\rho = \frac{ \ln(1+r) }{r}$.
Then for any $x_{i} \in [a, b]$ and any $\lambda_{i} > 0, \sum_{i=1}^{n}\lambda_{i} = 1$, we have
\begin{align}
\nonumber
1 \leq \frac{ \lambda_{1}x_{1} + \cdots + \lambda_{n}x_{n} }{ x_{1}^{ \lambda_{1} } \cdots x_{n}^{ \lambda_{n} } } \leq \frac{ \mathrm{e}^{\rho-1} }{\rho}.
\end{align}
The left equality holds if and only if $x_{1} = \cdots = x_{n}$, and the right equality holds if and only if
$\sum_{ x_{i} = a }\lambda_{i} = 1 + \frac{1}{r} - \frac{1}{\ln(1+r)}$ and $\sum_{ x_{i} = b }\lambda_{i} = \frac{1}{\ln(1+r)} - \frac{1}{r}$.
\end{theorem}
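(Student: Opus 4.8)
The plan is to reduce the entire statement to the Jensen-gap bound of Theorem \ref{0328016} by choosing the one convex function whose Jensen gap is exactly $\ln(A/G)$, where $A = \sum_{i}\lambda_{i}x_{i}$ is the weighted arithmetic mean and $G = \prod_{i}x_{i}^{\lambda_{i}}$ the weighted geometric mean. Taking logarithms, the claimed chain $1 \le A/G \le \mathrm{e}^{\rho-1}/\rho$ is equivalent to $0 \le \ln A - \sum_{i}\lambda_{i}\ln x_{i} \le (\rho-1)-\ln\rho$. Setting $f(x) = -\ln x$, which is continuous on $[a,b]$ (since $a > 0$) and has $f'(x) = -1/x$ strictly increasing on $(a,b)$, one checks $\sum_{i}\lambda_{i}f(x_{i}) - f(\sum_{i}\lambda_{i}x_{i}) = \ln A - \sum_{i}\lambda_{i}\ln x_{i} = \ln(A/G)$, so the whole statement becomes the logarithmic form of Theorem \ref{0328016} applied to this $f$.

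First I would pin down the weights $p, q$ demanded by the hypothesis $f'(pa+qb) = \frac{f(b)-f(a)}{b-a}$. Using $b - a = ar$ and $\ln(b/a) = \ln(1+r)$, the equation $-\frac{1}{pa+qb} = -\frac{\ln(1+r)}{ar}$ forces $pa+qb = \frac{ar}{\ln(1+r)} = a/\rho$. Solving together with $p+q=1$ and $b = a(1+r)$ gives $q = \frac{1-\rho}{\ln(1+r)} = \frac{1}{\ln(1+r)} - \frac{1}{r}$ and $p = 1 + \frac{1}{r} - \frac{1}{\ln(1+r)}$, which are precisely the numbers appearing in the equality clause.

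Before invoking the theorem I must verify $p, q > 0$, since that is a standing assumption of Theorem \ref{0328016}. That $\rho < 1$, hence $q > 0$, is the classical estimate $\ln(1+r) < r$; that $q < 1$, hence $p > 0$, amounts to $\ln(1+r) > r/(1+r)$, which I would obtain from $h(r) = \ln(1+r) - r/(1+r)$ via $h(0)=0$ and $h'(r) = r/(1+r)^{2} > 0$ for $r > 0$. This elementary monotonicity check is really the only non-formal step, and I expect it to be the only place a reader might pause.

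Finally I would evaluate the right-hand bound of Theorem \ref{0328016} for this $f$: using $p+q=1$, we have $pf(a)+qf(b) = -p\ln a - q\ln b$ and $f(pa+qb) = -\ln(a/\rho) = -\ln a + \ln\rho$, so the gap bound collapses to $-q\ln(1+r) - \ln\rho$; substituting $q\ln(1+r) = 1-\rho$ turns this into exactly $(\rho-1)-\ln\rho = \ln\frac{\mathrm{e}^{\rho-1}}{\rho}$. Exponentiating the two-sided bound yields the stated inequality. The left equality case (all $x_{i}$ equal) and the right equality case ($\sum_{x_{i}=a}\lambda_{i} = p$, $\sum_{x_{i}=b}\lambda_{i} = q$) transfer verbatim from Theorem \ref{0328016} because $f'$ is strictly increasing, and plugging in the values of $p, q$ found above gives precisely the displayed equality conditions.
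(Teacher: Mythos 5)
Your proof is correct and follows exactly the route the paper intends: the theorem is stated immediately after Theorem \ref{0328016} as its application to $f(x) = -\ln x$, whose Jensen gap is $\ln(A/G)$, and your computation of $p = 1 + \tfrac{1}{r} - \tfrac{1}{\ln(1+r)}$, $q = \tfrac{1}{\ln(1+r)} - \tfrac{1}{r}$ together with the bound $\rho - 1 - \ln\rho$ matches the stated equality conditions and upper bound. The positivity check $r/(1+r) < \ln(1+r) < r$ is the right (and only) nontrivial verification needed before invoking that theorem.
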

\begin{flushleft}
$1^{0}$. $\frac{ \mathrm{e}^{\rho-1} }{\rho} < 1 + \frac{ r^{2} }{8}, r > 0$. \\
\begin{proof}Let $f(x) = \frac{ x(1+x)^{ \frac{1}{x} } }{ \mathrm{e}\ln(1+x) } - 1 - \frac{ x^{2} }{8}$.
If $x > 0$, then $(1+x)^{ \frac{1}{x} } < \mathrm{e}$, $0 < x -\ln(1+x) < \frac{ x^{2} }{2}$ and $\ln(1+x) - \frac{x}{1+x} > 0$, hence,
\begin{align}
\nonumber
x f^{\prime}(x) \ln^{2}(1+x) = &\frac{ (1+x)^{ \frac{1}{x} } }{ \mathrm{e} }( x - \ln(1+x) )\left( \ln(1+x) - \frac{x}{1+x} \right) - \frac{ x^{2}\ln^{2}(1+x) }{4} \\
\nonumber
< &\frac{ x^{2} }{2}\left( \ln(1+x) - \frac{x}{1+x} - \frac{ \ln^{2}(1+x) }{2} \right) := \frac{ x^{2} }{2} g(x).
\end{align}
Note that if $x > 0$, then $g^{\prime}(x) = -\frac{1}{1+x}\left( \ln(1+x) - \frac{x}{1+x} \right) < 0$ and hence $g(x) < g(0) = 0$.
If $x > 0$, then $f^{\prime}(x) < 0$ and hence $f(x) < \lim_{ x \rightarrow 0^{+} } f(x) = 0$.
\end{proof}
\end{flushleft}


\begin{corollary}\label{0604003}
Let $x_{i} > 0$ and $\lambda_{i} > 0, \sum_{i=1}^{n}\lambda_{i} = 1$. Let $\max_{i} x_{i} / \min_{i} x_{i} = 1 + r$. Then
\begin{align}
\nonumber
1 \leq \frac{ \lambda_{1}x_{1} + \cdots + \lambda_{n}x_{n} }{ x_{1}^{ \lambda_{1} } \cdots x_{n}^{ \lambda_{n} } } \leq 1 + \frac{ r^{2} }{8},
\end{align}
and the left equality (resp. right equality) holds if and only if $x_{1} = \cdots = x_{n}$.
\end{corollary}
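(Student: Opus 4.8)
The plan is to read off the whole statement from Theorem \ref{0328018} and its Remark $1^0$, once the data $x_1,\dots,x_n$ are fitted into the template of that theorem. First I would set $a := \min_i x_i$ and $b := \max_i x_i$, so that the hypothesis $\max_i x_i/\min_i x_i = 1+r$ becomes $b/a = 1+r$ and every $x_i$ automatically lies in $[a,b]$---precisely the configuration Theorem \ref{0328018} requires. The degenerate case $a=b$ (equivalently $r=0$) has to be handled by hand, since Theorem \ref{0328018} presupposes $b>a$: here all the $x_i$ coincide, the weighted arithmetic and geometric means agree so the middle ratio equals $1$, and at the same time $1+r^2/8 = 1$, so the asserted chain collapses to $1=1=1$ with both equalities holding, consistently with the ``if and only if $x_1=\cdots=x_n$'' clauses.

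For the main case $b>a$, I would apply Theorem \ref{0328018} directly with $\rho = \ln(1+r)/r$ to obtain
\begin{align}
\nonumber
1 \leq \frac{ \lambda_{1}x_{1} + \cdots + \lambda_{n}x_{n} }{ x_{1}^{ \lambda_{1} } \cdots x_{n}^{ \lambda_{n} } } \leq \frac{ \mathrm{e}^{\rho-1} }{\rho},
\end{align}
and then invoke Remark $1^0$ of that theorem, namely $\frac{\mathrm{e}^{\rho-1}}{\rho} < 1 + r^{2}/8$ for $r>0$, to replace the transcendental upper bound by the clean quantity $1 + r^2/8$. This yields the desired $1 \leq \text{ratio} < 1 + r^2/8$ whenever $r>0$, the strictness of the upper bound being inherited from the strict estimate in the remark.

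For the equality analysis, the left equality is taken verbatim from Theorem \ref{0328018}: it holds if and only if $x_1=\cdots=x_n$; when $r>0$ both endpoints $a$ and $b$ are attained, so the $x_i$ are not all equal and the left inequality is strict, while the all-equal configuration is exactly the $r=0$ case already disposed of. For the right equality I would observe that the bound from Remark $1^0$ is strict for every $r>0$, so the ratio is strictly below $1+r^2/8$ whenever the $x_i$ are not all equal; hence equality with $1+r^2/8$ can occur only when $r=0$, i.e.\ when $x_1=\cdots=x_n$. I anticipate no genuine obstacle, since the entire analytic content---the inequality $\frac{\mathrm{e}^{\rho-1}}{\rho} < 1 + r^{2}/8$---has already been discharged in the proof of Remark $1^0$; the only delicate points are the formal exclusion of $b=a$ in Theorem \ref{0328018} and the bookkeeping needed to reconcile the two equality clauses with the degenerate configuration, both of which are routine.
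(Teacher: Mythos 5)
Your proposal is correct and is precisely the intended argument: the paper gives no separate proof of Corollary \ref{0604003} because it is meant to be read off directly from Theorem \ref{0328018} applied with $a=\min_i x_i$, $b=\max_i x_i$, together with the strict estimate $\mathrm{e}^{\rho-1}/\rho < 1+r^{2}/8$ established in Remark $1^{0}$, exactly as you do. Your handling of the degenerate case $r=0$ and the observation that strictness of the remark's bound forces the right equality to occur only when all $x_i$ coincide are the correct (and only) bookkeeping needed.
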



\begin{theorem}\label{0715002}
Let $f$ be continuous on $[a, b]$, and let $f^{\prime}$ be increasing on $(a, b)$.
Suppose that $f^{\prime}(pa + qb) = \frac{ f(b) - f(a) }{b - a}$, where $p, q > 0, p + q = 1$.
Then for any $\lambda(x) \geq 0, \int_{a}^{b} \lambda(x) \mathrm{d}x = 1$, we have
\begin{align}
\nonumber
0 \leq \int_{a}^{b} \lambda(x) f(x) \mathrm{d}x - f\left( \int_{a}^{b}\lambda(x) x \mathrm{d}x \right) \leq p f(a) + q f(b) - f(pa + qb).
\end{align}
If $f^{\prime}$ is strictly increasing on $(a, b)$, then the inequalities are strict.
\end{theorem}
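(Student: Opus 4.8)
The plan is to mirror the discrete argument of Theorem \ref{0328016}, replacing the finite weighted sums by integrals against the density $\lambda$. Since $f$ is continuous on $[a,b]$ with $f'$ increasing on $(a,b)$, $f$ is convex on $[a,b]$ (and strictly convex when $f'$ is strictly increasing). I would then use two elementary facts about a convex $f$: it lies above its supporting line at any interior point, and it lies below the secant line $L(x) = f(a) + \frac{f(b)-f(a)}{b-a}(x-a)$ joining the two endpoints.

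First I would reparametrize each $x \in [a,b]$ by $\theta(x) = \frac{b-x}{b-a} \in [0,1]$, so that $x = \theta(x)a + (1-\theta(x))b$, and write $\bar{c} = \int_{a}^{b} \lambda(x)x\,\mathrm{d}x$. Because $x \in [a,b]$, $\lambda \geq 0$, and $\int_a^b \lambda\,\mathrm{d}x = 1$ with $\lambda$ a genuine density, one has $\bar{c} \in (a,b)$. The left inequality is then exactly the integral Jensen inequality of Theorem \ref{0401009}: as $f$ is convex at $\bar{c}$ over $[a,b]$, we get $\int_{a}^{b} \lambda(x)f(x)\,\mathrm{d}x \geq f(\bar{c})$, and the strictness clause of Theorem \ref{0401009} upgrades this to a strict inequality under strict convexity.

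For the right inequality I would integrate the pointwise bound $f(x) \leq L(x)$ against $\lambda$, which by linearity and $\int_a^b \lambda\,\mathrm{d}x = 1$ yields $\int_{a}^{b} \lambda(x)f(x)\,\mathrm{d}x \leq L(\bar{c})$; hence the Jensen gap is at most $\phi(\bar{c})$, where $\phi(c) := L(c) - f(c)$. I would finish by maximizing $\phi$ over $[a,b]$: since $\phi'(c) = \frac{f(b)-f(a)}{b-a} - f'(c)$ and $f'$ is increasing, $\phi$ is concave with maximum exactly where $f'(c) = \frac{f(b)-f(a)}{b-a}$, that is at $c = pa+qb$ by hypothesis. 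A direct computation (using $pa+qb-a = q(b-a)$) gives $\phi(pa+qb) = p f(a) + q f(b) - f(pa+qb)$, which is precisely the asserted upper bound.

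The main obstacle is the strictness claim, which is stated unconditionally here whereas in Theorem \ref{0328016} equality can occur when the mass is concentrated at $a$ and $b$. The point is that $\lambda$ is an honest density, so $\{\lambda > 0\}$ has positive measure and must meet $(a,b)$; when $f$ is strictly convex we have $f(x) < L(x)$ for $x \in (a,b)$, so the integrated bound $\int_a^b \lambda f\,\mathrm{d}x < L(\bar{c})$ is already strict, and similarly a density cannot be a point mass, so Jensen's inequality is strict. I expect the only delicate step to be arguing carefully that a nonnegative integrable $\lambda$ of unit integral cannot concentrate at the endpoints or at a single point, so that both inequalities are automatically strict once $f$ is strictly convex.
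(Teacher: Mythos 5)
The paper states Theorem \ref{0715002} without proof, so there is nothing to match line by line; judged on its own, your argument is correct and is the natural integral analogue of the paper's proof of the discrete version, Theorem \ref{0328016}. There the identity (\ref{0719001}) reduces the upper bound to the quantity $\theta f(a)+(1-\theta)f(b)-f(\theta a+(1-\theta)b)$ with $\theta a+(1-\theta)b=\sum_i\lambda_i x_i$, which is exactly your $\phi(\bar c)=L(\bar c)-f(\bar c)$; your step of maximizing the concave function $\phi$ at the point where $f'$ equals the chord slope is the same mechanism that makes $pa+qb$ the extremal barycenter in Theorem \ref{0328016}. Your left inequality via Theorem \ref{0401009} is also the route the paper makes available. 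The only places requiring care are the ones you flag, and they do go through: since $\lambda\geq 0$ is (Riemann) integrable with $\int_a^b\lambda=1$, it is positive at some continuity point of $(a,b)$ (otherwise $\lambda=0$ a.e.\ and the integral would vanish), which forces $\bar c=\int_a^b\lambda(x)x\,\mathrm{d}x\in(a,b)$ and gives a subinterval of $(a,b)$ carrying positive mass; on that subinterval $L-f$ is bounded below by a positive constant when $f$ is strictly convex, so $\int_a^b\lambda f\,\mathrm{d}x<L(\bar c)$ strictly, and the supporting line at $\bar c$ (or the strictness clause of Theorem \ref{0401009}, which is in effect Theorem \ref{0811008}) makes the left inequality strict as well. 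It is worth noting that the strict case genuinely needs this measure-theoretic argument: one cannot simply push the two-point inequality of Theorem \ref{0328016} through the transfer principle of Theorem \ref{0215009}, because the two-point Jensen gaps attain the closed endpoints $0$ and $pf(a)+qf(b)-f(pa+qb)$, so the relevant interval $J$ is not open.
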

\begin{remark}
Further conclusions on the Jensen gap, see Theorems \ref{0209001} and \ref{0801001}.
\end{remark}

\section{The Hermite-Hadamard integral inequality}

\centerline{ \textbf{Part \uppercase\expandafter{\romannumeral1}} }

\begin{theorem}\label{0212005}
Let $f: [a, b] \rightarrow \mathbb{R}$ be convex and $x_{i} = a + i\frac{b - a}{n}, i = 0, 1, \ldots, n$, where $n \in \mathbb{N}$ and $n \geq 2$. Then
\begin{align}
\nonumber
f\left( \frac{a + b}{2} \right) \leq \frac{ f(x_{0}) + f(x_{1}) + \cdots + f(x_{n}) }{n+1} \leq \frac{ f(a) + f(b) }{2}.
\end{align}
If $f$ is strictly convex, then the inequalities are strict.
\end{theorem}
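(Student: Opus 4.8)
The plan is to reduce both inequalities to the convexity of $f$ by way of a single arithmetic observation that underlies the whole statement: the equally-spaced nodes satisfy $x_{i} = \left( 1 - \frac{i}{n} \right)a + \frac{i}{n}b$, so each $x_{i}$ is an explicit convex combination of the two endpoints $a = x_{0}$ and $b = x_{n}$, and moreover their uniform average is the midpoint. Indeed, $\frac{1}{n+1}\sum_{i=0}^{n} x_{i} = a + \frac{b-a}{n(n+1)}\sum_{i=0}^{n} i = a + \frac{b-a}{2} = \frac{a+b}{2}$. With this in hand the left inequality is a direct instance of Jensen's inequality, and the right inequality is an instance of two-point convexity summed over $i$.

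For the left inequality I would apply Theorem \ref{0605001} with the uniform weights $\lambda_{i} = \frac{1}{n+1}$ and the point $c = \frac{a+b}{2}$: since $f$ is convex on $[a,b]$ it is in particular convex at $c$ over $[a,b]$, and the computation above shows $\sum_{i=0}^{n}\lambda_{i}x_{i} = c$, whence $\frac{1}{n+1}\sum_{i=0}^{n} f(x_{i}) \geq f\!\left( \frac{a+b}{2} \right)$. When $f$ is strictly convex, the strict clause of Theorem \ref{0605001} gives equality only if $x_{0} = \cdots = x_{n}$, which is impossible for $a < b$; hence the inequality is strict.

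For the right inequality I would apply the two-point convexity inequality of Definition \ref{0330014} to each node, $f(x_{i}) \leq \left( 1 - \frac{i}{n} \right)f(a) + \frac{i}{n}f(b)$, and sum over $i = 0, \ldots, n$. Using $\sum_{i=0}^{n}\frac{i}{n} = \frac{n+1}{2}$ and therefore $\sum_{i=0}^{n}\left( 1 - \frac{i}{n} \right) = \frac{n+1}{2}$, the right-hand side collapses to $\frac{n+1}{2}\bigl( f(a) + f(b) \bigr)$, which upon dividing by $n+1$ is exactly $\frac{f(a)+f(b)}{2}$.

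The one delicate point, and the place I expect genuine care to be needed, is the strictness of the right inequality. The boundary terms $i=0$ and $i=n$ are always equalities, since there $x_{i}$ coincides with an endpoint; so strictness of the summed inequality cannot come from them and must instead come from an interior index. The hypothesis $n \geq 2$ guarantees at least one such index (for instance $i=1$), at which $x_{i}$ lies strictly inside $(a,b)$ and strict convexity makes the corresponding term inequality strict, forcing the sum to be strict as well. This is precisely where $n \geq 2$ is used; the remaining steps are routine arithmetic.
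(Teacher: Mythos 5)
Your proof is correct. It is worth noting how it sits relative to the paper: the paper offers no self-contained argument for this theorem, instead remarking that it is a special case of Theorem \ref{0312001} (take $\lambda_{i} = \frac{1}{n+1}$, $p = q = \frac{1}{2}$, and observe $\frac{1}{n+1}\sum_{i=0}^{n}x_{i} = \frac{a+b}{2}$), which in turn rests on the weighted-means decomposition machinery of Theorems \ref{0227002} and \ref{0716001}. What you do is essentially unpack that specialization by hand: your left inequality is Jensen (Theorem \ref{0605001}) at the midpoint, and your right inequality is the instantiation of the second decomposition principle (Theorem \ref{0406077}) in its most concrete form, namely writing each node as $x_{i} = \bigl( 1 - \frac{i}{n} \bigr)a + \frac{i}{n}b$ and summing the two-point convexity inequalities. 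The payoff of your route is that it is elementary and self-contained; the payoff of the paper's route is that the equality analysis comes for free from the general statement (the right equality in Theorem \ref{0312001} holds iff every $x_{i} \in \{a, b\}$, which fails for $n \geq 2$ because $x_{1} \in (a,b)$). Your explicit treatment of strictness --- the boundary terms $i = 0, n$ are genuine equalities, so strictness must be supplied by an interior index, which is exactly where $n \geq 2$ enters --- is the same observation made concrete, and is correct.
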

\begin{flushleft}
$1^{0}$. The theorem is a discrete version of the {\it Hermite-Hadamard integral inequality}.
(See Theorem \ref{0316002}.) \\
$2^{0}$. The theorem is a special case of Theorem \ref{0312001}.
\end{flushleft}


\begin{theorem}\label{0316002}
Let $f: [a, b] \rightarrow \mathbb{R}$ be convex. Then
\begin{align}
\nonumber
f\left( \frac{a + b}{2} \right) \leq \frac{1}{b - a}\int_{a}^{b} f(x) \mathrm{d}x \leq \frac{ f(a) + f(b) }{2}.
\end{align}
If $f$ is strictly convex, then the inequalities are strict.
\end{theorem}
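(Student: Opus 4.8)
The plan is to treat the two inequalities separately: the left one is a direct instance of the integral Jensen inequality already established as Theorem \ref{0401009}, while the right one is the reverse majorization by the endpoints, handled through the chord characterization of convexity in Theorem \ref{0331009}. A preliminary remark justifies the integrals: a convex $f$ on $[a,b]$ satisfies $f(x) \le \frac{b-x}{b-a}f(a) + \frac{x-a}{b-a}f(b) \le \max\{f(a),f(b)\}$ for all $x \in [a,b]$, is continuous on $(a,b)$, and is bounded below by any supporting line at an interior point; hence $f$ is bounded with at most two discontinuities and so $f \in \mathcal{R}[a,b]$, making every integral below legitimate.

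For the left inequality I would take the uniform density $\lambda(x) = \frac{1}{b-a}$ on $[a,b]$. Then $\int_a^b \lambda(x)\,\mathrm{d}x = 1$ and $\int_a^b \lambda(x)x\,\mathrm{d}x = \frac{a+b}{2} =: c \in (a,b)$. Since $f$ is convex on $[a,b]$ it is in particular convex at $c$ over $[a,b]$ in the sense of Definition \ref{0330014}, so Theorem \ref{0401009} applies verbatim and yields $\frac{1}{b-a}\int_a^b f(x)\,\mathrm{d}x \ge f\big(\frac{a+b}{2}\big)$. If $f$ is strictly convex it is strictly convex at $c$, and the strict clause of Theorem \ref{0401009} upgrades this to a strict inequality.

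For the right inequality I would invoke Theorem \ref{0331009}: convexity of $f$ on $[a,b]$ gives, for every $x$ with $a < x < b$, the slope inequality $\frac{f(x)-f(a)}{x-a} \le \frac{f(b)-f(a)}{b-a}$, equivalently the chord bound $f(x) \le \frac{b-x}{b-a}f(a) + \frac{x-a}{b-a}f(b)$, which is strict for $a < x < b$ when $f$ is strictly convex. Integrating this pointwise bound over $[a,b]$ and using $\int_a^b (b-x)\,\mathrm{d}x = \int_a^b (x-a)\,\mathrm{d}x = \frac{(b-a)^2}{2}$ gives $\int_a^b f(x)\,\mathrm{d}x \le \frac{b-a}{2}\big(f(a)+f(b)\big)$, which is the asserted inequality after dividing by $b-a$.

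The only genuine subtlety, and the step I expect to require the most care, is the strict case of the right inequality: passing from the strict pointwise bound valid on $(a,b)$ to a strict integral inequality. A mere limiting argument from the discrete Theorem \ref{0212005} would only preserve $\le$, so instead I would set $g(x) := \frac{b-x}{b-a}f(a)+\frac{x-a}{b-a}f(b) - f(x)$, note that $g$ is continuous and strictly positive on $(a,b)$ (continuity of $f$ on the interior is what makes this work), and conclude that $g$ stays above a positive constant on some closed subinterval, whence $\int_a^b g(x)\,\mathrm{d}x > 0$. This yields the strict form and completes the proof.
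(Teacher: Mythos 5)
Your proof is correct, but it follows a different route from the paper's. The paper offers no direct argument for Theorem \ref{0316002}: it is presented as a special case of Theorem \ref{0316003} (take $\lambda(x)=\frac{1}{b-a}$ and $p=q=\frac12$) and of Theorem \ref{0520002}, which in turn sit inside the chain culminating in Theorem \ref{0406055}; that chain is ultimately powered by the fundamental theorem of weighted means (Theorem \ref{0227002}) together with the Riemann-sum limiting arguments of Theorems \ref{0811010}--\ref{0709001}. You instead give a self-contained two-sided argument: the left inequality as a direct instance of the integral Jensen inequality (Theorem \ref{0401009}) with the uniform density, and the right inequality by integrating the pointwise chord bound $f(x)\le\frac{b-x}{b-a}f(a)+\frac{x-a}{b-a}f(b)$, which follows from Definition \ref{0330014} or Theorem \ref{0331009}. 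Your preliminary justification that $f\in\mathcal{R}[a,b]$ (bounded, continuous on the interior, at most two discontinuities) is a detail the paper glosses over but which is genuinely needed before any integral is written down, and your treatment of the strict case of the right inequality --- observing that $g(x)=\frac{b-x}{b-a}f(a)+\frac{x-a}{b-a}f(b)-f(x)$ is continuous and strictly positive on $(a,b)$, hence bounded below by a positive constant on a closed subinterval --- correctly supplies the one step where a naive limiting argument from the discrete version (Theorem \ref{0212005}) would only yield a weak inequality. What the paper's approach buys is uniformity: one general squeeze theorem specializes to many corollaries at once. What yours buys is independence from that machinery: everything reduces to the two-point definition of convexity plus elementary integration, at the cost of invoking Theorem \ref{0401009} (itself unproved in the paper) for the left half; if you wanted full self-containment you could replace that citation by the reflection identity $\frac{1}{b-a}\int_a^b f(x)\,\mathrm{d}x=\frac{1}{b-a}\int_a^{(a+b)/2}\bigl(f(x)+f(a+b-x)\bigr)\mathrm{d}x\ge f\bigl(\frac{a+b}{2}\bigr)$.
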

\begin{flushleft}
$1^{0}$. The theorem is a special case of Theorems \ref{0316003} and \ref{0520002}. \\
$2^{0}$. This is the {\it Hermite-Hadamard integral inequality for convex functions}. For more details on the inequality, see \cite{05} and \cite{04}.
\end{flushleft}


\begin{theorem}\label{0312001}
Let $f: [a, b] \rightarrow \mathbb{R}$ be convex. Then for any
\begin{enumerate}[(1)]
\item $x_{i} \in [a, b]$ and $\lambda_{i} > 0, \sum_{i=1}^{n}\lambda_{i} = 1$,
\item $p, q > 0, p + q = 1$,
\end{enumerate}
such that $\lambda_{1}x_{1} + \cdots + \lambda_{n}x_{n} = pa + qb$, we have
\begin{align}
\nonumber
f(pa + qb) \leq \lambda_{1} f(x_{1}) + \cdots + \lambda_{n} f(x_{n}) \leq p f(a) + q f(b).
\end{align}
If $f$ is strictly convex, then the left equality holds if and only if $x_{1} = \cdots = x_{n}$, and the right equality holds if and only if $x_{i} = a\ \text{or}\ b, i = 1, \ldots, n$.
\end{theorem}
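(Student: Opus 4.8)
The plan is to read off both inequalities from the point-mass machinery of Section~4 after translating the convexity hypothesis into weighted convexity. First I would record the geometric picture underlying the hypothesis $\lambda_{1}x_{1} + \cdots + \lambda_{n}x_{n} = pa + qb$: the system of masses $\lambda_{1}, \ldots, \lambda_{n}$ at positions $x_{1}, \ldots, x_{n}$ and the two-mass system $p, q$ at positions $a, b$ share the common center of mass $c := pa + qb \in (a, b)$. Since $f$ is convex on $[a, b]$, Theorem~\ref{0716001} gives that $f$ is weighted convex on $[a, b]$; in particular $f$ is convex at $c$ and weighted convex at $c$ over $[a, b]$, and strictly so (via the same theorem) whenever $f$ is strictly convex.

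For the left inequality I would simply invoke Theorem~\ref{0605001} applied to the system $x_{i}, \lambda_{i}$: because $f$ is convex at $c$ over $[a, b]$ and $\sum_{i} \lambda_{i} x_{i} = c$, we obtain $\sum_{i} \lambda_{i} f(x_{i}) \geq f\!\left( \sum_{i} \lambda_{i} x_{i} \right) = f(pa + qb)$. Under strict convexity, the equality clause of Theorem~\ref{0605001} yields exactly $x_{1} = \cdots = x_{n}$, the stated left-equality condition.

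The right inequality is the one that genuinely compares the two systems, and here I would apply Theorem~\ref{0605002}, letting the outer data $a, b$ with weights $p, q$ play the role of the $a_{i}, p_{i}$ and the inner data $x_{j}$ with weights $\lambda_{j}$ play the role of the $b_{j}, q_{j}$. The single hypothesis to verify is the separation condition $a_{i} \not\in ( \min_{j} b_{j}, \max_{j} b_{j} )$, i.e. that $a$ and $b$ avoid the open interval $( \min_{i} x_{i}, \max_{i} x_{i} )$; this is immediate from $x_{i} \in [a, b]$, since then $a \leq \min_{i} x_{i}$ and $b \geq \max_{i} x_{i}$. Theorem~\ref{0605002} then delivers $p f(a) + q f(b) \geq \sum_{i} \lambda_{i} f(x_{i})$, and its equality clause in the strictly weighted convex case asserts that each inner position equals $\min_{i} a_{i} = a$ or $\max_{i} a_{i} = b$, that is, $x_{i} \in \{a, b\}$ for every $i$, matching the stated right-equality condition.

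The main obstacle is bookkeeping rather than analysis: one must correctly align the indexing of Theorem~\ref{0605002} (two families of masses with a shared barycenter, one lying outside the span of the other) with the concrete roles of $a, b, p, q$ versus $x_{i}, \lambda_{i}$, and confirm the separation condition so that the hypotheses are met verbatim. Once the passage through Theorem~\ref{0716001} is in place, no further estimate is required, and the strict-convexity equality conditions fall out directly from the equality clauses of Theorems~\ref{0605001} and~\ref{0605002}.
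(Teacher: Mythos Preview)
Your argument is correct and is essentially the paper's own approach: the paper records Theorem~\ref{0312001} as the special case $a_{0}=b_{0}=pa+qb$ of the squeeze Theorem~\ref{0331005}, whose proof invokes exactly the same two ingredients you use, namely the decomposition Theorem~\ref{0227002} (which underlies both Theorems~\ref{0605001} and~\ref{0605002}) together with the convex $\Leftrightarrow$ weighted-convex equivalence of Theorem~\ref{0716001}. The only cosmetic difference is that you cite the left and right halves separately via Theorems~\ref{0605001} and~\ref{0605002}, whereas the paper packages both into the single squeeze statement~\ref{0331005}.
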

\begin{flushleft}
$1^{0}$. The theorem is a special case of Theorem \ref{0331005}.
\end{flushleft}


\begin{theorem}\label{0316003}
Let $f: [a, b] \rightarrow \mathbb{R}$ be convex.
Let $\lambda(x) \geq 0, \int_{a}^{b} \lambda(x) \mathrm{d}x = 1$ and $p, q > 0, p + q = 1$ such that $\int_{a}^{b} \lambda(x)x \mathrm{d}x = pa + qb$. Then
\begin{align}
\nonumber
f(pa + qb) \leq \int_{a}^{b} \lambda(x) f(x) \mathrm{d}x \leq p f(a) + q f(b).
\end{align}
If $f$ is strictly convex, then the inequalities are strict.
\end{theorem}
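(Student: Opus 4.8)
The plan is to split the double inequality into its two halves and prove each from a different earlier result. Before that I would record the standing reductions: a real-valued convex $f$ on $[a,b]$ is continuous on $(a,b)$ and bounded, hence $f\in\mathcal{R}[a,b]$ and every integral below is well defined. If $a=b$ the assertion is trivial, so I assume $a<b$ and write $c:=pa+qb$, which lies in the open interval $(a,b)$ because $p,q>0$.

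For the left inequality I would appeal to Theorem \ref{0401009} directly. Since $f$ is convex on $[a,b]$, it is in particular convex at the interior point $c$ over $[a,b]$ in the sense of Definition \ref{0330014}, and the given density satisfies $\lambda\ge 0$, $\int_a^b\lambda=1$, and, by hypothesis, $\int_a^b\lambda(x)x\,\mathrm{d}x=c$. Theorem \ref{0401009} then yields $\int_a^b\lambda(x)f(x)\,\mathrm{d}x\ge f(c)=f(pa+qb)$, and its strict-convexity clause supplies the strict inequality when $f$ is strictly convex at $c$, hence in particular when $f$ is strictly convex on $[a,b]$.

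For the right inequality I would argue by the chord bound. For $x\in[a,b]$ set $\theta(x)=\tfrac{b-x}{b-a}\in[0,1]$, so that $x=\theta(x)a+(1-\theta(x))b$; convexity of $f$ gives the pointwise estimate $f(x)\le\theta(x)f(a)+(1-\theta(x))f(b)$. Integrating against $\lambda$ and using $\int_a^b\lambda(x)x\,\mathrm{d}x=c$, I compute $\int_a^b\lambda(x)\theta(x)\,\mathrm{d}x=\tfrac{b-c}{b-a}=p$ and $\int_a^b\lambda(x)(1-\theta(x))\,\mathrm{d}x=\tfrac{c-a}{b-a}=q$, where the last equalities use $b-c=p(b-a)$ and $c-a=q(b-a)$. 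This gives $\int_a^b\lambda(x)f(x)\,\mathrm{d}x\le pf(a)+qf(b)$. When $f$ is strictly convex the chord estimate is strict for every $x\in(a,b)$; since $\{a,b\}$ has Lebesgue measure zero the full $\lambda$-mass sits on $(a,b)$, so integrating a pointwise-strict inequality against a density of total mass one keeps it strict.

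The one genuinely delicate point is the right inequality. It cannot be read off from the weighted-convex integral comparison of Theorem \ref{0711003}, because the extremal ``outer'' distribution in the present statement is the pair of point masses $p\delta_a+q\delta_b$ at the very endpoints, which is not representable by an admissible density on a set of the form $[a_1,b_1]\setminus(a_0,b_0)$. I therefore expect the main work to be this self-contained chord argument, together with the measure-zero observation needed to upgrade it to strictness; by contrast the left inequality is an immediate instance of Theorem \ref{0401009}.
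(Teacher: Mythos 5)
Your argument is correct, but it is not the route the paper takes: the paper offers no standalone proof of this statement and instead records it as a special case of Theorem \ref{0406055} (the integral squeeze theorem), obtained by letting the inner interval $[a_0,b_0]$ degenerate to the single point $\{pa+qb\}$, so that $p_0f(a_0)+q_0f(b_0)$ collapses to $f(pa+qb)$ and $[a_1,b_1]\setminus(a_0,b_0)$ becomes all of $[a,b]$. You instead give a direct two-part proof: the left inequality as an instance of Theorem \ref{0401009}, and the right inequality by integrating the chord bound $f(x)\le\theta(x)f(a)+(1-\theta(x))f(b)$ against $\lambda$ and computing $\int_a^b\lambda\theta=p$, $\int_a^b\lambda(1-\theta)=q$ from the barycenter condition. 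What the paper's route buys is uniformity with its general squeeze framework (and it disposes of the upper bound without any computation); what yours buys is a self-contained, elementary derivation that does not lean on the unproved general theorem, and your observation that Theorem \ref{0711003} cannot supply the upper bound (the extremal outer distribution $p\delta_a+q\delta_b$ is not an admissible density) correctly identifies why a degenerate-inner-interval reading of Theorem \ref{0406055}, rather than a degenerate-outer one of Theorem \ref{0711003}, is what the paper must intend. One small point to tighten: ``integrating a pointwise-strict inequality against a density of total mass one keeps it strict'' is not automatic; it works here because the gap $\theta(x)f(a)+(1-\theta(x))f(b)-f(x)$ is continuous and positive on $(a,b)$, hence bounded below by a positive constant on some compact subinterval carrying positive $\lambda$-mass (such a subinterval exists since $\int_a^b\lambda=1$ and the endpoints carry none). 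With that sentence added, the proof is complete.
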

\begin{flushleft}
$1^{0}$. The theorem is a special case of Theorem \ref{0406055}.
\end{flushleft}


\begin{theorem}\label{0331003}
Let $D \neq \emptyset$ be a subset of $\mathbb{R}$, and let $f: D \rightarrow \mathbb{R}$ be convex. Then for any
\begin{enumerate}[(1)]
\item $x_{i} \in D$ and $\lambda_{i} > 0, \sum_{i=1}^{n}\lambda_{i} = 1$,
\item $a, b \in D$ and $p, q > 0, p + q = 1$,
\end{enumerate}
such that $\lambda_{1}x_{1} + \cdots + \lambda_{n}x_{n} = pa + qb \in D$, where $x_{i} \in [a, b]$, we have
\begin{align}
\nonumber
f(pa + qb) \leq \lambda_{1} f(x_{1}) + \cdots + \lambda_{n} f(x_{n}) \leq p f(a) + q f(b).
\end{align}
If $f$ is strictly convex, then the left equality holds if and only if $x_{1} = \cdots = x_{n}$, and the right equality holds if and only if $x_{i} = a\ \text{or}\ b, i = 1, \ldots, n$.
\end{theorem}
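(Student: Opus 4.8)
The plan is to prove the two inequalities separately, since each falls out directly from the machinery already established. Write $c = \lambda_{1}x_{1} + \cdots + \lambda_{n}x_{n} = pa + qb$, and record the standing data: by hypothesis $c \in D$, every $x_{i}$ lies in $[a,b]$, and $a, b \in D$.

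For the \emph{left} inequality $f(pa+qb) \leq \sum_{i=1}^{n}\lambda_{i}f(x_{i})$, I would use convexity at the single point $c$. Since $f$ is convex on $D$, it is by Definition \ref{0330014} convex at every point of $D$, in particular convex at $c \in D$ over $D$. The weights $\lambda_{i}$ and points $x_{i}$ satisfy $\sum_{i=1}^{n}\lambda_{i}x_{i} = c$, so Theorem \ref{0605001} applies verbatim and yields $\sum_{i=1}^{n}\lambda_{i}f(x_{i}) \geq f(c) = f(pa+qb)$. When $f$ is strictly convex it is strictly convex at $c$, and the same theorem gives equality exactly when $x_{1} = \cdots = x_{n}$.

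For the \emph{right} inequality $\sum_{i=1}^{n}\lambda_{i}f(x_{i}) \leq p f(a) + q f(b)$, I would pass to weighted convexity. By Theorem \ref{0716001}, $f$ convex on $D$ is equivalent to $f$ weighted convex on $D$, so $f$ is weighted convex at $c \in D^{*}$ over $D$. Now apply Theorem \ref{0605002} with the \emph{outer} system taken to be the two points $a, b$ carrying weights $p, q$, and the \emph{inner} system taken to be the points $x_{i}$ carrying weights $\lambda_{i}$; both systems have barycenter $c$. The crucial hypothesis to verify is the separation condition $a, b \notin (\min_{i} x_{i}, \max_{i} x_{i})$, and this is exactly where the assumption $x_{i} \in [a,b]$ is used: since $a \leq x_{i} \leq b$ for all $i$ we have $a \leq \min_{i} x_{i}$ and $b \geq \max_{i} x_{i}$, so neither endpoint lies in the open interval spanned by the interior points. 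Theorem \ref{0605002} then delivers $p f(a) + q f(b) \geq \sum_{i=1}^{n}\lambda_{i}f(x_{i})$, and under strict weighted convexity (which follows from strict convexity via Theorem \ref{0716001}) equality holds iff each $x_{i}$ equals $\min_{i} a = a$ or $\max_{i} a = b$.

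The only genuine obstacle is bookkeeping rather than mathematics: one must match the labelling of Theorem \ref{0605002} (deciding which points play the role of the $a_{i}$ and which the $b_{j}$) correctly, and check that the barycenter identity and the separation condition hold simultaneously. Both are immediate from $\sum_{i=1}^{n}\lambda_{i}x_{i} = pa + qb = c$ together with $x_{i} \in [a,b]$. Concatenating the two halves produces the chain $f(pa+qb) \leq \sum_{i=1}^{n}\lambda_{i}f(x_{i}) \leq p f(a) + q f(b)$, along with the two stated equality characterizations.
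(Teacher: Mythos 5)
Your proof is correct and rests on the same machinery as the paper's: the paper disposes of this theorem by declaring it a special case of Theorem \ref{0605004} (take the inner interval $[a_{0},b_{0}]$ degenerate at $c=pa+qb$), which in turn is established via Theorems \ref{0227002} and \ref{0716001}, exactly the decomposition-into-two-point-systems argument underlying your citations of Theorems \ref{0605001} and \ref{0605002}. Splitting the chain into its two halves and verifying the separation condition $a,b\notin(\min_{i}x_{i},\max_{i}x_{i})$ explicitly, as you do, is just a more self-contained assembly of the same route.
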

\begin{flushleft}
$1^{0}$. The theorem is a special case of Theorem \ref{0605004}.
\end{flushleft}


\begin{theorem}\label{0316004}
Let $D \neq \emptyset$ be a subset of $\mathbb{R}$, and let $f: D \rightarrow \mathbb{R}$ be convex. Then for any
\begin{enumerate}[(1)]
\item random variable $\nu \in D$,
\item $a, b \in D$ and $p, q > 0, p + q = 1$,
\end{enumerate}
such that $\operatorname{\bf E}\nu = pa + qb \in D$, where $\nu \in [a, b]$, we have
\begin{align}
\nonumber
f(pa + qb) \leq \operatorname{\bf E} f(\nu) \leq p f(a) + q f(b).
\end{align}
If $f$ is strictly convex, then the left equality holds if and only if $\operatorname{P}( pa + qb) = 1$, and the right equality holds if and only if $\operatorname{P}(\nu = a, b) = 1$.
\end{theorem}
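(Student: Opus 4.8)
The plan is to treat the two one-sided bounds separately, recognizing each as a probabilistic instance of a result already established in the excerpt. Throughout I write $c = pa + qb$, which by hypothesis lies in $D$ and equals $\operatorname{\bf E}\nu$, with $\nu$ taking values in $[a,b]$.

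For the left inequality $f(c) \leq \operatorname{\bf E}f(\nu)$ I would simply invoke Theorem \ref{0608009}. Since $f$ is convex on $D$, it is in particular convex at the point $c \in D$ over $D$, and $\nu$ is a random variable taking values in $D$ with $\operatorname{\bf E}\nu = c$. Theorem \ref{0608009} then yields $\operatorname{\bf E}f(\nu) \geq f(\operatorname{\bf E}\nu) = f(c)$ directly. When $f$ is strictly convex it is strictly convex at $c$, so the same theorem forces $\operatorname{P}(\nu = c) = 1$ at equality, which is exactly the stated condition $\operatorname{P}(pa+qb) = 1$.

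For the right inequality $\operatorname{\bf E}f(\nu) \leq p f(a) + q f(b)$ the key idea is to introduce the auxiliary two-point random variable $\mu$ with $\operatorname{P}(\mu = a) = p$ and $\operatorname{P}(\mu = b) = q$. Then $\mu$ takes values in $D$ (as $a, b \in D$), one has $\operatorname{\bf E}\mu = pa + qb = c = \operatorname{\bf E}\nu$, and crucially $\operatorname{\bf E}f(\mu) = p f(a) + q f(b)$, so the right-hand side is precisely $\operatorname{\bf E}f(\mu)$. By Theorem \ref{0716001}, convexity of $f$ on $D$ is equivalent to weighted convexity on $D$, hence $f$ is weighted convex at $c \in D^{*}$ over $D$ (note $c \in D \subseteq D^{*}$). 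I would then apply Theorem \ref{0711004} with $m = a$ and $M = b$: by construction $\mu \not\in (a, b)$ since $\mu$ is supported on $\{a, b\}$, while $\nu \in [a, b] = [m, M]$ by hypothesis, and $\operatorname{\bf E}\mu = \operatorname{\bf E}\nu = c$. Theorem \ref{0711004} then delivers $\operatorname{\bf E}f(\mu) \geq \operatorname{\bf E}f(\nu)$, that is, $p f(a) + q f(b) \geq \operatorname{\bf E}f(\nu)$.

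The equality analysis in the strict case rides on the same reductions: by Theorem \ref{0716001} strict convexity upgrades to strict weighted convexity, so the equality clause of Theorem \ref{0711004} requires $\operatorname{P}(\mu \in \{a, b\}) = 1$ and $\operatorname{P}(\nu \in \{a, b\}) = 1$. The former is automatic from the construction of $\mu$, leaving exactly $\operatorname{P}(\nu = a, b) = 1$ as claimed. I expect the only real friction to be the upper bound, since it is not Jensen's inequality but its reverse; the whole point is to manufacture the correct comparison variable $\mu$ and to verify that it meets the majorization-type hypothesis (2) of Theorem \ref{0711004} — namely that $\mu$ avoids the open interval $(a,b)$ while $\nu$ remains inside $[a,b]$. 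Once $\mu$ is chosen, the rest is bookkeeping.
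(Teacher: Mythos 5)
Your proof is correct, but it takes a different route from the paper, which offers no argument at all for this statement beyond the remark that it is the special case of Theorem \ref{0406048} obtained by collapsing the inner interval to the single point $a_{0} = b_{0} = pa + qb$ and taking $[a_{1}, b_{1}] = [a, b]$, $p_{1} = p$, $q_{1} = q$ (so that $\nu \in [a,b] \setminus \emptyset = [a,b]$ and the left bound degenerates to $f(pa+qb)$). You instead assemble the two bounds from separate ingredients: the lower bound from Theorem \ref{0608009} applied at the point $c = pa + qb \in D$, and the upper bound from Theorem \ref{0711004} after upgrading convexity to weighted convexity via Theorem \ref{0716001} and manufacturing the two-point comparison variable $\mu$ with $\operatorname{P}(\mu = a) = p$, $\operatorname{P}(\mu = b) = q$, which satisfies $\operatorname{\bf E}\mu = \operatorname{\bf E}\nu$, $\mu \notin (a,b)$, $\nu \in [a,b]$, and $\operatorname{\bf E} f(\mu) = p f(a) + q f(b)$. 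Both equality clauses come out right under your reductions: strict convexity at $c$ forces $\operatorname{P}(\nu = pa+qb) = 1$ on the left, and the equality clause of Theorem \ref{0711004} with $m = a$, $M = b$ leaves exactly $\operatorname{P}(\nu = a, b) = 1$ on the right, since the corresponding condition on $\mu$ holds by construction. What the paper's route buys is a one-line specialization of an already-stated squeeze theorem; what your route buys is an actual mechanism, making explicit the comparison variable that underlies the upper half of that squeeze theorem rather than deferring to it.
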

\begin{flushleft}
$1^{0}$. The theorem is a special case of Theorem \ref{0406048}.
\end{flushleft}

\centerline{ \textbf{Part \uppercase\expandafter{\romannumeral2}} }

\begin{flushleft}
S. Simi$\acute{ \text{c} }$ proved the following theorem in \cite{01}.
\end{flushleft}
\begin{theorem}\label{0308001}
If $f: [a, b] \rightarrow \mathbb{R}$ is convex, then
\begin{align}
\nonumber
0 \leq \frac{1}{b - a}\int_{a}^{b} f(x) \mathrm{d}x - f\left( \frac{1}{b - a}\int_{a}^{b} x \mathrm{d}x \right) \leq \frac{1}{2}\left[ \frac{ f(a) + f(b) }{2} - f\left( \frac{a + b}{2} \right) \right].
\end{align}
If $f$ is strictly convex, then the inequalities are strict.
\end{theorem}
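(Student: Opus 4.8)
The plan is to reduce the whole statement to the Hermite--Hadamard inequality (Theorem \ref{0316002}) by splitting $[a,b]$ at its midpoint. First I would record the elementary identity $\frac{1}{b-a}\int_a^b x\,\mathrm{d}x = \frac{a+b}{2}$, so that the quantity to be estimated is simply
\begin{align}
\nonumber
\frac{1}{b-a}\int_a^b f(x)\,\mathrm{d}x - f\!\left( \frac{a+b}{2} \right).
\end{align}
With this simplification the left inequality is immediate, being exactly the left half of Theorem \ref{0316002} applied to $f$ on $[a,b]$; it is strict when $f$ is strictly convex.

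For the right inequality I would set $m = \frac{a+b}{2}$ and split the integral as $\int_a^b = \int_a^m + \int_m^b$. Since the subintervals $[a,m]$ and $[m,b]$ each have length $\frac{b-a}{2}$, the right half of Theorem \ref{0316002} applied on each gives $\frac{1}{b-a}\int_a^m f(x)\,\mathrm{d}x \le \frac{f(a)+f(m)}{4}$ and $\frac{1}{b-a}\int_m^b f(x)\,\mathrm{d}x \le \frac{f(m)+f(b)}{4}$. Summing these two bounds produces
\begin{align}
\nonumber
\frac{1}{b-a}\int_a^b f(x)\,\mathrm{d}x \le \frac{f(a)+f(b)}{4} + \frac{1}{2} f\!\left( \frac{a+b}{2} \right),
\end{align}
and subtracting $f(m)$ from both sides turns the right-hand side into exactly $\frac12\big[\frac{f(a)+f(b)}{2} - f(m)\big]$, which is the asserted upper bound.

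The strict case follows because strict convexity of $f$ on $[a,b]$ restricts to strict convexity on each of $[a,m]$ and $[m,b]$, so both applications of Theorem \ref{0316002} are strict and their sum stays strict. I expect no real obstacle here; the only point needing care is the bookkeeping of constants---verifying that the two halved Hermite--Hadamard bounds recombine with precisely the factor $\frac12$ demanded by the statement, which the computation above confirms.
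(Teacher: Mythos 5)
Your proof is correct and is essentially the paper's own argument: the paper proves the more general Theorem \ref{0327007} by splitting the integral at $c = pa+qb$ and applying the right half of the Hermite--Hadamard inequality (Theorem \ref{0316002}) on each piece, then obtains Theorem \ref{0308001} as the special case $p=q=\tfrac12$, which is precisely your midpoint split. The bookkeeping of constants and the treatment of the strict case both match.
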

\begin{flushleft}
$1^{0}$. The constant $1/2$ is optimal. (See Example \ref{0308002}.) \\
$2^{0}$. The theorem is a special case of Theorem \ref{0327007}.
\end{flushleft}


\begin{example}\label{0308002}
If $f(x) = \vert x \vert$, then
\begin{align}
\nonumber
  &\frac{1}{ 1 - (-1) }\int_{-1}^{1} f(x) \mathrm{d}x - f\left( \frac{1}{ 1 - (-1) }\int_{-1}^{1} x \mathrm{d}x \right) \\
\nonumber
= &\frac{1}{2}\left[ \frac{ f(-1) + f(1) }{2} - f\left( \frac{-1 + 1}{2} \right) \right] = \frac{1}{2}.
\end{align}
\end{example}
\begin{flushleft}
$1^{0}$. An open question was raised in \cite{01}: find the best possible bound $N( \alpha^{*}, \beta^{*} )$ such that
\begin{align}
\nonumber
\frac{1}{b - a}\int_{a}^{b} g(x) \mathrm{d}x \leq \alpha^{*} ( g(a) + g(b) ) + \beta^{*} g\left( \frac{a + b}{2} \right) := N( \alpha^{*}, \beta^{*} )
\end{align}
holds for every convex function $g \in C^{\infty}[a, b]$.

Let
\begin{align}
\nonumber
f_{n}(x) = \left\{\begin{array}{cc}
\vert x \vert\exp\left( \frac{ 1 - 1/\vert x \vert }{n} \right), & x \neq 0, \\
0, & x = 0.
\end{array}\right.
\end{align}
Then for every $n \in \mathbb{N}_{+}$, we have
\begin{enumerate}[(1)]
\item $f_{n}$ is strictly convex,
\item $f_{n}$ has derivatives of all orders,
\item $f_{n}( \pm 1 ) = f( \pm 1 ) = 1$ and $f_{n}(0) = f(0) = 0$.
\end{enumerate}
There are $\alpha_{n}, \beta_{n} > 0, 2\alpha_{n} + \beta_{n} = 1$ such that
\begin{align}
\nonumber
\frac{1}{ 1 - (-1) }\int_{-1}^{1} f_{n}(x) \mathrm{d}x = \alpha_{n} ( f_{n}(-1) + f_{n}(1) ) + \beta_{n} f_{n}\left( \frac{ -1 + 1 }{2} \right).
\end{align}
Note that $f_{n}(x) \leq f_{n+1}(x)$ and $\lim_{ n \rightarrow +\infty } f_{n}(x) = f(x)$ hold for all $x \in [-1, 1]$. Then
\begin{align}
\nonumber
\lim_{ n \rightarrow +\infty }\int_{-1}^{1} f_{n}(x) \mathrm{d}x = \int_{-1}^{1} f(x) \mathrm{d}x,
\end{align}
and then $\lim_{ n \rightarrow +\infty }\alpha_{n} = 1/4$ and $\lim_{ n \rightarrow +\infty }\beta_{n} = 1/2$.

Therefore, $N( \alpha^{*}, \beta^{*} ) = N(1/4, 1/2) = \frac{ g(a) + g(b) }{4} + \frac{1}{2} g\left( \frac{a + b}{2} \right)$.
\end{flushleft}


\begin{theorem}\label{0327007}
Let $f: [a, b] \rightarrow \mathbb{R}$ be convex. Then for any $p, q > 0, p + q = 1$, we have
\begin{align}
\nonumber
f\left( \frac{a + b}{2} \right) \leq \frac{1}{b - a}\int_{a}^{b} f(x) \mathrm{d}x \leq \frac{ f(a) + f(b) }{2} - \frac{ p f(a) + q f(b) - f(pa + qb) }{2}.
\end{align}
If $f$ is strictly convex, then the inequalities are strict.
\end{theorem}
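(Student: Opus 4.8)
The plan is to treat the two inequalities separately. The left inequality is nothing but the lower half of the Hermite--Hadamard inequality, so it follows immediately from Theorem \ref{0316002} applied to $f$ on $[a,b]$ and requires no new idea. All of the content lies in the right inequality, which sharpens the classical upper bound $\frac{1}{b-a}\int_{a}^{b} f(x)\,\mathrm{d}x \leq \frac{f(a)+f(b)}{2}$ by subtracting half of the Jensen gap $p f(a) + q f(b) - f(pa+qb)$.

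First I would set $c = pa + qb$. Since $p, q > 0$ and $p+q=1$, the point $c$ lies in $(a,b)$ and satisfies $c - a = q(b-a)$ and $b - c = p(b-a)$. The key step is to split the integral at $c$ and apply the upper bound of Theorem \ref{0316002} separately on the two subintervals $[a,c]$ and $[c,b]$:
\begin{align}
\nonumber
\frac{1}{b-a}\int_{a}^{b} f(x)\,\mathrm{d}x
&= \frac{1}{b-a}\left( \int_{a}^{c} f(x)\,\mathrm{d}x + \int_{c}^{b} f(x)\,\mathrm{d}x \right) \\
\nonumber
&\leq \frac{c-a}{b-a}\cdot\frac{f(a)+f(c)}{2} + \frac{b-c}{b-a}\cdot\frac{f(c)+f(b)}{2}.
\end{align}
Substituting $\frac{c-a}{b-a} = q$ and $\frac{b-c}{b-a} = p$ turns the right-hand side into $q\,\frac{f(a)+f(c)}{2} + p\,\frac{f(c)+f(b)}{2}$.

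Then a short simplification finishes the proof: collecting the $f(c)$ terms via $(p+q)f(c) = f(c)$ and using $1-p = q$, $1-q = p$ gives
\begin{align}
\nonumber
q\,\frac{f(a)+f(c)}{2} + p\,\frac{f(c)+f(b)}{2}
&= \frac{q f(a) + p f(b) + f(c)}{2} \\
\nonumber
&= \frac{f(a)+f(b)}{2} - \frac{p f(a) + q f(b) - f(c)}{2},
\end{align}
which, since $c = pa+qb$, is exactly the asserted upper bound. For the strict case, if $f$ is strictly convex then the Hermite--Hadamard upper bounds on each of $[a,c]$ and $[c,b]$ are strict (Theorem \ref{0316002}), so their combination is strict, and the left inequality is strict for the same reason. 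I do not expect a genuine obstacle here; the only point that needs care is the bookkeeping of which weight multiplies which subinterval, and this is pinned down once and for all by $c - a = q(b-a)$ and $b - c = p(b-a)$.
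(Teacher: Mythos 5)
Your proof is correct and follows essentially the same route as the paper: set $c = pa+qb$, split the integral at $c$, apply the Hermite--Hadamard upper bound (Theorem \ref{0316002}) on $[a,c]$ and $[c,b]$, and recombine using $c-a = q(b-a)$, $b-c = p(b-a)$. Your treatment of the left inequality and the strict case is also what the paper intends, just spelled out more explicitly.
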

\begin{proof}Let $c = pa + qb$. By Theorem \ref{0316002},
\begin{align}
\nonumber
\frac{1}{c - a}\int_{a}^{c} f(x) \mathrm{d}x \leq \frac{ f(a) + f(c) }{2}\ \text{and}\ \frac{1}{b - c}\int_{c}^{b} f(x) \mathrm{d}x \leq \frac{ f(c) + f(b) }{2}.
\end{align}
Then
\begin{align}
\nonumber
\frac{1}{b - a}\int_{a}^{b} f(x) \mathrm{d}x \leq &\frac{1}{b - a}\left( (c - a)\frac{ f(a) + f(c) }{2} + (b - c)\frac{ f(c) + f(b) }{2} \right) \\
\nonumber
= &\frac{ f(a) + f(b) }{2} - \frac{ p f(a) + q f(b) - f(pa + qb) }{2}.
\end{align}
This concludes the proof.
\end{proof}
\begin{flushleft}
$1^{0}$. The theorem is a special case of Theorem \ref{0423007}.
\end{flushleft}


\begin{theorem}\label{0423007}
Let $f: [a, b] \rightarrow \mathbb{R}$ be convex and $\lambda(x) \geq 0, \int_{a}^{b} \lambda(x) \mathrm{d}x = 1$.
Suppose that $P = \{ c_{0}, c_{1}, \ldots, c_{n} \}$ is a partition of $[a, b]$.
Let $w_{0} = R_{0}, w_{1} = L_{1} + R_{1}, \ldots, w_{n-1} = L_{n-1} + R_{n-1}, w_{n} = L_{n}$, where
\begin{align}
\nonumber
\left\{\begin{array}{c}
R_{i-1} + L_{i} = \int_{ c_{i-1} }^{ c_{i} } \lambda(x) \mathrm{d}x, \\
R_{i-1} c_{i-1} + L_{i} c_{i} = \int_{ c_{i-1} }^{ c_{i} } \lambda(x)x \mathrm{d}x,
\end{array}\right. i = 1, \ldots, n.
\end{align}
Then
\begin{align}
\nonumber
f\left( \sum_{i=0}^{n} w_{i}c_{i} \right) \leq \int_{a}^{b} \lambda(x) f(x) \mathrm{d}x \leq \sum_{i=0}^{n} w_{i} f(c_{i}).
\end{align}
If $f$ is strictly convex, then the inequalities are strict.
\end{theorem}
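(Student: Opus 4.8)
The plan is to reduce everything to the two–point Hermite--Hadamard estimate of Theorem \ref{0316003}, applied to the whole interval for the lower bound and to each subinterval $[c_{i-1},c_i]$ for the upper bound. The key observation is that the defining equations $R_{i-1}+L_i=\int_{c_{i-1}}^{c_i}\lambda$ and $R_{i-1}c_{i-1}+L_ic_i=\int_{c_{i-1}}^{c_i}\lambda x$ say exactly that the two–point system $\left[\begin{smallmatrix} R_{i-1} & L_i \\ c_{i-1} & c_i\end{smallmatrix}\right]$ has the same total mass and the same center of mass as $\lambda$ restricted to $[c_{i-1},c_i]$.

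First I would record the bookkeeping identities. Summing $R_{i-1}+L_i=\int_{c_{i-1}}^{c_i}\lambda$ over $i=1,\ldots,n$ and regrouping the masses by node — each interior node $c_i$ collects $L_i$ from the interval on its left and $R_i$ from the interval on its right, while $c_0$ collects $R_0=w_0$ and $c_n$ collects $L_n=w_n$ — gives $\sum_{i=0}^n w_i=\int_a^b\lambda=1$. The identical regrouping applied to $R_{i-1}c_{i-1}+L_ic_i=\int_{c_{i-1}}^{c_i}\lambda x$ yields $\sum_{i=0}^n w_ic_i=\int_a^b\lambda(x)x\,\mathrm{d}x$, so $\sum_{i=0}^n w_ic_i$ is precisely the barycenter of $\lambda$. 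Consequently the left-hand inequality of Theorem \ref{0316003}, applied to $\lambda$ on all of $[a,b]$, already gives the lower bound $f\big(\sum_{i=0}^n w_ic_i\big)=f\big(\int_a^b\lambda x\big)\le\int_a^b\lambda f$.

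For the upper bound I would work one subinterval at a time. On $[c_{i-1},c_i]$ set $M_i:=\int_{c_{i-1}}^{c_i}\lambda$ and $N_i:=\int_{c_{i-1}}^{c_i}\lambda x$; the normalized weight $\lambda/M_i$ has barycenter $N_i/M_i$, which I write as $\tilde p\,c_{i-1}+\tilde q\,c_i$ with $\tilde p+\tilde q=1$. The right-hand inequality of Theorem \ref{0316003} then gives $\frac{1}{M_i}\int_{c_{i-1}}^{c_i}\lambda f\le\tilde p f(c_{i-1})+\tilde q f(c_i)$. Now $(M_i\tilde p,\,M_i\tilde q)$ satisfies the same two linear equations (total mass $M_i$, first moment $N_i$) that define $(R_{i-1},L_i)$, so by uniqueness $M_i\tilde p=R_{i-1}$ and $M_i\tilde q=L_i$; multiplying the estimate by $M_i$ yields $\int_{c_{i-1}}^{c_i}\lambda f\le R_{i-1}f(c_{i-1})+L_if(c_i)$. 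Summing over $i$ and regrouping by node exactly as in the bookkeeping step produces $\int_a^b\lambda f\le\sum_{i=0}^n w_if(c_i)$.

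If $f$ is strictly convex, the inequality of Theorem \ref{0316003} is strict on every subinterval that carries mass and the left inequality is strict as well, so both bounds are strict. The step I expect to demand the most care is purely combinatorial: matching the per-subinterval endpoint masses $R_{i-1},L_i$ to the node weights $w_i$ in both regrouping identities, and confirming that the two–point decomposition of each subinterval's barycenter reproduces exactly $R_{i-1}$ and $L_i$. The analytic content is carried entirely by Theorem \ref{0316003}, so once the index bookkeeping is set up correctly the proof closes immediately.
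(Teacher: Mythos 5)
Your proof is correct and follows the same route the paper takes: the paper proves the special case (Theorem \ref{0327007}) by exactly this device of splitting $[a,b]$ at the partition points, applying the Hermite--Hadamard estimate (Theorem \ref{0316002}/\ref{0316003}) on each subinterval, and regrouping the endpoint masses, and the remarks $1^{0}$--$2^{0}$ following the theorem record precisely your bookkeeping identities $\sum_{i=0}^{n} w_{i}=1$ and $\sum_{i=0}^{n} w_{i}c_{i}=\int_{a}^{b}\lambda(x)x\,\mathrm{d}x$. The only points worth a passing word are the degenerate subintervals with $\int_{c_{i-1}}^{c_i}\lambda=0$ (where $R_{i-1}=L_i=0$ and the estimate is trivial) and the observation that a positive-mass subinterval necessarily has its barycenter in the open interval, so the hypothesis $p,q>0$ of Theorem \ref{0316003} is met.
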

\begin{flushleft}
$1^{0}$. $w_{i} \geq 0, \sum_{i=0}^{n} w_{i} = 1$, and $w_{i}$ do not depend on the convex function $f$. \\
$2^{0}$. $\sum_{i=0}^{n} w_{i}c_{i} = \int_{a}^{b} \lambda(x)x \mathrm{d}x$.
\end{flushleft}


\begin{example}\label{0424002}
If $f: [0, 1] \rightarrow \mathbb{R}$ is convex, then
\begin{align}
\nonumber
\int_{0}^{1} f(x) \mathrm{d}x \leq \frac{1}{14} f(0) + \frac{2}{7} f\left( \frac{1}{7} \right) + \frac{3}{7} f\left( \frac{4}{7} \right) + \frac{3}{14} f(1).
\end{align}
If $f$ is strictly convex, then the inequality is strict.
\end{example}
\begin{proof}By Theorem \ref{0423007}, we conclude the proof.
\end{proof}


\begin{theorem}\label{0424001}
Let $f: [a, b] \rightarrow \mathbb{R}$ be convex, and let $x_{i} \in [a, b]$ and $\lambda_{i} > 0, \sum_{i=1}^{n}\lambda_{i} = 1$.
Suppose that $P = \{ c_{0}, c_{1}, \ldots, c_{m} \}$ is a partition of $[a, b]$.
Let $\mathcal{I}_{1}, \ldots, \mathcal{I}_{m}$ be subsets of $\{ 1, \ldots, n \}$ such that
\begin{enumerate}[(1)]
\item $x_{s} \in [ c_{i-1}, c_{i} ], s \in \mathcal{I}_{i}, i = 1, \ldots, m$,
\item $\mathcal{I}_{i} \cap \mathcal{I}_{j} = \emptyset$ for all $i \neq j$,
\item $\mathcal{I}_{1} \cup \cdots \cup \mathcal{I}_{m} = \{ 1, \ldots, n \}$.
\end{enumerate}
Let $w_{0} = R_{0}, w_{1} = L_{1} + R_{1}, \ldots, w_{m-1} = L_{m-1} + R_{m-1}, w_{m} = L_{m}$, where
\begin{align}
\nonumber
\left\{\begin{array}{c}
R_{i-1} + L_{i} = \sum_{ s \in \mathcal{I}_{i} } \lambda_{s}, \\
R_{i-1} c_{i-1} + L_{i} c_{i} = \sum_{ s \in \mathcal{I}_{i} } \lambda_{s} x_{s},
\end{array}\right. i = 1, \ldots, m.
\end{align}
Then
\begin{align}
\nonumber
f\left( \sum_{i=0}^{m} w_{i}c_{i} \right) \leq \sum_{i=1}^{n}\lambda_{i} f(x_{i}) \leq \sum_{i=0}^{m} w_{i} f(c_{i}).
\end{align}
If $f$ is strictly convex, then the left equality holds if and only if $x_{1} = \cdots = x_{n}$, and the right equality holds if and only if $x_{1}, \ldots, x_{n} \in \{ c_{0}, c_{1}, \ldots, c_{m} \}$.
\end{theorem}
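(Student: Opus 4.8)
The plan is to treat this as the discrete counterpart of Theorem \ref{0423007}: each interval $[c_{i-1}, c_i]$ of the partition carries the sub-mass $\sum_{s \in \mathcal{I}_i} \lambda_s$ located at the points $x_s$ with $s \in \mathcal{I}_i$, and the defining system for $R_{i-1}, L_i$ merely splits this sub-mass into two point masses placed at the endpoints $c_{i-1}$ and $c_i$ while preserving both the total mass and the center of mass. First I would record the two bookkeeping identities that make the $w_i$ behave like a probability vector: summing the mass equation over $i$ gives $\sum_{i=0}^m w_i = \sum_{i=1}^m (R_{i-1} + L_i) = \sum_{i=1}^m \sum_{s \in \mathcal{I}_i} \lambda_s = 1$, and summing the center-of-mass equation while collecting the coefficient of each $c_j$ gives $\sum_{i=0}^m w_i c_i = \sum_{i=1}^m (R_{i-1} c_{i-1} + L_i c_i) = \sum_{s=1}^n \lambda_s x_s$.

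With the identity $\sum_{i=0}^m w_i c_i = \sum_{i=1}^n \lambda_i x_i$ in hand, the left inequality is immediate: it reads $f\left( \sum_{i=1}^n \lambda_i x_i \right) \leq \sum_{i=1}^n \lambda_i f(x_i)$, which is exactly Jensen's inequality for the masses $\{(\lambda_i, x_i)\}$, supplied by Theorem \ref{0605001} (with $c = \sum_i \lambda_i x_i$, a point of $[a,b]$ at which $f$ is convex). The strict-convexity clause of that theorem yields equality if and only if $x_1 = \cdots = x_n$, which is the stated left equality condition.

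For the right inequality I would work interval by interval. Fix $i$ with $\mathcal{I}_i \neq \emptyset$, put $P_i = \sum_{s \in \mathcal{I}_i} \lambda_s > 0$, and note that the solution of the defining system is $L_i = \sum_{s \in \mathcal{I}_i} \lambda_s (x_s - c_{i-1}) / (c_i - c_{i-1}) \geq 0$ and symmetrically $R_{i-1} \geq 0$, the nonnegativity coming precisely from $x_s \in [c_{i-1}, c_i]$. Hence $\mu_i := (R_{i-1} c_{i-1} + L_i c_i)/P_i$ is a convex combination of $c_{i-1}$ and $c_i$ equal to the local center of mass $\sum_{s \in \mathcal{I}_i}(\lambda_s/P_i) x_s$. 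Applying the upper bound of Theorem \ref{0331003} (convex $f$ on $[a,b]$, local endpoints $c_{i-1}, c_i$, points $x_s$ with normalized weights $\lambda_s/P_i$) and multiplying through by $P_i$ gives the local estimate $\sum_{s \in \mathcal{I}_i} \lambda_s f(x_s) \leq R_{i-1} f(c_{i-1}) + L_i f(c_i)$. Summing over $i = 1, \ldots, m$ collapses the left side to $\sum_{i=1}^n \lambda_i f(x_i)$ (the $\mathcal{I}_i$ partition the index set) and the right side to $\sum_{i=0}^m w_i f(c_i)$ after collecting the coefficient of each $f(c_j)$, namely $R_j + L_j = w_j$ for interior $j$, together with $R_0 = w_0$ and $L_m = w_m$. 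For strict convexity, the upper bound of Theorem \ref{0331003} is an equality on interval $i$ exactly when every $x_s$ with $s \in \mathcal{I}_i$ lies in $\{c_{i-1}, c_i\}$, so the global right equality holds if and only if every $x_s \in \{c_0, \ldots, c_m\}$, as claimed.

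The main obstacle is bookkeeping around the degenerate cases rather than any genuine analytic difficulty. I would need to dispose of the situations $\mathcal{I}_i = \emptyset$ (where $R_{i-1} = L_i = 0$ and the local estimate is the trivial $0 \leq 0$) and $\mu_i \in \{c_{i-1}, c_i\}$ (where one of $R_{i-1}, L_i$ vanishes, so the convex-combination weight $p$ or $q$ demanded by Theorem \ref{0331003} degenerates to $0$), checking directly in each case that the local inequality holds with equality so that the summation step is unaffected. Care is also needed to ensure the coefficient collection at the endpoints $c_0$ and $c_m$ matches the one-sided definitions $w_0 = R_0$ and $w_m = L_m$; this is where the telescoping of the two sums $\sum R_{i-1} c_{i-1}$ and $\sum L_i c_i$ must be aligned correctly.
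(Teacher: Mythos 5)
Your proposal is correct. The paper states this theorem without proof (only the remarks $2^{0}$ and $3^{0}$ recording that $w_{i} \geq 0$, $\sum_{i=0}^{m} w_{i} = 1$ and $\sum_{i=0}^{m} w_{i}c_{i} = \sum_{i=1}^{n}\lambda_{i}x_{i}$), and your argument --- the two bookkeeping identities, Jensen for the left inequality, and the cell-by-cell chord estimate $\sum_{s \in \mathcal{I}_{i}} \lambda_{s} f(x_{s}) \leq R_{i-1} f(c_{i-1}) + L_{i} f(c_{i})$ summed over $i$ for the right one --- is exactly the intended route, being the discrete counterpart of the paper's own proof of Theorem \ref{0327007} via Theorem \ref{0316002} on subintervals. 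Your attention to the degenerate cases ($\mathcal{I}_{i} = \emptyset$, or one of $R_{i-1}, L_{i}$ vanishing, where Theorem \ref{0331003} literally requires $p, q > 0$) is warranted and handled correctly.
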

\begin{flushleft}
$1^{0}$. If $\mathcal{I}_{1}, \ldots, \mathcal{I}_{m} \neq \emptyset$, then $\mathcal{I}_{1}, \ldots, \mathcal{I}_{m}$ is a partition of $\{ 1, \ldots, n \}$. \\
$2^{0}$. $w_{i} \geq 0, \sum_{i=0}^{m} w_{i} = 1$, and $w_{i}$ do not depend on the convex function $f$. \\
$3^{0}$. $\sum_{i=0}^{m} w_{i}c_{i} = \sum_{i=1}^{n}\lambda_{i}x_{i}$.
\end{flushleft}


\begin{example}\label{0424003}
If $f: [0, 1] \rightarrow \mathbb{R}$ is convex, then
\begin{align}
\nonumber
\frac{ f(0) + f\left( \frac{1}{17} \right) + \cdots + f\left( \frac{16}{17} \right) + f(1) }{18} \leq \frac{5}{51} f(0) + \frac{245}{918} f\left( \frac{1}{7} \right) + \frac{371}{918} f\left( \frac{4}{7} \right) + \frac{106}{459} f(1).
\end{align}
If $f$ is strictly convex, then the inequality is strict.
\end{example}
\begin{proof}By Theorem \ref{0424001}, we conclude the proof.
\end{proof}

\centerline{ \textbf{Part \uppercase\expandafter{\romannumeral3}} }

\begin{theorem}\label{0423001}
If $f: [0, 1] \rightarrow \mathbb{R}$ is convex, then
\begin{align}
\nonumber
\frac{ f\left( \frac{1}{n+1} \right) + \cdots + f\left( \frac{n}{n+1} \right) }{n} \geq \frac{ f\left( \frac{1}{n} \right) + \cdots + f\left( \frac{n-1}{n} \right) }{n-1}, n = 2, 3, \ldots.
\end{align}
If $f$ is strictly convex, then the inequalities are strict.
\end{theorem}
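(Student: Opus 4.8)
The plan is to recognize the two sides as consecutive terms of a single sequence and to prove that this sequence is monotone. Writing $S_m = \frac{1}{m}\sum_{k=1}^{m} f\!\left(\frac{k}{m+1}\right)$, the left-hand side is $S_n$ and the right-hand side is $S_{n-1}$, so the assertion is exactly $S_n \ge S_{n-1}$.

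First I would express each coarse-grid node $\frac{k}{n}$ (for $1 \le k \le n-1$) as a convex combination of the two neighbouring fine-grid nodes $\frac{k}{n+1}$ and $\frac{k+1}{n+1}$. Since $\frac{k}{n+1} \le \frac{k}{n} \le \frac{k+1}{n+1}$ for $1 \le k \le n-1$, such a representation exists, and solving for the weights gives
\begin{align}
\nonumber
\frac{k}{n} = \frac{n-k}{n}\cdot\frac{k}{n+1} + \frac{k}{n}\cdot\frac{k+1}{n+1},
\end{align}
with both weights $\frac{n-k}{n}$ and $\frac{k}{n}$ strictly positive. Convexity of $f$ then yields, for each $k = 1, \ldots, n-1$,
\begin{align}
\nonumber
f\!\left(\frac{k}{n}\right) \le \frac{n-k}{n}\, f\!\left(\frac{k}{n+1}\right) + \frac{k}{n}\, f\!\left(\frac{k+1}{n+1}\right).
\end{align}

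Next I would sum these $n-1$ inequalities and track, for each fine node $\frac{j}{n+1}$ with $j = 1, \ldots, n$, the total weight attached to $f\!\left(\frac{j}{n+1}\right)$. Node $j$ picks up $\frac{n-j}{n}$ from the term $k=j$ and $\frac{j-1}{n}$ from the term $k=j-1$; allowing for the two endpoints $j=1$ and $j=n$, each of which is hit only once, every total collapses to the same value $\frac{n-1}{n}$. This gives
\begin{align}
\nonumber
\sum_{k=1}^{n-1} f\!\left(\frac{k}{n}\right) \le \frac{n-1}{n}\sum_{j=1}^{n} f\!\left(\frac{j}{n+1}\right),
\end{align}
and dividing by $n-1$ is precisely the claimed inequality. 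For the strict case, the two interpolation nodes $\frac{k}{n+1}$ and $\frac{k+1}{n+1}$ are always distinct and the weights are strictly positive, so strict convexity makes each pointwise inequality strict, and hence their sum strict.

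I expect the only delicate point to be verifying that the accumulated weights are uniformly $\frac{n-1}{n}$; this telescoping of the two coefficient families is what forces the same average to reappear on both sides and is really the crux of the argument. Locating the correct two-node convex decomposition of $\frac{k}{n}$ is the key idea, after which everything reduces to this coefficient bookkeeping.
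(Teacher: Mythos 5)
Your proof is correct and is essentially the same argument as the paper's: the paper regroups $\frac{1}{n}\sum_{i=1}^{n} f\bigl(\frac{i}{n+1}\bigr)$ using weights $\frac{i}{n}-\frac{i-1}{n-1}=\frac{n-i}{n(n-1)}$ and $\frac{i}{n-1}-\frac{i}{n}=\frac{i}{n(n-1)}$, which are exactly your weights $\frac{n-k}{n}$, $\frac{k}{n}$ divided by $n-1$, and then applies two-point convexity at the same interpolation point $\frac{i}{n}$. The only difference is direction of presentation (the paper states the coefficient identity first and then applies Jensen, while you apply Jensen pointwise and then do the coefficient bookkeeping), so no further comparison is needed.
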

\begin{proof}We have
\begin{align}
\nonumber
     &\frac{1}{n}\sum_{i=1}^{n} f\left( \frac{i}{n+1} \right) = \sum_{i=1}^{n-1}\left[ \left( \frac{i}{n} - \frac{i-1}{n-1} \right) f\left( \frac{i}{n+1} \right) + \left( \frac{i}{n-1} - \frac{i}{n} \right) f\left( \frac{i+1}{n+1} \right) \right] \\
\nonumber
\geq &\sum_{i=1}^{n-1}\left[ \left( \frac{i}{n} - \frac{i-1}{n-1} \right) + \left( \frac{i}{n-1} - \frac{i}{n} \right) \right] f\left( \frac{ \left( \frac{i}{n} - \frac{i-1}{n-1} \right)\frac{i}{n+1} + \left( \frac{i}{n-1} - \frac{i}{n} \right)\frac{i+1}{n+1} }{ \left( \frac{i}{n} - \frac{i-1}{n-1} \right) + \left( \frac{i}{n-1} - \frac{i}{n} \right) } \right) \\
\nonumber
=    &\frac{1}{n-1}\sum_{i=1}^{n-1} f\left( \frac{i}{n} \right).
\end{align}
This concludes the proof.
\end{proof}


\begin{corollary}\label{0423002}
Let $f: [a, b] \rightarrow \mathbb{R}$ be (strictly) convex.
Then $\left\{ \frac{ f(x_{1}) + \cdots + f(x_{n-1}) }{n-1} \right\}$ is (strictly) increasing and converges to $\frac{1}{b - a}\int_{a}^{b} f(x) \mathrm{d}x$, where $x_{i} = a + i\frac{b - a}{n}$.
\end{corollary}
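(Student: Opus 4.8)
The plan is to treat the two assertions separately: the (strict) monotonicity reduces directly to Theorem \ref{0423001} after an affine normalization, while the convergence is a Riemann-sum computation. Write $A_n = \frac{f(x_1)+\cdots+f(x_{n-1})}{n-1}$ with $x_i = a + i\frac{b-a}{n}$. First I would pass to the unit interval by setting $\tilde f(t) = f\big(a + t(b-a)\big)$ for $t \in [0,1]$. Since $t \mapsto a + t(b-a)$ is an increasing affine bijection of $[0,1]$ onto $[a,b]$, the function $\tilde f$ is (strictly) convex exactly when $f$ is, and $\tilde f(i/n) = f(x_i)$ for all $i$. Hence
\[
A_n = \frac{\tilde f(1/n) + \cdots + \tilde f\big((n-1)/n\big)}{n-1},
\]
which is precisely the right-hand side of the inequality in Theorem \ref{0423001}, while $A_{n+1}$ is its left-hand side. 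Applying that theorem to $\tilde f$ immediately gives $A_{n+1} \ge A_n$, with strict inequality whenever $f$ is strictly convex.

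For convergence I would argue through Riemann sums. Multiplying by $b-a$ and regrouping,
\[
\frac{b-a}{n}\sum_{i=1}^{n-1} f(x_i) = \frac{(n-1)(b-a)}{n}\,A_n,
\]
and the left-hand side is the right-endpoint Riemann sum $\frac{b-a}{n}\sum_{i=1}^{n} f(x_i)$ with the single endpoint term $\frac{b-a}{n}f(b)$ removed. A convex function on $[a,b]$ is continuous on $(a,b)$ and bounded, hence Riemann integrable, so these Riemann sums converge to $\int_a^b f(x)\,\mathrm{d}x$; since $\frac{b-a}{n}f(b) \to 0$ and $\frac{(n-1)(b-a)}{n} \to b-a$, solving the displayed identity for $A_n$ yields $A_n \to \frac{1}{b-a}\int_a^b f(x)\,\mathrm{d}x$.

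One may dispense with half of this last step by noting that Theorem \ref{0212005} bounds $A_n$ above by $\frac{f(a)+f(b)}{2}$, so the already-established monotonicity forces convergence; but identifying the value of the limit still requires the Riemann-sum computation. The only genuinely delicate point is this convergence half: one must correctly reconcile an equal-weight average over the $n-1$ interior nodes (weight $\tfrac{1}{n-1}$ each) with a bona fide Riemann sum (weight $\tfrac{b-a}{n}$ each), and check that both the discarded boundary contribution $\frac{b-a}{n}f(b)$ and the normalizing factor $\frac{n}{n-1}$ behave as claimed in the limit. The monotonicity, being a direct corollary of Theorem \ref{0423001} once $\tilde f$ is in hand, needs no further effort.
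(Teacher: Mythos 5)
Your proposal is correct and takes essentially the same route as the paper: the paper's proof of this corollary consists of invoking Theorem \ref{0423001} (implicitly after the same affine rescaling to $[0,1]$ that you make explicit), and it leaves the identification of the limit as $\frac{1}{b-a}\int_a^b f(x)\,\mathrm{d}x$ unstated. Your Riemann-sum computation, including the reconciliation of the weight $\tfrac{1}{n-1}$ with $\tfrac{b-a}{n}$ and the vanishing endpoint term $\tfrac{b-a}{n}f(b)$, is a correct filling-in of that omitted step.
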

\begin{proof}By Theorem \ref{0423001}, we conclude the proof.
\end{proof}
\begin{flushleft}
$1^{0}$. We provide another method to prove $\frac{1}{b - a}\int_{a}^{b} f(x) \mathrm{d}x \geq \frac{ f(x_{1}) + \cdots + f(x_{n-1}) }{n-1}, n = 2, 3, \ldots$.
\begin{proof}Let $y_{i} = \frac{ x_{i-1} + x_{i} }{2}, i = 1, \ldots, n$. Then
\begin{align}
\nonumber
\frac{1}{b - a}\int_{ y_{1} }^{ y_{n} } f(x) \mathrm{d}x = \frac{1}{n}\sum_{i=1}^{n-1}\frac{1}{ y_{i+1} - y_{i} }\int_{ y_{i} }^{ y_{i+1} } f(x) \mathrm{d}x \geq \frac{ f(x_{1}) + \cdots + f(x_{n-1}) }{n}.
\end{align}
Note that
\begin{enumerate}[(1)]
\item $x_{1}, \ldots, x_{n-1} \in [ y_{1}, y_{n} ]$,
\item $\frac{1}{b - a}\int_{ [a, b] \setminus ( y_{1}, y_{n} ) } \mathrm{d}x = \left( \frac{1}{n-1} - \frac{1}{n} \right)(n-1)$,
\item $\frac{1}{b - a}\int_{ [a, b] \setminus ( y_{1}, y_{n} ) } x \mathrm{d}x = \left( \frac{1}{n-1} - \frac{1}{n} \right)( x_{1} + \cdots + x_{n-1} )$.
\end{enumerate}
By Theorem \ref{0406081}, $\frac{1}{b - a}\int_{ [a, b] \setminus ( y_{1}, y_{n} ) } f(x) \mathrm{d}x \geq \left( \frac{1}{n-1} - \frac{1}{n} \right)( f(x_{1}) + \cdots + f(x_{n-1}) )$. Then
\begin{align}
\nonumber
\frac{1}{b - a}\int_{a}^{b} f(x) \mathrm{d}x = &\frac{1}{b - a}\int_{ y_{1} }^{ y_{n} } f(x) \mathrm{d}x + \frac{1}{b - a}\int_{ [a, b] \setminus ( y_{1}, y_{n} ) } f(x) \mathrm{d}x \\
\nonumber
\geq &\frac{ f(x_{1}) + \cdots + f(x_{n-1}) }{n-1}.
\end{align}
This concludes the proof.
\end{proof}
\end{flushleft}


\begin{theorem}\label{0429001}
If $f: [a, b] \rightarrow \mathbb{R}$ is convex, then
\begin{align}
\nonumber
\frac{ f\left( \frac{0}{n} \right) + f\left( \frac{1}{n} \right) + \cdots + f\left( \frac{n}{n} \right) }{n+1} \geq \frac{ f\left( \frac{0}{n+1} \right) + f\left( \frac{1}{n+1} \right) + \cdots + f\left( \frac{n+1}{n+1} \right) }{n+2}, n = 1, 2, \ldots.
\end{align}
If $f$ is strictly convex, then the inequalities are strict.
\end{theorem}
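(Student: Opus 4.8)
The plan is to clear denominators and prove the equivalent statement
\[
\sum_{j=0}^{n+1} f\!\left( \frac{j}{n+1} \right) \le \frac{n+2}{n+1}\sum_{i=0}^{n} f\!\left( \frac{i}{n} \right),
\]
from which the stated inequality follows at once on dividing by $n+2$; writing $A_{n} = \frac{1}{n+1}\sum_{i=0}^{n} f(i/n)$, this is precisely $A_{n+1} \le A_{n}$. The two endpoints $\frac{0}{n+1} = \frac{0}{n} = 0$ and $\frac{n+1}{n+1} = \frac{n}{n} = 1$ are common to both grids, so I would carry them over untouched and concentrate on bounding the interior fine points $\frac{j}{n+1}$, $1 \le j \le n$, in terms of the coarse points $\frac{i}{n}$.

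The key observation I would establish first is that the two grids interlace perfectly: since $\frac{j}{n+1} < \frac{j}{n}$ for $j \ge 1$ and $\frac{j}{n} < \frac{j+1}{n+1}$ exactly when $j < n$, the merged sorted list of points is $0 = \frac{0}{n} < \frac{1}{n+1} < \frac{1}{n} < \frac{2}{n+1} < \cdots < \frac{n}{n+1} < \frac{n}{n} = 1$, so each coarse interval $\left[ \frac{i}{n}, \frac{i+1}{n} \right]$ with $0 \le i \le n-1$ contains exactly one interior fine point, namely $\frac{i+1}{n+1}$. Solving $\frac{i+1}{n+1} = \theta_{i}\frac{i}{n} + (1 - \theta_{i})\frac{i+1}{n}$ gives $\theta_{i} = \frac{i+1}{n+1} \in (0, 1)$, so convexity of $f$ yields the two-point Jensen bound
\[
f\!\left( \frac{i+1}{n+1} \right) \le \frac{i+1}{n+1} f\!\left( \frac{i}{n} \right) + \frac{n-i}{n+1} f\!\left( \frac{i+1}{n} \right), \quad i = 0, 1, \ldots, n-1,
\]
with strict inequality when $f$ is strictly convex, since the two coarse endpoints are distinct and $\theta_{i} \in (0,1)$.

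Finally I would sum these $n$ bounds over $i$, add $f(0) + f(1)$ to reconstitute the full fine sum on the left, and collect coefficients on the right. For an interior $k$ the coefficient of $f\!\left( \frac{k}{n} \right)$ is $\frac{k+1}{n+1}$ (from the term $i = k$) plus $\frac{n-k+1}{n+1}$ (from the term $i = k-1$), summing to $\frac{n+2}{n+1}$; for $k = 0$ or $k = n$ one of these two contributions is instead the carried-over endpoint coefficient $1$, and again the total is $\frac{1}{n+1} + 1 = \frac{n+2}{n+1}$. This produces the displayed inequality, hence $A_{n+1} \le A_{n}$, strictly when $f$ is strictly convex. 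I expect the only genuine work to be this coefficient bookkeeping, and the device that renders it painless is the perfect interlacing (each coarse subinterval capturing a single interior fine point); the same mechanism underlies the companion Theorem \ref{0423001} for the endpoint-free averages. One could alternatively deduce the bound from Theorem \ref{0424001} by taking the partition $P$ to be the coarse grid, at the cost of re-verifying that the weights it manufactures are exactly the uniform weights $\frac{1}{n+1}$, which is the same computation in disguise.
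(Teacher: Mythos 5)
Your proposal is correct and is essentially the paper's own proof read in the opposite direction: the two-point bound $f\bigl( \tfrac{i+1}{n+1} \bigr) \leq \tfrac{i+1}{n+1} f\bigl( \tfrac{i}{n} \bigr) + \tfrac{n-i}{n+1} f\bigl( \tfrac{i+1}{n} \bigr)$ is exactly the convex combination the paper uses (after the re-indexing $i \mapsto i-1$), and your coefficient bookkeeping reproduces the paper's identity for $\tfrac{1}{n+1}\sum_{i=0}^{n} f(i/n)$ multiplied through by $n+2$. No gaps.
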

\begin{proof}We have
\begin{align}
\nonumber
\frac{1}{n+1}\sum_{i=0}^{n} f\left( \frac{i}{n} \right) = &\frac{ f(0) + f(1) }{n+2} + \frac{1}{n+2}\sum_{i=1}^{n}\left[ \frac{i}{n+1} f\left( \frac{i-1}{n} \right) + \frac{n-i+1}{n+1} f\left( \frac{i}{n} \right) \right] \\
\nonumber
\geq &\frac{ f(0) + f(1) }{n+2} + \frac{1}{n+2}\sum_{i=1}^{n} f\left( \frac{i}{n+1}\cdot\frac{i-1}{n} + \frac{n-i+1}{n+1}\cdot\frac{i}{n} \right) \\
\nonumber
= & \frac{1}{n+2}\sum_{i=0}^{n+1} f\left( \frac{i}{n+1} \right).
\end{align}
This concludes the proof.
\end{proof}


\begin{corollary}\label{0522001}
Let $f: [a, b] \rightarrow \mathbb{R}$ be (strictly) convex.
Then $\left\{ \frac{ f(x_{0}) + f(x_{1}) + \cdots + f(x_{n}) }{n+1} \right\}$ is (strictly) decreasing and converges to $\frac{1}{b - a}\int_{a}^{b} f(x) \mathrm{d}x$, where $x_{i} = a + i\frac{b - a}{n}$.
\end{corollary}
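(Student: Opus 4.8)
The plan is to obtain the two assertions---monotonicity and the value of the limit---separately, each by invoking a result already established for equally spaced nodes, after which nothing but routine bookkeeping remains.

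First I would dispose of the monotonicity. Writing $D_{n} = \frac{ f(x_{0}) + f(x_{1}) + \cdots + f(x_{n}) }{n+1}$ with $x_{i} = a + i\frac{b-a}{n}$, I would pass to the interval $[0,1]$ by setting $g(t) = f( a + t(b-a) )$, which is (strictly) convex exactly when $f$ is and satisfies $g(i/n) = f(x_{i})$. Theorem \ref{0429001} applied to $g$ is then verbatim the inequality $D_{n} \geq D_{n+1}$, strict when $f$ is strictly convex. Hence $\{ D_{n} \}$ is (strictly) decreasing.

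For the limit, I would exploit the companion sequence $A_{n} = \frac{ f(x_{1}) + \cdots + f(x_{n-1}) }{n-1}$ built from the \emph{interior} nodes of the very same partition $x_{i} = a + i\frac{b-a}{n}$. By Corollary \ref{0423002}, $A_{n}$ is (strictly) increasing and $A_{n} \to \frac{1}{b-a}\int_{a}^{b} f(x)\,\mathrm{d}x$. Because $D_{n}$ and $A_{n}$ are formed from identical nodes, separating off the two endpoint terms $f(x_{0}) = f(a)$ and $f(x_{n}) = f(b)$ yields the exact identity
\begin{align}
\nonumber
D_{n} = \frac{n-1}{n+1}\,A_{n} + \frac{ f(a) + f(b) }{n+1}.
\end{align}
Letting $n \to \infty$, the coefficient $\frac{n-1}{n+1} \to 1$, the term $\frac{ f(a) + f(b) }{n+1} \to 0$, and $A_{n}$ tends to the integral average; therefore $D_{n} \to \frac{1}{b-a}\int_{a}^{b} f(x)\,\mathrm{d}x$ as well. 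Existence of the limit is in any case automatic once monotonicity is known, since Theorem \ref{0212005} confines $D_{n}$ between $f\!\left( \frac{a+b}{2} \right)$ and $\frac{ f(a) + f(b) }{2}$; the displayed identity serves only to identify its value.

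The substance of the argument is carried entirely by Theorem \ref{0429001} and Corollary \ref{0423002}, so the points needing care are combinatorial rather than analytic: verifying that the affine change of variables preserves (strict) convexity so that Theorem \ref{0429001} transfers without modification, and---the one genuine subtlety---observing that $D_{n}$ and $A_{n}$ are built from the \emph{same} set of nodes, which is precisely what makes the boxed relation an exact identity instead of a merely asymptotic one. Once that alignment is secured, passing to the limit is immediate and requires no estimate on $f$ beyond the finiteness of $f(a)$ and $f(b)$.
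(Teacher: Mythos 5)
Your proposal is correct and follows the paper's own route: the paper proves this corollary simply by citing Theorem \ref{0429001}, exactly as you do for the monotonicity after the (harmless) affine rescaling to $[0,1]$. Your additional identification of the limit via the exact identity $D_{n} = \frac{n-1}{n+1}A_{n} + \frac{f(a)+f(b)}{n+1}$ and Corollary \ref{0423002} is a correct filling-in of a step the paper leaves implicit (there the convergence is tacitly the Riemann-sum limit), not a different method.
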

\begin{proof}By Theorem \ref{0429001}, we conclude the proof.
\end{proof}
\begin{flushleft}
$1^{0}$. We provide another method to prove $\frac{1}{b - a}\int_{a}^{b} f(x) \mathrm{d}x \leq \frac{ f(x_{0}) + f(x_{1}) + \cdots + f(x_{n}) }{n+1}, n = 2, 3, \ldots$. \\
\begin{proof}By Theorems \ref{0312001} and \ref{0423007},
\begin{align}
\nonumber
  &\frac{ f(x_{0}) + f(x_{1}) + \cdots + f(x_{n}) }{n+1} - \frac{1}{b - a}\int_{a}^{b} f(x) \mathrm{d}x \\
\nonumber
\geq &\frac{ f(x_{0}) + f(x_{1}) + \cdots + f(x_{n}) }{n+1} - \left( \frac{ f(x_{0}) }{2n} + \frac{ f(x_{1}) + \cdots + f(x_{n-1}) }{n} + \frac{ f(x_{n}) }{2n} \right) \\
\nonumber
= &\frac{n-1}{ n(n+1) }\left( \frac{ f(x_{0}) + f(x_{n}) }{2} - \frac{ f(x_{1}) + \cdots + f(x_{n-1}) }{n-1} \right) \geq 0.
\end{align}
This concludes the proof.
\end{proof}
\end{flushleft}


\begin{theorem}\label{0423004}
Let $f: [a, b] \rightarrow \mathbb{R}$ be convex, and let $x_{i} = a + i\frac{b - a}{n}$, where $n \in \mathbb{N}$ and $n \geq 2$. Then
\begin{align}
\nonumber
\frac{ f(x_{1}) + \cdots + f(x_{n-1}) }{n-1} \leq \frac{1}{b - a}\int_{a}^{b} f(x) \mathrm{d}x \leq \frac{ f(x_{0}) + f(x_{1}) + \cdots + f(x_{n}) }{n+1}.
\end{align}
If $f$ is strictly convex, then the inequalities are strict.
\end{theorem}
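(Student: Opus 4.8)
The plan is to read this two-sided inequality off directly from the two monotone-convergence corollaries already established, namely Corollary \ref{0423002} and Corollary \ref{0522001}. Both describe sequences of averages of $f$ at the equally spaced nodes $x_i = a + i\frac{b-a}{n}$ that converge monotonically to the integral mean $\frac{1}{b-a}\int_{a}^{b} f(x)\mathrm{d}x$, and the present statement is nothing more than the observation that each term of a monotone sequence sits on the appropriate side of its limit.

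First I would handle the left inequality. By Corollary \ref{0423002}, the sequence $\left\{ \frac{ f(x_{1}) + \cdots + f(x_{n-1}) }{n-1} \right\}$ is increasing in $n$ and converges to $\frac{1}{b-a}\int_{a}^{b} f(x)\mathrm{d}x$. An increasing sequence never exceeds its limit, so for every $n \geq 2$ the $n$th term is at most the integral mean, which is exactly the left-hand bound.

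Next I would handle the right inequality in the same spirit. By Corollary \ref{0522001}, the sequence $\left\{ \frac{ f(x_{0}) + f(x_{1}) + \cdots + f(x_{n}) }{n+1} \right\}$ is decreasing in $n$ and converges to the same integral mean. A decreasing sequence never drops below its limit, so for every $n \geq 2$ the $n$th term is at least the integral mean, which is the right-hand bound. Combining the two gives the non-strict case.

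For the strict statement, if $f$ is strictly convex then Corollaries \ref{0423002} and \ref{0522001} yield \emph{strict} monotonicity of both sequences. A strictly increasing sequence lies strictly below its limit (each term is strictly smaller than any later term, which is in turn at most the limit), and dually a strictly decreasing sequence lies strictly above its limit; this upgrades both inequalities to strict ones. The only point requiring care is precisely this elementary ``term versus limit'' logic, so there is no genuine obstacle here: all of the analytic content has already been packaged into the two corollaries, and the present theorem is essentially their juxtaposition.
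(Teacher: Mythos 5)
Your proposal is correct and is essentially the argument the paper intends: Theorem \ref{0423004} is stated immediately after Corollaries \ref{0423002} and \ref{0522001} precisely so that it can be read off as ``each term of a (strictly) increasing sequence is (strictly) below its limit, and dually for the decreasing one.'' The term-versus-limit logic you spell out, including the strict case via $a_n < a_{n+1} \leq L$, is sound, so nothing further is needed.
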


\section{The squeeze theorem for convex functions}

\begin{theorem}\label{0331005}
Let $f: [ a_{1}, b_{1} ] \setminus ( a_{0}, b_{0} ) \rightarrow \mathbb{R}$ be convex ($[ a_{0}, b_{0} ] \subseteq [ a_{1}, b_{1} ]$).
Then for any
\begin{enumerate}[(1)]
\item $x_{i} \in [ a_{1}, b_{1} ] \setminus ( a_{0}, b_{0} )$ and $\lambda_{i} > 0, \sum_{i=1}^{n}\lambda_{i} = 1$,
\item $p_{s}, q_{s} > 0, p_{s} + q_{s} = 1$,
\end{enumerate}
such that $p_{0}a_{0} + q_{0}b_{0} = \lambda_{1}x_{1} + \cdots + \lambda_{n}x_{n} = p_{1}a_{1} + q_{1}b_{1}$, we have
\begin{align}
\nonumber
p_{0} f(a_{0}) + q_{0} f(b_{0}) \leq \lambda_{1} f(x_{1}) + \cdots + \lambda_{n} f(x_{n}) \leq p_{1} f(a_{1}) + q_{1} f(b_{1}).
\end{align}
If $f$ is strictly convex, then the left equality holds if and only if $x_{i} = a_{0}\ \text{or}\ b_{0}$, and the right equality holds if and only if $x_{i} = a_{1}\ \text{or}\ b_{1}$, $i = 1, \ldots, n$.
\end{theorem}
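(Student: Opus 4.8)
The plan is to obtain the two inequalities separately, each as a single application of Theorem \ref{0605002}, after upgrading the convexity of $f$ to weighted convexity. Write $D = [a_1,b_1]\setminus(a_0,b_0)$ and $c = \lambda_1 x_1 + \cdots + \lambda_n x_n = p_0 a_0 + q_0 b_0 = p_1 a_1 + q_1 b_1$. First I would record that all the points in play lie in $D$: indeed $a_0,b_0,a_1,b_1 \in D$ since none of them lies in the open interval $(a_0,b_0)$, and $x_i \in D$ by hypothesis; moreover $c \in [a_1,b_1] = D^{*}$, while $c \in (a_0,b_0)$ because $p_0,q_0 > 0$. Since $f$ is convex on $D$, Theorem \ref{0716001} shows $f$ is weighted convex on $D$, hence weighted convex at $c$ over $D$ as $c \in D^{*}$; if $f$ is strictly convex, the same theorem gives strict weighted convexity at $c$.

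For the left inequality I would apply Theorem \ref{0605002} with the points $x_1,\ldots,x_n$ (weights $\lambda_1,\ldots,\lambda_n$) playing the role of the $a_i$ and the two points $a_0,b_0$ (weights $p_0,q_0$) playing the role of the $b_j$. The barycentric hypothesis $\sum_i \lambda_i x_i = p_0 a_0 + q_0 b_0 = c$ holds, and the separation hypothesis $a_i \notin (\min_j b_j, \max_j b_j)$ becomes exactly $x_i \notin (a_0,b_0)$, which is part of our assumption. Theorem \ref{0605002} then yields $\sum_i \lambda_i f(x_i) \ge p_0 f(a_0) + q_0 f(b_0)$.

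For the right inequality I would apply Theorem \ref{0605002} in the opposite configuration, with $a_1,b_1$ (weights $p_1,q_1$) as the $a_i$ and $x_1,\ldots,x_n$ (weights $\lambda_1,\ldots,\lambda_n$) as the $b_j$. Again the barycenters agree, and since every $x_i$ lies in $[a_1,b_1]$ we have $a_1 \le \min_i x_i$ and $\max_i x_i \le b_1$, so both $a_1$ and $b_1$ lie outside $(\min_i x_i, \max_i x_i)$, verifying the separation hypothesis. Theorem \ref{0605002} now gives $p_1 f(a_1) + q_1 f(b_1) \ge \sum_i \lambda_i f(x_i)$. Combining the two applications produces the desired chain of inequalities.

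The remaining, and most delicate, point is to match the equality clauses. Under strict convexity the equality case of Theorem \ref{0605002} states that equality holds if and only if every point among the $b_j$ coincides with $\min_i a_i$ or $\max_i a_i$. For the right inequality this reads directly as $x_i \in \{a_1,b_1\}$ for all $i$, as claimed. For the left inequality it reads $a_0,b_0 \in \{\min_i x_i, \max_i x_i\}$; here I would use that $c \in (a_0,b_0)$ forces the $x_i$ to straddle $[a_0,b_0]$, i.e. $\min_i x_i \le a_0 < b_0 \le \max_i x_i$, so the condition is equivalent to $\min_i x_i = a_0$ and $\max_i x_i = b_0$, and combined with $x_i \notin (a_0,b_0)$ it forces each $x_i \in \{a_0,b_0\}$, which is the stated criterion. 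This translation between the two forms of the equality condition is the only step requiring care; the inequalities themselves drop out immediately from the two applications of Theorem \ref{0605002}.
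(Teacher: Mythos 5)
Your proof is correct and follows essentially the same route as the paper, which simply cites Theorems \ref{0227002} and \ref{0716001}: you use Theorem \ref{0716001} to pass from convexity to weighted convexity and then apply Theorem \ref{0605002} twice, which is just the decomposition principle of Theorem \ref{0227002} in packaged form. Your careful translation of the equality clause for the left inequality (from $a_{0}, b_{0} \in \{\min_{i} x_{i}, \max_{i} x_{i}\}$ to $x_{i} \in \{a_{0}, b_{0}\}$ for all $i$) is a worthwhile detail that the paper's one-line proof leaves implicit.
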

\begin{proof}By Theorems \ref{0227002} and \ref{0716001}, we conclude the proof.
\end{proof}


\begin{theorem}[Inequality of arithmetic and geometric means]\label{0328020}\hfill \\
Suppose that $[a_{0}, b_{0}] \subseteq [a_{1}, b_{1}] \subset (0, +\infty)$. Then for any
\begin{enumerate}[(1)]
\item $x_{i} \in [a_{1}, b_{1}] \setminus (a_{0}, b_{0})$ and $\lambda_{i} > 0, \sum_{i=1}^{n}\lambda_{i} = 1$,
\item $p_{s}, q_{s} > 0, p_{s} + q_{s} = 1$,
\end{enumerate}
such that $p_{0}a_{0} + q_{0}b_{0} = \lambda_{1}x_{1} + \cdots + \lambda_{n}x_{n} = p_{1}a_{1} + q_{1}b_{1}$, we have
\begin{align}
\nonumber
\frac{ p_{0}a_{0} + q_{0}b_{0} }{ a_{0}^{ p_{0} } b_{0}^{ q_{0} } } \leq \frac{ \lambda_{1}x_{1} + \cdots + \lambda_{n}x_{n} }{ x_{1}^{ \lambda_{1} } \cdots x_{n}^{ \lambda_{n} } } \leq \frac{ p_{1}a_{1} + q_{1}b_{1} }{ a_{1}^{ p_{1} } b_{1}^{ q_{1} } }.
\end{align}
The left equality holds if and only if $x_{i} = a_{0}\ \text{or}\ b_{0}$, and the right equality holds if and only if $x_{i} = a_{1}\ \text{or}\ b_{1}$, $i = 1, \ldots, n$.
\end{theorem}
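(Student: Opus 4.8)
The plan is to reduce the entire statement to the squeeze theorem for convex functions (Theorem \ref{0331005}) applied to the single convex function $f(x) = -\ln x$. The key observation is that $-\ln x$ is strictly convex on $(0, +\infty)$, since its second derivative $1/x^{2}$ is positive; because $[a_{1}, b_{1}] \subset (0, +\infty)$, it is an admissible convex function on $[a_{1}, b_{1}] \setminus (a_{0}, b_{0})$, and the logarithm turns the geometric means appearing in the statement into the weighted sums that Theorem \ref{0331005} controls.

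First I would write $c = p_{0}a_{0} + q_{0}b_{0} = \lambda_{1}x_{1} + \cdots + \lambda_{n}x_{n} = p_{1}a_{1} + q_{1}b_{1}$ for the common arithmetic mean. Applying Theorem \ref{0331005} to $f = -\ln$ and negating the resulting chain gives
\[
\ln\!\left( a_{0}^{p_{0}} b_{0}^{q_{0}} \right) \geq \sum_{i=1}^{n}\lambda_{i}\ln x_{i} \geq \ln\!\left( a_{1}^{p_{1}} b_{1}^{q_{1}} \right),
\]
where $\sum_{i=1}^{n}\lambda_{i}\ln x_{i} = \ln\!\big( x_{1}^{\lambda_{1}} \cdots x_{n}^{\lambda_{n}} \big)$.

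Next I would add the constant $\ln c$ to each side. Because all three numerators $p_{0}a_{0} + q_{0}b_{0}$, $\sum_{i}\lambda_{i}x_{i}$ and $p_{1}a_{1} + q_{1}b_{1}$ equal $c$, this is exactly the step that converts the purely multiplicative statement into one about AM/GM ratios:
\[
\ln\frac{ p_{0}a_{0} + q_{0}b_{0} }{ a_{0}^{p_{0}} b_{0}^{q_{0}} } \leq \ln\frac{ \lambda_{1}x_{1} + \cdots + \lambda_{n}x_{n} }{ x_{1}^{\lambda_{1}} \cdots x_{n}^{\lambda_{n}} } \leq \ln\frac{ p_{1}a_{1} + q_{1}b_{1} }{ a_{1}^{p_{1}} b_{1}^{q_{1}} }.
\]
Exponentiating and using the monotonicity of $\exp$ yields the asserted chain of inequalities. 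Since $-\ln$ is \emph{strictly} convex, the equality clauses of Theorem \ref{0331005} transfer verbatim: the left equality holds if and only if $x_{i} \in \{ a_{0}, b_{0} \}$ for every $i$, and the right equality holds if and only if $x_{i} \in \{ a_{1}, b_{1} \}$ for every $i$, which is precisely the equality statement here.

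I expect no genuine obstacle. The only points to check are bookkeeping ones: that $-\ln$ qualifies as a convex function on $[a_{1}, b_{1}] \setminus (a_{0}, b_{0})$, which is immediate from $[a_{1}, b_{1}] \subset (0, +\infty)$, and that adding $\ln c$ is legitimate precisely because all three numerators coincide with the common mean $c$. Monotonicity of $\ln$ and $\exp$ does the rest, so the argument is essentially a one-line specialization of Theorem \ref{0331005}.
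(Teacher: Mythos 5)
Your proof is correct and is exactly the argument the paper intends: Theorem~\ref{0328020} is stated in the squeeze-theorem section as an unproved specialization of Theorem~\ref{0331005} to the strictly convex function $f(x) = -\ln x$, with the common mean $c$ supplying the identical numerators, and the strict-convexity equality clauses carrying over verbatim. The only nit is the phrase ``add the constant $\ln c$ to each side'': what you actually do is subtract each term from $\ln c$ (which flips the $\geq$ chain back to $\leq$), but your displayed inequalities are all correct, so this is purely a wording issue.
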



\begin{theorem}\label{0520002}
Let $f: [ a_{1}, b_{1} ] \setminus ( a_{0}, b_{0} ) \rightarrow \mathbb{R}$ be convex ($[ a_{0}, b_{0} ] \subset ( a_{1}, b_{1} )$).
If $\frac{ a_{0} + b_{0} }{2} = \frac{ a_{1} + b_{1} }{2}$, then
\begin{align}
\nonumber
\frac{ f(a_{0}) + f(b_{0}) }{2} \leq \frac{1}{ (b_{1} - a_{1}) - (b_{0} - a_{0}) }\int_{ [ a_{1}, b_{1} ] \setminus ( a_{0}, b_{0} ) } f(x) \mathrm{d}x \leq \frac{ f(a_{1}) + f(b_{1}) }{2}.
\end{align}
If $f$ is strictly convex, then the inequalities are strict.
\end{theorem}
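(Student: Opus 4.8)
The plan is to exploit the symmetry forced by $\frac{a_{0}+b_{0}}{2}=\frac{a_{1}+b_{1}}{2}$, reduce the problem to the ordinary Hermite--Hadamard inequality (Theorem~\ref{0316002}) applied separately on the two pieces of the domain, and then close the resulting gap by a single use of weighted convexity at the common midpoint.

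First I set $c=\frac{a_{0}+b_{0}}{2}=\frac{a_{1}+b_{1}}{2}$ and write $a_{0}=c-s,\ b_{0}=c+s,\ a_{1}=c-t,\ b_{1}=c+t$ with $0<s<t$, the strict inequality $s<t$ coming from $[a_{0},b_{0}]\subset(a_{1},b_{1})$. Then the domain decomposes as $[a_{1},b_{1}]\setminus(a_{0},b_{0})=[a_{1},a_{0}]\cup[b_{0},b_{1}]$ into two intervals of equal length $L=a_{0}-a_{1}=b_{1}-b_{0}$, so that $(b_{1}-a_{1})-(b_{0}-a_{0})=2L$. Since $[a_{1},a_{0}]$ and $[b_{0},b_{1}]$ are subintervals of the domain, $f$ restricts to a convex function on each (Theorem~\ref{0331009}), so Theorem~\ref{0316002} applies to both. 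Writing $m_{L}=\frac{a_{1}+a_{0}}{2}=c-\frac{s+t}{2}$ and $m_{R}=\frac{b_{0}+b_{1}}{2}=c+\frac{s+t}{2}$,
\begin{align}
\nonumber
f(m_{L})\le\frac{1}{L}\int_{a_{1}}^{a_{0}}f(x)\,\mathrm{d}x\le\frac{f(a_{1})+f(a_{0})}{2},\quad
f(m_{R})\le\frac{1}{L}\int_{b_{0}}^{b_{1}}f(x)\,\mathrm{d}x\le\frac{f(b_{0})+f(b_{1})}{2}.
\end{align}
Adding these and dividing by $2$ gives
\begin{align}
\nonumber
\frac{f(m_{L})+f(m_{R})}{2}\le\frac{1}{2L}\int_{[a_{1},b_{1}]\setminus(a_{0},b_{0})}f(x)\,\mathrm{d}x\le\frac{f(a_{1})+f(a_{0})+f(b_{0})+f(b_{1})}{4}.
\end{align}

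It then remains to replace the two outer bounds by $\frac{f(a_{0})+f(b_{0})}{2}$ and $\frac{f(a_{1})+f(b_{1})}{2}$. Here I would use that $f$, being convex on $D=[a_{1},b_{1}]\setminus(a_{0},b_{0})$, is weighted convex at every point of $D^{*}=[a_{1},b_{1}]$, in particular at $c$ (Theorem~\ref{0716001}). Both $\tfrac12 a_{0}+\tfrac12 b_{0}=c$ and $\tfrac12 a_{1}+\tfrac12 b_{1}=c$ represent $c$ with $[a_{0},b_{0}]\subseteq[a_{1},b_{1}]$, so weighted convexity at $c$ yields $f(a_{0})+f(b_{0})\le f(a_{1})+f(b_{1})$, whence $\frac{f(a_{1})+f(a_{0})+f(b_{0})+f(b_{1})}{4}\le\frac{f(a_{1})+f(b_{1})}{2}$, the upper bound. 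Similarly $s<\frac{s+t}{2}$ gives $[a_{0},b_{0}]\subseteq[m_{L},m_{R}]$ with both pairs averaging to $c$, so weighted convexity gives $\frac{f(a_{0})+f(b_{0})}{2}\le\frac{f(m_{L})+f(m_{R})}{2}$, the lower bound. In the strictly convex case the two Hermite--Hadamard estimates displayed above are strict, which already forces both final inequalities to be strict.

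The main obstacle is organizational rather than analytic: one must notice that the two pieces of the domain are congruent and that the midpoint terms $f(m_{L}),f(m_{R})$ and endpoint terms $f(a_{1}),f(b_{1})$ left over by Hermite--Hadamard again form symmetric pairs about $c$, so each gap is disposed of by one application of weighted convexity. The only genuine verification is that the subinterval midpoints $m_{L},m_{R}$ lie in $D$ and satisfy $[a_{0},b_{0}]\subseteq[m_{L},m_{R}]$, and this is precisely where the hypotheses $\frac{a_{0}+b_{0}}{2}=\frac{a_{1}+b_{1}}{2}$ and $[a_{0},b_{0}]\subset(a_{1},b_{1})$ (i.e.\ $s<t$) enter.
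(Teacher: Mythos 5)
Your proof is correct, but it follows a different route from the paper, which offers no self-contained argument for this statement: the paper simply remarks that it is a special case of Theorem \ref{0406055} (the squeeze theorem with a general weight $\lambda(x)$ and general $p_{s},q_{s}$), itself stated without proof and resting on the decomposition machinery of Theorems \ref{0227002} and \ref{0406077}. You instead exploit the midpoint symmetry directly: split the domain into the two congruent intervals $[a_{1},a_{0}]$ and $[b_{0},b_{1}]$, apply the classical Hermite--Hadamard inequality (Theorem \ref{0316002}) on each, and close the two remaining gaps with one application of weighted convexity at $c=\frac{a_{0}+b_{0}}{2}$ via Theorem \ref{0716001}. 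All the details check out: the restrictions of $f$ to the two closed subintervals are convex, the four auxiliary points $m_{L},m_{R},a_{0},b_{0}$ lie in $D$ with $[a_{0},b_{0}]\subseteq[m_{L},m_{R}]$ and both pairs averaging to $c\in D^{*}$, and in the strict case the strictness of the two Hermite--Hadamard estimates alone forces both final inequalities to be strict. What your approach buys is a proof that is essentially elementary and independent of the paper's general squeeze theorem; what it gives up is generality, since it uses the equal-length symmetry in an essential way and would not extend to the asymmetric version covered by Theorem \ref{0406055}. A small simplification worth noting: both closing steps can be done with ordinary convexity alone (convexity of $f$ at $a_{0}$ and at $b_{0}$ over $D$, writing each of $a_{0},b_{0}$ as a convex combination of $a_{1},b_{1}$, respectively of $m_{L},m_{R}$, and adding the two resulting inequalities), so the appeal to weighted convexity, while valid, is not strictly necessary.
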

\begin{flushleft}
$1^{0}$ The theorem is a special case of Theorem \ref{0406055}.
\end{flushleft}


\begin{example}\label{0222007}
If $f: [0, 1] \setminus (1/3, 2/3) \rightarrow \mathbb{R}$ is convex, then
\begin{align}
\nonumber
\frac{ f(1/3) + f(2/3) }{3} \leq \int_{ [0, 1] \setminus (1/3, 2/3) } f(x) \mathrm{d}x \leq \frac{ f(0) + f(1) }{3}.
\end{align}
If $f$ is strictly convex, then the inequalities are strict.
\end{example}


\begin{theorem}\label{0406055}
Let $f: [ a_{1}, b_{1} ] \setminus ( a_{0}, b_{0} ) \rightarrow \mathbb{R}$ be convex ($[ a_{0}, b_{0} ] \subset ( a_{1}, b_{1} )$).
Then for any $\lambda(x) \geq 0, \int_{ [ a_{1}, b_{1} ] \setminus ( a_{0}, b_{0} ) } \lambda(x) \mathrm{d}x = 1$ and any $p_{s}, q_{s} > 0, p_{s} + q_{s} = 1$ such that
\begin{align}
\nonumber
p_{0}a_{0} + q_{0}b_{0} = \int_{ [ a_{1}, b_{1} ] \setminus ( a_{0}, b_{0} ) } \lambda(x) x \mathrm{d}x = p_{1}a_{1} + q_{1}b_{1},
\end{align}
we have
\begin{align}
\nonumber
p_{0} f(a_{0}) + q_{0} f(b_{0}) \leq \int_{ [ a_{1}, b_{1} ] \setminus ( a_{0}, b_{0} ) } \lambda(x) f(x) \mathrm{d}x \leq p_{1} f(a_{1}) + q_{1} f(b_{1}).
\end{align}
If $f$ is strictly convex, then the inequalities are strict.
\end{theorem}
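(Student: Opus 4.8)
The plan is to reduce the statement to two results already established, namely the equivalence of convexity and weighted convexity (Theorem \ref{0716001}) and the probabilistic comparison of weighted convex functions (Theorem \ref{0711004}), applied once in each direction. The guiding picture is that the density $\lambda$ on $D := [a_{1},b_{1}]\setminus(a_{0},b_{0})$ is an \emph{outer} distribution relative to the two-point system at $\{a_{0},b_{0}\}$ (which lives inside the closed gap $[a_{0},b_{0}]$), and simultaneously an \emph{inner} distribution relative to the two-point system at the extreme ends $\{a_{1},b_{1}\}$. Theorem \ref{0711004} says precisely that for a weighted convex function the $f$-average of the more spread-out (outer) system dominates that of the more concentrated (inner) one.

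First I would set $c := \int_{D}\lambda(x)x\,\mathrm{d}x = p_{0}a_{0}+q_{0}b_{0} = p_{1}a_{1}+q_{1}b_{1}$ and translate everything into expectations. Since $c\in(a_{0},b_{0})\subseteq(a_{1},b_{1})\subseteq D^{*}=[a_{1},b_{1}]$ and $f$ is convex on $D$, Theorem \ref{0716001} guarantees that $f$ is weighted convex at $c$ over $D$; note that $c$ need not lie in $D$ itself, only in the convex hull $D^{*}$, which is all Definition \ref{0426001} requires. A convex function is bounded on the compact set $D$, so $\int_{D}\lambda|f|<\infty$ and the density $\lambda$ determines a random variable $\nu_{\lambda}\in D$ with $\operatorname{\bf E}\nu_{\lambda}=c$ and $\operatorname{\bf E}f(\nu_{\lambda})=\int_{D}\lambda f$.

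For the left inequality I would apply Theorem \ref{0711004} with $\mu=\nu_{\lambda}$, with $\nu$ the two-point variable taking the values $a_{0},b_{0}\in D$ with probabilities $p_{0},q_{0}$, and with $m=a_{0},M=b_{0}$: then $\mu\notin(a_{0},b_{0})$ because $\nu_{\lambda}$ avoids the gap, while $\nu\in[a_{0},b_{0}]$, and both have mean $c$, so $\operatorname{\bf E}f(\nu_{\lambda})\geq p_{0}f(a_{0})+q_{0}f(b_{0})$. For the right inequality I would swap the roles: take $\mu$ the two-point variable on $a_{1},b_{1}\in D$ with probabilities $p_{1},q_{1}$ and $\nu=\nu_{\lambda}$, with $m=a_{1},M=b_{1}$; now $\mu\notin(a_{1},b_{1})$ while $\nu_{\lambda}\in[a_{1},b_{1}]$ since $D\subseteq[a_{1},b_{1}]$, so $p_{1}f(a_{1})+q_{1}f(b_{1})\geq\operatorname{\bf E}f(\nu_{\lambda})$. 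Together these give the two bounds, and the degenerate case $a_{0}=b_{0}$ (where the gap is empty and $D=[a_{1},b_{1}]$) is covered uniformly by the same two applications.

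The step I expect to require the most care is the strict case, since strict inequalities are exactly what the probabilistic machinery must be pushed to deliver. Strict convexity upgrades, again via Theorem \ref{0716001}, to strict weighted convexity, and the equality clause of Theorem \ref{0711004} then forces the $\lambda$-governed variable $\nu_{\lambda}$ to be concentrated on $\{a_{0},b_{0}\}$ (resp.\ $\{a_{1},b_{1}\}$) for equality to hold. Because $\nu_{\lambda}$ is governed by an honest density, it assigns zero probability to any two-point set, so these equality conditions are impossible and both inequalities are strict. A purely analytic alternative, should one wish to avoid the probabilistic language, is to discretize: choose partitions of the two components of $D$ refining the breakpoints $a_{0},b_{0}$, replace $\lambda$ on each cell by a point mass at the cell barycenter (which preserves both the total mass and the barycenter $c$ \emph{exactly}), invoke the discrete squeeze Theorem \ref{0331005}, and pass to the limit; there the difficulty migrates to recovering strictness, which does not survive a limit and would have to be reinstated through the supporting-line estimate underlying Theorem \ref{0331009}.
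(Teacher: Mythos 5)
The paper states this theorem without any proof, so there is no official argument to measure yours against; judged on its own terms, your reduction is correct. Both instantiations of Theorem \ref{0711004} are legitimate: $c=p_{0}a_{0}+q_{0}b_{0}$ lies in $[a_{0},b_{0}]\subset(a_{1},b_{1})\subseteq D^{*}$, Theorem \ref{0716001} upgrades (strict) convexity on $D=[a_{1},b_{1}]\setminus(a_{0},b_{0})$ to (strict) weighted convexity at $c$ over $D$, the law $\lambda(x)\,\mathrm{d}x$ defines a $D$-valued variable $\nu_{\lambda}$ with mean $c$ and finite $\operatorname{\bf E}f(\nu_{\lambda})$ (convexity on each closed component bounds $f$ above by chords and below by supporting-line estimates, so the integral exists), and the witness pairs $(m,M)=(a_{0},b_{0})$ and $(m,M)=(a_{1},b_{1})$ meet the inner/outer hypotheses exactly as you describe; the strictness claim then follows because an absolutely continuous law charges no two-point set, so the equality clause of Theorem \ref{0711004} cannot be met, and the degenerate case $a_{0}=b_{0}$ causes no trouble. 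Two remarks. First, Theorem \ref{0711004} is itself left unproved in the paper (as is Theorem \ref{0716001} beyond a pointer to Theorem \ref{0406077}), so your argument relocates the analytic burden into the cited result rather than discharging it --- permissible given the paper's ordering and free of circularity, but the hard work is hidden there. Second, the route the paper's own apparatus most directly supports is the one you mention only as an aside: the non-strict bounds come from $\emptyset\neq f_{\infty}^{*}(D,c)\subseteq f^{*}(D,c)$ (Theorem \ref{0715001}, proved by the Riemann-sum argument of Theorem \ref{0811010}) combined with the discrete squeeze of Theorem \ref{0331005}, and strictness is recovered not by passing to a limit but by the localization device of Theorem \ref{0811008} (isolate two short subintervals of positive $\lambda$-mass on either side of the barycenter, extract a definite positive defect there, and apply the non-strict bound to the remainder). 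Your observation that strictness does not survive the limiting process is precisely why that extra step exists in the paper; either your probabilistic route or that localization completes the proof within the paper's framework.
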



\begin{theorem}\label{0605004}
Let $D \neq \emptyset$ be a subset of $\mathbb{R}$, and let $f: D \rightarrow \mathbb{R}$ be convex. Then for any
\begin{enumerate}[(1)]
\item $x_{i} \in D$ and $\lambda_{i} > 0, \sum_{i=1}^{n}\lambda_{i} = 1$,
\item $a_{s}, b_{s} \in D$ and $p_{s}, q_{s} > 0, p_{s} + q_{s} = 1$, where $[ a_{0}, b_{0} ] \subseteq [ a_{1}, b_{1} ]$,
\end{enumerate}
such that $p_{0}a_{0} + q_{0}b_{0} = \lambda_{1}x_{1} + \cdots + \lambda_{n}x_{n} = p_{1}a_{1} + q_{1}b_{1}, x_{i} \in [ a_{1}, b_{1} ] \setminus ( a_{0}, b_{0} )$, we have
\begin{align}
\nonumber
p_{0} f(a_{0}) + q_{0} f(b_{0}) \leq \lambda_{1} f(x_{1}) + \cdots + \lambda_{n} f(x_{n}) \leq p_{1} f(a_{1}) + q_{1} f(b_{1}).
\end{align}
If $f$ is strictly convex, then the left equality holds if and only if $x_{i} = a_{0}\ \text{or}\ b_{0}$, and the right equality holds if and only if $x_{i} = a_{1}\ \text{or}\ b_{1}$, $i = 1, \ldots, n$.
\end{theorem}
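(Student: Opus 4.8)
The plan is to reduce the two-sided estimate to the weighted convexity of $f$ and then exploit the pairwise decomposition supplied by the fundamental theorem of weighted means, exactly as in Theorem \ref{0331005}. Set $c = \lambda_{1}x_{1} + \cdots + \lambda_{n}x_{n} = p_{0}a_{0} + q_{0}b_{0} = p_{1}a_{1} + q_{1}b_{1}$. Since $p_{0}, q_{0} > 0$, we have $c \in (a_{0}, b_{0})$, while every $x_{i}$ lies in $[a_{1}, b_{1}] \setminus (a_{0}, b_{0})$; in particular no $x_{i}$ equals $c$, and the $x_{i}$ split into those that are $\leq a_{0}$ and those that are $\geq b_{0}$. (If $a_{0} = b_{0}$ the left-hand side is just $f(c)$ and the left inequality is Jensen's inequality, Theorem \ref{0605001}, so I assume $a_{0} < b_{0}$.) By Theorem \ref{0716001}, convexity of $f$ on $D$ coincides with weighted convexity of $f$ on $D$, so $f$ is weighted convex at $c \in D^{*}$ over $D$, strictly so when $f$ is strictly convex.

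Next I would apply Theorem \ref{0227002} to the system $(\lambda_{i}, x_{i})$, after merging equal values of the $x_{i}$ into distinct, strictly ordered masses (legitimate because none of them equals $c$, which lies strictly between the largest value $\leq a_{0}$ and the smallest value $\geq b_{0}$). This produces weights $w_{s} > 0$ with $\sum_{s} w_{s} = 1$ and two-point configurations carrying mass $p_{s}$ at $x_{i_{s}}$ and $q_{s}$ at $x_{j_{s}}$, each with centre of mass $c$, such that
\begin{align}
\nonumber
\sum_{i=1}^{n}\lambda_{i} f(x_{i}) = \sum_{s} w_{s}\Big( p_{s} f(x_{i_{s}}) + q_{s} f(x_{j_{s}}) \Big).
\end{align}
Here the index maps pair a point left of $c$ with a point right of $c$, so $x_{i_{s}} < c < x_{j_{s}}$; combined with $x_{i_{s}}, x_{j_{s}} \notin (a_{0}, b_{0})$ and $x_{i_{s}}, x_{j_{s}} \in [a_{1}, b_{1}]$ this yields the key nesting
\begin{align}
\nonumber
[a_{0}, b_{0}] \subseteq [x_{i_{s}}, x_{j_{s}}] \subseteq [a_{1}, b_{1}],
\end{align}
with all three pairs sharing the mass-centre $c$.

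Applying weighted convexity of $f$ at $c$ to each pair then sandwiches every summand,
\begin{align}
\nonumber
p_{0} f(a_{0}) + q_{0} f(b_{0}) \leq p_{s} f(x_{i_{s}}) + q_{s} f(x_{j_{s}}) \leq p_{1} f(a_{1}) + q_{1} f(b_{1}),
\end{align}
and multiplying by $w_{s}$ and summing over $s$, using $\sum_{s} w_{s} = 1$, gives the asserted double inequality at once.

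For the equality clauses, suppose $f$ is strictly convex, hence strictly weighted convex at $c$. Equality in the left (resp.\ right) inequality above then forces $[x_{i_{s}}, x_{j_{s}}] = [a_{0}, b_{0}]$ (resp.\ $= [a_{1}, b_{1}]$) for every $s$; since the weights $w_{s}$ are positive, overall equality holds iff each pair collapses in this way, which (using surjectivity of the index maps onto all the $x_{i}$) is equivalent to $x_{i} \in \{a_{0}, b_{0}\}$ (resp.\ $\{a_{1}, b_{1}\}$) for all $i$, the converse direction being immediate from the uniqueness of the two-point representation of $c$. I expect the main care-point to be precisely this equality bookkeeping: one must verify that strictness passes cleanly from each two-point comparison back to the full sum, and that the nesting above is genuinely forced by the hypothesis $x_{i} \in [a_{1}, b_{1}] \setminus (a_{0}, b_{0})$ together with $c \in (a_{0}, b_{0})$, since everything else is an immediate consequence of Theorems \ref{0227002} and \ref{0716001}.
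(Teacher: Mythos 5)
Your proposal is correct and follows essentially the route the paper intends: the paper states Theorem \ref{0605004} without an explicit proof, but its proof of the special case Theorem \ref{0331005} cites exactly Theorems \ref{0227002} and \ref{0716001}, i.e.\ the same two-point decomposition of the weighted mean followed by weighted convexity at the common centre of mass that you carry out (your treatment of the nesting $[a_{0},b_{0}] \subseteq [x_{i_{s}},x_{j_{s}}] \subseteq [a_{1},b_{1}]$ and of the equality cases is the right bookkeeping). The only loose end is the degenerate case $a_{0}=b_{0}$, where you dispatch the left inequality via Jensen but should note that the right inequality still follows from the same decomposition (or from Theorem \ref{0331003}), since then $x_{i}$ may coincide with $c$.
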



\begin{theorem}\label{0406048}
Let $D \neq \emptyset$ be a subset of $\mathbb{R}$, and let $f: D \rightarrow \mathbb{R}$ be convex. Then for any
\begin{enumerate}[(1)]
\item random variable $\nu \in D$,
\item $a_{s}, b_{s} \in D$ and $p_{s}, q_{s} > 0, p_{s} + q_{s} = 1$, where $[ a_{0}, b_{0} ] \subseteq [ a_{1}, b_{1} ]$,
\end{enumerate}
such that $p_{0}a_{0} + q_{0}b_{0} = \operatorname{\bf E} \nu = p_{1}a_{1} + q_{1}b_{1}, \nu \in [ a_{1}, b_{1} ] \setminus ( a_{0}, b_{0} )$, we have
\begin{align}
\nonumber
p_{0}f(a_{0}) + q_{0}f(b_{0}) \leq \operatorname{\bf E}f(\nu) \leq p_{1}f(a_{1}) + q_{1}f(b_{1}).
\end{align}
If $f$ is strictly convex, then the left equality holds if and only if $\operatorname{P}( \nu = a_{0}, b_{0} ) = 1$, and the right equality holds if and only if $\operatorname{P}( \nu = a_{1}, b_{1} ) = 1$.
\end{theorem}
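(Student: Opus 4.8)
The plan is to derive both inequalities as instances of the random-variable comparison in Theorem \ref{0711004}, after converting convexity into weighted convexity at the common barycenter $c := p_0 a_0 + q_0 b_0 = \operatorname{\bf E}\nu = p_1 a_1 + q_1 b_1$. Since $c$ is a convex combination of $a_0, b_0 \in D$, it lies in the convex hull $D^*$, and because $f$ is convex on $D$, Theorem \ref{0716001} shows that $f$ is weighted convex on $D$, hence in particular weighted convex at $c$ over $D$. The strict case will run identically, with Theorem \ref{0716001} upgrading strict convexity to strict weighted convexity at $c$.

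First I would introduce two auxiliary two-valued random variables. Let $\mu_1$ take the value $a_1$ with probability $p_1$ and the value $b_1$ with probability $q_1$, and let $\mu_0$ take the value $a_0$ with probability $p_0$ and the value $b_0$ with probability $q_0$. Both take values in $D$ (in $\{a_1, b_1\}$ and $\{a_0, b_0\}$ respectively) and both satisfy $\operatorname{\bf E}\mu_1 = \operatorname{\bf E}\mu_0 = \operatorname{\bf E}\nu = c$, while $\operatorname{\bf E} f(\mu_1) = p_1 f(a_1) + q_1 f(b_1)$ and $\operatorname{\bf E} f(\mu_0) = p_0 f(a_0) + q_0 f(b_0)$.

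For the right inequality I would apply Theorem \ref{0711004} with $m = a_1$, $M = b_1$, taking the more spread-out variable to be $\mu_1$ (which never lands in $(a_1, b_1)$) and the concentrated variable to be $\nu$ (which lies in $[a_1, b_1]$ by hypothesis); this yields $p_1 f(a_1) + q_1 f(b_1) = \operatorname{\bf E} f(\mu_1) \geq \operatorname{\bf E} f(\nu)$. For the left inequality I would apply the same theorem with $m = a_0$, $M = b_0$, but now reversing the roles: the more spread-out variable is $\nu$ (which avoids $(a_0, b_0)$, since $\nu \in [a_1, b_1] \setminus (a_0, b_0)$) and the concentrated variable is $\mu_0$ (which lies in $[a_0, b_0]$), giving $\operatorname{\bf E} f(\nu) \geq \operatorname{\bf E} f(\mu_0) = p_0 f(a_0) + q_0 f(b_0)$.

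For the strict case the equality clause of Theorem \ref{0711004} applies. In the right-hand application its condition ``$\operatorname{P}(\mu_1 = a_1, b_1) = 1$ and $\operatorname{P}(\nu = a_1, b_1) = 1$'' reduces to $\operatorname{P}(\nu = a_1, b_1) = 1$, because $\mu_1$ is supported on $\{a_1, b_1\}$ by construction; similarly the left-hand application reduces to $\operatorname{P}(\nu = a_0, b_0) = 1$. The only point requiring care — and the sole genuinely bookkeeping-heavy step — is keeping track of which of $\nu$ and the auxiliary variable plays the more spread-out role in each direction, since Theorem \ref{0711004} is not symmetric in its two arguments. Once the role assignment above is fixed, the hypotheses are immediate and no computation remains.
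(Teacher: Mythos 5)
Your proof is correct: the reduction of convexity to weighted convexity at $c$ via Theorem \ref{0716001}, followed by two applications of Theorem \ref{0711004} with the auxiliary two-point variables $\mu_{1}$ and $\mu_{0}$ playing the spread-out and concentrated roles respectively, verifies all hypotheses (including $\nu\not\in(a_{0},b_{0})$ for the left bound and $\nu\in[a_{1},b_{1}]$ for the right), and the equality analysis correctly exploits that each auxiliary variable satisfies its own support condition automatically. The paper states this theorem without proof, but your route is exactly the mechanism the paper intends --- it mirrors how the discrete analogue (Theorem \ref{0331005}) is proved by reduction to the weighted-convexity machinery --- so nothing further is needed.
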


\section{Generalizations of the Jensen's inequality}

\centerline{ \textbf{Part \uppercase\expandafter{\romannumeral1}} }

\begin{theorem}\label{0216002}
Let $I$ be an interval, and let $f, g: I \rightarrow \mathbb{R}$.

If for any $x, y \in I$ and any $p, q > 0, p + q = 1$, we have
\begin{align}
\nonumber
p f(x) + q f(y) + g(px + qy) \geq 0\ ( \text{resp}. > 0 ),
\end{align}
then for any $x_{i} \in I$ and any $\lambda_{i} > 0, \sum_{i=1}^{n}\lambda_{i} = 1$, we have
\begin{align}
\nonumber
\sum_{i=1}^{n}\lambda_{i} f(x_{i}) + g\left( \sum_{i=1}^{n}\lambda_{i}x_{i} \right) \geq 0\ ( \text{resp}. > 0 ).
\end{align}
\end{theorem}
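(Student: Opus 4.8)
The plan is to derive the $n$-point inequality from the two-point hypothesis by invoking the decomposition identity (\ref{0103001}) supplied by Theorem \ref{0227002}. Put $c = \sum_{i=1}^{n}\lambda_i x_i$, and observe first that $c \in I$: since $I$ is an interval and $c$ is a convex combination of the points $x_i \in I$, the barycenter lies in $I$, so $g(c)$ is defined and the hypothesis may legitimately be applied at $c$. The guiding idea is that once $\sum_i \lambda_i f(x_i) + g(c)$ has been written as a convex combination $\sum_s w_s\big[ p_s f(\alpha_s) + q_s f(\beta_s) + g(p_s\alpha_s + q_s\beta_s) \big]$ with $w_s > 0$, $\sum_s w_s = 1$ and $p_s\alpha_s + q_s\beta_s = c$ for every $s$, each bracket is precisely a two-point expression of the hypothesis and is therefore $\geq 0$ (resp.\ $> 0$); a convex combination of nonnegative (resp.\ positive) numbers is then $\geq 0$ (resp.\ $> 0$).

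The obstacle is that Theorem \ref{0227002} applies only to strictly ordered data with $c$ strictly interior, $a_1 < \cdots < a_z < c < a_{z+1} < \cdots < a_n$, whereas in the Jensen setting the $x_i$ may repeat and $c$ may equal one of them. I would therefore first collapse equal points: let $v_1 < \cdots < v_r$ be the distinct values among the $x_i$ with pooled weights $\mu_k = \sum_{x_i = v_k}\lambda_i > 0$, so that $\sum_i \lambda_i f(x_i) = \sum_{k=1}^{r}\mu_k f(v_k)$ and $\sum_k \mu_k v_k = c$. If $r = 1$, then every $x_i$ equals $c$ and the left-hand side is $f(c) + g(c)$, which is the hypothesis taken with $x = y = c$. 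If $r \geq 2$ and $c \notin \{v_1, \ldots, v_r\}$, then $c$ is strictly interior to the collapsed data and the equivalent form (\ref{0103001}) of Theorem \ref{0227002} applies verbatim, producing the required convex-combination form with pairs $(v_{i_s}, v_{j_s})$ satisfying $p_s v_{i_s} + q_s v_{j_s} = c$.

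The genuine difficulty is the remaining case $c = v_t$, which I would handle by peeling off the mass sitting exactly at the barycenter. Since $r \geq 2$ and all $\mu_k > 0$ force $v_1 < c < v_r$ strictly, the index $t$ satisfies $1 < t < r$, so $r \geq 3$ and deleting $v_t$ leaves $r - 1 \geq 2$ distinct values of pooled weight $1 - \mu_t$ whose barycenter is still $c$, because $\sum_{k \neq t}\mu_k v_k = c - \mu_t v_t = (1-\mu_t)c$. Applying Theorem \ref{0227002} to the normalized weights $\mu_k/(1-\mu_t)$ expresses $\sum_{k \neq t}\mu_k f(v_k)$ as $(1-\mu_t)\sum_s w_s'\big[ p_s f(v_{i_s}) + q_s f(v_{j_s}) \big]$ with each $p_s v_{i_s} + q_s v_{j_s} = c$. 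Splitting $g(c) = \mu_t g(c) + (1-\mu_t)g(c)$ and recombining (using $\sum_s w_s' = 1$ to distribute the second copy of $g(c)$ into the sum) then yields
\begin{align}
\nonumber
\sum_{i=1}^{n}\lambda_i f(x_i) + g(c) = \mu_t\big[ f(c) + g(c) \big] + (1-\mu_t)\sum_{s} w_s'\big[ p_s f(v_{i_s}) + q_s f(v_{j_s}) + g(c) \big].
\end{align}
The first bracket is the hypothesis with $x = y = c$, and because $p_s v_{i_s} + q_s v_{j_s} = c$ each inner bracket of the second term equals $p_s f(v_{i_s}) + q_s f(v_{j_s}) + g(p_s v_{i_s} + q_s v_{j_s})$, again a hypothesis term; all coefficients $\mu_t$ and $(1-\mu_t)w_s'$ are positive, so the entire expression is $\geq 0$, and $> 0$ in the strict case. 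This disposes of every configuration and completes the argument.
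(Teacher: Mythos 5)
Your proposal is correct and follows the same route as the paper, which derives this theorem (via Theorems \ref{0209009} and \ref{0709002}) from the decomposition identity (\ref{0103001}) of Theorem \ref{0227002}: write the $n$-point expression as a convex combination of two-point hypothesis terms sharing the barycenter $c$. The only difference is that you explicitly work out the degenerate configurations (repeated points, all points equal, and $c$ coinciding with one of the $v_k$) that the paper's one-line citation of (\ref{0103001}) leaves implicit, and your treatment of these cases is sound.
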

\begin{flushleft}
$1^{0}$. If we take $g = -f$, then the {\it Jensen's inequality}. \\
$2^{0}$. The theorem is a special case of Theorem \ref{0209009}.
\end{flushleft}


\begin{theorem}\label{0209009}
Let $I, J$ be intervals, and let $f, g: I \rightarrow \mathbb{R}$.

If for any $x, y \in I$ and any $p, q > 0, p + q = 1$, we have
\begin{align}
\nonumber
p f(x) + q f(y) + g(px + qy) \in J,
\end{align}
then for any $x_{i} \in I$ and any $\lambda_{i} > 0, \sum_{i=1}^{n}\lambda_{i} = 1$, we have
\begin{align}
\nonumber
\sum_{i=1}^{n}\lambda_{i} f(x_{i}) + g\left( \sum_{i=1}^{n}\lambda_{i}x_{i} \right) \in J.
\end{align}
\end{theorem}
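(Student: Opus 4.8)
The plan is to run everything through the identity (\ref{0103001}) of Theorem \ref{0227002}, which is exactly what the ``fundamental theorem of weighted means'' is built to supply. That identity rewrites the quantity of interest as
\[
\sum_{k=1}^{n}\lambda_{k}f(x_{k}) + g\Big(\sum_{k=1}^{n}\lambda_{k}x_{k}\Big) = \sum_{s=1}^{m} w_{s}\Big[p_{s}f(x_{i_{s}}) + q_{s}f(x_{j_{s}}) + g(p_{s}x_{i_{s}} + q_{s}x_{j_{s}})\Big],
\]
where $w_{s}>0$, $\sum_{s}w_{s}=1$, $p_{s},q_{s}>0$, $p_{s}+q_{s}=1$, and each two-point barycenter $p_{s}x_{i_{s}}+q_{s}x_{j_{s}}$ equals the global mean $\sum_{k}\lambda_{k}x_{k}$. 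Thus the left-hand side is exhibited as an honest convex combination of expressions of precisely the shape controlled by the hypothesis.

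Next I would apply the hypothesis term by term. For each $s$ one has $x_{i_{s}},x_{j_{s}}\in I$ and $p_{s},q_{s}>0$ with $p_{s}+q_{s}=1$; the point $p_{s}x_{i_{s}}+q_{s}x_{j_{s}}$ lies in $I$ because $I$ is an interval, so $g$ is defined there and the hypothesis gives $p_{s}f(x_{i_{s}})+q_{s}f(x_{j_{s}})+g(p_{s}x_{i_{s}}+q_{s}x_{j_{s}})\in J$. Since $J$ is an interval, hence convex, and the weights $w_{s}$ are positive and sum to $1$, the convex combination on the right lies in $J$, which is exactly the conclusion. The heart of the argument is therefore a single line once (\ref{0103001}) is in hand.

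The only genuine work is that Theorem \ref{0227002} is stated for strictly ordered, pairwise distinct points whose weighted mean falls strictly between two consecutive ones, so I must reduce the general configuration to that normal form. First I would merge coincident $x_{i}$ into single masses (legitimate, since neither $\sum\lambda_{i}f(x_{i})$ nor the barycenter changes), leaving distinct values $b_{1}<\cdots<b_{N}$ with positive weights summing to $1$. If $N=1$ all points coincide at some $c$, and the hypothesis with $x=y=c$ already yields $f(c)+g(c)\in J$. If $N\geq 2$ and the mean $\bar{x}$ avoids every $b_{l}$, Theorem \ref{0227002} applies directly. The delicate case is $\bar{x}=b_{l_{0}}$ for some $l_{0}$, which forces $N\geq 3$ (for $N=2$ the mean is strictly interior): here I would peel off the mass at $b_{l_{0}}$ and renormalize the remaining weights, whose barycenter is again $\bar{x}=b_{l_{0}}$ but now falls strictly between remaining points, so Theorem \ref{0227002} applies to the reduced system. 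Writing $\mu_{l_{0}}$ for the peeled weight and $S=1-\mu_{l_{0}}$, the recombination
\[
\sum_{l}\mu_{l}f(b_{l}) + g(\bar{x}) = \mu_{l_{0}}\big[f(\bar{x})+g(\bar{x})\big] + S\Big[\sum_{l\neq l_{0}}\tfrac{\mu_{l}}{S}f(b_{l})+g(\bar{x})\Big]
\]
presents the target as a convex combination of $f(\bar{x})+g(\bar{x})\in J$ and the reduced expression (in $J$ by the previous case), hence in $J$. I expect this bookkeeping in the coincident-mean case to be the main obstacle; everything else reduces to the one convex-combination observation above.
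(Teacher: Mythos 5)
Your proof takes the same route as the paper: the decomposition identity (\ref{0103001}) supplied by Theorem \ref{0227002} writes the $n$-point expression as a convex combination of two-point expressions all sharing the global barycenter, and convexity of the interval $J$ finishes it — this is precisely the one-line argument the paper gives for the more general Theorem \ref{0709002} and alludes to in the introduction. Your reduction to the normal form that Theorem \ref{0227002} actually requires (merging coincident points, the all-coincident case, and peeling off the mass at $b_{l_{0}}$ when the mean lands on a data point) is correct and fills in bookkeeping the paper leaves implicit.
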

\begin{flushleft}
$1^{0}$. If we take $g = -f$, then $0 \in J$. \\
$2^{0}$. If we take $g = -f$ and $J = [0, +\infty)$ (resp. $J = (-\infty, 0]$), then the {\it Jensen's inequality}.
\end{flushleft}


\begin{example}\label{0406008}
For any $x, y \in \mathbb{R}$ and any $p, q > 0, p + q = 1$, we have
\begin{align}
\nonumber
-1 < p\lfloor x \rfloor + q\lfloor y \rfloor - \lfloor px + qy \rfloor < 1,
\end{align}
therefore for any $x_{i} \in \mathbb{R}$ and any $\lambda_{i} > 0, \sum_{i=1}^{n}\lambda_{i} = 1$, we have
\begin{align}
\nonumber
-1 < \sum_{i=1}^{n}\lambda_{i}\lfloor x_{i} \rfloor - \left\lfloor \sum_{i=1}^{n}\lambda_{i}x_{i} \right\rfloor < 1.
\end{align}
\end{example}
\begin{proof}Note that $x-1 < \lfloor x \rfloor \leq x$, $y-1 < \lfloor y \rfloor \leq y$, and $px + qy - 1 < \lfloor px + qy \rfloor \leq px + qy$.
Then $-1 < p\lfloor x \rfloor + q\lfloor y \rfloor - \lfloor px + qy \rfloor < 1$.
\end{proof}


\begin{example}\label{0406010}
For any $x, y \in [-\pi/2, \pi/2]$ and any $p, q > 0, p + q = 1$, we have
\begin{align}
\nonumber
-\frac{47}{170} < p\sin x + q\sin y - \sin(px + qy) < \frac{47}{170},
\end{align}
therefore for any $x_{i} \in [-\pi/2, \pi/2]$ and any $\lambda_{i} > 0, \sum_{i=1}^{n}\lambda_{i} = 1$, we have
\begin{align}
\nonumber
-\frac{47}{170} < \sum_{i=1}^{n}\lambda_{i}\sin x_{i} - \sin\left( \sum_{i=1}^{n}\lambda_{i}x_{i} \right) < \frac{47}{170}.
\end{align}
\end{example}


\begin{theorem}\label{0215009}
Let $f \in \mathcal{R}[a, b]$, and let $g: [a, b] \rightarrow \mathbb{R}$. Let $J$ be an interval.

If for any $x, y \in [a, b]$ and any $p, q > 0, p + q = 1$, we have
\begin{align}
\nonumber
p f(x) + q f(y) + g(px + qy) \in J,
\end{align}
then for any $\lambda(x) \geq 0, \int_{a}^{b} \lambda(x) \mathrm{d}x = 1$, we have
\begin{align}
\nonumber
\int_{a}^{b} \lambda(x) f(x) \mathrm{d}x + g\left( \int_{a}^{b} \lambda(x)x \mathrm{d}x \right) \in J.
\end{align}
\end{theorem}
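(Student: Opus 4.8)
The plan is to deduce this integral statement from its finite counterpart, Theorem \ref{0209009}, by a Riemann-sum discretization, with one structural device that makes the passage clean: I arrange every approximating configuration to have weighted mean exactly equal to $c := \int_a^b \lambda(x)x\,\mathrm{d}x$, so that $g$ is always evaluated at the single point $c$. This is essential because $g$ is an arbitrary function — no continuity is available — so one must not let the argument of $g$ drift and then try to take a limit inside $g$.

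Concretely, I would fix $n$ and the uniform partition $a = t_0 < \cdots < t_n = b$, with cells $I_k = [t_{k-1}, t_k]$. Set $\lambda_k = \int_{I_k}\lambda(x)\,\mathrm{d}x$ and, for those $k$ with $\lambda_k > 0$, let $\xi_k = \frac{1}{\lambda_k}\int_{I_k}\lambda(x)x\,\mathrm{d}x \in I_k$ be the barycenter of $\lambda$ over $I_k$; the indices with $\lambda_k = 0$ are discarded. Then $\sum_k \lambda_k = \int_a^b \lambda = 1$ and, by additivity of the integral, $\sum_k \lambda_k \xi_k = \sum_k \int_{I_k}\lambda x = c$ \emph{exactly}, for every $n$. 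Applying Theorem \ref{0209009} with $I = [a,b]$, the points $\xi_k$, and the weights $\lambda_k$ yields
\[
S_n := \sum_k \lambda_k f(\xi_k) + g(c) = \sum_k \lambda_k f(\xi_k) + g\Big(\sum_k \lambda_k \xi_k\Big) \in J.
\]
Since $\lambda$ is bounded, for large $n$ at least two cells carry positive mass, so this is a genuine multi-point application.

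Next I would show $S_n \to L := \int_a^b \lambda(x)f(x)\,\mathrm{d}x + g(c)$. Writing $\omega_k = \sup_{I_k} f - \inf_{I_k} f$, the error is controlled by
\[
\Big| \int_a^b \lambda f - \sum_k \lambda_k f(\xi_k)\Big| \le \sum_k \int_{I_k}\lambda(x)\,|f(x) - f(\xi_k)|\,\mathrm{d}x \le \Big(\sup_{[a,b]}\lambda\Big)\sum_k \omega_k\, |I_k|,
\]
and $\sum_k \omega_k |I_k| \to 0$ because $f \in \mathcal{R}[a,b]$ (Riemann's criterion); here $\lambda f \in \mathcal{R}[a,b]$ as a product of Riemann-integrable functions, so $L$ is well defined. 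Hence $S_n \to L$, and since $S_n \in J$ we obtain $L \in \overline{J}$. If $J$ is closed this already finishes the proof.

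The main obstacle is the upgrade from $L \in \overline{J}$ to $L \in J$ when $J$ is not closed (the strict forms, e.g. the $J=(-1,1)$ phenomenon of Example \ref{0406008}). Here I would not argue through the limit but exploit that each $S_n$ is a finite convex combination of bracket values $p f(u) + q f(v) + g(c)$ with $u \le c \le v$ and $pu + qv = c$, each of which lies in $J$. Letting $\mu$ and $M$ be the infimum and supremum of all such bracket values, every $S_n$, and hence $L$, lies in $[\mu, M]$; and since the bracket values all lie in the interval $J$, one checks that $(\mu, M) \subseteq J$. Thus $L \in J$ except possibly when $L$ equals an endpoint $\mu$ or $M$ that is not attained as a bracket value. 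Ruling out this degenerate case is the delicate point: it would force the weighted average $\int \lambda f$ to coincide with an extreme chord value of $f$ through $c$, which cannot occur for a genuine density $\lambda$ (it would require the mass of $\lambda$ to sit on two extreme abscissae, impossible for a bounded density of total mass $1$). Making this last step fully rigorous for an arbitrary Riemann-integrable $f$ is where the real care is needed.
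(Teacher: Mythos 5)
Your reduction to the discrete case is sound and follows the same basic route as the paper: pinning the argument of $g$ at $c=\int_a^b\lambda(x)x\,\mathrm{d}x$ by using the cell barycenters $\xi_k$, so that the hypothesis restricted to pairs with mean $c$ gives $p f(u)+q f(v)\in J-g(c)$, and then passing to the Riemann-sum limit. This correctly yields $\int_a^b\lambda(x) f(x)\,\mathrm{d}x+g(c)\in\overline{J}$, which is precisely the content of the paper's Theorem \ref{0811010} (the closed half-line case), proved there by the same partition-and-barycenter device.

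However, the passage from $\overline{J}$ to $J$ --- the only nontrivial point of the theorem --- is not proved in your proposal; it is asserted. Your claim that $L=\mu$ (or $L=M$) ``would require the mass of $\lambda$ to sit on two extreme abscissae'' is exactly the statement that needs proof, and the convex-combination bookkeeping does not deliver it: the bracket values occurring in $S_n$ change with $n$ and may accumulate at an unattained endpoint $\mu$ without any mass concentration of $\lambda$, so one must produce a quantitative gap $\int\lambda f-\mu\geq\delta>0$ from somewhere. The paper supplies this in Theorem \ref{0811008}: using Lebesgue's criterion it selects points $x_0\in(a,c)$ and $y_0\in(c,b)$ at which both $f$ and $\lambda$ are continuous with $\lambda>0$, surrounds them by small cells on which $f$ oscillates little, peels off fixed fractions $\theta_1,\theta_2$ of the mass there whose contribution is bounded below by $\tfrac{1}{2}\bigl(p_0f(x_0)+q_0f(y_0)\bigr)>0$, and handles the remaining mass by the closed case (Theorem \ref{0811010}); the general interval $J$ then follows by writing the membership condition as a conjunction of closed and open half-line conditions (Theorem \ref{0709001}). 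Some argument of this kind is indispensable for a general $f\in\mathcal{R}[a,b]$ and bounded density $\lambda$; without it your proof establishes the theorem only for closed $J$.
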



\begin{theorem}\label{0709002}
Let $A, B \neq \emptyset$ be subsets of $\mathbb{R}$, and $f: A \rightarrow \mathbb{R}$ and $g: B \rightarrow \mathbb{R}$. Let $J$ be an interval.

If for any $x, y \in A$ and any $p, q > 0, p + q = 1$ such that $px + qy \in B$, we have
\begin{align}
\nonumber
p f(x) + q f(y) + g(px + qy) \in J,
\end{align}
then for any $x_{i} \in A$ and any $\lambda_{i} > 0, \sum_{i=1}^{n}\lambda_{i} = 1$ such that $\sum_{i=1}^{n}\lambda_{i}x_{i} \in B$, we have
\begin{align}
\nonumber
\sum_{i=1}^{n}\lambda_{i} f(x_{i}) + g\left( \sum_{i=1}^{n}\lambda_{i}x_{i} \right) \in J.
\end{align}
\end{theorem}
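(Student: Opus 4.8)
The plan is to reduce the $n$-point statement to the two-point hypothesis by means of the decomposition in Theorem \ref{0227002}, and then to close the argument using the fact that an interval is convex. Write $c = \sum_{i=1}^{n}\lambda_i x_i$, which lies in $B$ by assumption. Since the $x_i$ need not be distinct, I would first collapse equal nodes: let $a_1 < \cdots < a_r$ be the distinct values occurring among $x_1, \ldots, x_n$ and set $\mu_k = \sum_{i : x_i = a_k}\lambda_i > 0$, so that $\sum_{k=1}^{r}\mu_k = 1$, $\sum_{k=1}^{r}\mu_k a_k = c$, and $\sum_{i=1}^{n}\lambda_i f(x_i) = \sum_{k=1}^{r}\mu_k f(a_k)$. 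The goal then becomes showing $\sum_{k=1}^{r}\mu_k f(a_k) + g(c) \in J$.

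If $r = 1$, then $c = a_1 \in A$, and applying the hypothesis with $x = y = c$ and $p = q = 1/2$ gives $f(c) + g(c) \in J$, which is exactly the desired conclusion. Now suppose $r \geq 2$ and, to begin with, that $c \neq a_k$ for every $k$; since $c$ is a strict convex combination of the distinct $a_k$, it then satisfies $a_1 < \cdots < a_z < c < a_{z+1} < \cdots < a_r$ for some $z$. This is precisely the hypothesis of Theorem \ref{0227002}, so identity (\ref{0103001}) applies (with $\mu_k$ in place of $\lambda_k$) and yields
\begin{align}
\nonumber
\sum_{k=1}^{r}\mu_k f(a_k) + g(c) = \sum_{s=1}^{m} w_s\Big[ p_s f(a_{i_s}) + q_s f(a_{j_s}) + g(p_s a_{i_s} + q_s a_{j_s}) \Big],
\end{align}
where $w_s > 0$, $\sum_s w_s = 1$, and $p_s a_{i_s} + q_s a_{j_s} = c$ for each $s$. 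Each bracketed term has the form $p f(x) + q f(y) + g(px+qy)$ with $x = a_{i_s}, y = a_{j_s} \in A$, $p = p_s, q = q_s > 0$, $p + q = 1$, and $px + qy = c \in B$; hence each lies in $J$ by hypothesis. The right-hand side is therefore a convex combination of points of $J$, and since an interval is convex, the whole expression lies in $J$.

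The one remaining case, which I expect to be the main obstacle, is $c = a_{k_0}$ for some node, for then Theorem \ref{0227002} does not apply directly (it requires the mean to be strictly separated from every node). I would handle it by peeling off the mass sitting at the mean: note $f(c) + g(c) \in J$ as above, and consider the renormalized system $\mu_k' = \mu_k/(1 - \mu_{k_0})$ for $k \neq k_0$. Its mean is again $c$, because $\sum_{k \neq k_0}\mu_k a_k = c - \mu_{k_0} c = (1-\mu_{k_0}) c$, and now no surviving node equals $c$ while nodes lie on both sides of $c$; so the previous paragraph gives $\sum_{k \neq k_0}\mu_k' f(a_k) + g(c) \in J$. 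Finally
\begin{align}
\nonumber
\sum_{k=1}^{r}\mu_k f(a_k) + g(c) = \mu_{k_0}\big[ f(c) + g(c) \big] + (1 - \mu_{k_0})\Big[ \sum_{k \neq k_0}\mu_k' f(a_k) + g(c) \Big]
\end{align}
exhibits the left side as a convex combination of two points of $J$, hence it too lies in $J$. This would complete the argument in all cases; the essential content is that Theorem \ref{0227002} turns the single global constraint into a family of two-point constraints, each matched exactly by the hypothesis, after which convexity of $J$ does the rest.
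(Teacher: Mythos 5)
Your proof is correct and follows essentially the same route as the paper, whose entire proof is the invocation of identity (\ref{0103001}) from Theorem \ref{0227002} followed by the convexity of the interval $J$. Your extra care with repeated nodes, the constant case $r=1$, and the case where the mean coincides with a node fills in degenerate configurations that Theorem \ref{0227002} (which assumes the strict separation $a_{1} < \cdots < a_{z} < c < a_{z+1} < \cdots < a_{n}$) does not literally cover, so this is a tightening of the paper's argument rather than a different approach.
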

\begin{proof}By (\ref{0103001}), we conclude the proof.
\end{proof}


\begin{theorem}\label{0702003}
Let $f \in \mathcal{R}[a, b]$, and $B \neq \emptyset$ be a subset of $\mathbb{R}$ and $g: B \rightarrow \mathbb{R}$. Let $J$ be an interval.

If for any $x, y \in [a, b]$ and any $p, q > 0, p + q = 1$ such that $px + qy \in B$, we have
\begin{align}
\nonumber
p f(x) + q f(y) + g(px + qy) \in J,
\end{align}
then for any $\lambda(x) \geq 0, \int_{a}^{b} \lambda(x) \mathrm{d}x = 1$ such that $\int_{a}^{b} \lambda(x) x \mathrm{d}x \in B$, we have
\begin{align}
\nonumber
\int_{a}^{b} \lambda(x) f(x) \mathrm{d}x + g\left( \int_{a}^{b} \lambda(x)x \mathrm{d}x \right) \in J.
\end{align}
\end{theorem}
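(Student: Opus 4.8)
The plan is to imitate the proof of the discrete statement (Theorem \ref{0709002}, which rests on the decomposition (\ref{0103001})): I would realize the integral quantity as a probability-weighted average of two-point expressions $p f(x) + q f(y) + g(c)$ in which each pair balances at the common mean $c := \int_a^b \lambda(x)x\,\mathrm{d}x$, and then invoke the fact that an interval swallows the barycenter of any probability distribution supported in it. Since $\int_a^b \lambda(x)x\,\mathrm{d}x \in B$ by hypothesis, $g(c)$ is defined and each such two-point expression lies in $J$.

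First I would reduce the claim to showing that $\int_a^b \lambda(x) f(x)\,\mathrm{d}x$ can be written as a barycenter of the numbers $p f(x) + q f(y)$ taken over pairs $x \le c \le y$ with $p x + q y = c$. To produce such a decomposition I would split the mass $\lambda(x)\,\mathrm{d}x$ at $c$ and match the mass lying to the left of $c$ against the mass lying to the right by equalizing the accumulated torques $\Phi_L(x) = \int_a^x \lambda(t)(c - t)\,\mathrm{d}t$ and $\Phi_R(y) = \int_c^y \lambda(t)(t - c)\,\mathrm{d}t$; the moment balance $\int_a^b \lambda(x)(x - c)\,\mathrm{d}x = 0$ guarantees that $\Phi_L$ and $\Phi_R$ sweep out the same range $[0, T]$. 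The coupling $\Phi_L(x(\tau)) = \Phi_R(y(\tau)) = \tau$ then pairs infinitesimal masses that balance exactly at $c$, and a change of variables shows $\int_a^b \lambda f\,\mathrm{d}x = \int [\,p(\tau) f(x(\tau)) + q(\tau) f(y(\tau))\,]\,\mathrm{d}W(\tau)$ with $W$ a probability measure and $p(\tau) x(\tau) + q(\tau) y(\tau) = c$.

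With the decomposition in hand the conclusion is immediate: each integrand $p(\tau) f(x(\tau)) + q(\tau) f(y(\tau)) + g(c)$ lies in $J$ by hypothesis (its mean is $c \in B$), and the barycenter of a $J$-valued integrable function against a probability measure again lies in $J$ — this holding even when $J$ is open, since $\int (h - \inf J)\,\mathrm{d}W > 0$ whenever $h > \inf J$ everywhere (and symmetrically at $\sup J$). A more elementary but weaker substitute would be to discretize: partition $[a,b]$, transfer each subinterval's mass to its two endpoints preserving both mass and first moment (as in the weights of Theorem \ref{0423007}), obtaining points $c_0,\dots,c_n$ with weights $w_i \ge 0$, $\sum_i w_i = 1$ and $\sum_i w_i c_i = c \in B$, apply the discrete Theorem \ref{0709002} to get $\sum_i w_i f(c_i) + g(c) \in J$, and let the mesh tend to $0$ (using $f \in \mathcal{R}[a,b]$).

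I expect the main obstacle to be sharpness: the naive limit of the discretized conclusion only places the answer in the closure $\overline{J}$, so the delicate point is to recover membership in $J$ itself. This is exactly what the exact coupling buys, because it exhibits $\int_a^b \lambda f\,\mathrm{d}x + g(c)$ as a genuine barycenter rather than a limit of barycenters; the remaining difficulty is technical, namely handling degenerate densities (flat pieces where $\lambda = 0$, the case where $c$ sits at an edge of the support, and the measurability of the matching maps $x(\tau)$ and $y(\tau)$).
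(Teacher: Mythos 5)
Your coupling argument is correct, but it takes a genuinely different route from the paper. The paper reduces Theorem \ref{0702003} to Theorem \ref{0606002} and ultimately to Theorem \ref{0709001}, whose analytic content is split into two halves: Theorem \ref{0811010} proves the non-strict inequalities by exactly the Riemann-sum discretization you describe as your ``weaker substitute'' (transfer the mass of each subinterval to its barycenter and pass to the limit), and Theorem \ref{0811008} separately upgrades to strict inequalities by a localization trick --- using Lebesgue's criterion to find continuity points $x_{0} < c < y_{0}$ of $f$ and $\lambda$ with $\lambda(x_{0}), \lambda(y_{0}) > 0$, peeling off a fixed fraction $\theta_{1}, \theta_{2}$ of the mass near these two points so that the remainder still balances at $c$, and bounding the peeled-off contribution strictly from below while the remainder is handled by the non-strict result. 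An arbitrary interval $J$ is then treated as the intersection of at most two half-lines, each open or closed, and an arbitrary $B$ by taking unions over $c \in B$ (Theorems \ref{0529007} and \ref{1019008}). Your exact torque-matching disintegration $\Phi_{L}(x(\tau)) = \Phi_{R}(y(\tau)) = \tau$ replaces both halves at once: writing $p(\tau) = \frac{y(\tau)-c}{y(\tau)-x(\tau)}$, $q(\tau) = \frac{c-x(\tau)}{y(\tau)-x(\tau)}$ and $w(\tau) = \frac{1}{c-x(\tau)} + \frac{1}{y(\tau)-c}$, one checks directly that $\int_{0}^{T} w(\tau)\,\mathrm{d}\tau = 1$ and $\int_{0}^{T}\bigl[ p(\tau) f(x(\tau)) + q(\tau) f(y(\tau)) \bigr] w(\tau)\,\mathrm{d}\tau = \int_{a}^{b}\lambda(x) f(x)\,\mathrm{d}x$, so the conclusion is a genuine barycenter of $J$-valued quantities and membership in $J$ (including open endpoints) follows in one step, with no case analysis and no appeal to Lebesgue's criterion. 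The price is the measure-theoretic bookkeeping you flag --- generalized inverses on flat pieces of $\Phi_{L}, \Phi_{R}$ and the a.e.\ validity of the change of variables --- all routine but worth writing out; note also that $T = \Phi_{L}(c) > 0$ automatically (otherwise $\lambda$ would vanish a.e.\ and could not integrate to $1$), so the degenerate edge case for $c$ does not occur. Both proofs are sound; yours is the more conceptual, the paper's the more elementary per step.
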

\begin{flushleft}
$1^{0}$. The theorem is equivalent to Theorem \ref{0606002}.
\end{flushleft}


\begin{theorem}\label{0702001}
Let $A, B \neq \emptyset$ be subsets of $\mathbb{R}$, and $f: A \rightarrow \mathbb{R}$ and $g: B \rightarrow \mathbb{R}$. Let $J$ be an interval.

If for any $x, y \in A$ and any $p, q > 0, p + q = 1$ such that $px + qy \in B$, we have
\begin{align}
\nonumber
p f(x) + q f(y) + g(px + qy) \in J,
\end{align}
then for any random variable $\nu \in A$ such that $\operatorname{\bf E}\nu \in B$, we have
\begin{align}
\nonumber
\operatorname{\bf E} f(\nu) + g( \operatorname{\bf E}\nu ) \in J.
\end{align}
\end{theorem}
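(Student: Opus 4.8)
The plan is to reduce Theorem \ref{0702001} to its already-established discrete counterpart, Theorem \ref{0709002}, together with one soft fact about intervals. Write $c = \operatorname{\bf E}\nu \in B$. The engine I will use is that an interval is closed under expectations: if $W$ is an integrable random variable with $W \in J$ almost surely, then $\operatorname{\bf E} W \in J$. Indeed, with $\alpha = \inf J$ and $\beta = \sup J$ we have $\alpha \le \operatorname{\bf E} W \le \beta$; if $\operatorname{\bf E} W$ lies strictly between $\alpha$ and $\beta$ it belongs to $J$ because $(\alpha,\beta)\subseteq J$, whereas if $\operatorname{\bf E} W = \alpha$ then $W-\alpha\ge 0$ has zero mean, forcing $W=\alpha$ almost surely, so $\alpha\in J$ and hence $\operatorname{\bf E} W = \alpha\in J$; the case $\operatorname{\bf E} W=\beta$ is symmetric. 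This is the probabilistic analogue of the convexity step that turns (\ref{0103001}) into Theorem \ref{0709002}, and it handles open, half-open, and closed $J$ uniformly.

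The key step is to exhibit $\operatorname{\bf E} f(\nu) + g(c)$ as the expectation of a $J$-valued random quantity built from two-point systems whose mean is exactly $c$. Concretely, I would construct an auxiliary coupling, that is, a random triple $(X,Y,P)$ with $X,Y\in A$, $0<P<1$, $Q:=1-P$, and $PX+QY=c$ almost surely, such that
\begin{align}
\nonumber
\operatorname{\bf E} f(\nu) = \operatorname{\bf E}\big[\, P f(X) + Q f(Y) \,\big].
\end{align}
Granting such a coupling, each realization satisfies $PX+QY=c\in B$, so the hypothesis applied with $x=X$, $y=Y$, $p=P$, $q=Q$ gives $P f(X)+Q f(Y)+g(c)\in J$ pointwise; taking expectations and invoking the closure principle yields
\begin{align}
\nonumber
\operatorname{\bf E} f(\nu)+g(c) = \operatorname{\bf E}\big[\,P f(X)+Q f(Y)+g(PX+QY)\,\big]\in J,
\end{align}
which is the assertion. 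Crucially, because every pair averages to exactly $c$, the function $g$ is only ever evaluated at $c$; this is what lets the argument proceed \emph{without} any continuity hypothesis on $g$, and it is why I prefer the coupling to a naive simple-function approximation of $\nu$, in which the term $g(\operatorname{\bf E}\nu_n)$ would otherwise have to be controlled.

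The hard part will be the construction of the coupling, which is exactly the continuous version of the fundamental theorem of weighted means (Theorems \ref{0227002} and \ref{0406077}): one must pair the mass of $\nu$ lying below $c$ with the mass lying above $c$ so that every matched pair has weighted mean $c$ while the total $f$-contribution is preserved. First I would dispose of the degenerate case $\operatorname{P}(\nu=c)=1$, where $\operatorname{\bf E} f(\nu)+g(c)=f(c)+g(c)\in J$ follows from the hypothesis with $x=y=c$ (here $c\in A$ since $\nu\in A$), and mass sitting exactly at $c$ would be carried by degenerate pairs $(c,c)$. In the non-degenerate case the identity $\operatorname{\bf E}(\nu-c)=0$ forces equal total deficit below $c$ and surplus above $c$, and I would build the pairing by a greedy matching of the left and right conditional laws mirroring the inductive splitting used to prove Theorem \ref{0219001}; for a general distribution this coupling is obtained as a weak limit of the discrete couplings produced by Theorem \ref{0227002} on simple approximants of $\nu$ that share the mean $c$ and take values in $A$. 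Verifying that this matching can be performed measurably and that it reproduces $\operatorname{\bf E} f(\nu)$ exactly — rather than merely in the limit — is the technical heart, and it is precisely the exactness of the representation that lets the closure principle deliver membership in $J$ directly, with no residual boundary ambiguity.
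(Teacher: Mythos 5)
The paper states Theorem \ref{0702001} without any proof, so there is nothing to compare your proposal against directly; judged on its own terms, its architecture is sound. The closure-of-$J$-under-expectation lemma is stated and justified correctly, you rightly observe that $g$ is only ever evaluated at $c=\operatorname{\bf E}\nu\in B$ (so no regularity of $g$ is needed), and the degenerate case $\operatorname{P}(\nu=c)=1$ is handled properly. The genuine gap is exactly where you locate it, but it is more serious than you suggest: the fallback you offer for producing the coupling --- a weak limit of the discrete decompositions of Theorem \ref{0227002} applied to simple approximants of $\nu$ --- cannot work at this level of generality, because $f$ is an arbitrary (possibly nowhere continuous) function on an arbitrary set $A$, so weak convergence of the couplings gives no control of $\operatorname{\bf E}\big[Pf(X)+Qf(Y)\big]$, and even $\operatorname{\bf E} f(\nu_n)\to\operatorname{\bf E} f(\nu)$ fails for generic simple approximants. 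The coupling does exist, but it must be constructed exactly, e.g.\ by disintegrating the law $\mu$ of $\nu$: reweight $\mu$ on $\{x<c\}$ by $(c-x)$ and on $\{y>c\}$ by $(y-c)$ (these have equal total mass $Z$ because $\operatorname{\bf E}(\nu-c)=0$), match the two reweighted measures through their quantile functions $x(t),y(t)$, $t\in(0,Z)$, and check that mixing the two-point laws $p_t\delta_{x(t)}+q_t\delta_{y(t)}$ (with $p_tx(t)+q_ty(t)=c$) against the density $h(t)=\tfrac{1}{c-x(t)}+\tfrac{1}{y(t)-c}$ reproduces $\mu$ off $\{c\}$. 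That computation is the missing content of your proof and is not supplied by anything you wrote.

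You could also bypass the coupling entirely with a supporting-line argument, which is both shorter and closer in spirit to the paper's other proofs. Let $m=\inf$ and $M=\sup$ of $f_{2}^{*}(A,c)=\{pf(x)+qf(y)\}$ over admissible pairs with $px+qy=c$; by hypothesis $f_{2}^{*}(A,c)+g(c)\subseteq J$. If $m>-\infty$, the defining inequality $pf(x)+qf(y)\ge m$ for all chords through $c$ is equivalent to
\begin{align}
\nonumber
\sup_{x\in A,\,x<c}\frac{f(x)-m}{x-c}\;\le\;\inf_{y\in A,\,y>c}\frac{f(y)-m}{y-c},
\end{align}
so choosing $k$ between these two quantities gives $f(t)\ge m+k(t-c)$ for every $t\in A$, whence $\operatorname{\bf E}f(\nu)\ge m+k\operatorname{\bf E}(\nu-c)=m$; symmetrically $\operatorname{\bf E}f(\nu)\le M$. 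If $\operatorname{\bf E}f(\nu)$ lands strictly between $m$ and $M$ it lies in the interval $(f_{2}^{*}(A,c))^{*}\subseteq J-g(c)$; if it equals an endpoint, say $m$, then $f(\nu)=m+k(\nu-c)$ almost surely, which forces $m\in f_{2}^{*}(A,c)$ (either $\nu=c$ a.s., or one finds $x_{0}<c<y_{0}$ in $A$ on the supporting line), so $m+g(c)\in J$ directly. This route uses only the two-point hypothesis and linearity of expectation, with no measure-theoretic construction at all.
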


\centerline{ \textbf{Part \uppercase\expandafter{\romannumeral2}} }

\begin{theorem}\label{0406081}
Let $D \neq \emptyset$ be a subset of $\mathbb{R}$, and let $f$ be convex at $b_{1}, \ldots, b_{n}$ over $D$. Then for any
\begin{enumerate}[(1)]
\item $a_{i} \in D$ and $p_{i} > 0, \sum_{i=1}^{m} p_{i} = 1$,
\item $q_{j} > 0, \sum_{j=1}^{n} q_{j} = 1$,
\end{enumerate}
such that $\sum_{i=1}^{m} p_{i}a_{i} = \sum_{j=1}^{n} q_{j}b_{j}$, where $a_{i} \not\in ( \min_{j} b_{j}, \max_{j} b_{j} )$, we have
\begin{align}
\nonumber
\sum_{i=1}^{m} p_{i} f(a_{i}) \geq \sum_{j=1}^{n} q_{j} f(b_{j}).
\end{align}
If $f$ is strictly convex at $b_{1}, \ldots, b_{n}$ over $D$, then the equality holds if and only if $b_{j} = \min_{i} a_{i}\ \text{or}\ \max_{i} a_{i}, j = 1, \ldots, n$.
\end{theorem}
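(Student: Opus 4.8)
The plan is to reduce this statement to the scalar Jensen inequality for functions that are convex at a single point (Theorem \ref{0605001}), applied separately at each of the points $b_1,\dots,b_n$, using the decomposition supplied by the third decomposition principle for weighted means (Theorem \ref{0902001}). This parallels the proof of Theorem \ref{0605002}, except that convexity is now assumed only pointwise at the $b_j$ rather than as weighted convexity at a common value.

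First I would invoke Theorem \ref{0902001}: since $\sum_{i=1}^m p_i a_i=\sum_{j=1}^n q_j b_j$ and every $a_i$ lies outside the open interval $(\min_j b_j,\max_j b_j)$, there exist coefficients $\lambda_{ij}\ge 0$ with $\sum_{j=1}^n\lambda_{ij}=p_i$, $\sum_{i=1}^m\lambda_{ij}=q_j$, and
\[ \frac{\lambda_{1j}a_1+\cdots+\lambda_{mj}a_m}{\lambda_{1j}+\cdots+\lambda_{mj}}=b_j,\qquad j=1,\dots,n. \]
The identity in remark $2^{0}$ following that theorem then rewrites the target left-hand side as a convex combination of conditional averages:
\[ \sum_{i=1}^m p_i f(a_i)=\sum_{j=1}^n q_j\cdot\frac{\lambda_{1j}f(a_1)+\cdots+\lambda_{mj}f(a_m)}{\lambda_{1j}+\cdots+\lambda_{mj}}. \]

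Next, I would fix $j$ and treat the inner average. Discarding the indices $i$ with $\lambda_{ij}=0$, the numbers $\lambda_{ij}/q_j$ are positive weights summing to $1$, the surviving $a_i$ lie in $D$, and their weighted mean is $b_j\in D$. Because $f$ is convex at $b_j$ over $D$, Theorem \ref{0605001} yields
\[ \frac{\lambda_{1j}f(a_1)+\cdots+\lambda_{mj}f(a_m)}{\lambda_{1j}+\cdots+\lambda_{mj}}\ge f(b_j). \]
Substituting these $n$ inequalities into the identity above and using $q_j>0$ gives $\sum_{i=1}^m p_i f(a_i)\ge\sum_{j=1}^n q_j f(b_j)$, which is the assertion.

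The delicate part is the equality characterization under strict convexity. By the strict version of Theorem \ref{0605001}, the $j$-th inner inequality is an equality if and only if all $a_i$ with $\lambda_{ij}>0$ coincide, in which case their common value is $b_j$; thus overall equality forces each $b_j$ to be attained by some $a_i$. I would then combine this with the hypothesis $a_i\notin(\min_j b_j,\max_j b_j)$: since each $b_j$ equals some $a_i$ and simultaneously lies in $[\min_k b_k,\max_k b_k]$, no $b_j$ can be interior, so every $b_j\in\{\min_k b_k,\max_k b_k\}$; and as every index $i$ occurs in some decomposition (because $\sum_j\lambda_{ij}=p_i>0$), each $a_i$ then equals $\min_k b_k$ or $\max_k b_k$, whence $\{\min_i a_i,\max_i a_i\}=\{\min_k b_k,\max_k b_k\}$ and the stated condition $b_j=\min_i a_i$ or $\max_i a_i$ follows. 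I expect this bookkeeping, reconciling the per-point equality condition with the global min/max description, to be the only real obstacle; the inequality itself is an immediate consequence of Theorems \ref{0902001} and \ref{0605001}.
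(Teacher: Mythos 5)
Your route---decompose via Theorem \ref{0902001}, rewrite $\sum_{i} p_{i} f(a_{i})$ as $\sum_{j} q_{j}$ times the conditional averages $\frac{\sum_{i}\lambda_{ij}f(a_{i})}{\sum_{i}\lambda_{ij}}$, and apply Theorem \ref{0605001} at each $b_{j}$---is exactly the argument the paper intends: Theorem \ref{0406081} is stated without proof, but the neighbouring results (Theorems \ref{0605002} and \ref{0701004}) are disposed of by citing precisely these decomposition principles. Your derivation of the inequality is complete and correct (including the care taken to discard indices with $\lambda_{ij}=0$ before invoking Theorem \ref{0605001}), and your argument for the implication ``equality $\Rightarrow$ $b_{j}=\min_{i}a_{i}$ or $\max_{i}a_{i}$'' is sound.

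The one genuine gap is that you prove only half of the ``if and only if'': you still must show that the condition $b_{j}\in\{\min_{i}a_{i},\max_{i}a_{i}\}$ for all $j$ forces equality, and this is not automatic. It does follow from the hypotheses: set $\alpha=\min_{i}a_{i}$ and $\beta=\max_{i}a_{i}$. If $\alpha=\beta$ everything collapses to a single point and both sides are $f(\alpha)$. If $\alpha<\beta$, then not all $b_{j}$ can equal $\alpha$ (otherwise $\sum_{i}p_{i}a_{i}=\alpha$ with $p_{i}>0$ would force every $a_{i}=\alpha$), nor can all equal $\beta$, so $\min_{j}b_{j}=\alpha$ and $\max_{j}b_{j}=\beta$; the hypothesis $a_{i}\notin(\min_{j}b_{j},\max_{j}b_{j})=(\alpha,\beta)$ combined with $a_{i}\in[\alpha,\beta]$ then puts every $a_{i}$ in $\{\alpha,\beta\}$, and equating the two weighted means yields $\sum_{a_{i}=\alpha}p_{i}=\sum_{b_{j}=\alpha}q_{j}$, so both sides reduce to the same two-point combination of $f(\alpha)$ and $f(\beta)$. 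With that short converse argument added, your proof is complete.
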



\begin{theorem}\label{0608003}
Let $f: [ a_{1}, b_{1} ] \rightarrow \mathbb{R}$ be convex and $[ a_{0}, b_{0} ] \subset ( a_{1}, b_{1} )$.
Then for any $\lambda(x) \geq 0, \int_{ [a_{1}, b_{1}] \setminus (a_{0}, b_{0}) } \lambda(x) \mathrm{d}x = 1$ and any $w(x) \geq 0, \int_{ a_{0} }^{ b_{0} } w(x) \mathrm{d}x = 1$ such that
\begin{align}
\nonumber
\int_{ [a_{1}, b_{1}] \setminus (a_{0}, b_{0}) } \lambda(x) x \mathrm{d}x = \int_{ a_{0} }^{ b_{0} } w(x)x \mathrm{d}x,
\end{align}
we have
\begin{align}
\nonumber
\int_{ [a_{1}, b_{1}] \setminus (a_{0}, b_{0}) } \lambda(x) f(x) \mathrm{d}x \geq \int_{ a_{0} }^{ b_{0} } w(x) f(x) \mathrm{d}x.
\end{align}
If $f$ is strictly convex, then the inequality is strict.
\end{theorem}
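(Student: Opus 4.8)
The plan is to reduce Theorem \ref{0608003} to the weighted-convexity version already proved in Theorem \ref{0711003}, using Theorem \ref{0716001} to bridge the gap between ``$f$ is convex'' and ``$f$ is weighted convex at a point.'' First I would name the common center of mass $c := \int_{[a_1,b_1]\setminus(a_0,b_0)} \lambda(x) x\,\mathrm{d}x = \int_{a_0}^{b_0} w(x) x\,\mathrm{d}x$, which is exactly the hypothesis equating the two weighted means. Theorem \ref{0711003} is stated for a fixed $c \in (a_0,b_0)$, whereas here $c$ is merely the common mean, so the first task is to confirm that this $c$ in fact lies in the open interval $(a_0,b_0)$.

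That verification is the one place requiring a little care. Since $w(x) \geq 0$ is supported on $[a_0,b_0]$ with $\int_{a_0}^{b_0} w = 1$, I would compute $\int_{a_0}^{b_0} w(x)(x-a_0)\,\mathrm{d}x = c - a_0 \geq 0$ and $\int_{a_0}^{b_0} w(x)(b_0-x)\,\mathrm{d}x = b_0 - c \geq 0$. If either were zero, the nonnegative Riemann-integrable integrand would have to vanish at every point of continuity, hence almost everywhere, forcing $w \equiv 0$ a.e.\ on $(a_0,b_0]$ (resp.\ $[a_0,b_0)$) and contradicting $\int w = 1$. Thus $c \in (a_0,b_0)$ automatically, and the restriction in Theorem \ref{0711003} is met without extra assumption.

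With $c$ secured, the conclusion follows quickly: convexity of $f$ on $[a_1,b_1]$ yields, via Theorem \ref{0716001}, that $f$ is weighted convex on $[a_1,b_1]$, in particular weighted convex at $c \in (a_0,b_0)\subset[a_1,b_1]$; and convexity on a closed interval forces continuity on the interior, so $f \in \mathcal{R}[a_1,b_1]$. Hence every hypothesis of Theorem \ref{0711003} holds with this $c$, and applying it gives $\int_{[a_1,b_1]\setminus(a_0,b_0)} \lambda f \geq \int_{a_0}^{b_0} w f$ at once. For the strict case I would observe that strict convexity of $f$ gives strict weighted convexity at $c$ (again by Theorem \ref{0716001}), whereupon Theorem \ref{0711003} returns the strict inequality. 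I do not anticipate a genuine obstacle, since the analytic substance has already been absorbed into Theorems \ref{0711003} and \ref{0716001}; the only subtlety is the endpoint check above guaranteeing $c \in (a_0,b_0)$.
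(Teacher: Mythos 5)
Your proposal is correct, and the reduction you use --- Theorem \ref{0716001} to upgrade ``convex on $[a_1,b_1]$'' to ``weighted convex at $c$ over $[a_1,b_1]$,'' followed by an application of Theorem \ref{0711003} --- is exactly the derivation the paper's architecture intends; the paper states Theorem \ref{0608003} without any written proof, so there is no competing argument to compare against. Your extra care in checking that the common mean $c$ actually lies in the open interval $(a_0,b_0)$ (via $\int_{a_0}^{b_0} w(x)(x-a_0)\,\mathrm{d}x$ and $\int_{a_0}^{b_0} w(x)(b_0-x)\,\mathrm{d}x$ being strictly positive for Riemann-integrable $w\geq 0$ with unit mass) is a genuine hypothesis of Theorem \ref{0711003} that the paper leaves implicit, so including it strengthens rather than departs from the intended route.
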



\begin{theorem}\label{0701004}
Let $A, B \neq \emptyset$ be subsets of $\mathbb{R}$, and let $f: A \rightarrow \mathbb{R}$ and $g: B \rightarrow \mathbb{R}$.
Let $J$ be an interval.

If for any $x, y \in A$ and any $p, q > 0, p + q = 1$ such that $px + qy \in B$, we have
\begin{align}
\nonumber
p f(x) + q f(y) +g(px + qy) \in J,
\end{align}
then for any
\begin{enumerate}[(1)]
\item $a_{i} \in A$ and $p_{i} > 0, \sum_{i=1}^{m} p_{i} = 1$,
\item $b_{j} \in B$ and $q_{j} > 0, \sum_{j=1}^{n} q_{j} = 1$,
\end{enumerate}
such that $\sum_{i=1}^{m} p_{i}a_{i} = \sum_{j=1}^{n} q_{j}b_{j}$, where $a_{i} \not\in ( \min_{j} b_{j}, \max_{j} b_{j} )$, we have
\begin{align}
\nonumber
\sum_{i=1}^{m} p_{i} f(a_{i}) + \sum_{j=1}^{n} q_{j} g(b_{j}) \in J.
\end{align}
\end{theorem}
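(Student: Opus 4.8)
The plan is to reduce the many-point statement to the two-point hypothesis by combining the mass-splitting supplied by Theorem \ref{0902001} with the weighted-mean form of the generalized Jensen inequality already established in Theorem \ref{0709002}. First I would invoke Theorem \ref{0902001}, whose hypotheses are precisely the ones given here: $a_i$ with $p_i > 0$, $\sum_i p_i = 1$; $b_j$ with $q_j > 0$, $\sum_j q_j = 1$; the balance condition $\sum_i p_i a_i = \sum_j q_j b_j$; and the separation condition $a_i \not\in (\min_j b_j, \max_j b_j)$. The theorem returns weights $\lambda_{ij} \geq 0$ with row sums $\sum_j \lambda_{ij} = p_i$ and column sums $\sum_i \lambda_{ij} = q_j$ for which the column-$j$ weighted mean of the $a_i$'s equals $b_j$. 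Remark $2^{0}$ following that theorem records the algebraic identity I actually need,
\begin{align}
\nonumber
\sum_{i=1}^{m} p_i f(a_i) + \sum_{j=1}^{n} q_j g(b_j) = \sum_{j=1}^{n} q_j \left[ \frac{ \sum_{i=1}^{m} \lambda_{ij} f(a_i) }{ \sum_{i=1}^{m} \lambda_{ij} } + g(b_j) \right],
\end{align}
which holds because $b_j = \left( \sum_i \lambda_{ij} a_i \right) \big/ \left( \sum_i \lambda_{ij} \right)$ for every $j$.

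Next I would analyze a single bracketed term on the right. Fixing $j$ and setting $S = \{ i : \lambda_{ij} > 0 \}$, which is nonempty since $\sum_i \lambda_{ij} = q_j > 0$, I would put $\mu_i = \lambda_{ij} / q_j$ for $i \in S$; these are positive weights summing to $1$, the points $a_i$ ($i \in S$) lie in $A$, and their $\mu$-weighted mean is exactly $b_j \in B$. Thus the running hypothesis is met for the data $\{ a_i : i \in S \}$ with weights $\{ \mu_i \}$ and target point $b_j$, and Theorem \ref{0709002} yields
\begin{align}
\nonumber
\frac{ \sum_{i=1}^{m} \lambda_{ij} f(a_i) }{ \sum_{i=1}^{m} \lambda_{ij} } + g(b_j) = \sum_{i \in S} \mu_i f(a_i) + g\left( \sum_{i \in S} \mu_i a_i \right) \in J.
\end{align}

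Finally, the left-hand side of the displayed identity is the $q$-weighted average of the $n$ quantities just shown to lie in $J$; since $q_j > 0$, $\sum_j q_j = 1$, and an interval is convex, this average again lies in $J$, giving the conclusion. The one delicate point I anticipate is the bookkeeping for a column in which a single $a_i$ carries all the mass, so that $b_j = a_i$: there the application of Theorem \ref{0709002} degenerates to the one-point statement $f(a_i) + g(b_j) \in J$, which is itself an instance of the hypothesis obtained by taking $x = y = a_i$ (legitimate because then $px + qy = a_i = b_j \in B$). With that case absorbed, the chain from Theorem \ref{0902001} through Theorem \ref{0709002} to the convexity of $J$ completes the argument.
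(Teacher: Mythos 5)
Your proposal is correct and follows exactly the route the paper intends: its entire proof of this theorem is the citation ``By Theorems \ref{0902001} and \ref{0709002}, we conclude the proof,'' and you have supplied precisely the missing details — the $\lambda_{ij}$ decomposition and the identity from remark $2^{0}$ after Theorem \ref{0902001}, the column-by-column application of Theorem \ref{0709002}, and the final averaging inside the interval $J$. The handling of the degenerate one-point column via $x=y$ in the hypothesis is a sensible touch the paper leaves implicit.
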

\begin{proof}By Theorems \ref{0902001} and \ref{0709002}, we conclude the proof.
\end{proof}


\begin{theorem}\label{0902017}
Let $f \in \mathcal{R}( [ a_{1}, b_{1} ] \setminus ( a_{0}, b_{0} ) )$, and let $g \in \mathcal{R}[ a_{0}, b_{0} ]$, where $[ a_{0}, b_{0} ] \subset ( a_{1}, b_{1} )$.
Let $J$ be an interval.

If for any $x, y \in [a_{1}, b_{1}] \setminus (a_{0}, b_{0})$ and any $p, q > 0, p + q = 1$ such that $px + qy \in [a_{0}, b_{0}]$, we have
\begin{align}
\nonumber
p f(x) + q f(y) + g(px + qy) \in J,
\end{align}
then for any $\lambda(x) \geq 0, \int_{ [a_{1}, b_{1}] \setminus (a_{0}, b_{0}) } \lambda(x) \mathrm{d}x = 1$ and any $w(x) \geq 0, \int_{ a_{0} }^{ b_{0} } w(x) \mathrm{d}x = 1$ such that
\begin{align}
\nonumber
\int_{ [a_{1}, b_{1}] \setminus (a_{0}, b_{0}) } \lambda(x) x \mathrm{d}x = \int_{ a_{0} }^{ b_{0} } w(x)x \mathrm{d}x,
\end{align}
we have
\begin{align}
\nonumber
\int_{ [a_{1}, b_{1}] \setminus (a_{0}, b_{0}) } \lambda(x) f(x) \mathrm{d}x + \int_{ a_{0} }^{ b_{0} } w(x) g(x) \mathrm{d}x \in J.
\end{align}
\end{theorem}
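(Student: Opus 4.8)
The plan is to realize the quantity $\int_A\lambda f + \int_B w g$ as an expectation over a mean-preserving coupling of the two densities, and then invoke the random-variable form, Theorem \ref{0702001}. Write $A=[a_1,b_1]\setminus(a_0,b_0)=[a_1,a_0]\cup[b_0,b_1]$ and $B=[a_0,b_0]$, regard $\mu=\lambda\,\mathrm{d}x$ and $\rho=w\,\mathrm{d}x$ as probability laws on $A$ and $B$, and note that they share the barycenter $c:=\int_A\lambda x\,\mathrm{d}x=\int_B w x\,\mathrm{d}x$. The goal is a random pair $(\xi,\eta)$ with $\xi\sim\mu$, $\eta\sim\rho$, and $\operatorname{\bf E}[\xi\mid\eta]=\eta$ almost surely; granting this,
\begin{align}
\nonumber
\int_A\lambda(x)f(x)\,\mathrm{d}x+\int_B w(x)g(x)\,\mathrm{d}x=\operatorname{\bf E}\big[\operatorname{\bf E}[f(\xi)\mid\eta]+g(\eta)\big],
\end{align}
and for almost every value $\eta=y$ the conditional law of $\xi$ lives in $A$ with mean $y=\operatorname{\bf E}[\xi\mid\eta=y]\in B$, so Theorem \ref{0702001} yields $\operatorname{\bf E}[f(\xi)\mid\eta=y]+g(y)\in J$.

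To build the coupling I would discretize and appeal to the discrete decomposition. Fix partitions of $[a_1,a_0]$, $[b_0,b_1]$ and of $B$; on each subinterval replace the restricted density by two point masses at the endpoints, chosen as in the $R_{s-1},L_s$ construction of Theorem \ref{0423007} so as to preserve both the mass and the first moment of the density there. This produces discrete data $p_i>0$ at $a_i\in A$ and $q_j>0$ at $b_j\in B$ with $\sum_i p_i=\sum_j q_j=1$ and, because first moments are preserved subinterval by subinterval, the exact barycenter identity $\sum_i p_i a_i=c=\sum_j q_j b_j$. Since each $a_i\notin(a_0,b_0)$ while every $b_j\in[a_0,b_0]$, we have $(\min_j b_j,\max_j b_j)\subseteq(a_0,b_0)$ and hence $a_i\notin(\min_j b_j,\max_j b_j)$. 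Theorem \ref{0902001} then supplies weights $\lambda_{ij}\ge 0$ with row sums $p_i$, column sums $q_j$, and conditional barycenters $\frac{\sum_i\lambda_{ij}a_i}{\sum_i\lambda_{ij}}=b_j$; this is precisely a discrete mean-preserving coupling, and Theorem \ref{0701004} confirms that the discrete target $\sum_i p_i f(a_i)+\sum_j q_j g(b_j)$ already lies in $J$.

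Passing to the limit as the mesh tends to $0$ and using that $f,g$ are bounded (being Riemann integrable), these discrete couplings converge to a coupling $(\xi,\eta)$ with the stated marginals and the martingale property $\operatorname{\bf E}[\xi\mid\eta]=\eta$. The displayed identity then holds, the inner quantity lies in $J$ almost surely by Theorem \ref{0702001}, and averaging an almost-surely $J$-valued bounded integrand over a probability law keeps the value inside $J$: if $J=(\alpha,\beta)$ and the integrand exceeds $\alpha$ almost surely, then its mean exceeds $\alpha$, and symmetrically for $\beta$. This handles every interval $J$, open or closed, uniformly, and is the reason the coupling route is preferable to merely letting the discrete memberships pass to a closure.

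The step I expect to be the main obstacle is the construction and convergence of the limiting coupling: one must verify that the discrete plans from Theorem \ref{0902001} are tight, extract a limit, and check that the conditional-barycenter identity survives passage to the limit. It is exactly here that the hypotheses---$w$ supported in $[a_0,b_0]$ and $\lambda$ supported off $(a_0,b_0)$ with equal means---are indispensable, since they force $\mu$ to be a mean-preserving spread of $\rho$ and thereby guarantee that such a coupling exists at all.
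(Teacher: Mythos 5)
Your argument is correct in outline, but it follows a genuinely different route from the paper's. The paper does not prove Theorem \ref{0902017} directly: it records it as a special case of Theorem \ref{0715003}, whose proof chains together the containment $f_{\infty}^{*} \subseteq f^{*}$ (Theorems \ref{0529007} and \ref{0715001}, whose analytic content is the Riemann-sum argument of Theorem \ref{0811010} plus the localization argument of Theorem \ref{0811008} that upgrades $\geq$ to $>$ near open endpoints of $J$) with the discrete two-sided statement (Theorem \ref{0701004}); membership in $J$ then follows because $J$ is an interval containing the two-point values, hence their conditional convex hull. You instead build a martingale coupling $(\xi,\eta)$ of the two densities as a weak limit of the discrete transport plans furnished by Theorem \ref{0902001}, disintegrate, apply the random-variable form (Theorem \ref{0702001}) to each conditional law, and handle open endpoints of $J$ by the observation that a probability average of an a.s.\ strictly-greater-than-$\alpha$ bounded integrand is strictly greater than $\alpha$. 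What your route buys is a conceptually clean, uniform treatment of all interval types $J$ without the delicate pointwise localization of Theorem \ref{0811008}; what it costs is the measure-theoretic overhead you yourself flag: tightness and weak convergence of the couplings (automatic here since $A\times B$ is compact), passage of the conditional-barycenter identity to the limit (test against $(x-y)\varphi(y)$ for continuous $\varphi$), convergence of the marginals of the discrete plans to $\lambda\,\mathrm{d}x$ and $w\,\mathrm{d}x$ (which does hold for the $R,L$ endpoint construction because mass and first moment are preserved on each cell and $\lambda, w, f, g$ are bounded), and the existence of a regular conditional distribution. None of these is an obstruction, but they are exactly the steps you would need to write out in full; as submitted, the limiting-coupling step is a sketch rather than a proof, whereas the paper's machinery (Theorems \ref{0715001} and \ref{0701004}) disposes of the same difficulties once and for all upstream. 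One small point worth making explicit in your write-up: the discretization can produce cells of zero mass, and Theorem \ref{0902001} requires strictly positive weights, so those points must be discarded before invoking it.
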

\begin{flushleft}
$1^{0}$. The theorem is a special case of Theorem \ref{0715003}.
\end{flushleft}


\begin{theorem}\label{0711006}
Let $A, B \neq \emptyset$ be subsets of $\mathbb{R}$, and let $f: A \rightarrow \mathbb{R}$ and $g: B \rightarrow \mathbb{R}$.
Let $J$ be an interval.

If for any $x, y \in A$ and any $p, q > 0, p + q = 1$ such that $px + qy \in B$, we have
\begin{align}
\nonumber
p f(x) + q f(y) +g(px + qy) \in J,
\end{align}
then for any random variables $\mu \in A$ and $\nu \in B$ such that
\begin{enumerate}[(1)]
\item $\operatorname{\bf E}\mu = \operatorname{\bf E}\nu$,
\item there are real numbers $m$ and $M$ such that $\mu \not\in (m, M)$ and $\nu \in [m, M]$,
\end{enumerate}
we have
\begin{align}
\nonumber
\operatorname{\bf E} f(\mu) + \operatorname{\bf E} g(\nu) \in J.
\end{align}
\end{theorem}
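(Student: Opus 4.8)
The plan is to imitate the proof of the discrete counterpart, Theorem \ref{0701004}, replacing the third decomposition principle (Theorem \ref{0902001}) by its random-variable incarnation---the existence of a martingale coupling of $\mu$ and $\nu$---and replacing Theorem \ref{0709002} by its random-variable form, Theorem \ref{0702001}. Concretely, I would first produce a joint law of the pair $(\mu, \nu)$ with the prescribed marginals for which $\operatorname{\bf E}[\mu \mid \nu] = \nu$ almost surely; then I would condition on $\nu$, apply Theorem \ref{0702001} to each conditional law of $\mu$, and finally average over $\nu$ using the convexity of the interval $J$.

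The construction of the coupling is the crux, and it is where the two hypotheses are consumed. I claim they force $\nu$ to precede $\mu$ in the convex order. Indeed, fix any convex $\phi$ and let $\ell$ be the affine function agreeing with $\phi$ at $m$ and at $M$; convexity gives $\phi \leq \ell$ on $[m, M]$ and $\phi \geq \ell$ on $\mathbb{R} \setminus (m, M)$. Since $\nu \in [m, M]$ and $\mu \not\in (m, M)$, and since $\operatorname{\bf E}\mu = \operatorname{\bf E}\nu$,
\begin{align}
\nonumber
\operatorname{\bf E}\phi(\nu) \leq \operatorname{\bf E}\ell(\nu) = \ell(\operatorname{\bf E}\nu) = \ell(\operatorname{\bf E}\mu) = \operatorname{\bf E}\ell(\mu) \leq \operatorname{\bf E}\phi(\mu).
\end{align}
As $\phi$ was an arbitrary convex function, $\nu$ is dominated by $\mu$ in the convex order, and Strassen's theorem supplies a joint law under which $\operatorname{\bf E}[\mu \mid \nu] = \nu$ almost surely. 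This is exactly the continuous analogue of the weights $\lambda_{ij}$ delivered by Theorem \ref{0902001}, whose defining relation $\frac{ \lambda_{1j}a_{1} + \cdots + \lambda_{mj}a_{m} }{ \lambda_{1j} + \cdots + \lambda_{mj} } = b_{j}$ says precisely that each value of $\nu$ is a conditional mean of $\mu$; one may therefore also obtain the coupling as a weak limit of those finite decompositions, staying within the paper's two-point framework.

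Granting the coupling, the remaining steps are routine. For almost every value $v$ of $\nu$ the regular conditional law of $\mu$ given $\nu = v$ is a random variable taking values in $A$ with mean $\operatorname{\bf E}[\mu \mid \nu = v] = v \in B$, so Theorem \ref{0702001} yields $\operatorname{\bf E}[f(\mu) \mid \nu = v] + g(v) \in J$. Taking the expectation over $\nu$ and using that $J$ is an interval, hence convex, gives
\begin{align}
\nonumber
\operatorname{\bf E} f(\mu) + \operatorname{\bf E} g(\nu) = \operatorname{\bf E}\Big[ \operatorname{\bf E}[f(\mu) \mid \nu] + g(\nu) \Big] \in J,
\end{align}
which is the assertion. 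The only genuine obstacle is the coupling step; once $\operatorname{\bf E}[\mu \mid \nu] = \nu$ is available, the conditioning and averaging are immediate and require no regularity of $f$ or $g$. I would avoid any attempt to prove the theorem directly by discretizing $\mu$, $\nu$ and passing to the limit in Theorem \ref{0701004}, since the absence of continuity hypotheses on $f$ and $g$ makes the convergence $\operatorname{\bf E} f(\mu_{n}) \to \operatorname{\bf E} f(\mu)$ unjustified; the convex-order route sidesteps this difficulty entirely.
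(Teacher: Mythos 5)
Your proof is correct, and it supplies something the paper does not: Theorem \ref{0711006} is stated there with no proof at all, as the probabilistic companion of Theorem \ref{0701004}, which the paper proves by combining the third decomposition principle (Theorem \ref{0902001}) with Theorem \ref{0709002} --- an argument that only covers finitely supported $\mu$ and $\nu$. Your observation that hypotheses (1) and (2) force $\nu$ to be dominated by $\mu$ in the convex order is the right continuous analogue of the condition $a_{i} \not\in ( \min_{j} b_{j}, \max_{j} b_{j} )$ in Theorem \ref{0902001}, and Strassen's theorem is exactly the measure-theoretic version of the coupling that Theorem \ref{0902001} builds by hand; the price is importing a substantial external result where the paper's philosophy is to derive everything from its own decomposition principles, and the reward is that you sidestep the limit-passage problem you correctly identify (no continuity of $f$ or $g$ is available). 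A few points deserve a sentence each if you write this up: the degenerate case $m = M$ makes the secant line $\ell$ undefined, but then $\nu$ is a.s.\ constant and convex order follows from Jensen directly; the regular conditional laws of $\mu$ given $\nu$ are carried by $A$ only almost surely, which is enough but should be said; and the final step, that an integrable random variable lying a.s.\ in an interval $J$ has its mean in $J$ even when $J$ is not closed, is true but not completely free. I would also downgrade your aside that the coupling could be obtained as a weak limit of the finite decompositions of Theorem \ref{0902001}: discretizations do not have the exact marginals of $\mu$ and $\nu$, so that route needs real work, and Strassen should remain the primary justification.
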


\section{Conditional convex hull}

\begin{theorem}\label{0407078}
Let $\mathscr{A}$ be a set of subsets of $V$, where $V$ is a real vector space. Then
\begin{align}
\nonumber
\left( \bigcup_{ A \in \mathscr{A} } A^{*} \right)^{*} = \left( \bigcup_{ A \in \mathscr{A} } A \right)^{*}.
\end{align}
\end{theorem}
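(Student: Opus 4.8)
The plan is to reduce everything to three standard properties of the convex-hull operator $(\cdot)^{*}$: that it is extensive ($A \subseteq A^{*}$), monotone ($A \subseteq B$ implies $A^{*} \subseteq B^{*}$), and idempotent ($(A^{*})^{*} = A^{*}$). Together these say that $(\cdot)^{*}$ is a closure operator on the power set of $V$. Each follows immediately from the characterization of $A^{*}$ as the smallest convex set containing $A$ (see \cite{03}): extensivity and monotonicity are built into ``smallest containing,'' and idempotence records that $A^{*}$ is already convex. I would state these three facts at the outset and then use nothing else, so that the rest of the argument is purely formal.

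For the inclusion $\supseteq$, I would first observe that extensivity gives $A \subseteq A^{*}$ for every $A \in \mathscr{A}$, so that $\bigcup_{A \in \mathscr{A}} A \subseteq \bigcup_{A \in \mathscr{A}} A^{*}$. Applying monotonicity of $(\cdot)^{*}$ to this inclusion yields
\begin{align}
\nonumber
\left( \bigcup_{A \in \mathscr{A}} A \right)^{*} \subseteq \left( \bigcup_{A \in \mathscr{A}} A^{*} \right)^{*},
\end{align}
which is exactly the desired containment.

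For the inclusion $\subseteq$, I would argue in the other direction. For each fixed $A \in \mathscr{A}$ we have $A \subseteq \bigcup_{B \in \mathscr{A}} B$, so monotonicity gives $A^{*} \subseteq (\bigcup_{B \in \mathscr{A}} B)^{*}$; since this holds for every $A$, taking the union over $A$ produces $\bigcup_{A \in \mathscr{A}} A^{*} \subseteq (\bigcup_{B \in \mathscr{A}} B)^{*}$. Now apply $(\cdot)^{*}$ once more and invoke idempotence, $((\bigcup_{B \in \mathscr{A}} B)^{*})^{*} = (\bigcup_{B \in \mathscr{A}} B)^{*}$, to obtain
\begin{align}
\nonumber
\left( \bigcup_{A \in \mathscr{A}} A^{*} \right)^{*} \subseteq \left( \bigcup_{B \in \mathscr{A}} B \right)^{*}.
\end{align}
Combining the two inclusions gives the asserted equality.

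I do not expect a serious obstacle here; the argument is entirely formal once the closure-operator properties are in hand, and it even covers degenerate cases such as $\mathscr{A} = \emptyset$ (both sides reduce to $\emptyset^{*} = \emptyset$). The only step carrying genuine content is idempotence, which rests on the convex hull of a set being convex, so that hulling a second time changes nothing. This is precisely the property that lets the nested stars on the left-hand side collapse, and it is where the geometric meaning of ``convex hull'' actually enters the proof.
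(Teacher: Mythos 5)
Your proof is correct. The paper states Theorem \ref{0407078} without any proof, so there is nothing to compare against; your closure-operator argument (extensivity, monotonicity, idempotence of $(\cdot)^{*}$) is the standard and complete way to establish the identity, and both inclusions are carried out without gaps.
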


\centerline{ \textbf{Part \uppercase\expandafter{\romannumeral1}} }

\subsection{The $f^{*}(D, B)$ type}

\begin{definition}\label{0630006}
Let $D \neq \emptyset$ be a subset of $\mathbb{R}$, and let $f: D \rightarrow \mathbb{R}$. Suppose that $B \subseteq D^{*}$. Define
\begin{align}
\nonumber
f^{*}(D, B) = \left\{ \lambda_{1}f(x_{1}) + \cdots + \lambda_{n}f(x_{n})
\left\vert\begin{array}{c}
x_{1}, \ldots, x_{n} \in D, \\
\lambda_{1}, \ldots, \lambda_{n} > 0, \sum_{i=1}^{n}\lambda_{i} = 1, \\
\lambda_{1}x_{1} + \cdots + \lambda_{n}x_{n} \in B, \\
n = 1, 2, \ldots
\end{array}\right. \right\}.
\end{align}
If $B = \{c\}$, then $f^{*}(D, c)$ for short; and if $B = D^{*}$, then $f^{*}(D)$ for short.
\end{definition}
\begin{flushleft}
$1^{0}$. $f^{*}(D) = ( f(D) )^{*}$.
\end{flushleft}


\begin{theorem}\label{0630005}
Let $D \neq \emptyset$ be a subset of $\mathbb{R}$, and let $f: D \rightarrow \mathbb{R}$. Suppose that $B \subseteq D^{*}$.
If $B$ is an interval, then $f^{*}(D, B)$ is also an interval.
\end{theorem}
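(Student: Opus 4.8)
The plan is to realize $f^{*}(D, B)$ as a coordinate projection of a slice of a single convex set in $\mathbb{R}^{2}$, namely the convex hull of the graph of $f$; the interval property then follows because convexity is preserved under intersection with a vertical slab and under linear projection. Write $\Gamma = \{ (x, f(x)) : x \in D \} \subseteq \mathbb{R}^{2}$ for the graph of $f$ and let $G = \Gamma^{*}$ be its convex hull. Unwinding the definition of convex hull, $G$ consists of all finite combinations $\sum_{i=1}^{n} \lambda_{i}(x_{i}, f(x_{i})) = \left( \sum_{i=1}^{n}\lambda_{i}x_{i}, \sum_{i=1}^{n}\lambda_{i}f(x_{i}) \right)$ with $x_{i} \in D$, $\lambda_{i} \geq 0$, $\sum_{i}\lambda_{i} = 1$; since terms with $\lambda_{i} = 0$ may be discarded, this is the same set one obtains by insisting $\lambda_{i} > 0$. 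This is precisely the two-dimensional refinement of the remark $f^{*}(D) = (f(D))^{*}$ following Definition \ref{0630006}.

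First I would record the identity
\begin{align}
\nonumber
f^{*}(D, B) = \pi_{2}\big( G \cap (B \times \mathbb{R}) \big),
\end{align}
where $\pi_{2}(v, w) = w$ is the projection onto the second coordinate. Indeed, a number $w$ lies in $f^{*}(D, B)$ exactly when $w = \sum_{i}\lambda_{i}f(x_{i})$ for some admissible data whose argument mean $v = \sum_{i}\lambda_{i}x_{i}$ lies in $B$; by the description of $G$ above this says precisely that $(v, w) \in G$ for some $v \in B$, i.e. $(v, w) \in G \cap (B \times \mathbb{R})$. (The vertical slice of $G$ over a single point $c$ is the set $f^{*}(D, c)$, which simultaneously explains the ``conditional'' terminology of the section.)

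The remaining steps are then purely formal. The set $G$ is convex, being a convex hull. Since $B$ is an interval it is a convex subset of $\mathbb{R}$, so the slab $B \times \mathbb{R}$ is convex, and therefore $G \cap (B \times \mathbb{R})$ is convex as an intersection of convex sets. Its image under the linear map $\pi_{2}$ is again convex, and a convex subset of $\mathbb{R}$ is an interval; hence $f^{*}(D, B)$ is an interval. If one prefers to avoid citing these closure properties, the same conclusion follows from the intermediate-value characterization of intervals: given $w_{1} < w < w_{2}$ with $w_{1}, w_{2} \in f^{*}(D, B)$, pick $v_{1}, v_{2} \in B$ with $(v_{i}, w_{i}) \in G$, and take the point of the segment joining $(v_{1}, w_{1})$ to $(v_{2}, w_{2})$ at height $w$; it lies in $G$ by convexity, and its first coordinate is a convex combination of $v_{1}, v_{2}$, hence in $B$, so $w \in f^{*}(D, B)$.

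The main obstacle is conceptual rather than computational: one must recognize that the conditional construction $f^{*}(D, B)$ is nothing more than the graph convex hull $G$ read off along those fibers sitting above $B$. Once the identity $f^{*}(D, B) = \pi_{2}(G \cap (B \times \mathbb{R}))$ is in place, no estimate is needed. The only point requiring care is the passage between the strict inequalities $\lambda_{i} > 0$ of Definition \ref{0630006} and the weak inequalities $\lambda_{i} \geq 0$ defining a convex hull; this is harmless because the empty list is excluded ($n \geq 1$) and zero weights can always be deleted without changing the value. An alternative route, closer to the section's ``union of intervals is an interval'' theme, would prove first that each fiber $f^{*}(D, c)$ is an interval and then that $f^{*}(D, B) = \bigcup_{c \in B} f^{*}(D, c)$ is an interval; but that version must grapple with the fact that neighboring fibers need not overlap (they may even be single points, as for $D = \{0, 1\}$), so the convex-hull argument above is the more economical one.
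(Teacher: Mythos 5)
Your proof is correct. Note that the paper states Theorem \ref{0630005} with no proof at all, so there is no official argument to compare against; the route suggested by the surrounding machinery would be to prove first that each fiber $f^{*}(D,c)$ is an interval (Theorem \ref{0117003}) and then to pass to the union $\bigcup_{c \in B} f^{*}(D,c)$, but, as you rightly observe, the overlap tests of Theorems \ref{0411004}, \ref{0411003} and \ref{0411002} do not apply here because adjacent fibers can be pairwise disjoint (even singletons, as for $D = \{0,1\}$); the paper instead reaches the interval conclusion only indirectly, via $f_{2}^{**}(D,B)$ in Theorem \ref{1017002}, whose equality clause for interval $B$ is itself asserted rather than derived and is essentially what your argument supplies. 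Your identification $f^{*}(D,B) = \pi_{2}\bigl( G \cap (B \times \mathbb{R}) \bigr)$, with $G$ the convex hull of the graph of $f$, is the more economical route: it needs only that convexity is preserved under intersection with a convex slab and under linear projection, it subsumes the single-fiber case $B = \{c\}$, and you correctly dispose of the only delicate bookkeeping point, the passage between the strict weights $\lambda_{i} > 0$ of Definition \ref{0630006} and the weak weights of the convex hull. The self-contained intermediate-value variant you sketch at the end is also sound and avoids citing any closure properties of convex sets, so either version stands on its own.
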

\begin{flushleft}
$1^{0}$. $f^{*}(D)$ is an interval. \\
$2^{0}$. $f^{*}(D, c), c \in D^{*}$ are intervals.
\end{flushleft}

\subsection{The $f_{\infty}^{*}(D, B)$ type}

\begin{definition}\label{0630002}
Let $f \in \mathcal{R}[a, b]$. Suppose that $B \subseteq (a, b)$. Define
\begin{align}
\nonumber
f_{\infty}^{*}([a, b], B) = \left\{ \int_{a}^{b} \lambda(x) f(x) \mathrm{d}x
\left\vert\begin{array}{c}
\lambda(x) \geq 0, \int_{a}^{b} \lambda(x) \mathrm{d}x = 1, \\
\int_{a}^{b} \lambda(x) x \mathrm{d}x \in B
\end{array}\right. \right\}.
\end{align}
If $B = \{c\}$, then $f_{\infty}^{*}([a, b], c)$ for short; and if $B = (a, b)$, then $f_{\infty}^{*}([a, b])$ for short.
\end{definition}


\begin{theorem}\label{0511001}
Let $f \in \mathcal{R}[a, b]$. Suppose that $B \subseteq (a, b)$.
If $B$ is an interval, then $f_{\infty}^{*}(D, B)$ is also an interval.
\end{theorem}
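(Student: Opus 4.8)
The plan is to show directly that $f_{\infty}^{*}([a,b], B)$ is a convex subset of $\mathbb{R}$; since in $\mathbb{R}$ a set is an interval if and only if it is convex, this settles the theorem. (I read the $D$ appearing in the statement as $[a,b]$, to match Definition \ref{0630002}.) The guiding observation is that membership is certified by a density $\lambda$, and all three defining conditions---$\lambda \geq 0$ with total mass $1$, barycenter in $B$, and value equal to $\int_{a}^{b}\lambda f$---are affine constraints on $\lambda$, hence stable under convex combinations of densities.

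First I would fix $y_{0}, y_{1} \in f_{\infty}^{*}([a,b], B)$, choosing witnessing densities $\lambda_{0}, \lambda_{1} \geq 0$ with $\int_{a}^{b}\lambda_{k}(x)\,\mathrm{d}x = 1$, barycenters $c_{k} := \int_{a}^{b}\lambda_{k}(x)x\,\mathrm{d}x \in B$, and $\int_{a}^{b}\lambda_{k}(x)f(x)\,\mathrm{d}x = y_{k}$ for $k \in \{0,1\}$. For $t \in [0,1]$ put $\lambda_{t} = t\lambda_{0} + (1-t)\lambda_{1}$. Then $\lambda_{t} \geq 0$ and $\int_{a}^{b}\lambda_{t}(x)\,\mathrm{d}x = 1$, while $\lambda_{t}$, $\lambda_{t}f$ and $\lambda_{t}x$ stay Riemann integrable as linear combinations of integrable functions. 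By linearity the barycenter of $\lambda_{t}$ equals $tc_{0} + (1-t)c_{1}$, which lies in $B$ precisely because $B$ is an interval and hence convex; and $\int_{a}^{b}\lambda_{t}(x)f(x)\,\mathrm{d}x = ty_{0} + (1-t)y_{1}$. Thus $\lambda_{t}$ witnesses $ty_{0} + (1-t)y_{1} \in f_{\infty}^{*}([a,b], B)$, giving convexity.

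The slickest packaging, and the one I would write up, is geometric: the linear map $\lambda \mapsto \bigl(\int_{a}^{b}\lambda f,\ \int_{a}^{b}\lambda x\bigr)$ carries the convex set of admissible densities onto a convex set $\mathcal{G} \subseteq \mathbb{R}^{2}$; the elements of $f_{\infty}^{*}([a,b], B)$ are exactly the first coordinates of $\mathcal{G} \cap (\mathbb{R} \times B)$, the intersection of $\mathcal{G}$ with a convex slab, and the projection of a convex planar set to an axis is a convex, hence interval, subset of $\mathbb{R}$. This parallels the discrete Theorem \ref{0630005} and is compatible with the fibration $f_{\infty}^{*}([a,b], B) = \bigcup_{c \in B} f_{\infty}^{*}([a,b], c)$; one could alternatively prove each conditional fibre $f_{\infty}^{*}([a,b], c)$ is an interval and then apply a union-of-intervals test, but the direct convexity argument removes the need for that. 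I do not expect a genuine obstacle: the only steps wanting a line of justification are that $\lambda_{t}$ remains admissible (immediate from affineness) and that $B \subseteq (a,b)$ makes every fibre---and therefore the whole set---nonempty; the real risk with the union-of-intervals route would merely be verifying that consecutive fibres chain together, which the convexity argument sidesteps entirely.
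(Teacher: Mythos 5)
Your argument is correct. All the defining data of a witness $\lambda$ behave affinely: nonnegativity and unit total mass are preserved under convex combination, the barycenter $\int_{a}^{b}\lambda(x)x\,\mathrm{d}x$ and the value $\int_{a}^{b}\lambda(x)f(x)\,\mathrm{d}x$ depend linearly on $\lambda$, and Riemann integrability of $t\lambda_{0}+(1-t)\lambda_{1}$ and of its products with $f$ and with $x$ is immediate; so $f_{\infty}^{*}([a,b],B)$ is convex and hence an interval (your reading of the stray $D$ as $[a,b]$ is the right one). The paper states Theorem \ref{0511001} without any proof, so there is no written argument to compare against line by line; but the surrounding machinery --- the fibration $f_{\infty}^{*}([a,b],B)=\bigcup_{c\in B}f_{\infty}^{*}([a,b],c)$, the union-of-intervals tests of Appendix A (Theorems \ref{0411004}, \ref{0912004}, \ref{0723001}), and the pattern of the proofs of Theorems \ref{0117003}, \ref{1019009} and \ref{0215002} --- makes clear that the intended route is to prove each fibre $f_{\infty}^{*}([a,b],c)$ is a nonempty interval and then glue the fibres along a continuous selection such as $c\mapsto\frac{1}{2\varepsilon}\int_{c-\varepsilon}^{c+\varepsilon}f(x)\,\mathrm{d}x$. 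Your direct convexity argument is genuinely different and simpler: it treats the whole set at once, needs no chaining lemma, and recovers the fibre statement ($B=\{c\}$, remark $2^{0}$ after the theorem) as the special case $c_{0}=c_{1}$. What the fibrewise route buys instead is the finer per-fibre information the paper actually uses elsewhere (e.g.\ $f_{\infty}^{*}([a,b],c)\subseteq f^{*}([a,b],c)$ in Theorem \ref{0529007}), but none of that is needed to establish the present statement.
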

\begin{flushleft}
$1^{0}$. $f_{\infty}^{*}( [a, b] )$ is an interval. \\
$2^{0}$. $f_{\infty}^{*}( [a, b], c ), c \in (a, b)$ are intervals.
\end{flushleft}

\subsection{The connection between $f^{*}( [a, b], B )$ and $f_{\infty}^{*}( [a, b], B )$}

\begin{theorem}\label{0811010}
Let $f \in \mathcal{R}[a, b]$. Suppose that $c \in (a, b)$.

If for any $x, y \in [a, b]$ and any $p, q > 0, p + q = 1$ such that $px + qy = c$, we have
\begin{align}
\nonumber
p f(x) + q f(y) \geq 0,
\end{align}
then for any $\lambda(x) \geq 0, \int_{a}^{b} \lambda(x) \mathrm{d}x = 1$ such that $\int_{a}^{b} \lambda(x) x \mathrm{d}x = c$, we have
\begin{align}
\nonumber
\int_{a}^{b} \lambda(x) f(x) \mathrm{d}x \geq 0.
\end{align}
\end{theorem}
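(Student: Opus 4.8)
The plan is to reduce the integral inequality to its finite counterpart by a \emph{mean-preserving} Riemann-sum approximation, and to obtain the finite counterpart directly from the decomposition recorded in Theorem \ref{0227002}. The conceptual point is that the hypothesis is precisely the statement that every two-point mass system supported in $[a,b]$ with center of mass $c$ has nonnegative $f$-average, and Theorem \ref{0227002} says that every finite system with center of mass $c$ splits into such two-point systems.

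First I would establish the \textbf{finite version}: if $x_{1}, \ldots, x_{n} \in [a,b]$ and $\lambda_{1}, \ldots, \lambda_{n} > 0$ satisfy $\sum_{i}\lambda_{i} = 1$ and $\sum_{i}\lambda_{i} x_{i} = c$, then $\sum_{i}\lambda_{i} f(x_{i}) \geq 0$. To prove this I would first merge coinciding tags and peel off any tag equal to $c$, which contributes a term $\lambda_{i}f(c)$ that is nonnegative because the hypothesis applied with $x = y = c$ forces $f(c) \geq 0$; after renormalizing, the surviving distinct tags straddle $c$ strictly, so Theorem \ref{0227002} applies and the identity (\ref{1123001}) rewrites $\sum_{i}\lambda_{i} f(x_{i})$ as a convex combination $\sum_{s} w_{s}\left( p_{s} f(a_{i_{s}}) + q_{s} f(a_{j_{s}}) \right)$ in which every node satisfies $p_{s} a_{i_{s}} + q_{s} a_{j_{s}} = c$. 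Each bracket is nonnegative by hypothesis, hence so is the sum.

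Next comes the \textbf{discretization}. Given $\lambda$ with $\int_{a}^{b}\lambda = 1$ and $\int_{a}^{b}\lambda(x)x\,dx = c$, I would take a partition $a = t_{0} < \cdots < t_{N} = b$ with subintervals $I_{i}$, and for each $i$ with $\lambda_{i} := \int_{I_{i}}\lambda > 0$ choose as tag the conditional mean $x_{i} := \lambda_{i}^{-1}\int_{I_{i}}\lambda(x)x\,dx \in I_{i}$. The crucial feature of this choice is that it pins the weighted mean exactly at $c$: one has $\sum_{i}\lambda_{i} = 1$ and $\sum_{i}\lambda_{i} x_{i} = \int_{a}^{b}\lambda(x)x\,dx = c$, the zero-mass subintervals contributing nothing. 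By the finite version, $\sum_{i}\lambda_{i} f(x_{i}) \geq 0$ for every such partition.

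Finally I would let the mesh tend to $0$. Writing $\omega_{i}$ for the oscillation of $f$ on $I_{i}$ and using that $\lambda$ is bounded, one gets $|\sum_{i}\lambda_{i} f(x_{i}) - \int_{a}^{b}\lambda f| \leq \sum_{i}\int_{I_{i}}|f(x_{i}) - f(x)|\lambda(x)\,dx \leq \|\lambda\|_{\infty}\sum_{i}\omega_{i}|I_{i}|$, which tends to $0$ as the mesh shrinks because $f \in \mathcal{R}[a,b]$. Thus $\int_{a}^{b}\lambda(x)f(x)\,dx$ is a limit of nonnegative numbers and is therefore nonnegative. The main obstacle is exactly this limit step: since $f$ is merely Riemann integrable (possibly discontinuous) one cannot appeal to continuity of $f$ at the tags, and must instead control the approximation error through the Darboux oscillation sum $\sum_{i}\omega_{i}|I_{i}|$. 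This is what forces the use of the boundedness of $\lambda$ together with the conditional-mean tag choice, the latter being what keeps the weighted mean locked at $c$ so that the finite version is applicable at every stage.
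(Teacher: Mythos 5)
Your proposal is correct and follows essentially the same route as the paper: the paper likewise tags each subinterval with its $\lambda$-weighted centroid $c_{i}$ so that $\sum_{i}\lambda_{i}c_{i}=c$, applies the finite (two-point decomposition) version to get $\sum_{i}\lambda_{i}f(c_{i})\geq 0$, and passes to the limit as $\|P\|\to 0$. The only difference is that you spell out the limit step via the oscillation bound $\|\lambda\|_{\infty}\sum_{i}\omega_{i}|I_{i}|$, a detail the paper leaves implicit.
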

\begin{proof}For any partition $P = \{ x_{0}, x_{1}, \ldots, x_{n} \}$ of $[a, b]$, there are $c_{i} \in ( x_{i-1}, x_{i} )$ such that
\begin{align}
\nonumber
\int_{ x_{i-1} }^{ x_{i} } \lambda(x) x \mathrm{d}x = c_{i}\int_{ x_{i-1} }^{ x_{i} } \lambda(x) \mathrm{d}x := \lambda_{i}c_{i}, i = 1, \ldots, n.
\end{align}
Then $c_{i} \in [a, b]$ and $\lambda_{i} \geq 0, \sum_{i=1}^{n}\lambda_{i} = 1$ such that $\sum_{i=1}^{n}\lambda_{i}c_{i} = c$.
By Theorem \ref{0709002},
\begin{align}
\nonumber
\sum_{i=1}^{n} f(c_{i})\int_{ x_{i-1} }^{ x_{i} } \lambda(x) \mathrm{d}x = \sum_{i=1}^{n}\lambda_{i} f(c_{i}) \geq 0.
\end{align}
Then
\begin{align}
\nonumber
\int_{a}^{b} \lambda(x) f(x) \mathrm{d}x = \lim_{ \|P\| \rightarrow 0 }\sum_{i=1}^{n} f(c_{i})\int_{ x_{i-1} }^{ x_{i} } \lambda(x) \mathrm{d}x \geq 0.
\end{align}
This concludes the proof.
\end{proof}


\begin{theorem}\label{0811008}
Let $f \in \mathcal{R}[a, b]$. Suppose that $c \in (a, b)$.

If for any $x, y \in [a, b]$ and any $p, q > 0, p + q = 1$ such that $px + qy = c$, we have
\begin{align}
\nonumber
p f(x) + q f(y) > 0,
\end{align}
then for any $\lambda(x) \geq 0, \int_{a}^{b} \lambda(x) \mathrm{d}x = 1$ such that $\int_{a}^{b} \lambda(x) x \mathrm{d}x = c$, we have
\begin{align}
\nonumber
\int_{a}^{b} \lambda(x) f(x) \mathrm{d}x > 0.
\end{align}
\end{theorem}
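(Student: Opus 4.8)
The plan is to argue by contradiction, leaning on the non-strict companion Theorem~\ref{0811010}. First I would record that the stated hypothesis is exactly the condition of Theorem~\ref{0811010} (indeed of Theorem~\ref{0709002} with $g=0$ and $J=(0,+\infty)$), so in particular $\int_a^b\lambda(x)f(x)\,\mathrm{d}x\ge 0$ for every admissible $\lambda$, and every finite barycenter-$c$ combination $\sum\lambda_i f(x_i)$ is strictly positive. Suppose, for contradiction, that some admissible $\lambda$ gives $\int_a^b\lambda f=0$. A preliminary observation is that the measure $\mathrm{d}\mu=\lambda\,\mathrm{d}x$ must carry mass strictly on both sides of $c$: if, say, $\mu\big((a,c)\big)=0$ then $\int_a^b\lambda x=\int_{[c,b]}\lambda x\ge c\int_{[c,b]}\lambda=c$ with equality only when $\lambda$ is null on $(c,b]$, which is impossible; hence $S:=\{x:\lambda(x)>0\}$ meets both $(a,c)$ and $(c,b)$ in sets of positive Lebesgue measure.

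The heart of the argument is a variational observation. Since $\int\lambda'f\ge 0$ for every admissible density $\lambda'$ and our $\lambda$ attains the value $0$, $\lambda$ minimizes the linear functional $\lambda'\mapsto\int\lambda'f$ over the convex set $\{\lambda'\ge 0:\int\lambda'=1,\ \int\lambda'x=c\}$. For any bounded $h$ supported in $\{x:\lambda(x)\ge\varepsilon\}$ with $\int h=\int hx=0$, both $\lambda\pm\theta h$ are admissible for small $\theta>0$, so $\pm\theta\int hf\ge 0$ and therefore $\int hf=0$. Letting $\varepsilon\downarrow 0$, on each set $\{\lambda\ge\varepsilon\}$ the function $f$ is $L^2$-orthogonal to the orthogonal complement of $\mathrm{span}\{1,x\}$; hence $f$ agrees almost everywhere on $S$ with a single affine function $f(x)=A+Bx$, and evaluating the integral gives $A+Bc=\int\lambda f=0$.

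To conclude I would exploit that $S$ straddles $c$: choose $x<c<y$ in $S$ at which $f(x)=A+Bx$ and $f(y)=A+By$ (possible since the affine identity holds a.e.\ and $S$ has positive measure on each side), and let $p,q>0$ be the weights with $px+qy=c$. Then $pf(x)+qf(y)=A+B(px+qy)=A+Bc=0$, contradicting the strict hypothesis $pf(x)+qf(y)>0$. The routine reductions (mass on both sides, selecting the two points) are easy; the main obstacle is making the variational step fully rigorous within the Riemann-integrable setting. One must either extend Theorem~\ref{0811010} to $L^1$ densities by an approximation argument (legitimate because $f$ is bounded, so $\lambda'\mapsto\int\lambda'f$ is continuous under $L^1$ convergence of densities) or restrict to step-function perturbations and then upgrade the resulting orthogonality to the a.e.-affine conclusion, handling the fact that $\{\lambda\ge\varepsilon\}$ need not contain intervals. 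Once the orthogonality-to-$\mathrm{span}\{1,x\}$ deduction is secured, the contradiction is immediate.
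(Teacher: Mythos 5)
Your argument takes a genuinely different route from the paper's. The paper proceeds directly: by Lebesgue's criterion it selects continuity points $x_{0}\in(a,c)$ and $y_{0}\in(c,b)$ of both $f$ and $\lambda$ with $\lambda(x_{0}),\lambda(y_{0})>0$, surrounds them with small intervals $[a_{1},b_{1}]$ and $[a_{2},b_{2}]$ on which all relevant two-point combinations stay above $\tfrac12\bigl(p_{0}f(x_{0})+q_{0}f(y_{0})\bigr)>0$, extracts fractions $\theta_{1},\theta_{2}$ of the mass on these two intervals chosen so that the extracted sub-density has barycenter $c$ (hence contributes a strictly positive amount to the integral), and applies Theorem~\ref{0811010} to the remaining density $\lambda_{0}$, whose barycenter is still $c$. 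Your proof is instead a contradiction/equality-case analysis: if $\int\lambda f=0$ then $\lambda$ minimizes the linear functional over the admissible convex set, two-sided perturbations force $f$ to agree a.e.\ on $\{\lambda>0\}$ with an affine function $A+Bx$ satisfying $A+Bc=0$, and testing the hypothesis at support points straddling $c$ yields $0>0$. Conceptually this is appealing --- it identifies exactly what an extremal $\lambda$ would force $f$ to be --- and your preliminary reductions (mass on both sides of $c$, selecting the two points where the a.e.\ identity holds) are sound.

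The unfinished part is the one you flag yourself, and it is not merely cosmetic. To conclude $\int hf=0$ you must apply Theorem~\ref{0811010} to $\lambda\pm\theta h$, and that theorem (whose proof is by Riemann sums) requires a nonnegative \emph{Riemann-integrable} density with barycenter exactly $c$. For a general bounded measurable $h$ supported on $\{\lambda\ge\varepsilon\}$ the perturbed density need not be Riemann integrable, because $\{\lambda\ge\varepsilon\}$ need not be Jordan measurable. This is repairable: choose $\varepsilon$ outside the countable set for which $\{\lambda=\varepsilon\}$ has positive measure, so that $1_{\{\lambda>\varepsilon\}}$ is Riemann integrable; take $h=1_{\{\lambda>\varepsilon\}}g$ with $g$ a step function satisfying the two moment constraints; pass to the $L^{2}$ closure to get $f\in\operatorname{span}\{1,x\}$ on each such level set; then let $\varepsilon\downarrow0$ and check that the affine functions agree (they do, since the level sets are nested and eventually of positive measure). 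None of this is in your write-up, and it is precisely the part carrying the analytic weight; the paper's localization avoids the issue entirely by only ever rescaling $\lambda$ by constants on two honest subintervals. So: a correct strategy, genuinely different from the paper's, but the variational step must be completed along these lines before the proof stands.
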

\begin{proof}Note that $\int_{a}^{c} \lambda(x) \mathrm{d}x, \int_{c}^{b} \lambda(x) \mathrm{d}x > 0$.

By Lebesgue's criterion, we choose two points $x_{0} \in (a, c)$ and $y_{0} \in (c, b)$ such that $\lambda$ and $f$ are continuous at both $x_{0}$ and $y_{0}$, and $\lambda(x_{0}), \lambda(y_{0}) > 0$.

Let $p_{0}, q_{0} >0, p_{0} + q_{0} = 1$ such that $p_{0}x_{0} + q_{0}y_{0} = c$. Then
\begin{align}
\nonumber
p_{0} f(x_{0}) + q_{0} f(y_{0}) > 0.
\end{align}
We choose $x_{0} \in ( a_{1}, b_{1} ) \subset (a, c)$ and $y_{0} \in ( a_{2}, b_{2} ) \subset (c, b)$ such that
\begin{enumerate}[(1)]
\item $f(x) > f(y) - \frac{ p_{0} f(x_{0}) + q_{0} f(y_{0}) }{2}, x, y \in [ a_{1}, b_{1} ]\ \text{or}\ x, y \in [ a_{2}, b_{2} ]$.
\item For any $x \in [a_{1}, b_{1}], y \in [a_{2}, b_{2}]$ and any $p, q > 0, p + q = 1$ such that $px + qy = c$, we have $p f(x) + q f(y) > \frac{ p_{0} f(x_{0}) + q_{0} f(y_{0}) }{2}$.
\end{enumerate}

There are $c_{1} \in ( a_{1}, b_{1} )$ and $c_{2} \in ( a_{2}, b_{2}) $ such that
\begin{align}
\nonumber
\int_{ a_{1} }^{ b_{1} } \lambda(x) x \mathrm{d}x = c_{1}\int_{ a_{1} }^{ b_{1} } \lambda(x) \mathrm{d}x\ \text{and}\ \int_{ a_{2} }^{ b_{2} } \lambda(x) x \mathrm{d}x = c_{2}\int_{ a_{2} }^{ b_{2} } \lambda(x) \mathrm{d}x.
\end{align}
Note that $\int_{ a_{1} }^{ b_{1} } \lambda(x) \mathrm{d}x, \int_{ a_{2} }^{ b_{2} } \lambda(x) \mathrm{d}x > 0$.
Choose $\theta_{1}, \theta_{2} \in (0, 1)$ such that
\begin{align}
\nonumber
\frac{ \theta_{1}\int_{ a_{1} }^{ b_{1} } \lambda(x) \mathrm{d}x \cdot c_{1} + \theta_{2}\int_{ a_{2} }^{ b_{2} } \lambda(x) \mathrm{d}x \cdot c_{2} }{ \theta_{1}\int_{ a_{1} }^{ b_{1} } \lambda(x) \mathrm{d}x + \theta_{2}\int_{ a_{2} }^{ b_{2} } \lambda(x) \mathrm{d}x } = c.
\end{align}
Then
\begin{align}
\nonumber
  &\frac{ \theta_{1}\int_{ a_{1} }^{ b_{1} } \lambda(x) f(x) \mathrm{d}x + \theta_{2}\int_{ a_{2} }^{ b_{2} } \lambda(x) f(x) \mathrm{d}x }{ \theta_{1}\int_{ a_{1} }^{ b_{1} } \lambda(x) \mathrm{d}x + \theta_{2}\int_{ a_{2} }^{ b_{2} } \lambda(x) \mathrm{d}x } \\
\nonumber
> &\frac{ \theta_{1}\int_{ a_{1} }^{ b_{1} } \lambda(x) \mathrm{d}x \cdot f(c_{1}) + \theta_{2}\int_{ a_{2} }^{ b_{2} } \lambda(x) \mathrm{d}x \cdot f(c_{2}) }{ \theta_{1}\int_{ a_{1} }^{ b_{1} } \lambda(x) \mathrm{d}x + \theta_{2}\int_{ a_{2} }^{ b_{2} } \lambda(x) \mathrm{d}x } - \frac{ p_{0} f(x_{0}) + q_{0} f(y_{0}) }{2} > 0.
\end{align}
Let
\begin{align}
\nonumber
\lambda_{0}(x) = \left\{\begin{array}{cc}
( 1 - \theta_{1} )\lambda(x), & a_{1} \leq x \leq b_{1}, \\
( 1 - \theta_{2} )\lambda(x), & a_{2} \leq x \leq b_{2}, \\
\lambda(x), &\ \text{others}.
\end{array}\right.
\end{align}
By Theorem \ref{0811010}, $\int_{a}^{b} \lambda_{0}(x) f(x) \mathrm{d}x \geq 0$ because $\int_{a}^{b} \lambda_{0}(x)x \mathrm{d}x = c\int_{a}^{b} \lambda_{0}(x) \mathrm{d}x$. Then
\begin{align}
\nonumber
\int_{a}^{b} \lambda(x) f(x) \mathrm{d}x \geq \theta_{1}\int_{ a_{1} }^{ b_{1} } \lambda(x) f(x) \mathrm{d}x + \theta_{2}\int_{ a_{2} }^{ b_{2} } \lambda(x) f(x) \mathrm{d}x > 0.
\end{align}
This concludes the proof.
\end{proof}


\begin{theorem}\label{0709001}
Let $f \in \mathcal{R}[a, b]$. Suppose $c \in (a, b)$, and let $J$ be an interval.

If for any $x, y \in [a, b]$ and any $p, q > 0, p + q = 1$ such that $px + qy = c$, we have
\begin{align}
\nonumber
p f(x) + q f(y) \in J,
\end{align}
then for any $\lambda(x) \geq 0, \int_{a}^{b} \lambda(x) \mathrm{d}x = 1$ such that $\int_{a}^{b} \lambda(x) x \mathrm{d}x = c$, we have
\begin{align}
\nonumber
\int_{a}^{b} \lambda(x) f(x) \mathrm{d}x \in J.
\end{align}
\end{theorem}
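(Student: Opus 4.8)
The plan is to reduce the assertion $\int_a^b \lambda(x) f(x)\,\mathrm{d}x \in J$ to a pair of one-sided bounds at the endpoints of $J$, obtaining each bound by shifting $f$ by a constant so as to land exactly in the hypotheses of Theorems \ref{0811010} and \ref{0811008}. Write $\alpha = \inf J$ and $\beta = \sup J$ (allowing $\alpha = -\infty$ or $\beta = +\infty$), and set $M = \int_a^b \lambda(x) f(x)\,\mathrm{d}x$. Since by hypothesis every admissible value $p f(x) + q f(y)$ (with $p,q>0$, $p+q=1$, $px+qy=c$) lies in $J \subseteq [\alpha,\beta]$, we have $\alpha \le p f(x) + q f(y) \le \beta$ for all admissible $x,y,p,q$; moreover, if $\alpha \notin J$ then every element of $J$ strictly exceeds $\alpha$, so the left inequality is strict for every admissible choice, and symmetrically if $\beta \notin J$.

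For the lower bound, assume $\alpha$ finite and apply the hypothesis to $\hat f = f - \alpha \in \mathcal{R}[a,b]$. Then $p\hat f(x) + q\hat f(y) = p f(x) + q f(y) - \alpha \ge 0$ for all admissible $x,y,p,q$, so Theorem \ref{0811010} yields $\int_a^b \lambda(x)\hat f(x)\,\mathrm{d}x \ge 0$, i.e. $M \ge \alpha$. If in addition $\alpha \notin J$, the same shifted inequality is strict and Theorem \ref{0811008} gives $M > \alpha$. (When $\alpha = -\infty$ there is nothing to prove.) The upper bound is entirely symmetric: applying the two theorems to $\check f = \beta - f$ (when $\beta$ is finite), for which $p\check f(x) + q\check f(y) = \beta - (p f(x) + q f(y)) \ge 0$, gives $M \le \beta$, strict whenever $\beta \notin J$.

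It remains to assemble these into $M \in J$. Because $J$ is an interval with $\inf J = \alpha$ and $\sup J = \beta$, we have $(\alpha,\beta) \subseteq J \subseteq [\alpha,\beta]$. If $\alpha < M < \beta$, then $M \in (\alpha,\beta) \subseteq J$. If $M = \alpha$, then the strict lower bound cannot have held, whence $\alpha \in J$ and so $M \in J$; the case $M = \beta$ is handled in the same way. This exhausts all possibilities, including the degenerate one-point interval $\alpha = \beta$, and the vacuous case $J = \emptyset$ (where the hypothesis fails because at least one admissible pair exists, e.g. $x=a$, $y=b$). Hence $M \in J$.

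The only real work here is bookkeeping: every genuine analytic difficulty has been pushed into Theorems \ref{0811010} and \ref{0811008}, whose statements are tailored for exactly the shifted functions $f-\alpha$ and $\beta - f$. I expect the one point that needs care is the correct matching of open versus closed endpoints of $J$ with the non-strict versus strict conclusions, together with the separate treatment of the unbounded cases $\alpha = -\infty$ and $\beta = +\infty$; once these are tracked correctly, the interval-membership conclusion follows formally from the fact that an interval is determined by its infimum, its supremum, and whether each of these is attained.
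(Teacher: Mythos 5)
Your proof is correct and follows the same route as the paper, which simply cites Theorems \ref{0811010} and \ref{0811008}; you have supplied exactly the intended reduction, shifting $f$ by the endpoints $\inf J$ and $\sup J$ and matching the strict versus non-strict cases to whether each endpoint belongs to $J$. The endpoint bookkeeping and the treatment of unbounded or degenerate $J$ are all handled correctly.
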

\begin{proof}By Theorems \ref{0811010} and \ref{0811008}, we conclude the proof.
\end{proof}


\begin{theorem}\label{0529007}
Let $f \in \mathcal{R}[a, b]$. Then
\begin{align}
\nonumber
\emptyset \neq f_{\infty}^{*}( [a, b], c ) \subseteq f^{*}( [a, b], c ), c \in (a, b).
\end{align}
\end{theorem}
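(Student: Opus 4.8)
The plan is to establish the two assertions separately: that $f_{\infty}^{*}([a,b],c)$ is nonempty, and that it is contained in $f^{*}([a,b],c)$. For nonemptiness I would simply exhibit one admissible density. Since $c \in (a,b)$, choose $\delta > 0$ with $[c-\delta, c+\delta] \subseteq (a,b)$ and set $\lambda(x) = 1/(2\delta)$ on $[c-\delta, c+\delta]$ and $\lambda(x) = 0$ elsewhere. Then $\int_{a}^{b}\lambda(x)\,\mathrm{d}x = 1$ and, by symmetry about $c$, $\int_{a}^{b}\lambda(x)x\,\mathrm{d}x = c$, so $\int_{a}^{b}\lambda(x)f(x)\,\mathrm{d}x$ (which exists because $f \in \mathcal{R}[a,b]$) is an element of $f_{\infty}^{*}([a,b],c)$.

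For the inclusion, write $J = f^{*}([a,b],c)$. By Theorem \ref{0630005}, $J$ is an interval; it is nonempty because the single-point combination $n=1$, $x_{1}=c$, $\lambda_{1}=1$ shows $f(c) \in J$; and since $f$ is bounded (being Riemann integrable), its endpoints $\alpha := \inf J$ and $\beta := \sup J$ are finite. The decisive observation is that, by the very definition of $f^{*}([a,b],c)$, every two-point value $p f(x) + q f(y)$ with $x, y \in [a,b]$, $p, q > 0$, $p+q=1$ and $px+qy = c$ (as well as the degenerate value $f(c)$) is an element of $J$, hence lies in $[\alpha,\beta]$. Now fix an arbitrary $I = \int_{a}^{b}\lambda(x)f(x)\,\mathrm{d}x \in f_{\infty}^{*}([a,b],c)$.

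Applying Theorem \ref{0709001} with the interval $[\alpha,\beta]$ — whose hypothesis is exactly the observation above — yields $\alpha \leq I \leq \beta$. It remains to exclude any nonattained endpoint. Suppose $\alpha \notin J$; then every element of $J$, in particular every two-point value with barycenter $c$, is strictly greater than $\alpha$, so $f - \alpha$ satisfies the hypothesis of the strict Theorem \ref{0811008}. We conclude $\int_{a}^{b}\lambda(x)\big(f(x)-\alpha\big)\,\mathrm{d}x > 0$, that is, $I > \alpha$. Symmetrically, if $\beta \notin J$, applying Theorem \ref{0811008} to $\beta - f$ gives $I < \beta$. Thus $I$ satisfies $\alpha \leq I \leq \beta$, together with $I > \alpha$ whenever $\alpha \notin J$ and $I < \beta$ whenever $\beta \notin J$; since $J$ is precisely the interval between $\alpha$ and $\beta$ containing exactly its attained endpoints, this says $I \in J$. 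As $I$ was arbitrary, $f_{\infty}^{*}([a,b],c) \subseteq f^{*}([a,b],c)$.

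The main obstacle is the endpoint-attainment issue. A naive Riemann-sum argument (choosing on each subinterval of a partition the conditional mean $c_{i}$ with $\int_{x_{i-1}}^{x_{i}}\lambda x = c_{i}\int_{x_{i-1}}^{x_{i}}\lambda$, exactly as in the proof of Theorem \ref{0811010}) only shows that $I$ lies in the \emph{closure} of $f^{*}([a,b],c)$; and for a merely Riemann-integrable, possibly discontinuous, $f$ the extremal values $\alpha,\beta$ need not be attained, so $J$ may be half-open or open. Routing the argument through the strict inequality Theorem \ref{0811008}, rather than through a limiting/closure argument, is exactly what rules out $I$ coinciding with a nonattained endpoint, after which membership in the interval $J$ follows from the two-sided bound.
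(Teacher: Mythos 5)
Your proposal is correct and follows essentially the paper's route: the paper's entire proof is to cite Theorem \ref{0630005} (so that $J = f^{*}([a,b],c)$ is a nonempty interval containing every two-point value $pf(x)+qf(y)$ with $px+qy=c$) together with Theorem \ref{0709001} applied directly to $J$ itself, which is stated for arbitrary intervals and already absorbs the endpoint-attainment issue you worry about, since its own proof combines the non-strict Theorem \ref{0811010} with the strict Theorem \ref{0811008}. Your detour through the closed hull $[\alpha,\beta]$ followed by a separate invocation of Theorem \ref{0811008} at nonattained endpoints is therefore a correct but redundant unrolling of Theorem \ref{0709001} for this particular $J$.
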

\begin{proof}By Theorems \ref{0630005} and \ref{0709001}, we conclude the proof.
\end{proof}
\begin{flushleft}
$1^{0}$. $\emptyset \neq f_{\infty}^{*}([a, b]) \subseteq f^{*}([a, b])$. \\
$2^{0}$. $\emptyset \neq f_{\infty}^{*}( [a, b], B ) \subseteq f^{*}( [a, b], B ), \emptyset \neq B \subseteq (a, b)$.
\end{flushleft}


\begin{flushleft}
Similarly, we have the following theorem.
\end{flushleft}
\begin{theorem}\label{0715001}
Let $f \in \mathcal{R}( [ a_{1}, b_{1} ] \setminus ( a_{0}, b_{0} ) )$, where $[ a_{0}, b_{0} ] \subset ( a_{1}, b_{1} )$. Then
\begin{align}
\nonumber
\emptyset \neq f_{\infty}^{*}( [ a_{1}, b_{1} ] \setminus ( a_{0}, b_{0} ), c ) \subseteq f^{*}( [ a_{1}, b_{1} ] \setminus ( a_{0}, b_{0} ), c ), c \in ( a_{0}, b_{0} ).
\end{align}
\end{theorem}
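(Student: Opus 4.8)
The plan is to mirror the proof of Theorem \ref{0529007}, adapting its two ingredients — the interval property of the ``discrete'' hull and the integral-containment theorem — to the disconnected domain $D := [a_1,b_1]\setminus(a_0,b_0) = [a_1,a_0]\cup[b_0,b_1]$, whose convex hull is $D^{*}=[a_1,b_1]\ni c$.

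First, the interval property comes for free: Theorem \ref{0630005} is stated for an arbitrary nonempty $D\subseteq\mathbb{R}$ and an arbitrary subinterval $B\subseteq D^{*}$, so with $B=\{c\}$ it yields immediately that $f^{*}(D,c)$ is an interval. For nonemptiness, since $c\in(a_0,b_0)$ lies strictly between the two pieces $[a_1,a_0]$ and $[b_0,b_1]$, I would exhibit a density $\lambda\ge 0$ supported on $D$ with $\int_{D}\lambda = 1$ and $\int_{D}\lambda(x)x\,\mathrm{d}x=c$ — for instance uniform densities on two small subintervals straddling the gap, with weights chosen to place the barycenter at $c$ — which shows $f_{\infty}^{*}(D,c)\neq\emptyset$.

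The heart of the matter is the inclusion, for which I would establish the exact analog of Theorem \ref{0709001} over $D$: if an interval $J$ contains $pf(x)+qf(y)$ for every $x,y\in D$ and $p,q>0$, $p+q=1$ with $px+qy=c$, then $\int_{D}\lambda f\in J$ for every density $\lambda$ on $D$ with barycenter $c$. As in the passage from Theorems \ref{0811010} and \ref{0811008} to Theorem \ref{0709001}, it suffices to treat the endpoint constraints of $J$ separately, replacing $f$ by $f-\alpha$ or $\beta-f$ to reduce an inequality of the form $pf(x)+qf(y)\ge\alpha$ or $pf(x)+qf(y)\le\beta$ to the normalized forms ``$\ge 0$'' and ``$>0$''. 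Thus I would reprove Theorems \ref{0811010} and \ref{0811008} with $[a,b]$ replaced by $D$. For the nonstrict version I would partition each of $[a_1,a_0]$ and $[b_0,b_1]$, use the mean-value theorem for integrals to pick sample points $c_i$ (which automatically lie in $D$, since each subinterval is contained in one of the two pieces) with weights $\lambda_i=\int\lambda$, discard the subintervals of zero mass, invoke Theorem \ref{0709002} with $A=D$, $B=\{c\}$ to get $\sum\lambda_i f(c_i)\ge 0$, and pass to the limit of the Riemann sums. The strict version transcribes the argument of Theorem \ref{0811008}: because the barycenter $c$ lies in the gap, both $\int_{[a_1,a_0]}\lambda$ and $\int_{[b_0,b_1]}\lambda$ are forced to be positive, so Lebesgue's criterion furnishes continuity points $x_0\in(a_1,a_0)$ and $y_0\in(b_0,b_1)$ with $\lambda(x_0),\lambda(y_0)>0$; the remaining construction of small neighborhoods, mean-value points, balancing parameters, and the comparison density $\lambda_0$ then goes through as before.

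Finally, I would apply this $D$-version of Theorem \ref{0709001} with $J=f^{*}(D,c)$, which is an interval by the first step. Every two-point combination $pf(x)+qf(y)$ with $x,y\in D$, $p,q>0$, $p+q=1$, $px+qy=c$ is itself a member of $f^{*}(D,c)$ (take $n=2$; note $x\neq y$, since $x=y$ would force $x=c\notin D$), so the hypothesis holds for this $J$, and the conclusion reads $\int_{D}\lambda f\in f^{*}(D,c)$, that is, $f_{\infty}^{*}(D,c)\subseteq f^{*}(D,c)$. The main obstacle is faithfully transcribing the delicate approximation of Theorem \ref{0811008} to the two-piece domain; the disconnectedness actually helps here, since it guarantees the separation of $x_0,y_0$ from $c$ and the positivity of the masses on either side, rather than hindering the construction.
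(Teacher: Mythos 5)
Your proposal is correct and matches the paper's intent exactly: the paper offers no written proof beyond the remark ``Similarly, we have the following theorem,'' deferring to the argument for Theorem \ref{0529007}, i.e., Theorem \ref{0630005} for the interval property of $f^{*}(D,c)$ together with the analogues of Theorems \ref{0811010}, \ref{0811008} and \ref{0709001} adapted to the two-piece domain $D=[a_{1},a_{0}]\cup[b_{0},b_{1}]$. Your transcription of those analogues --- in particular the observation that a barycenter lying in the gap forces positive mass on both pieces, and that sample points from a partition refining the two pieces automatically land in $D$ --- is precisely the adaptation the paper has in mind.
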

\begin{flushleft}
$1^{0}$. $\emptyset \neq f_{\infty}^{*}( [ a_{1}, b_{1} ] \setminus ( a_{0}, b_{0} ) ) \subseteq f^{*}( [ a_{1}, b_{1} ] \setminus ( a_{0}, b_{0} ) )$. \\
$2^{0}$. $\emptyset \neq f_{\infty}^{*}( [ a_{1}, b_{1} ] \setminus ( a_{0}, b_{0} ), B ) \subseteq f^{*}( [ a_{1}, b_{1} ] \setminus ( a_{0}, b_{0} ), B ), \emptyset \neq B \subseteq ( a_{0}, b_{0} )$.
\end{flushleft}

\centerline{ \textbf{Part \uppercase\expandafter{\romannumeral2}} }

\subsection{The $f_{n}^{*}(D, B)$ type}

\begin{definition}\label{0117001}
Let $D \neq \emptyset$ be a subset of $\mathbb{R}$, and let $f: D \rightarrow \mathbb{R}$.
Suppose that $B \subseteq D^{*}$. Define
\begin{align}
\nonumber
f_{n}^{*}(D, B) = \left\{ \lambda_{1}f(x_{1}) + \cdots + \lambda_{n}f(x_{n})
\left\vert\begin{array}{c}
x_{1}, \ldots, x_{n} \in D, \\
\lambda_{1}, \ldots, \lambda_{n} > 0, \sum_{i=1}^{n}\lambda_{i} = 1, \\
\lambda_{1}x_{1} + \cdots + \lambda_{n}x_{n} \in B
\end{array}\right. \right\}.
\end{align}
If $B = \{c\}$, then $f_{n}^{*}(D, c)$ for short; and if $B = D^{*}$, then $f_{n}^{*}(D)$ for short.
\end{definition}

\subsection{$f$ is any function}

\begin{theorem}\label{0117003}
Let $D \neq \emptyset$ be a subset of $\mathbb{R}$, and let $f: D \rightarrow \mathbb{R}$. Then
\begin{align}
\nonumber
f_{3}^{*}(D, c) = f^{*}(D, c) = f_{2}^{**}(D, c) \neq \emptyset, c \in D^{*}.
\end{align}
\end{theorem}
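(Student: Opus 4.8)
The plan is to read the three objects geometrically and then establish the two equalities $f^{*}(D,c)=f_{3}^{*}(D,c)$ and $f^{*}(D,c)=f_{2}^{**}(D,c)$ separately, where I understand $f_{2}^{**}(D,c)=\bigl(f_{2}^{*}(D,c)\bigr)^{*}$ to be the convex hull of the set of two-point values. Set $S=\{(x,f(x)):x\in D\}\subseteq\mathbb{R}^{2}$. The basic dictionary is
\[
v\in f^{*}(D,c)\iff (c,v)\in S^{*},
\]
because a representation $v=\sum_{k}\lambda_{k}f(x_{k})$ with $\sum_{k}\lambda_{k}x_{k}=c$, $\lambda_{k}>0$, $\sum_{k}\lambda_{k}=1$ is exactly a convex combination $\sum_{k}\lambda_{k}(x_{k},f(x_{k}))=(c,v)$ of points of $S$, and conversely after discarding zero weights. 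Thus $f^{*}(D,c)$ is the vertical slice of $S^{*}$ over the abscissa $c$; in particular it is convex, as already recorded in Theorem \ref{0630005}. Nonemptiness follows from $c\in D^{*}$: either $c\in D$, or $c=pa+qb$ with $a,b\in D$ and $p,q>0$, so $f_{2}^{*}(D,c)\neq\emptyset$, and splitting one weight into two positive parts exhibits a three-point representation, whence $f_{3}^{*}(D,c)\neq\emptyset$ as well.

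For $f^{*}(D,c)=f_{3}^{*}(D,c)$ the inclusion $f_{3}^{*}(D,c)\subseteq f^{*}(D,c)$ is immediate, since $n=3$ is allowed. For the reverse inclusion I take $v\in f^{*}(D,c)$, so $(c,v)\in S^{*}$, and apply Carath\'eodory's theorem in the plane: $(c,v)$ is a convex combination of at most three points of $S$, i.e. $v=\sum_{j=1}^{r}\mu_{j}f(y_{j})$ with $r\le 3$, $\mu_{j}>0$ and $\sum_{j}\mu_{j}y_{j}=c$. If $r<3$ I pad to exactly three points by duplicating one $y_{j}$ and splitting $\mu_{j}$ into two positive parts, which changes neither the barycenter nor the value; hence $v\in f_{3}^{*}(D,c)$.

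For $f^{*}(D,c)=f_{2}^{**}(D,c)$, the inclusion $f_{2}^{**}(D,c)\subseteq f^{*}(D,c)$ holds because $f_{2}^{*}(D,c)\subseteq f^{*}(D,c)$ and $f^{*}(D,c)$ is convex, so it contains the convex hull of $f_{2}^{*}(D,c)$. For the reverse inclusion I use the equality just proved to write an arbitrary $v\in f^{*}(D,c)=f_{3}^{*}(D,c)$ as a single three-point combination with barycenter $c$, and then invoke the first decomposition principle in its three-point form (Theorem \ref{0227002}, equation \eqref{1123001}): the sum is rewritten as $\sum_{s=1}^{m}w_{s}\bigl(p_{s}f(a_{i_{s}})+q_{s}f(a_{j_{s}})\bigr)$ with $m\le 2$, $w_{s}>0$, $\sum_{s}w_{s}=1$, and $p_{s}a_{i_{s}}+q_{s}a_{j_{s}}=c$. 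Each bracket lies in $f_{2}^{*}(D,c)$, so $v$ is a convex combination of elements of $f_{2}^{*}(D,c)$, i.e. $v\in f_{2}^{**}(D,c)$. Combining the two equalities yields $f_{3}^{*}(D,c)=f^{*}(D,c)=f_{2}^{**}(D,c)\neq\emptyset$.

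The step I expect to require the most care is applying Theorem \ref{0227002}, which is stated for strictly ordered abscissae with $c$ lying strictly between them and equal to none. So before invoking it I must normalize the three-point datum: merge coincident abscissae, and peel off any abscissa equal to $c$ as a term $f(c)\in f_{2}^{*}(D,c)$, renormalizing the remaining weights so that their barycenter is again $c$. Concretely this amounts to checking the positional subcases $y_{1}\le y_{2}<c<y_{3}$ (pair $y_{1}$ and $y_{2}$ each with $y_{3}$, splitting the weight on $y_{3}$), $y_{2}=c$ (the pair $(y_{1},y_{3})$ already has barycenter $c$), and $y_{1}<c<y_{2}\le y_{3}$ (the symmetric splitting), together with the trivial case $y_{1}=y_{2}=y_{3}=c$. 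Each subcase produces at most two pairs with common barycenter $c$, which is exactly what is needed; that the split weights are positive and sum correctly is the only routine computation, and it follows from the identity $\sum_{j}\mu_{j}(y_{j}-c)=0$.
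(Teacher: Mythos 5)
Your proof is correct, but it takes a genuinely different route from the paper's. The paper establishes the chain $\emptyset \neq f_{2}^{*}(D,c) \subseteq f_{3}^{*}(D,c) \subseteq f^{*}(D,c) \subseteq f_{2}^{**}(D,c)$ (the last inclusion via (\ref{1123001})) and then reduces everything to showing that $f_{3}^{*}(D,c)$ is an interval, which it proves directly: for fixed abscissae the admissible weight set is convex hence connected, so its image under the evaluation map is an interval $J_{x,y,z}$, and these intervals are glued together by the union-of-intervals tests (Theorems \ref{0411004} and \ref{0411002}), with a case split on $c \in D$ versus $c \notin D$. Once $f_{3}^{*}(D,c)$ is an interval, $f_{2}^{**}(D,c) \subseteq f_{3}^{*}(D,c)$ and the chain collapses. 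You instead identify $f^{*}(D,c)$ with the vertical slice of the planar convex hull of the graph of $f$ and obtain $f^{*}(D,c) = f_{3}^{*}(D,c)$ in one stroke from Carath\'eodory's theorem in $\mathbb{R}^{2}$ (which the paper does state, as Theorem \ref{0407035}, but does not use here), and you get $f^{*}(D,c) = f_{2}^{**}(D,c)$ from convexity of the slice together with the decomposition principle. Your route is shorter and rests on a classical theorem rather than on the paper's bespoke interval-gluing machinery; the paper's route has the merit of exhibiting the interval structure of $f_{3}^{*}(D,c)$ constructively and of staying within its own toolkit. One point in your favor: your normalization before invoking Theorem \ref{0227002} (merging coincident abscissae and peeling off terms equal to $c$) addresses a degeneracy that the paper's one-line citation of (\ref{1123001}) for the inclusion $f^{*}(D,c) \subseteq f_{2}^{**}(D,c)$ passes over in silence, and your check that the peeled-off barycenter condition survives via $\sum_{j}\mu_{j}(y_{j}-c)=0$ is exactly the right verification.
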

\begin{proof}By (\ref{1123001}) we have $\emptyset \neq f_{2}^{*}(D, c) \subseteq f_{3}^{*}(D, c) \subseteq f^{*}(D, c) \subseteq f_{2}^{**}(D, c)$.
Hence, we only need to prove that $f_{3}^{*}(D, c)$ is an interval.
We have
\begin{align}
\nonumber
f_{3}^{*}(D, c) = \left\{ p f(x) + q f(y) + r f(z)
\left\vert\begin{array}{c}
x, y, z \in D, \\
p, q, r \geq 0, p + q + r = 1, \\
px + qy + rz = c
\end{array}\right. \right\}.
\end{align}
Without loss of generality we assume $x, y, z \in D$, $x \leq y \leq z$ and $x \leq c \leq z$. Note that
\begin{align}
\nonumber
&D_{x, y, z} = \left\{ (p, q, r)
\left\vert\begin{array}{c}
p, q, r \geq 0, p + q + r = 1, \\
px + qy + rz = c
\end{array}\right. \right\} \\
\nonumber
= &\{ (p, q, r) \mid p, q, r \geq 0, p + q + r = 1 \} \cap \{ (p, q, r) \mid px + qy + rz = c \} \neq \emptyset
\end{align}
is convex, so connected. Define $h(p, q, r) = p f(x) + q f(y) + r f(z)$. Then
\begin{align}
\nonumber
J_{x, y, z} = h( D_{x, y, z} ) = \left\{ p f(x) + q f(y) + r f(z)
\left\vert\begin{array}{c}
p, q, r \geq 0, p + q + r = 1, \\
px + qy + rz = c
\end{array}\right. \right\}
\end{align}
is an interval.
We now consider the following two cases.
\begin{flushleft}
\textbf{1.} $c \in D$.
\end{flushleft}

Because $J_{x, y, z} \subseteq J_{x, y, c} \bigcup J_{y, z, c} \bigcup J_{z, x, c}$, we have $f_{3}^{*}(D, c) = \bigcup_{x, y \in D} J_{x, y, c}$.
Note that $f(c) \in \bigcap_{x, y \in D} J_{x, y, c}$. By Theorem \ref{0411004}, $f_{3}^{*}(D, c)$ is an interval.
\begin{flushleft}
$\textbf{2.}$ $c \not\in D$.
\end{flushleft}

Let $x, y \in D, x < c < y$. Then $f(x) + \theta( f(y) - f(x) ) \in \bigcap_{z \in D} J_{x, y, z}$, where $x + \theta(y - x) = c$.
Hence, by Theorem \ref{0411004}, $J_{x, y} = \bigcup_{z \in D} J_{x, y, z}$ is an interval.

For any $x_{i}, y_{i} \in D, x_{i} < c < y_{i}, i = 1, 2$, we have
\begin{align}
\nonumber
J_{ x_{1}, y_{1} } \cap J_{ x_{1}, y_{2} } \supseteq J_{ x_{1}, y_{1}, y_{2} } \neq \emptyset\ \text{and}\ J_{ x_{1}, y_{2} } \cap J_{ x_{2}, y_{2} } \supseteq J_{ x_{1}, x_{2}, y_{2} } \neq \emptyset.
\end{align}
Therefore, by Theorem \ref{0411002}, $f_{3}^{*}(D, c) = \bigcup_{ x, y \in D \atop x < c < y } J_{x, y}$ is an interval.
\end{proof}


\begin{example}\label{0405004}
Let $f(x) = 1_{x=0}$ equals $1$ when $x = 0$ and $0$ otherwise.
Then $f_{2}^{*}( \mathbb{R}, 0 ) = \{0, 1\}$ and $f_{3}^{*}( \mathbb{R}, 0 ) = f^{*}( \mathbb{R}, 0 ) = f_{2}^{**}( \mathbb{R}, 0 ) = [0, 1]$.
\end{example}


\begin{theorem}\label{1017002}
Let $D \neq \emptyset$ be a subset of $\mathbb{R}$, and let $f: D \rightarrow \mathbb{R}$. Then
\begin{align}
\nonumber
\emptyset \neq f_{3}^{*}(D, B) = f^{*}(D, B) \subseteq f_{2}^{**}(D, B), \emptyset \neq B \subseteq D^{*},
\end{align}
and equality holds if $B$ is an interval.
\end{theorem}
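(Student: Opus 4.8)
The plan is to reduce everything to the already-established single-barycenter case, Theorem \ref{0117003}, by slicing the membership constraint $\lambda_{1}x_{1} + \cdots + \lambda_{n}x_{n} \in B$ according to the exact value of the barycenter. Concretely, for each of the three constructions involved one has
\begin{align}
\nonumber
f_{3}^{*}(D, B) = \bigcup_{c \in B} f_{3}^{*}(D, c), \quad f^{*}(D, B) = \bigcup_{c \in B} f^{*}(D, c), \quad f_{2}^{*}(D, B) = \bigcup_{c \in B} f_{2}^{*}(D, c),
\end{align}
since requiring the barycenter to lie in $B$ is exactly requiring it to equal some $c \in B$. I would record this decomposition first, as it is the bridge between the $(D, B)$-statement and the $(D, c)$-statement, and it is valid for the $n=2,3$ fixed-length sets and the unrestricted set alike.

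With these decompositions in hand, the first assertions are immediate. For every $c \in B \subseteq D^{*}$, Theorem \ref{0117003} gives $f_{3}^{*}(D, c) = f^{*}(D, c) = f_{2}^{**}(D, c) = ( f_{2}^{*}(D, c) )^{*} \neq \emptyset$; taking the union over $c \in B$ yields $f_{3}^{*}(D, B) = f^{*}(D, B)$, which is nonempty because $B \neq \emptyset$. For the inclusion into $f_{2}^{**}(D, B)$, I would apply Theorem \ref{0407078} to the family $\mathscr{A} = \{ f_{2}^{*}(D, c) \mid c \in B \}$, using Theorem \ref{0117003} termwise in the first equality:
\begin{align}
\nonumber
( f^{*}(D, B) )^{*} = \left( \bigcup_{c \in B} ( f_{2}^{*}(D, c) )^{*} \right)^{*} = \left( \bigcup_{c \in B} f_{2}^{*}(D, c) \right)^{*} = ( f_{2}^{*}(D, B) )^{*} = f_{2}^{**}(D, B).
\end{align}
Since any subset of $\mathbb{R}$ lies in its own convex hull, this gives $f^{*}(D, B) \subseteq ( f^{*}(D, B) )^{*} = f_{2}^{**}(D, B)$.

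Finally, under the hypothesis that $B$ is an interval, I would invoke Theorem \ref{0630005}, which guarantees that $f^{*}(D, B)$ is then itself an interval, hence convex, so that $( f^{*}(D, B) )^{*} = f^{*}(D, B)$; combining this with the displayed computation forces $f^{*}(D, B) = f_{2}^{**}(D, B)$. The only genuine content is the convex-hull bookkeeping in the inclusion step: for arbitrary $B$ the set $f^{*}(D, B) = \bigcup_{c \in B}( f_{2}^{*}(D, c) )^{*}$ is a union of intervals and need not be convex, and Theorem \ref{0407078} is precisely the device that computes the convex hull of such a union without enlarging it past $f_{2}^{**}(D, B)$. I expect this to be the step that demands the most care, whereas the union decomposition and the appeal to Theorem \ref{0630005} are routine.
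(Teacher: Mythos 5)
Your proposal is correct and follows essentially the same route as the paper: decompose each set as a union over the exact barycenter value $c \in B$, apply Theorem \ref{0117003} termwise, and use Theorem \ref{0407078} to identify $\bigl( \bigcup_{c \in B} f_{2}^{**}(D, c) \bigr)^{*}$ with $f_{2}^{**}(D, B)$. You additionally make explicit the equality case via Theorem \ref{0630005} (that $f^{*}(D,B)$ is an interval, hence its own convex hull, when $B$ is an interval), a step the paper's written proof leaves implicit.
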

\begin{proof}By Theorems \ref{0407078} and \ref{0117003}, we have
\begin{align}
\nonumber
f^{*}(D, B) = &\bigcup_{c \in B} f^{*}(D, c) = \bigcup_{c \in B} f_{2}^{**}(D, c) \\
\nonumber
    \subseteq &\left( \bigcup_{c \in B} f_{2}^{**}(D, c) \right)^{*} = \left( \bigcup_{c \in B} f_{2}^{*}(D, c) \right)^{*} = f_{2}^{**}(D, B).
\end{align}
This concludes the proof.
\end{proof}
\begin{flushleft}
$1^{0}$. If we set $B = \{c\}$, then the Theorem \ref{0117003}.
\end{flushleft}


\begin{example}\label{0218011}
Let $f(x) = 1_{x = \pm 1}$. Then
\begin{align}
\nonumber
f_{2}^{*}( [-1, 1], [-1/2, 1/2] ) = &[0, 3/4) \cup \{1\}, \\
\nonumber
f_{3}^{*}( [-1, 1], [-1/2, 1/2] ) = &f^{*}( [-1, 1], [-1/2, 1/2] ) = f_{2}^{**}( [-1, 1], [-1/2, 1/2] ) = [0, 1].
\end{align}
\end{example}

\subsection{$f$ is continuous}

\begin{theorem}\label{0529002}
Let $I$ be an interval, and let $f: I \rightarrow \mathbb{R}$ be continuous. Then
\begin{align}
\nonumber
f_{2}^{*}(I, c) = f^{*}(I, c) \neq \emptyset, c \in I.
\end{align}
\end{theorem}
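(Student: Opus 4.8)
The plan is to pass from arbitrarily many points down to three via an earlier theorem, and then from three to two by a one-dimensional convex-hull argument, with the continuity of $f$ entering only to make the two-point set an interval. First observe that every two-point combination is in particular an $n$-point combination, so $f_2^*(I,c) \subseteq f^*(I,c)$, and by Theorem \ref{0117003} we have $f^*(I,c) = f_3^*(I,c)$; moreover $f(c) \in f_2^*(I,c)$ (take $x = y = c$), so the set is nonempty. Hence it remains to prove $f_3^*(I,c) \subseteq f_2^*(I,c)$. If $c$ is an endpoint of $I$ (including the degenerate case $I = \{c\}$), then $\sum_i \lambda_i x_i = c$ with $x_i \in I$ and $\lambda_i > 0$ forces $x_1 = \cdots = x_n = c$, so both sets equal $\{f(c)\}$ and we are done. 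I may therefore assume $c$ is interior to $I$.

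Next I would show that $f_2^*(I,c)$ is an interval. Positivity of the weights forces a strict straddle, so a two-point combination with barycenter $c$ and distinct points has the form
\begin{align}
\nonumber
V(x, y) = \frac{y - c}{y - x} f(x) + \frac{c - x}{y - x} f(y), \qquad x < c < y,
\end{align}
while $x = y$ forces $x = y = c$ with value $f(c)$. Since $c$ is interior, the domain $\big( I \cap (-\infty, c) \big) \times \big( I \cap (c, +\infty) \big)$ is a product of two intervals, hence connected, and $V$ is continuous on it because $f$ is continuous and $y - x > 0$ there; so its image is an interval. Letting $x \to c^-$ with $y$ fixed gives $V(x,y) \to f(c)$, so $f(c)$ lies in the closure of this image, and therefore $f_2^*(I,c) = V(\text{domain}) \cup \{ f(c) \}$ is an interval. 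This is the only step that uses continuity, and it is indispensable: for discontinuous $f$ the two-point set can fail to be an interval (compare Example \ref{0405004}).

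Finally I would carry out the three-to-two reduction. Let $v = p f(x) + q f(y) + r f(z) \in f_3^*(I,c)$ with $x \leq y \leq z$; then $c = px + qy + rz \in [x, z]$, and the point $(c, v)$ lies in the convex hull of $(x, f(x)), (y, f(y)), (z, f(z))$ in $\mathbb{R}^2$. The vertical line $\{ X = c \}$ meets this hull in a (possibly degenerate) segment whose extreme points each lie on a segment joining two of the three points $(a, f(a))$ and $(b, f(b))$ with $a, b \in \{ x, y, z \}$; such an intersection point equals $\big( c, \theta f(a) + (1 - \theta) f(b) \big)$ where $\theta a + (1 - \theta) b = c$, whose second coordinate is a two-point combination with barycenter $c$ and hence lies in $f_2^*(I,c)$ (collapsing to $f(c)$ when the intersection is a vertex). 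Thus $v$ lies between two elements of $f_2^*(I,c)$, and since that set is an interval, $v \in f_2^*(I,c)$. This yields $f_3^*(I,c) \subseteq f_2^*(I,c)$, which with the reverse inclusion gives $f_2^*(I,c) = f^*(I,c)$. The main obstacle is really the interval claim of the second paragraph, since everything downstream relies on $f_2^*(I,c)$ being convex; the convex-hull step is essentially the one-dimensional geometry already present in the proof of Theorem \ref{0117003}.
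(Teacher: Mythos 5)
Your proposal is correct, and its load-bearing step is the same as the paper's: continuity of $f$ makes $f_{2}^{*}(I,c)$ an interval containing $f(c)$, after which Theorem \ref{0117003} finishes the job. The differences are in how each half is executed. For the interval claim, the paper fixes $x$, lets $y$ vary, and observes that the resulting sets $J_{x}$ are intervals all containing $f(c)$, so their union is an interval by Theorem \ref{0411004}; you instead take the image of the connected product domain $\bigl( I \cap (-\infty,c) \bigr) \times \bigl( I \cap (c,+\infty) \bigr)$ under the continuous map $V$ and adjoin $f(c)$ as a closure point. Both work; yours requires splitting off the endpoint case, which the paper's slicing avoids. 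For the finish, the paper implicitly uses the clause $f^{*}(D,c) = f_{2}^{**}(D,c)$ of Theorem \ref{0117003}: once $f_{2}^{*}(I,c)$ is an interval it equals its own convex hull, so $f^{*}(I,c) = f_{2}^{**}(I,c) = f_{2}^{*}(I,c)$ with no further geometry. You instead invoke only the clause $f^{*}(D,c) = f_{3}^{*}(D,c)$ and then re-derive the three-to-two reduction by intersecting the vertical line $X = c$ with the planar convex hull of the three graph points. That argument is sound (including the vertex degeneracy you flag), but it is logically redundant: it amounts to reproving $f_{3}^{*}(I,c) \subseteq f_{2}^{**}(I,c)$, which Theorem \ref{0117003} already gives you. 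What your route buys is self-containedness and an explicit picture of why three points reduce to two; what the paper's route buys is brevity.
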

\begin{proof}Let
\begin{align}
\nonumber
J_{x} = \left\{ p f(x) + q f(y)
\left\vert\begin{array}{c}
y \in I, \\
p, q \geq 0, p + q = 1, \\
px + qy = c
\end{array}\right. \right\}, x \in I.
\end{align}
Note that $f(c) \in \bigcap_{x \in I} J_{x}$, where $J_{x}$ are intervals.
Then by Theorem \ref{0411004}, $f_{2}^{*}(I, c) = \bigcup_{x \in I} J_{x}$ is an interval.
This concludes the proof.
\end{proof}


\begin{theorem}\label{0711002}
Let $I$ be an interval, and let $f: I \rightarrow \mathbb{R}$ be continuous. Then
\begin{align}
\nonumber
\emptyset \neq f_{2}^{*}(I, B) = f^{*}(I, B) \subseteq f_{2}^{**}(I, B), \emptyset \neq B \subseteq I,
\end{align}
and equality holds if $B$ is an interval.
\end{theorem}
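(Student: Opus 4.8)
The plan is to reduce everything to the single-barycenter statement of Theorem \ref{0529002}, exactly as Theorem \ref{1017002} reduces the arbitrary-$f$ case to Theorem \ref{0117003}. The key observation is that both $f_2^*(I, B)$ and $f^*(I, B)$ split as unions over the admissible barycenters: directly from Definitions \ref{0117001} and \ref{0630006},
\begin{align}
\nonumber
f_2^*(I, B) = \bigcup_{c \in B} f_2^*(I, c) \quad\text{and}\quad f^*(I, B) = \bigcup_{c \in B} f^*(I, c),
\end{align}
since the constraint $\lambda_1 x_1 + \cdots + \lambda_n x_n \in B$ is nothing but $\lambda_1 x_1 + \cdots + \lambda_n x_n = c$ for some $c \in B$.

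First I would apply Theorem \ref{0529002} termwise. Since $f$ is continuous on the interval $I$ and every $c \in B$ lies in $I$, that theorem gives $f_2^*(I, c) = f^*(I, c) \neq \emptyset$ for each such $c$. Taking the union over $c \in B$ immediately yields the main equality $f_2^*(I, B) = f^*(I, B)$, while nonemptiness follows from any single $c \in B$. The inclusion $f^*(I, B) \subseteq f_2^{**}(I, B)$ is then automatic: having identified $f^*(I, B)$ with $f_2^*(I, B)$, it sits inside its own convex hull $\big( f_2^*(I, B) \big)^* = f_2^{**}(I, B)$ by definition. In contrast to the proof of Theorem \ref{1017002}, no appeal to Theorem \ref{0407078} is needed here, because continuity upgrades the three-point identity of Theorem \ref{0117003} to the two-point identity of Theorem \ref{0529002}, so the convex-hull juggling disappears.

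Finally, for the clause ``equality holds if $B$ is an interval,'' I would invoke Theorem \ref{0630005}: with $D = I$ (so $D^* = I \supseteq B$), that theorem asserts that $f^*(I, B)$ is an interval, hence convex, whenever $B$ is an interval. A convex set equals its own convex hull, so $f_2^{**}(I, B) = \big( f_2^*(I, B) \big)^* = \big( f^*(I, B) \big)^* = f^*(I, B)$, collapsing the inclusion to an equality. I do not anticipate a genuine obstacle: the real work — showing, via the continuity of $f$, that each single-barycenter set $f_2^*(I, c)$ is already an interval, so that the chain $f_2^*(I, c) \subseteq f^*(I, c) \subseteq f_2^{**}(I, c)$ collapses to equality — is already carried out in Theorem \ref{0529002}. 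What remains is the bookkeeping of the union decomposition together with the interval criterion of Theorem \ref{0630005}, and the only point demanding care is checking that the union decomposition is a genuine equality of sets (both inclusions), which is immediate from the definitions.
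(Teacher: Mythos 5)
Your proof is correct and takes essentially the route the paper intends: the paper leaves Theorem \ref{0711002} without a written proof, remarking only that $B=\{c\}$ recovers Theorem \ref{0529002}, and your union-over-barycenters reduction to that theorem, together with Theorem \ref{0630005} for the case where $B$ is an interval, is precisely the argument paralleling the paper's proof of the analogous Theorem \ref{1017002}. As you observe, continuity makes $f_{2}^{*}(I,c)=f^{*}(I,c)$ hold pointwise in $c$, so the appeal to Theorem \ref{0407078} used in the general case becomes unnecessary.
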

\begin{flushleft}
$1^{0}$. If we set $B = \{ c \}$, then the Theorem \ref{0529002}.
\end{flushleft}

\section{Applications of conditional convex hull}

\centerline{ \textbf{Part \uppercase\expandafter{\romannumeral1}} }

\begin{theorem}\label{0209001}
Let $D \neq \emptyset$ be a subset of $\mathbb{R}$, and let $f: D \rightarrow \mathbb{R}$. Suppose that $\emptyset \neq B \subseteq D$. Then
\begin{align}
\nonumber
  &\left\{ \sum_{i=1}^{n}\lambda_{i} f(x_{i}) - f\left( \sum_{i=1}^{n}\lambda_{i}x_{i} \right)
\left\vert\begin{array}{c}
x_{1}, \ldots, x_{n} \in D, \\
\lambda_{1}, \ldots, \lambda_{n} > 0, \sum_{i=1}^{n}\lambda_{i} = 1, \\
\sum_{i=1}^{n}\lambda_{i}x_{i} \in B
\end{array}\right. \right\} \\
\nonumber
= &\left\{ p f(a) + q f(b) - f(pa + qb)
\left\vert\begin{array}{c}
a, b \in D, \\
p, q > 0, p + q = 1, \\
pa + qb \in B
\end{array}\right. \right\}^{*} \ni 0, n \geq 3.
\end{align}
\end{theorem}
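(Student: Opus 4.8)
The plan is to collapse the problem onto one fixed barycenter $c\in B$ and then reassemble over $c$ using the conditional–convex–hull results of Section~8. Write $G_{2}=\{\,pf(a)+qf(b)-f(pa+qb):a,b\in D,\ p,q>0,\ p+q=1,\ pa+qb\in B\,\}$ for the two–point gap set appearing on the right, and let $L$ denote the set on the left. Since $B\subseteq D$, for every $c\in B$ the value $f(c)$ is defined; the trivial one–point mean $x_{1}=c,\ \lambda_{1}=1$ shows $f(c)\in f^{*}(D,c)$, and the choice $a=b=c$, $p=q=1/2$ shows $0\in G_{2}$. Thus $0$ will lie in both sides once the equality is established, settling the assertion ``$\ni 0$''.

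First I would fix $c\in B$ and describe the slice of $L$ with barycenter exactly $c$, which by definition is $f^{*}(D,c)-f(c)$; the corresponding two–point slice of $G_{2}$ is $f_{2}^{*}(D,c)-f(c)$. Theorem~\ref{0117003} gives $f^{*}(D,c)=f_{2}^{**}(D,c)=(f_{2}^{*}(D,c))^{*}$, and since translation by the constant $f(c)$ commutes with the convex hull,
\begin{align}
\nonumber
f^{*}(D,c)-f(c) = (f_{2}^{*}(D,c))^{*}-f(c) = \big( f_{2}^{*}(D,c)-f(c) \big)^{*}.
\end{align}
By Theorem~\ref{0630005} (and its remark) each $f^{*}(D,c)$ is an interval, so every slice $f^{*}(D,c)-f(c)$ is an interval containing $0$.

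Next I would assemble the slices. The left–hand set is $L=\bigcup_{c\in B}\big(f^{*}(D,c)-f(c)\big)$, a union of intervals all containing the common point $0$, hence connected, hence itself an interval, and in particular convex, so $L=L^{*}$. Applying Theorem~\ref{0407078} to the family $\mathscr{A}=\{\,f_{2}^{*}(D,c)-f(c):c\in B\,\}$ in $V=\mathbb{R}$, together with the slice identity above,
\begin{align}
\nonumber
L^{*} = \Big( \bigcup_{c\in B}\big( f_{2}^{*}(D,c)-f(c) \big)^{*} \Big)^{*} = \Big( \bigcup_{c\in B}\big( f_{2}^{*}(D,c)-f(c) \big) \Big)^{*} = (G_{2})^{*},
\end{align}
the last equality because $\bigcup_{c\in B}\big(f_{2}^{*}(D,c)-f(c)\big)=G_{2}$. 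Combining with $L=L^{*}$ yields $L=(G_{2})^{*}$, the desired equality. For the refinement $n\ge 3$, I would use the sharper part of Theorem~\ref{0117003}, namely $f_{3}^{*}(D,c)=f^{*}(D,c)$, so each slice is already realized by exactly three points; to reach any prescribed $n\ge 3$ one splits a single atom (duplicating some $x_{i}$ and dividing its weight), which changes neither $\sum\lambda_{i}f(x_{i})$ nor the barycenter, so every element of $L$ is attained with exactly $n$ points.

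The only genuinely substantive input is the identity $f^{*}(D,c)=f_{2}^{**}(D,c)$ supplied by Theorem~\ref{0117003}; this is where the decomposition of an $n$–point mean into two–point means with a common barycenter (equation~\eqref{0103001} with $g=-f$) does the real work, including the bookkeeping needed when the barycenter coincides with one of the data points. Everything afterward is formal, and the two steps that most deserve care are: verifying that each slice contains $0$, which is what makes $L$ an interval and hence lets us drop the outer star on the left; and applying Theorem~\ref{0407078} to the \emph{translated} two–point slices $f_{2}^{*}(D,c)-f(c)$ rather than to $G_{2}$ directly, so that the per-$c$ shifts by $f(c)$ are absorbed correctly.
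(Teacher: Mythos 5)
Your proposal is correct and follows essentially the same route as the paper's own proof: slice by the barycenter $c\in B$, use Theorem~\ref{0117003} to identify each slice $f_{n}^{*}(D,c)-f(c)$ with the interval $f_{2}^{**}(D,c)-f(c)$ containing $0$, and glue the slices with the common-point test (Theorem~\ref{0411004}). You are in fact somewhat more complete than the paper, which stops after showing the left-hand set is an interval and leaves the final identification with $\bigl(G_{2}\bigr)^{*}$ via Theorem~\ref{0407078} implicit.
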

\begin{proof}Let $n \in \mathbb{N}$ and $n \geq 3$.
By Theorem \ref{0117003}, $f_{n}^{*}(D, c), c \in B$ are intervals.
Note that $0 \in \bigcap_{c \in B}( f_{n}^{*}(D, c) - f(c) )$.
Then, by Theorem \ref{0411004},
\begin{align}
\nonumber
  &\left\{ \sum_{i=1}^{n}\lambda_{i} f(x_{i}) - f\left( \sum_{i=1}^{n}\lambda_{i}x_{i} \right)
\left\vert\begin{array}{c}
x_{1}, \ldots, x_{n} \in D, \\
\lambda_{1}, \ldots, \lambda_{n} > 0, \sum_{i=1}^{n}\lambda_{i} = 1, \\
\sum_{i=1}^{n}\lambda_{i}x_{i} \in B
\end{array}\right. \right\} \\
\nonumber
= &\bigcup_{c \in B}( f_{n}^{*}(D, c) - f(c) )
\end{align}
is an interval. This concludes the proof.
\end{proof}
\begin{flushleft}
$1^{0}$. By Theorem \ref{0529007},
\begin{align}
\nonumber
  &\left\{ \operatorname{\bf E} f(\nu) - f( \operatorname{\bf E}\nu )
\left\vert\begin{array}{c}
\nu \in D\ \text{is a random variable}, \\
\operatorname{\bf E}\nu \in B
\end{array}\right. \right\} \\
\nonumber
= &\left\{ p f(a) + q f(b) - f(pa + qb)
\left\vert\begin{array}{c}
a, b \in D, \\
p, q > 0, p + q = 1, \\
pa + qb \in B
\end{array}\right. \right\}^{*}.
\end{align}
\end{flushleft}


\begin{theorem}\label{0801001}
Let $I$ be an interval, and let $f: I \rightarrow \mathbb{R}$ be continuous.
Suppose that $\emptyset \neq B \subseteq I$. Then
\begin{align}
\nonumber
  &\left\{ \sum_{i=1}^{n}\lambda_{i} f(x_{i}) - f\left( \sum_{i=1}^{n}\lambda_{i}x_{i} \right)
\left\vert\begin{array}{c}
x_{1}, \ldots, x_{n} \in I, \\
\lambda_{1}, \ldots, \lambda_{n} > 0, \sum_{i=1}^{n}\lambda_{i} = 1, \\
\sum_{i=1}^{n}\lambda_{i}x_{i} \in B
\end{array}\right. \right\} \\
\nonumber
= &\left\{ p f(x) + q f(y) - f(px + qy)
\left\vert\begin{array}{c}
x, y \in I, \\
p, q > 0, p + q = 1, \\
px + qy \in B
\end{array}\right. \right\}^{*} \ni 0, n \geq 2.
\end{align}
\end{theorem}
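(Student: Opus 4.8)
The plan is to mirror the proof of Theorem~\ref{0209001}, replacing its $n \geq 3$ ingredient (Theorem~\ref{0117003}) by the continuity ingredient (Theorem~\ref{0529002}), which is precisely what permits the index to drop to $n \geq 2$. First I would record that, for a continuous $f$ on the interval $I$ and any $c \in I$, the set $f_{n}^{*}(I, c)$ is an interval for every $n \geq 2$: the padding inclusions $f_{2}^{*}(I, c) \subseteq f_{n}^{*}(I, c) \subseteq f^{*}(I, c)$ (repeat a point to raise the number of summands, and use that $f^{*}$ contains every $f_{n}^{*}$) together with $f^{*}(I, c) = f_{2}^{*}(I, c)$ from Theorem~\ref{0529002} force $f_{n}^{*}(I, c) = f_{2}^{*}(I, c)$, and the latter is an interval by the same theorem.

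Next I would rewrite the left-hand set, for a fixed $n \geq 2$, as the union $\bigcup_{c \in B}\bigl( f_{n}^{*}(I, c) - f(c) \bigr)$, sorting the admissible tuples by the value $c = \sum_{i}\lambda_{i}x_{i} \in B$. Each summand is an interval (by the previous step) and contains $0$, since choosing all $x_{i} = c$ gives $\sum_{i}\lambda_{i}f(c) = f(c) \in f_{n}^{*}(I, c)$; hence $0 \in \bigcap_{c \in B}\bigl( f_{n}^{*}(I, c) - f(c) \bigr)$. The family therefore consists of intervals with a common point, so Theorem~\ref{0411004} applies and the union is an interval containing $0$. This shows the left-hand side is an interval.

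Finally I would identify the pre-hull set on the right. Collecting the pairs $(x, y, p, q)$ by their barycenter $c = px + qy \in B$ expresses that set as $\bigcup_{c \in B}\bigl( f_{2}^{*}(I, c) - f(c) \bigr)$, which by the first step equals $\bigcup_{c \in B}\bigl( f_{n}^{*}(I, c) - f(c) \bigr)$, i.e.\ exactly the left-hand side. Since the left-hand side was just shown to be an interval, applying the convex hull $(\cdot)^{*}$ changes nothing, so the right-hand side equals the left-hand side, and the membership of $0$ in both sides is already established.

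The only substantive point is the first step: for a general subset $D$ one needs $n \geq 3$ because $f_{2}^{*}(D, c)$ can fail to be an interval (as in Example~\ref{0405004}), whereas continuity on an interval makes $f_{2}^{*}(I, c)$ an interval and collapses every $f_{n}^{*}(I, c)$ to it. Once that is in hand, the argument is purely the union-of-intervals test of Theorem~\ref{0411004}; no further analysis is required, and the same reasoning via Theorem~\ref{0711002} would handle the barycenter constraint $B$ directly should one prefer to bypass the pointwise decomposition.
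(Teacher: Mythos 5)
Your proposal is correct and follows the paper's own route: the paper proves this result by citing exactly Theorem \ref{0529002} (continuity collapses $f^{*}(I,c)$ to $f_{2}^{*}(I,c)$, an interval) together with the common-point union test of Theorem \ref{0411004}, mirroring the template of Theorem \ref{0209001}. Your padding argument showing $f_{n}^{*}(I,c)=f_{2}^{*}(I,c)$ for all $n\geq 2$ is a detail the paper leaves implicit, but it is the same proof.
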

\begin{proof}By Theorems \ref{0529002} and \ref{0411004}, we conclude the proof.
\end{proof}
\begin{flushleft}
$1^{0}$. By Theorem \ref{0529007},
\begin{align}
\nonumber
  &\left\{ \operatorname{\bf E} f(\nu) - f( \operatorname{\bf E}\nu )
\left\vert\begin{array}{c}
\nu \in I\ \text{is a random variable}, \\
\operatorname{\bf E}\nu \in B
\end{array}\right. \right\} \\
\nonumber
= &\left\{ p f(x) + q f(y) - f(px + qy)
\left\vert\begin{array}{c}
x, y \in I, \\
p, q > 0, p + q = 1, \\
px + qy \in B
\end{array}\right. \right\}.
\end{align}
In particular,
\begin{align}
\nonumber
  &\{ \operatorname{\bf E} f(\nu) - f( \operatorname{\bf E}\nu ) \mid \nu \in I\ \text{is a random variable} \} \\
\nonumber
= &\left\{ p f(x) + q f(y) - f(px + qy)
\left\vert\begin{array}{c}
x, y \in I, \\
p, q > 0, p + q = 1
\end{array}\right. \right\}.
\end{align}
\end{flushleft}


\begin{example}\label{0218012}
Let $f(x) = 1_{x = \pm 1}$ (Clearly, $f$ is convex on $[-1, 1]$). Then
\begin{align}
\nonumber
  &\left\{ p f(x) + q f(y) - f(px + qy)
\left\vert\begin{array}{c}
x, y \in [-1, 1], \\
p, q > 0, p + q = 1, \\
px + qy \in [-1/2, 1/2]
\end{array}\right. \right\} = [0, 3/4) \cup \{1\}, \\
\nonumber
  &\left\{ \sum_{i=1}^{n}\lambda_{i} f(x_{i}) - f\left( \sum_{i=1}^{n}\lambda_{i}x_{i} \right)
\left\vert\begin{array}{c}
x_{1}, \ldots, x_{n} \in [-1, 1], \\
\lambda_{1}, \ldots, \lambda_{n} > 0, \sum_{i=1}^{n}\lambda_{i} = 1, \\
\sum_{i=1}^{n}\lambda_{i}x_{i} \in [-1/2, 1/2]
\end{array}\right. \right\} \\
\nonumber
= &\left( [0, 3/4) \cup \{1\} \right)^{*} = [0, 1], n \geq 3.
\end{align}
\end{example}


\begin{example}\label{0221001}
Let $f(x) = 1_{x=0}$ equals $1$ when $x = 0$ and $0$ otherwise. Then
\begin{align}
\nonumber
  &\left\{ p f(x) + q f(y) - f(px + qy)
\left\vert\begin{array}{c}
x, y \in \mathbb{R}, \\
p, q > 0, p + q = 1
\end{array}\right. \right\} = \{-1\} \cup [0, 1), \\
\nonumber
  &\left\{ \left. \sum_{i=1}^{n}\lambda_{i} f(x_{i}) - f\left( \sum_{i=1}^{n}\lambda_{i}x_{i} \right) \right\vert
\begin{array}{c}
x_{1}, \ldots, x_{n} \in \mathbb{R}, \\
\lambda_{1}, \ldots, \lambda_{n} > 0, \sum_{i=1}^{n}\lambda_{i} = 1
\end{array} \right\} \\
\nonumber
= &( \{-1\} \cup [0, 1) )^{*} = [-1, 1), n \geq 3.
\end{align}
\end{example}

\centerline{ \textbf{Part \uppercase\expandafter{\romannumeral2}} }

\begin{theorem}\label{1019009}
Let $D \neq \emptyset$ be a subset of $\mathbb{R}$, and let $f: D \rightarrow \mathbb{R}$.
Let $g: D^{*} \rightarrow \mathbb{R}$ be continuous. Then
\begin{align}
\nonumber
  &\left\{ \left. \sum_{i=1}^{n}\lambda_{i} f(x_{i}) + g\left( \sum_{i=1}^{n}\lambda_{i}x_{i} \right) \right\vert
\begin{array}{c}
x_{1}, \ldots, x_{n} \in D, \\
\lambda_{1}, \ldots, \lambda_{n} > 0, \sum_{i=1}^{n}\lambda_{i} = 1
\end{array} \right\} \\
\nonumber
= &\left\{ p f(a) + q f(b) + g(pa + qb)
\left\vert\begin{array}{c}
a, b \in D, \\
p, q > 0, p + q = 1
\end{array}\right. \right\}^{*} \neq \emptyset, n \geq 2.
\end{align}
\end{theorem}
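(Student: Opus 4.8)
The plan is to reduce everything to the single fact that the two-point set
\[
S = \Big\{\, p f(a) + q f(b) + g(pa+qb) \ \Big|\ a,b\in D,\ p,q>0,\ p+q=1 \,\Big\}
\]
is already an interval. Granting this, $S^{*}=S$, and writing $T_{n}$ for the left-hand set of the theorem (the $n$-point set for a fixed $n\ge 2$), I will show $S\subseteq T_{n}\subseteq S^{*}=S$, which forces $T_{n}=S^{*}$. Note first that $S\neq\emptyset$, since $f(a)+g(a)\in S$ via the diagonal choice $a=b$; hence all three sets are nonempty.

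The heart of the argument is that $S$ is an interval, and this is precisely where the continuity of $g$ enters. For $a,b\in D$ with $a\le b$ (so that $[a,b]\subseteq D^{*}$, where $g$ lives) define $h_{a,b}(t)=(1-t)f(a)+tf(b)+g\big((1-t)a+tb\big)$ for $t\in[0,1]$. Since $g$ is continuous on $D^{*}$, each $h_{a,b}$ is continuous, so $I_{a,b}:=h_{a,b}([0,1])$ is an interval. For $t\in(0,1)$ the value $h_{a,b}(t)$ is a generator of $S$, while $h_{a,b}(0)=f(a)+g(a)$ and $h_{a,b}(1)=f(b)+g(b)$ lie in $S$ through the diagonal; thus $I_{a,b}\subseteq S$, and since every generator of $S$ arises as some $h_{a,b}(t)$ (swapping $a,b$ if necessary), $S=\bigcup_{a,b\in D} I_{a,b}$.

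I then assemble this union in two stages using the common-point test for unions of intervals (Theorem \ref{0411004}, exactly as in the proofs of Theorems \ref{0117003} and \ref{0209001}). Fixing $a$, every $I_{a,b}$ with $b\in D$ contains $h_{a,b}(0)=f(a)+g(a)$, so $J_{a}:=\bigcup_{b\in D}I_{a,b}$ is an interval; moreover $J_{a}$ contains each endpoint $h_{a,b}(1)=f(b)+g(b)$. Fixing now a single $b_{0}\in D$, every $J_{a}$ contains the common point $f(b_{0})+g(b_{0})$, so $S=\bigcup_{a\in D}J_{a}$ is again an interval by Theorem \ref{0411004}, giving $S^{*}=S$.

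It remains to sandwich $T_{n}$. The inclusion $S\subseteq T_{n}$ is a padding argument: from $p f(a)+q f(b)+g(pa+qb)$ take $x_{1}=a$ with weight $p$ and $x_{2}=\dots=x_{n}=b$ with positive weights summing to $q$, leaving both the weighted mean and the weighted $f$-sum unchanged. For $T_{n}\subseteq S^{*}$ I invoke Theorem \ref{0709002} with $A=D$, $B=D^{*}$, and $J=S^{*}$: for any two-point data the mean $px+qy$ automatically lies in $D^{*}=B$ and $p f(x)+q f(y)+g(px+qy)\in S\subseteq S^{*}=J$, so the hypothesis holds, and the conclusion places every $n$-point value $\sum_{i}\lambda_{i}f(x_{i})+g(\sum_{i}\lambda_{i}x_{i})$ in $S^{*}$. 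Combining, $S\subseteq T_{n}\subseteq S^{*}=S$, whence $T_{n}=S^{*}$. The one genuine obstacle is the interval claim for $S$; the delicate part is securing a common point at each of the two union stages, which is why the degenerate values $f(a)+g(a)$ (the case $a=b$) must be kept in $S$, and why continuity of $g$---rather than the restriction $n\ge 3$ needed in the discontinuous setting of Theorem \ref{0209001}---is what powers the case $n=2$.
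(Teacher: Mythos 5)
Your proof is correct and follows essentially the same route as the paper: both establish that the two-point set $S$ is an interval by writing it as a two-stage union of the continuity-induced intervals $I_{a,b}=h_{a,b}([0,1])$ and applying the union-of-intervals tests (the paper uses the common-point test, Theorem \ref{0411004}, at the first stage and the pairwise-intersection test, Theorem \ref{0411003}, at the second, whereas you use the common-point test twice via a fixed $b_{0}$ --- a cosmetic difference). You additionally spell out the sandwich $S \subseteq T_{n} \subseteq S^{*}=S$ by padding and Theorem \ref{0709002}, a step the paper leaves implicit, which is a welcome clarification rather than a departure.
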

\begin{proof}Let
\begin{align}
\nonumber
J_{x, y} = \left\{ p f(x) + q f(y) + g(px + qy) \mid p, q \geq 0, p + q = 1 \right\}, x, y \in D.
\end{align}
Note that $J_{x, y}, x, y \in D$ are intervals, and $f(x) + g(x) \in \bigcap _{y \in D} J_{x, y}, x \in D$.
Then, by Theorem \ref{0411004}, $J_{x} = \bigcup_{y \in D} J_{x, y}, x \in D$ are intervals.
Note that $J_{x} \cap J_{y} \supseteq J_{x, y} \neq \emptyset, x, y \in D$. Then, by Theorem \ref{0411003},
\begin{align}
\nonumber
\left\{ p f(a) + q f(b) + g(pa + qb)
\left\vert\begin{array}{c}
a, b \in D, \\
p, q > 0, p + q = 1
\end{array}\right. \right\} = \bigcup_{x \in D} J_{x}
\end{align}
is an interval. This concludes the proof.
\end{proof}


\begin{theorem}\label{0215002}
Let $D \neq \emptyset$ be a subset of $\mathbb{R}$, and let $f: D \rightarrow \mathbb{R}$.
Let $B$ be a subinterval of $D^{*}$, and let $g: B \rightarrow \mathbb{R}$ be continuous. Then
\begin{align}
\nonumber
 &\left\{ \sum_{i=1}^{n}\lambda_{i} f(x_{i}) + g\left( \sum_{i=1}^{n}\lambda_{i}x_{i} \right)
\left\vert\begin{array}{c}
x_{1}, \ldots, x_{n} \in D, \\
\lambda_{1}, \ldots, \lambda_{n} > 0, \sum_{i=1}^{n}\lambda_{i} = 1, \\
\sum_{i=1}^{n}\lambda_{i}x_{i} \in B
\end{array}\right. \right\} \\
\nonumber
= &\left\{ p f(a) + q f(b) + g(pa + qb)
\left\vert\begin{array}{c}
a, b \in D, \\
p, q > 0, p + q = 1, \\
pa + qb \in B
\end{array}\right. \right\}^{*} \neq \emptyset, n \geq 3.
\end{align}
\end{theorem}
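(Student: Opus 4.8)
The plan is to reduce the identity to a statement about the single-barycenter slices and then to prove that the left-hand set is an interval by a connectedness argument, rather than by exhibiting a point common to all slices as in the proof of Theorem~\ref{0209001}; that trick is unavailable here because $c$ ranges over $B\subseteq D^{*}$ and need not belong to $D$.

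First I would slice the left-hand set $L$ according to the barycenter $c=\sum_{i}\lambda_{i}x_{i}\in B$. For a fixed $c$ the inner sum $\sum_{i}\lambda_{i}f(x_{i})$ ranges exactly over $f_{n}^{*}(D,c)$, so Theorem~\ref{0117003}, which yields $f_{n}^{*}(D,c)=f^{*}(D,c)=\big(f_{2}^{*}(D,c)\big)^{*}$ for every $n\geq 3$ and every $c\in D^{*}$, gives
\[
L=\bigcup_{c\in B}\big(f^{*}(D,c)+g(c)\big)=\bigcup_{c\in B}\big(f_{2}^{*}(D,c)+g(c)\big)^{*}.
\]
Writing $R_{0}=\bigcup_{c\in B}\big(f_{2}^{*}(D,c)+g(c)\big)$ for the set generating the right-hand side (so that the right-hand side is $R_{0}^{*}$), two containments are immediate: $R_{0}\subseteq L$ since $f_{2}^{*}(D,c)\subseteq f^{*}(D,c)$, and $L\subseteq R_{0}^{*}$ since each $\big(f_{2}^{*}(D,c)+g(c)\big)^{*}\subseteq R_{0}^{*}$ by monotonicity of the convex hull (cf.\ Theorem~\ref{0407078}). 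Consequently, once $L$ is known to be convex, the sandwich $R_{0}\subseteq L\subseteq R_{0}^{*}$ forces $L=R_{0}^{*}$; nonemptiness is clear from $B\neq\emptyset$ and $f^{*}(D,c)\neq\emptyset$.

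The crux, and the step I expect to be the main obstacle, is thus to show that $L$ is an interval. I would argue by connectedness. Consider the planar set $E=\{\,(c,t):c\in B,\ t\in f^{*}(D,c)+g(c)\,\}$. Because $g$ is continuous, the shear $(c,t)\mapsto(c,\,t-g(c))$ is a homeomorphism of $\mathbb{R}^{2}$ carrying $E$ onto $E'=\{\,(c,s):c\in B,\ s\in f^{*}(D,c)\,\}$. Now $f^{*}(D,c)$ is exactly the vertical slice at $c$ of the convex hull of the graph $\{\,(x,f(x)):x\in D\,\}$, so $E'$ is the intersection of that convex hull with the slab $B\times\mathbb{R}$; as an intersection of two convex sets it is convex, hence connected, and therefore so is $E$. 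The projection of $E$ to its second coordinate is continuous with image $\bigcup_{c\in B}\big(f^{*}(D,c)+g(c)\big)=L$, so $L$ is a connected subset of $\mathbb{R}$, i.e.\ an interval.

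Combining the two steps yields $L=R_{0}^{*}$, which is the asserted right-hand side, for each $n\geq 3$ (the particular value being immaterial since $f_{n}^{*}(D,c)=f^{*}(D,c)$ throughout). Two minor points should be checked in the write-up: the strict weights $\lambda_{i}>0$ defining $f^{*}(D,c)$ must be reconciled with the weights $\geq 0$ used to form the convex hull of the graph, which is harmless because Theorem~\ref{0117003} identifies $f^{*}(D,c)$ with the genuine convex hull $f_{2}^{**}(D,c)$; and the behaviour at the endpoints of $B$, where one verifies that the barycenters there do not disconnect $L$ — again a consequence of the convexity of $E'$.
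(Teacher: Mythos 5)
Your proposal is correct and follows essentially the same route as the paper: the paper likewise identifies the left-hand set with the image of the convex set $\mathcal{J}$ of pairs $\left( \sum_{i}\lambda_{i}f(x_{i}), \sum_{i}\lambda_{i}x_{i} \right)$ with barycenter in $B$ under the continuous map $\varphi(x,y)=x+g(y)$ — your shear-then-project argument is just a factorization of that map — and then sandwiches this interval between the two-point set and its convex hull. The only cosmetic difference is that you obtain the inclusion into $R_{0}^{*}$ from Theorem \ref{0117003} together with monotonicity of the convex hull, whereas the paper invokes the decomposition identity (\ref{0103001}) directly; these are equivalent.
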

\begin{proof}Note that $\varphi(x, y) = x + g(y)$ is continuous on
\begin{align}
\nonumber
\mathcal{J} = \left\{ \left( \sum_{i=1}^{k}\lambda_{i} f(x_{i}), \sum_{i=1}^{k}\lambda_{i}x_{i} \right)
\left\vert\begin{array}{c}
x_{1}, \ldots, x_{k} \in D, \\
\lambda_{1}, \ldots, \lambda_{k} > 0, \sum_{i=1}^{k}\lambda_{i} = 1, \\
\sum_{i=1}^{k}\lambda_{i}x_{i} \in B, \\
k = 1, 2, \ldots
\end{array}\right. \right\},
\end{align}
where $\mathcal{J}$ is convex, and then connected. Then $\varphi( \mathcal{J} )$ is an interval.

By Theorem \ref{0117003}, for any $n \in \mathbb{N}$ such that $n \geq 3$, we have
\begin{align}
\nonumber
 &\left\{ \sum_{i=1}^{n}\lambda_{i} f(x_{i}) + g\left( \sum_{i=1}^{n}\lambda_{i}x_{i} \right)
\left\vert\begin{array}{c}
x_{1}, \ldots, x_{n} \in D, \\
\lambda_{1}, \ldots, \lambda_{n} > 0, \sum_{i=1}^{n}\lambda_{i} = 1, \\
\sum_{i=1}^{n}\lambda_{i}x_{i} \in B
\end{array}\right. \right\} \\
\nonumber
= &\bigcup_{c \in B}( f_{n}^{*}(D, c) + g(c) ) = \bigcup_{c \in B}( f^{*}(D, c) + g(c) ) \\
\nonumber
= &\left\{ \sum_{i=1}^{k}\lambda_{i} f(x_{i}) + g\left( \sum_{i=1}^{k}\lambda_{i}x_{i} \right)
\left\vert\begin{array}{c}
x_{1}, \ldots, x_{k} \in D, \\
\lambda_{1}, \ldots, \lambda_{k} > 0, \sum_{i=1}^{k}\lambda_{i} = 1, \\
\sum_{i=1}^{k}\lambda_{i}x_{i} \in B, \\
k = 1, 2, \ldots
\end{array}\right. \right\} = \varphi( \mathcal{J} ).
\end{align}
Then, by (\ref{0103001}), we conclude the proof.
\end{proof}

\centerline{ \textbf{Part \uppercase\expandafter{\romannumeral3}} }

\begin{theorem}\label{1019008}
Let $f: D \subseteq \mathbb{R} \rightarrow \mathbb{R}$, and let $g: B \subseteq D^{*} \rightarrow \mathbb{R}$. Then
\begin{align}
\nonumber
\emptyset \neq &\left\{ \sum_{i=1}^{n}\lambda_{i} f(x_{i}) + g\left( \sum_{i=1}^{n}\lambda_{i}x_{i} \right)
\left\vert\begin{array}{c}
x_{1}, \ldots, x_{n} \in D, \\
\lambda_{1}, \ldots, \lambda_{n} > 0, \sum_{i=1}^{n}\lambda_{i} = 1, \\
\sum_{i=1}^{n}\lambda_{i}x_{i} \in B, \\
n = 1, 2, \ldots
\end{array}\right. \right\} \\
\nonumber
\subseteq &\left\{ p f(a) + q f(b) + g(pa + qb)
\left\vert\begin{array}{c}
a, b \in D, \\
p, q > 0, p + q = 1, \\
pa + qb \in B
\end{array}\right. \right\}^{*},
\end{align}
and equality holds if $B$ is an interval and $g$ is continuous.
\end{theorem}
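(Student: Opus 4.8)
The plan is to reduce the statement to the conditional convex hulls $f^{*}(D,c)$ and $f_{2}^{*}(D,c)$ of the previous subsections and then feed the result into the union-of-hulls identity of Theorem \ref{0407078}. Write $S$ for the set on the left and $T$ for the set sitting inside the outer star on the right, and group elements according to the value of the barycenter. Since an element of $S$ has barycenter $c=\sum_{i}\lambda_{i}x_{i}\in B$ and $f$-part $\sum_{i}\lambda_{i}f(x_{i})\in f^{*}(D,c)$, while an element of $T$ has $c=pa+qb\in B$ and $f$-part in $f_{2}^{*}(D,c)$, Definitions \ref{0630006} and \ref{0117001} give the decompositions
\[
S=\bigcup_{c\in B}\bigl(f^{*}(D,c)+g(c)\bigr),\qquad T=\bigcup_{c\in B}\bigl(f_{2}^{*}(D,c)+g(c)\bigr).
\]

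The first key step uses Theorem \ref{0117003}, which for every $c\in D^{*}$ gives the pointwise identity $f^{*}(D,c)=\bigl(f_{2}^{*}(D,c)\bigr)^{*}=f_{2}^{**}(D,c)\neq\emptyset$. Because the convex hull of a subset of $\mathbb{R}$ is translation-equivariant, translating by the scalar $g(c)$ yields $f^{*}(D,c)+g(c)=\bigl(f_{2}^{*}(D,c)+g(c)\bigr)^{*}$. Setting $A_{c}=f_{2}^{*}(D,c)+g(c)$, the decompositions above become $S=\bigcup_{c\in B}A_{c}^{*}$ and $T=\bigcup_{c\in B}A_{c}$. Applying Theorem \ref{0407078} to the family $\{A_{c}\}_{c\in B}$ of subsets of $\mathbb{R}$ then gives
\[
S^{*}=\Bigl(\bigcup_{c\in B}A_{c}^{*}\Bigr)^{*}=\Bigl(\bigcup_{c\in B}A_{c}\Bigr)^{*}=T^{*}.
\]
As $S\subseteq S^{*}$ always, this is exactly the inclusion $S\subseteq T^{*}$. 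Nonemptiness is then immediate: for any $c$ in the (nonempty) set $B\subseteq D^{*}$, Theorem \ref{0117003} gives $f^{*}(D,c)\neq\emptyset$, so $S\supseteq f^{*}(D,c)+g(c)\neq\emptyset$.

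For the final clause, suppose $B$ is an interval and $g$ is continuous. Under precisely these hypotheses Theorem \ref{0215002} shows that, for any fixed $n\geq 3$, the corresponding $n$-point subfamily of $S$ already equals $T^{*}$. In particular the $n=3$ slice of $S$ equals $T^{*}$, so $T^{*}\subseteq S$; combined with the inclusion $S\subseteq T^{*}$ from the previous step, this forces $S=T^{*}$.

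I expect the real content to lie not in any single computation but in packaging $S$ and $T$ as unions of translated conditional hulls and in the observation that translation-equivariance of the convex hull lets $g(c)$ pass freely through the star. The main conceptual subtlety concerns the role of the hypotheses. The target set $T$ is itself typically non-convex (cf. Example \ref{0218011}), which is why the outer $(\cdot)^{*}$ must appear on the right; and $S=\bigcup_{c\in B}A_{c}^{*}$, a union of intervals indexed by the barycenter $c$, can fail to be an interval when $B$ is disconnected, so in general only the inclusion $S\subseteq T^{*}$ survives. The assumption that $B$ is an interval and $g$ is continuous is exactly what Theorem \ref{0215002} needs in order to make the $n=3$ slice fill out all of $T^{*}$ and thereby upgrade the inclusion to an equality.
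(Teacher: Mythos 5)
Your proof is correct and follows essentially the same route as the paper's: decompose both sets as unions over the barycenter $c\in B$ of translated conditional hulls, invoke Theorem \ref{0117003} to rewrite $f^{*}(D,c)+g(c)$ as $\bigl(f_{2}^{*}(D,c)+g(c)\bigr)^{*}$, and then pass the outer star through the union via Theorem \ref{0407078}. The only difference is that you explicitly settle the equality clause by citing Theorem \ref{0215002}, a step the paper's printed proof leaves implicit.
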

\begin{proof}By Theorems \ref{0407078} and \ref{0117003},
\begin{align}
\nonumber
  &\left\{ \sum_{i=1}^{n}\lambda_{i} f(x_{i}) + g\left( \sum_{i=1}^{n}\lambda_{i}x_{i} \right)
\left\vert\begin{array}{c}
x_{1}, \ldots, x_{n} \in D, \\
\lambda_{1}, \ldots, \lambda_{n} > 0, \sum_{i=1}^{n}\lambda_{i} = 1, \\
\sum_{i=1}^{n}\lambda_{i}x_{i} \in B, \\
n = 1, 2, \ldots
\end{array}\right.
\right\} \\
\nonumber
= &\bigcup_{c \in B}( f^{*}(D, c) + g(c) ) = \bigcup_{c \in B}( f_{2}^{**}(D, c) + g(c) ) = \bigcup_{c \in B}( f_{2}^{*}(D, c) + g(c) )^{*} \\
\nonumber
\subseteq &\left( \bigcup_{c \in B}( f_{2}^{*}(D, c) + g(c) )^{*} \right)^{*} = \left( \bigcup_{c \in B}( f_{2}^{*}(D, c) + g(c) ) \right)^{*} \\
\nonumber
= &\left\{ p f(a) + q f(b) + g(pa + qb)
\left\vert\begin{array}{c}
a, b \in D, \\
p, q > 0, p + q = 1, \\
pa + qb \in B
\end{array}\right.
\right\}^{*}.
\end{align}
This concludes the proof.
\end{proof}


\begin{theorem}\label{0606002}
Let $f \in \mathcal{R}[a, b]$, and let $g: B \subseteq (a, b) \rightarrow \mathbb{R}$. Then
\begin{align}
\nonumber
\emptyset \neq &\left\{ \int_{a}^{b} \lambda(x) f(x) \mathrm{d}x + g\left( \int_{a}^{b} \lambda(x) x \mathrm{d}x \right)
\left\vert\begin{array}{c}
\lambda(x) \geq 0, \int_{a}^{b} \lambda(x) \mathrm{d}x = 1, \\
\int_{a}^{b} \lambda(x) x \mathrm{d}x \in B
\end{array}\right. \right\} \\
\nonumber
\subseteq      &\left\{ p f(a) + q f(b) + g(pa + qb)
\left\vert\begin{array}{c}
x, y \in [a, b], \\
p, q > 0, p + q = 1, \\
px + qy \in B
\end{array}\right. \right\}^{*}.
\end{align}
\end{theorem}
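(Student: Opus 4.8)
The plan is to read this as the integral analogue of Theorem \ref{1019008} and to deduce it in one stroke from Theorem \ref{0702003}, of which the statement is the announced equivalent. Write
\[
S = \left\{ \int_{a}^{b} \lambda(x) f(x)\,\mathrm{d}x + g\!\left( \int_{a}^{b} \lambda(x) x\,\mathrm{d}x \right) \mid \lambda(x) \geq 0,\ \int_{a}^{b}\lambda(x)\,\mathrm{d}x = 1,\ \int_{a}^{b}\lambda(x)x\,\mathrm{d}x \in B \right\}
\]
for the integral set on the left, and
\[
T = \left\{ p f(x) + q f(y) + g(px+qy) \mid x,y \in [a,b],\ p,q>0,\ p+q=1,\ px+qy \in B \right\}
\]
for the two-point set whose conditional convex hull $T^{*}$ is the right-hand side (reading the evident $f(x),f(y)$ in place of the typographical $f(a),f(b)$). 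Thus I must establish two things: $S \neq \emptyset$ and $S \subseteq T^{*}$.

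For non-emptiness I would fix any $c \in B$; since $B \subseteq (a,b)$ we have $c \in (a,b)$, so there is a Riemann-integrable density $\lambda_{0} \geq 0$ on $[a,b]$ with $\int_{a}^{b}\lambda_{0} = 1$ and barycenter $\int_{a}^{b}\lambda_{0}(x)x\,\mathrm{d}x = c$ (for instance a piecewise-constant $\lambda_{0}$ placing appropriate weights near $a$ and near $b$). Because $f \in \mathcal{R}[a,b]$ the product $\lambda_{0}f$ is again Riemann integrable, so $\int_{a}^{b}\lambda_{0}f + g(c)$ is a well-defined real number lying in $S$; likewise $p_{0}f(a) + q_{0}f(b) + g(c) \in T$ for any $p_{0}a + q_{0}b = c$, so both $S$ and $T$ are non-empty.

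The containment is the substance, and the key move is simply the choice of the interval fed into Theorem \ref{0702003}. Since $T$ is a non-empty subset of $\mathbb{R}$, its conditional convex hull $T^{*}$ is a non-empty \emph{interval}; this is the whole point, as it makes $T^{*}$ an admissible target. I would apply Theorem \ref{0702003} with $J := T^{*}$. Its hypothesis requires $p f(x) + q f(y) + g(px+qy) \in J$ for all $x,y \in [a,b]$ and all $p,q>0$ with $p+q=1$ and $px+qy \in B$; but every such value lies in $T \subseteq T^{*} = J$ by the very definition of $T$, so the hypothesis holds automatically. The conclusion of Theorem \ref{0702003} then asserts precisely that $\int_{a}^{b}\lambda f + g\!\left(\int_{a}^{b}\lambda x\right) \in J = T^{*}$ for every admissible density $\lambda$, that is, $S \subseteq T^{*}$, which finishes the proof.

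The only genuine obstacle here is conceptual rather than computational: one must recognize that the right-hand side is already a convex hull and hence an interval, so that it can itself serve as the interval $J$ in Theorem \ref{0702003}; once $J = T^{*}$ is chosen, the two set-inclusions of the statement collapse into a single invocation of that theorem together with the elementary non-emptiness check. Should one prefer to avoid citing Theorem \ref{0702003}, the alternative is to mirror the proof of Theorem \ref{1019008}: approximate $\int_{a}^{b}\lambda f$ by Riemann sums whose nodes $c_{i}\in(x_{i-1},x_{i})$ realize the local barycenters, rewrite each partial sum through Theorems \ref{0407078} and \ref{0117003} as an element of $\bigcup_{c\in B}\bigl(f^{*}([a,b],c)+g(c)\bigr)$, and pass to the limit inside the interval $T^{*}$; this route is longer, and I would keep it in reserve.
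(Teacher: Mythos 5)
Your argument is logically sound and is a genuinely different route from the paper's: the paper proves Theorem \ref{0606002} by writing the left-hand set as $\bigcup_{c \in B}\bigl( f_{\infty}^{*}([a,b],c) + g(c) \bigr)$, invoking Theorem \ref{0529007} to replace $f_{\infty}^{*}$ by $f^{*}$, and then absorbing the result into the two-point hull via Theorem \ref{1019008}; you instead feed the interval $J = T^{*}$ into the transfer principle of Theorem \ref{0702003}, whose hypothesis is then vacuously satisfied because $T \subseteq T^{*}$. Your observation that the right-hand side is itself an interval and hence an admissible $J$ is exactly the content of one direction of the equivalence the paper asserts between Theorems \ref{0702003} and \ref{0606002}, and your reduction is cleaner and shorter than the paper's chain.

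The one point you must address is circularity: the paper states Theorem \ref{0702003} \emph{without proof}, and its only stated justification is the remark that it ``is equivalent to Theorem \ref{0606002}'' --- the very theorem you are proving. Within the paper's logical architecture, \ref{0606002} is established first (via \ref{0529007} and \ref{1019008}) and \ref{0702003} is then derived from it, so citing \ref{0702003} as you do inverts that dependency. The fix is easy and you should make it explicit: Theorem \ref{0702003} follows independently from Theorem \ref{0709001} (itself proved via Theorems \ref{0811010} and \ref{0811008}), because for a density $\lambda$ whose barycenter is a fixed $c \in B$ the term $g(c)$ is a constant, so the hypothesis $p f(x) + q f(y) + g(c) \in J$ for all two-point systems with $px+qy=c$ is the hypothesis of Theorem \ref{0709001} for the interval $J - g(c)$, and its conclusion gives $\int_{a}^{b}\lambda f + g(c) \in J$. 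With that derivation inserted, your proof is complete and non-circular; without it, the proof rests on an unproven statement that the paper itself treats as a corollary of the target. Your non-emptiness check and your reading of the typographical $f(a), f(b)$ as $f(x), f(y)$ are both correct.
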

\begin{proof}By Theorems \ref{0529007} and \ref{1019008}, we conclude the proof.
\end{proof}


\begin{theorem}\label{0715003}
Let $f \in \mathcal{R}( [ a_{1}, b_{1} ] \setminus ( a_{0}, b_{0} ) )$, and let $g \in \mathcal{R}[ a_{0}, b_{0} ]$, where $[ a_{0}, b_{0} ] \subset ( a_{1}, b_{1} )$.
Suppose that $\emptyset \neq B \subseteq ( a_{0}, b_{0} )$. Then
\begin{footnotesize}
\begin{align}
\nonumber
\emptyset \neq &\left\{ \int_{ [ a_{1}, b_{1} ] \setminus ( a_{0}, b_{0} ) } \lambda(x) f(x) \mathrm{d}x + \int_{ a_{0} }^{ b_{0} } w(x) g(x) \mathrm{d}x
\left\vert\begin{array}{c}
\lambda(x) \geq 0, \int_{ [ a_{1}, b_{1} ] \setminus ( a_{0}, b_{0} ) } \lambda(x) \mathrm{d}x = 1, \\
w(x) \geq 0, \int_{ a_{0} }^{ b_{0} } w(x) \mathrm{d}x = 1, \\
\int_{ [ a_{1}, b_{1} ] \setminus ( a_{0}, b_{0} ) } \lambda(x)x \mathrm{d}x = \int_{ a_{0} }^{ b_{0} } w(x)x \mathrm{d}x \in B
\end{array}\right. \right\} \\
\nonumber
\subseteq      &\left\{ p f(x) + q f(y) + g(px + qy)
\left\vert\begin{array}{c}
x, y \in [ a_{1}, b_{1} ] \setminus ( a_{0}, b_{0} ), \\
p, q > 0, p + q = 1, \\
px + qy \in B
\end{array}\right. \right\}^{*}.
\end{align}
\end{footnotesize}
\end{theorem}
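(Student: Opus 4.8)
The plan is to mirror the derivation of Theorem~\ref{0606002}, organizing the argument according to the common value $c$ of the two barycentres and then transferring from the continuous (integral) picture to the discrete (finite-sum) one. First I would dispose of nonemptiness: for any fixed $c\in B$ the set $[a_1,b_1]\setminus(a_0,b_0)$ meets both $(-\infty,c)$ and $(c,+\infty)$, and $c\in(a_0,b_0)$, so one may choose a density $\lambda$ on $[a_1,b_1]\setminus(a_0,b_0)$ and a density $w$ on $[a_0,b_0]$ whose barycentres both equal $c$; this exhibits a member of the left-hand set.

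For the inclusion I would write the left-hand set as the union over $c\in B$ of the Minkowski sums
\begin{align}
\nonumber
f_{\infty}^{*}([a_1,b_1]\setminus(a_0,b_0),c)+g_{\infty}^{*}([a_0,b_0],c),
\end{align}
and then replace each continuous factor by its discrete counterpart: Theorem~\ref{0715001} applied to the outer density $\lambda$ gives $f_{\infty}^{*}([a_1,b_1]\setminus(a_0,b_0),c)\subseteq f^{*}([a_1,b_1]\setminus(a_0,b_0),c)$, and Theorem~\ref{0529007} applied to $g$ on $[a_0,b_0]$ gives $g_{\infty}^{*}([a_0,b_0],c)\subseteq g^{*}([a_0,b_0],c)$. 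Concretely this step approximates $\int\lambda f$ and $\int w g$ by Riemann sums $\sum_i\lambda_i f(x_i)$ and $\sum_j w_j g(y_j)$ whose barycentres are pinned back to the common value $c$.

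It then remains to prove the purely discrete statement: any matched pair of finite sums with $\sum_i\lambda_i x_i=\sum_j w_j y_j=c\in B$ lies in the conditional convex hull on the right. Here I would invoke the point-mass decomposition, Theorems~\ref{0902001} and~\ref{0701004}: since every outer node $x_i$ lies outside $(\min_j y_j,\max_j y_j)$, the combined system splits into two-point systems all sharing the barycentre $c$, and the identity in remark $2^{0}$ of Theorem~\ref{0902001} rewrites $\sum_i\lambda_i f(x_i)+\sum_j w_j g(y_j)$ as a genuine convex combination of terms $pf(x)+qf(y)+g(px+qy)$. Finally Theorem~\ref{0117003} (each $f_n^{*}(D,c)$ is an interval), the identity $\big(\bigcup A^{*}\big)^{*}=\big(\bigcup A\big)^{*}$ of Theorem~\ref{0407078}, and the union-of-intervals tests of Theorems~\ref{0411002}--\ref{0411004} assemble these conditional convex hulls, as $c$ ranges over $B$, into the single set on the right.

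The hard part will be the Riemann-sum passage to the limit together with the bookkeeping of where $g$ is sampled. The finite sums approximating $\int w g$ evaluate $g$ at nodes $y_j$ spread across all of $[a_0,b_0]$, whereas the target set controls $g$ only at barycentres lying in $B$; reconciling these requires that the decomposition be carried out so that the two-point barycentres actually occurring are the admissible ones, and it requires the limit of the interval-valued discrete estimates to stay inside the (possibly non-closed) hull. I expect this to be handled exactly as in the proofs of Theorems~\ref{0811010} and~\ref{0811008}: first establish the closed ($\geq$) case by a direct Riemann-sum limit, and then upgrade to the open ($>$) case by a mean-preserving perturbation of $\lambda$ and $w$ supported on small subintervals on either side of $c$.
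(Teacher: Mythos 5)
Your proposal follows the same route as the paper: write the left-hand set as $\bigcup_{c\in B}\bigl( f_{\infty}^{*}([a_{1},b_{1}]\setminus(a_{0},b_{0}),c)+g_{\infty}^{*}([a_{0},b_{0}],c) \bigr)$, pass to the discrete conditional hulls via Theorems \ref{0715001} and \ref{0529007}, and finish with the two-weighting discrete statement of Theorem \ref{0701004} (itself resting on Theorem \ref{0902001}). The Riemann-sum and perturbation concerns you flag are already absorbed into the cited containments $f_{\infty}^{*}\subseteq f^{*}$ and $g_{\infty}^{*}\subseteq g^{*}$, exactly as you anticipate, so no further work is needed there.
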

\begin{proof}By Theorems \ref{0701004}, \ref{0529007} and \ref{0715001},
\begin{footnotesize}
\begin{align}
\nonumber
  &\left\{ \int_{ [ a_{1}, b_{1} ] \setminus ( a_{0}, b_{0} ) } \lambda(x) f(x) \mathrm{d}x + \int_{ a_{0} }^{ b_{0} } w(x) g(x) \mathrm{d}x
\left\vert\begin{array}{c}
\lambda(x) \geq 0, \int_{ [ a_{1}, b_{1} ] \setminus ( a_{0}, b_{0} ) } \lambda(x) \mathrm{d}x = 1, \\
w(x) \geq 0, \int_{ a_{0} }^{ b_{0} } w(x) \mathrm{d}x = 1, \\
\int_{ [ a_{1}, b_{1} ] \setminus ( a_{0}, b_{0} ) } \lambda(x)x \mathrm{d}x = \int_{ a_{0} }^{ b_{0} } w(x)x \mathrm{d}x \in B
\end{array}\right. \right\} \\
\nonumber
= &\bigcup_{c \in B}( f_{\infty}^{*}( [ a_{1}, b_{1} ] \setminus ( a_{0}, b_{0} ), c ) + g_{\infty}^{*}( [ a_{0}, b_{0} ], c ) ) \subseteq \bigcup_{c \in B}( f^{*}( [ a_{1}, b_{1} ] \setminus ( a_{0}, b_{0} ), c ) + g^{*}( [ a_{0}, b_{0} ], c ) ) \\
\nonumber
\subseteq &\left\{ p f(x) + q f(y) + g(px + qy)
\left\vert\begin{array}{c}
x, y \in [ a_{1}, b_{1} ] \setminus ( a_{0}, b_{0} ), \\
p, q > 0, p + q = 1, \\
px + qy \in B
\end{array}\right. \right\}^{*}.
\end{align}
\end{footnotesize} \\
This concludes the proof.
\end{proof}

\section{Appendix A}

{\flushleft \textbf{Union of intervals is an interval tests}}

\centerline{ \textbf{Part \uppercase\expandafter{\romannumeral1}} }

\begin{theorem}\label{0411004}
Let $\mathscr{A}$ be a cluster of intervals.

If $\bigcap_{ A \in \mathscr{A} } A \neq \emptyset$, then $\bigcup_{ A \in \mathscr{A} } A$ is an interval.
\end{theorem}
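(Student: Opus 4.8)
The plan is to use the standard characterization of an interval of $\mathbb{R}$ as a convex set: a set $S \subseteq \mathbb{R}$ is an interval if and only if for all $x, y \in S$ and every $t$ with $x \le t \le y$ one has $t \in S$. So, writing $U = \bigcup_{A \in \mathscr{A}} A$, it suffices to verify this convexity condition for $U$.

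First I would fix, once and for all, a point $c \in \bigcap_{A \in \mathscr{A}} A$, which exists by hypothesis; thus $c \in A$ for every $A \in \mathscr{A}$. Now take arbitrary $x, y \in U$ with $x \le y$ and an arbitrary $t$ satisfying $x \le t \le y$; the goal is to produce a single member of $\mathscr{A}$ containing $t$. Since $x, y \in U$, choose $A_1, A_2 \in \mathscr{A}$ with $x \in A_1$ and $y \in A_2$. Because $c$ lies in every member of $\mathscr{A}$, we have $c \in A_1$ and $c \in A_2$ as well.

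The core of the argument is a two-case split according to the position of $t$ relative to $c$. If $t \le c$, then from $x \le t \le c$ and the fact that $A_1$ contains both $x$ and $c$, convexity of the interval $A_1$ gives $t \in A_1 \subseteq U$. If instead $t \ge c$, then from $c \le t \le y$ and the fact that $A_2$ contains both $c$ and $y$, convexity of $A_2$ gives $t \in A_2 \subseteq U$. In either case $t \in U$, so $[x, y] \subseteq U$, and therefore $U$ is an interval.

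I do not expect a genuine obstacle here: the only point requiring care is that $c$ may lie outside $[x, y]$, but this never interferes, because the chain of inequalities used in each case (namely $x \le t \le c$, respectively $c \le t \le y$) is forced precisely by the assumption $t \le c$ (respectively $t \ge c$) together with $x \le t \le y$, and these are exactly the inequalities needed to invoke convexity. The argument makes no use of any topological or order-completeness property beyond the definition of an interval, so it applies verbatim to an arbitrary, possibly infinite, family $\mathscr{A}$.
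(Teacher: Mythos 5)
Your argument is correct and complete: the two-case split on the position of $t$ relative to the common point $c$ is exactly the standard convexity argument, and the inequalities $x \le t \le c$ and $c \le t \le y$ are indeed forced in the respective cases, so the appeal to convexity of $A_1$ or $A_2$ is legitimate. The paper states this theorem without proof, treating it as a basic fact, and your proof is the natural one it implicitly relies on.
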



\begin{theorem}\label{0411003}
Let $\mathscr{A}$ be a cluster of intervals.

If $A \cap B \neq \emptyset, A, B \in \mathscr{A}$, then $\bigcup_{ A \in \mathscr{A} } A$ is an interval.
\end{theorem}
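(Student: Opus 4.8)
The plan is to reduce the statement to Theorem \ref{0411004}, which already handles the case of a common point. The obvious temptation is to invoke a Helly-type principle---pairwise intersecting intervals should have a common point---and then apply Theorem \ref{0411004} directly to $\mathscr{A}$. This is exactly the step I expect to be the main obstacle, because the principle is false in this generality: the family $\{(0, 1/n) : n \in \mathbb{N}\}$ is pairwise intersecting yet has empty total intersection, so $\bigcap_{A \in \mathscr{A}} A$ may well be empty. Thus Theorem \ref{0411004} cannot be applied to $\mathscr{A}$ as it stands, and the whole difficulty is in manufacturing a common point without altering the union.

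To get around this, I would fix one member $A_{0} \in \mathscr{A}$ (the pairwise hypothesis forces every member, and in particular $A_{0}$, to be nonempty) and enlarge every interval by adjoining $A_{0}$. Concretely, set $B_{A} = A \cup A_{0}$ for each $A \in \mathscr{A}$. First, each $B_{A}$ is itself an interval: the two-member cluster $\{A, A_{0}\}$ has nonempty intersection by hypothesis, so Theorem \ref{0411004} applied to $\{A, A_{0}\}$ shows that $A \cup A_{0}$ is an interval. Second, the new family $\{B_{A} : A \in \mathscr{A}\}$ has a common point, since $A_{0} \subseteq B_{A}$ for every $A$, whence $\bigcap_{A \in \mathscr{A}} B_{A} \supseteq A_{0} \neq \emptyset$.

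Now Theorem \ref{0411004} applies to the family $\{B_{A}\}$ and yields that $\bigcup_{A \in \mathscr{A}} B_{A}$ is an interval. The proof then closes by observing that adjoining $A_{0}$ did not change the union: since $A \subseteq B_{A} \subseteq \bigcup_{A' \in \mathscr{A}} A'$ for every $A$ (using that $A_{0}$ is itself a member of $\mathscr{A}$), we get $\bigcup_{A \in \mathscr{A}} B_{A} = \bigcup_{A \in \mathscr{A}} A$. Hence the original union is an interval.

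As an alternative that avoids Theorem \ref{0411004} entirely, one can argue directly from the order-convexity characterization of intervals: given $x, y$ in the union with $x < z < y$, choose $A, B \in \mathscr{A}$ with $x \in A$ and $y \in B$ together with a witness $w \in A \cap B$; then $z \in A$ when $z \leq w$ (as $z$ lies between $x$ and $w$) and $z \in B$ when $z \geq w$ (as $z$ lies between $w$ and $y$), so in either case $z$ belongs to the union. I would present the reduction above as the primary proof, since it reuses the preceding lemma cleanly, and relegate this direct argument to a remark.
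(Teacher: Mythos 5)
The paper states this theorem in Appendix A without supplying any proof, so there is nothing to compare against line by line; what matters is whether your argument stands on its own, and it does. Your reduction to Theorem \ref{0411004} is correct and complete: you rightly flag that the naive Helly-type shortcut fails (the family $\{(0,1/n)\}_{n\in\mathbb{N}}$ is pairwise intersecting with empty total intersection, so one cannot simply exhibit a common point of $\mathscr{A}$), and your repair --- replacing each $A$ by $B_{A}=A\cup A_{0}$, checking via the two-member case of Theorem \ref{0411004} that each $B_{A}$ is an interval, observing $\bigcap_{A}B_{A}\supseteq A_{0}\neq\emptyset$ (nonemptiness of $A_{0}$ following from the hypothesis with $A=B=A_{0}$), and noting that the union is unchanged because $A_{0}\in\mathscr{A}$ --- is airtight. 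The only case you leave implicit is $\mathscr{A}=\emptyset$, where the union is the empty interval; this is harmless. Your alternative direct argument via order-convexity (splitting on $z\leq w$ versus $z\geq w$ for a witness $w\in A\cap B$) is also correct and is arguably the more self-contained route, since it does not even presuppose Theorem \ref{0411004}; either version would serve as the missing proof in the paper.
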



\begin{theorem}\label{0723001}
Let $\mathscr{A}$ be a cluster of intervals, and let $J \subseteq \bigcup_{ A \in \mathscr{A} } A$ be an interval.
If $A \cap J \neq \emptyset, A \in \mathscr{A}$, then $\bigcup_{ A \in \mathscr{A} } A$ is an interval.
\end{theorem}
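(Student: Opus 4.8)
The plan is to reduce the statement to Theorem \ref{0411004}, which already handles clusters of intervals possessing a common point. First I would dispose of the degenerate case: if $\mathscr{A} = \emptyset$ then $\bigcup_{A \in \mathscr{A}} A = \emptyset$ is vacuously an interval, so assume $\mathscr{A} \neq \emptyset$; the hypothesis $A \cap J \neq \emptyset$ then forces $J \neq \emptyset$.

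The key idea is to enlarge every member of the cluster by the ``hub'' interval $J$. For a fixed $A \in \mathscr{A}$, the two-element cluster $\{A, J\}$ satisfies $A \cap J \neq \emptyset$ by hypothesis, so Theorem \ref{0411004} shows that $A \cup J$ is an interval. I would then pass to the new cluster $\mathscr{B} = \{ A \cup J : A \in \mathscr{A} \}$, whose members are all intervals by the previous sentence. Since every $A \cup J$ contains $J$, we have $\bigcap_{B \in \mathscr{B}} B \supseteq J \neq \emptyset$, so Theorem \ref{0411004} applies a second time and yields that $\bigcup_{B \in \mathscr{B}} B$ is an interval.

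It remains to identify this union with the desired one. Because $J \subseteq \bigcup_{A \in \mathscr{A}} A$, we have $\bigcup_{B \in \mathscr{B}} B = \bigcup_{A \in \mathscr{A}}(A \cup J) = \left( \bigcup_{A \in \mathscr{A}} A \right) \cup J = \bigcup_{A \in \mathscr{A}} A$, which completes the argument.

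I do not anticipate a serious obstacle: the entire content is the observation that adjoining the common hub $J$ to each $A$ converts a merely $J$-intersecting family into one whose members share the whole point set $J$, after which Theorem \ref{0411004} does the work twice. The only points needing care are the empty-cluster edge case and the convention that the intervals used may be taken nonempty. A self-contained alternative would verify convexity of $U = \bigcup_{A \in \mathscr{A}} A$ directly: given $x < z < y$ with $x, y \in U$, choose $A_1 \ni x$ and $A_2 \ni y$ together with points $j_1 \in A_1 \cap J$ and $j_2 \in A_2 \cap J$; the intervals $A_1$, $J$, $A_2$ then overlap successively at $j_1$ and $j_2$, so their union is an interval containing both $x$ and $y$, hence $z \in U$. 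The reduction above is cleaner, so I would present that.
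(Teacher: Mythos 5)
Your argument is correct. The paper states Theorem \ref{0723001} without proof, so there is no official argument to compare against; but your reduction to Theorem \ref{0411004} is the natural one and almost certainly what the author intended (note that Theorem \ref{0912004} and Theorem \ref{0529001} are themselves proved in the paper by citing Theorem \ref{0723001} in exactly this ``hub interval'' spirit). Both of your routes work: replacing each $A$ by $A \cup J$ produces a cluster with the common point set $J$, and the hypothesis $J \subseteq \bigcup_{A \in \mathscr{A}} A$ is precisely what guarantees the union is unchanged --- you correctly identify this as essential (without it, $\mathscr{A} = \{[0,1],[2,3]\}$ with $J=[1,2]$ is a counterexample). The only caveats are conventions the paper never pins down (whether $\emptyset$ counts as an interval, and whether members of $\mathscr{A}$ may be empty), which you flag appropriately; your handling of them is consistent with the rest of the paper.
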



\begin{theorem}\label{0912004}
Let $A_{x}, x \in I$ be a cluster of intervals, and let $f: I \rightarrow \mathbb{R}$ be continuous, where $I$ is an interval.

If $f(x) \in A_{x}, x \in I$, then $\bigcup_{x \in I} A_{x}$ is an interval.
\end{theorem}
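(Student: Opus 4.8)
The plan is to reduce the statement to Theorem \ref{0723001} by using the image $J = f(I)$ as the single ``connecting'' interval required there. The role of Theorem \ref{0723001} is that one interval $J \subseteq \bigcup_{x \in I} A_{x}$ which meets every member of the cluster already forces the union to be an interval; the continuous selection $f$ is tailor-made to produce such a $J$, since the hypothesis $f(x) \in A_{x}$ says precisely that the graph value at $x$ sits inside the $x$th fiber.

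Concretely, first I would set $J = f(I) = \{ f(x) \mid x \in I \}$. Because $I$ is an interval and $f$ is continuous, $J$ is an interval, by the intermediate value property of continuous functions on an interval. Next, since $f(x) \in A_{x}$ for every $x \in I$, each value $f(x)$ lies in $\bigcup_{x \in I} A_{x}$, whence $J \subseteq \bigcup_{x \in I} A_{x}$. Finally, for every $x \in I$ the point $f(x)$ belongs to both $A_{x}$ and $J$, so $A_{x} \cap J \neq \emptyset$ for each member $A_{x}$ of the cluster. With $J$ an interval, $J \subseteq \bigcup_{x \in I} A_{x}$, and $A_{x} \cap J \neq \emptyset$ for all $x$, Theorem \ref{0723001} applies verbatim and gives that $\bigcup_{x \in I} A_{x}$ is an interval.

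I do not expect a genuine obstacle here: the only point needing any argument is that $f(I)$ is an interval, which is just the intermediate value theorem, and the rest is a direct verification of the hypotheses of Theorem \ref{0723001}. The real content is the recognition that a continuous pointwise selection supplies exactly the common meeting interval that Theorem \ref{0723001} demands. For completeness I would note an alternative, purely topological route: the set $E = \bigcup_{x \in I} ( \{x\} \times A_{x} ) \subseteq \mathbb{R}^{2}$ is connected, because every vertical fiber $\{x\} \times A_{x}$ is connected and meets the connected graph $\{ (x, f(x)) \mid x \in I \}$ at the point $(x, f(x))$; then $\bigcup_{x \in I} A_{x}$ is the image of $E$ under the continuous projection onto the second coordinate, hence connected and therefore an interval. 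The first route is preferable, as it stays within the ``union of intervals is an interval'' machinery already developed in this appendix.
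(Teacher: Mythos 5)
Your proposal is correct and is essentially identical to the paper's own proof: both take $J = f(I)$, observe that $f(x) \in A_{x} \cap f(I)$ for every $x \in I$ with $f(I) \subseteq \bigcup_{x \in I} A_{x}$ an interval, and invoke Theorem \ref{0723001}. The additional topological argument via connectedness in $\mathbb{R}^{2}$ is a valid alternative but not needed.
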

\begin{proof}Note that $f(x) \in A_{x} \cap f(I), x \in I$, where $f(I) \subseteq \bigcup_{x \in I} A_{x}$ is an interval.
Then, by Theorem \ref{0723001}, we conclude the proof.
\end{proof}


\begin{theorem}\label{0912001}
Let $A, I$ be intervals, and let $f: I \rightarrow \mathbb{R}$ be continuous.
Then $\bigcup_{x \in I}( A + f(x) )$ is an interval.
\end{theorem}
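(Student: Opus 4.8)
The plan is to reduce the statement to Theorem \ref{0912004} by exhibiting a continuous selection through the fibres $A_{x} := A + f(x)$. First I would dispose of the degenerate case $A = \emptyset$: then $A + f(x) = \emptyset$ for every $x$, so the union is empty, which is an interval. (The same remark covers $I = \emptyset$.) From now on assume $A \neq \emptyset$ and fix a point $a_{0} \in A$.

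Next I would observe that for each $x \in I$ the set $A_{x} = A + f(x)$ is the translate of the interval $A$ by the real number $f(x)$, hence is itself an interval; thus $\{ A_{x} \}_{x \in I}$ is a cluster of intervals indexed by the interval $I$. Define $g: I \rightarrow \mathbb{R}$ by $g(x) = a_{0} + f(x)$. Since $f$ is continuous on $I$ and translation by the constant $a_{0}$ is continuous, $g$ is continuous, and by construction $g(x) = a_{0} + f(x) \in A + f(x) = A_{x}$ for every $x \in I$.

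Finally I would apply Theorem \ref{0912004} with this cluster $\{ A_{x} \}_{x \in I}$ and the continuous function $g$ playing the role of its selection: the hypothesis $g(x) \in A_{x}$ holds, so $\bigcup_{x \in I} A_{x} = \bigcup_{x \in I}( A + f(x) )$ is an interval, as required.

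I do not expect any genuine obstacle here; the only point requiring care is producing the continuous section, and the key idea is that a single fixed point $a_{0} \in A$ already furnishes one, namely $x \mapsto a_{0} + f(x)$, because every fibre is the same interval $A$ shifted rigidly by $f(x)$. The edge cases (where $A$ or $I$ is empty, or $A$ is a single point) are handled by the opening remark and cause no difficulty.
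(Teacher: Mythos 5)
Your proof is correct and follows essentially the same route as the paper: the paper also fixes a point $x_{0} \in A$, defines the continuous selection $\varphi(x) = x_{0} + f(x) \in A + f(x)$, and invokes Theorem \ref{0912004}. Your additional handling of the empty cases is a harmless extra precaution.
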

\begin{proof}Let $x_{0} \in A$ and $\varphi(x) = x_{0} + f(x)$.
Then $\varphi$ is continuous on $I$ and $\varphi(x) \in A + f(x), x \in I$.
By Theorem \ref{0912004}, we conclude the proof.
\end{proof}
\begin{flushleft}
$1^{0}$. If we set $A = \{ 0 \}$, then the {\it intermediate value theorem for continuous functions}.
\end{flushleft}


\begin{theorem}\label{1212001}
Let $A_{x}, B_{x}, x \in D$ be two clusters of intervals such that $A_{x} \subseteq B_{x}, x \in D$.

If $\bigcup_{x \in D} A_{x}$ is an interval, then $\bigcup_{x \in D} B_{x}$ is also an interval.
\end{theorem}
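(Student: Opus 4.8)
The plan is to deduce this directly from Theorem \ref{0723001}, the ``common meeting interval'' test: a cluster of intervals has an interval as its union as soon as there is one interval $J$, contained in the union, that intersects every member of the cluster. The cluster I would feed into that test is $\mathscr{A} = \{ B_x \mid x \in D \}$, and the obvious candidate for the auxiliary interval $J$ is the one handed to us by hypothesis, namely $\bigcup_{x \in D} A_x$.

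First I would set $J = \bigcup_{x \in D} A_x$, which is an interval by assumption. Since $A_x \subseteq B_x$ for every $x \in D$, passing to unions gives $J = \bigcup_{x \in D} A_x \subseteq \bigcup_{x \in D} B_x$, so the containment hypothesis $J \subseteq \bigcup_{A \in \mathscr{A}} A$ of Theorem \ref{0723001} holds.

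Next I would verify that $J$ meets each member of the cluster. Fixing $x \in D$, note that $A_x \subseteq \bigcup_{y \in D} A_y = J$ on the one hand and $A_x \subseteq B_x$ on the other, so $A_x \subseteq B_x \cap J$. Since each interval in a cluster is nonempty, $A_x \neq \emptyset$, whence $B_x \cap J \neq \emptyset$. This is exactly the second hypothesis $A \cap J \neq \emptyset$ (with $A = B_x$) required by Theorem \ref{0723001}, now checked for every $x \in D$.

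With both hypotheses in hand, Theorem \ref{0723001} immediately gives that $\bigcup_{x \in D} B_x$ is an interval, as desired. I expect the only delicate point to be the nonemptiness of $A_x$: it is precisely what guarantees $B_x \cap J \neq \emptyset$, and the statement genuinely fails without it (an empty $A_x$ would impose no constraint on $B_x$, which could then sit in a gap of $J$). Under the standing convention that the intervals forming a cluster are nonempty, this causes no difficulty, and the argument is otherwise entirely formal.
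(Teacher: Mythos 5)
Your proof is correct and follows essentially the same route as the paper, which derives Theorem \ref{1212001} as a special case of Theorem \ref{0529001}, itself proved by taking $J = \bigcup_{x \in D} A_{x}$ and applying Theorem \ref{0723001} exactly as you do. Your explicit attention to the nonemptiness of each $A_{x}$ is the same point the paper encodes via the hypothesis $A_{x} \cap B_{x} \neq \emptyset$ in Theorem \ref{0529001}.
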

\begin{flushleft}
$1^{0}$. The theorem is a special case of Theorem \ref{0529001}.
\end{flushleft}


\begin{theorem}\label{0529001}
Let $A_{x}, B_{x}, x \in D$ be two clusters of intervals such that
\begin{enumerate}[(1)]
\item $A_{x} \cap B_{x} \neq \emptyset$ for all $x \in D$,
\item $\bigcup_{x \in D} A_{x} \subseteq \bigcup_{x \in D} B_{x}$.
\end{enumerate}
If $\bigcup_{x \in D} A_{x}$ is an interval, then $\bigcup_{x \in D} B_{x}$ is also an interval.
\end{theorem}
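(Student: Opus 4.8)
The plan is to reduce the statement directly to Theorem \ref{0723001}, which already handles the situation of a cluster of intervals all meeting a fixed interval $J$ that is contained in their union. The natural choice for $J$ is the interval $\bigcup_{x \in D} A_{x}$ supplied by the hypothesis, and the cluster to which I apply Theorem \ref{0723001} will be $\{ B_{x} \mid x \in D \}$. So the whole argument amounts to verifying the two inputs that Theorem \ref{0723001} requires.

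First I would check the containment $J \subseteq \bigcup_{x \in D} B_{x}$; but this is precisely condition (2). Next I would verify that every member of the cluster meets $J$, that is, $B_{x} \cap J \neq \emptyset$ for each $x \in D$. This is where condition (1) and the interval hypothesis combine: since $A_{x} \subseteq \bigcup_{y \in D} A_{y} = J$, any point witnessing $A_{x} \cap B_{x} \neq \emptyset$ lies simultaneously in $B_{x}$ and in $J$, so that $B_{x} \cap J \supseteq A_{x} \cap B_{x} \neq \emptyset$. Thus the elementary observation $A_{x} \subseteq J$ is exactly what upgrades the pairwise nonemptiness condition (1) into the ``meets $J$'' hypothesis demanded by Theorem \ref{0723001}.

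With both inputs established, Theorem \ref{0723001} applied to the cluster $\{ B_{x} \mid x \in D \}$ together with the interval $J = \bigcup_{x \in D} A_{x}$ yields that $\bigcup_{x \in D} B_{x}$ is an interval, which is the assertion. I do not expect any genuine obstacle here: the content is entirely in recognizing that Theorem \ref{0723001} is the correct engine, after which the proof is a one-line intersection check. (This also makes transparent why Theorem \ref{1212001}, the case $A_{x} \subseteq B_{x}$, is a special case: then $A_{x} \cap B_{x} = A_{x} \neq \emptyset$, so condition (1) holds automatically.)
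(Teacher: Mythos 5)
Your proposal is correct and is exactly the paper's argument: the paper's proof likewise notes that $B_{x} \cap \bigl( \bigcup_{x \in D} A_{x} \bigr) \neq \emptyset$ for all $x \in D$ and then invokes Theorem \ref{0723001} with $J = \bigcup_{x \in D} A_{x}$. You have simply spelled out the intersection check in more detail.
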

\begin{proof}Note that $B_{x} \cap \left( \bigcup_{x \in D} A_{x} \right) \neq \emptyset$ for all $x \in D$.
Then by Theorem \ref{0723001}, we conclude the proof.
\end{proof}

\centerline{ \textbf{Part \uppercase\expandafter{\romannumeral2}} }

\begin{definition}\label{0411001}
Let $\mathscr{A}$ be a cluster of intervals, then $\mathscr{A}$ is said to be connected if for any two intervals $A, B \in \mathscr{A}$, there are corresponding intervals
\begin{align}
\nonumber
A = A_{0}, A_{1}, \ldots, A_{n} = B \in \mathscr{A}\ \text{(} n = n(A, B) \text{)}
\end{align}
 such that
\begin{align}
\nonumber
A_{i-1} \cap A_{i} \neq \emptyset, i = 1, \ldots, n.
\end{align}
\end{definition}


\begin{theorem}\label{0411002}
Let $\mathscr{A}$ be a cluster of intervals.

If $\mathscr{A}$ is connected, then $\bigcup_{ A \in \mathscr{A} } A$ is an interval.
\end{theorem}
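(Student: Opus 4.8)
The plan is to verify that $U := \bigcup_{A \in \mathscr{A}} A$ is order-convex, i.e.\ that whenever $x, y \in U$ with $x < z < y$ we also have $z \in U$; this property characterizes the intervals of $\mathbb{R}$. The crucial observation is that $z$ need only be located inside some \emph{finite} subunion drawn from $\mathscr{A}$, and connectedness is precisely what supplies such a finite subunion joining the member containing $x$ to the member containing $y$.

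First I would fix $x, y \in U$ with $x < z < y$, and choose members $A, B \in \mathscr{A}$ with $x \in A$ and $y \in B$ (if $U$ is empty or a singleton the claim is trivial, and $x = y$ is vacuous). By Definition~\ref{0411001} there is a finite chain $A = A_{0}, A_{1}, \ldots, A_{n} = B$ in $\mathscr{A}$ with $A_{i-1} \cap A_{i} \neq \emptyset$ for $i = 1, \ldots, n$.

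Next I would show by induction on $k$ that each partial union $B_{k} := A_{0} \cup A_{1} \cup \cdots \cup A_{k}$ is an interval. The base case $B_{0} = A_{0}$ is immediate. For the inductive step, $A_{k-1} \subseteq B_{k-1}$ gives $B_{k-1} \cap A_{k} \supseteq A_{k-1} \cap A_{k} \neq \emptyset$, so the two-element cluster $\{ B_{k-1}, A_{k} \}$ has nonempty intersection, and Theorem~\ref{0411004} yields that $B_{k-1} \cup A_{k} = B_{k}$ is an interval. Hence $B_{n}$ is an interval.

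Finally, $B_{n}$ contains $x \in A_{0}$ and $y \in A_{n}$, and since $B_{n}$ is an interval with $x < z < y$ we conclude $z \in B_{n}$. As $B_{n} \subseteq U$, this gives $z \in U$, so $U$ is an interval. I do not expect a serious obstacle: the one point demanding care is that the chain in Definition~\ref{0411001} is \emph{finite}, which is exactly what lets the induction terminate and justifies the finitely many applications of Theorem~\ref{0411004}; an infinite family that is ``connected'' only through infinitely long chains would not be reachable by this argument, but the definition rules that out.
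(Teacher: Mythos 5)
Your proof is correct. The paper states Theorem~\ref{0411002} without providing a proof, and your argument --- reducing order-convexity of the union to a finite chain supplied by Definition~\ref{0411001}, then showing by induction that each partial union $A_{0} \cup \cdots \cup A_{k}$ is an interval via the two-element case of Theorem~\ref{0411004} --- is the natural and complete way to establish it; the point you flag about the finiteness of the chain being essential is exactly right.
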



\begin{theorem}[Heine-Borel]\label{0411005}
Let $\mathscr{A}$ be a cluster of open intervals.
Then $\bigcup_{ A \in \mathscr{A} } A$ is an open interval if and only if $\mathscr{A}$ is connected.
\end{theorem}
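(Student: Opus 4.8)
The plan is to prove the two implications separately, with essentially all of the content residing in the converse. For the forward direction, suppose $\mathscr{A}$ is connected. Theorem \ref{0411002} already yields that $\bigcup_{A \in \mathscr{A}} A$ is an interval, and since each $A$ is open and an arbitrary union of open sets is open, this interval is open. So this direction is immediate once Theorem \ref{0411002} is invoked.

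For the converse, suppose $U = \bigcup_{A \in \mathscr{A}} A$ is an open interval; I must show that $\mathscr{A}$ is connected, i.e. that any two members $A, B \in \mathscr{A}$ are joined by a finite chain of pairwise-overlapping members. First I would pick points $a \in A$ and $b \in B$ and assume $a \le b$ without loss of generality. Since $U$ is an interval containing both $a$ and $b$, the closed segment $[a,b]$ lies in $U$, so $\mathscr{A}$ is an open cover of $[a,b]$. This is where the name of the theorem enters: by the Heine--Borel property, the compact set $[a,b]$ admits a finite subcover $I_1, \ldots, I_n \in \mathscr{A}$.

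The heart of the argument is to extract from this finite subcover an actual chain marching from $a$ to $b$, which I would do by a left-to-right selection governed by right endpoints. Start with a member $J_1$ of the subcover containing $a$. If $b \in J_1$ we stop; otherwise $r_1 := \sup J_1$ satisfies $a < r_1 \le b$, and because $J_1$ is open we have $r_1 \notin J_1$, so some member $J_2$ of the subcover covers $r_1$. Since $\sup J_1 = r_1 \in J_2$ and $J_2$ is open, points slightly below $r_1$ lie in both $J_1$ and $J_2$, which gives $J_1 \cap J_2 \neq \emptyset$ together with $\sup J_2 > r_1$. Iterating produces a strictly increasing sequence of suprema drawn from the finite set $\{\sup I_1, \ldots, \sup I_n\}$, so the process terminates, and since the step can fail only when the stopping condition holds, it terminates at a member $J_k$ with $b \in J_k$. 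This yields a chain $J_1, \ldots, J_k$ with $J_i \cap J_{i+1} \neq \emptyset$, $a \in J_1$, and $b \in J_k$.

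Finally I would reattach the original intervals: $a \in A \cap J_1$ and $b \in J_k \cap B$ give $A \cap J_1 \neq \emptyset$ and $J_k \cap B \neq \emptyset$, so $A, J_1, \ldots, J_k, B$ is the desired chain in $\mathscr{A}$ from $A$ to $B$. As $A$ and $B$ were arbitrary, $\mathscr{A}$ is connected. The main obstacle is the chain-extraction step: because the intervals are open they never contain their suprema, so one must argue carefully that consecutively selected intervals genuinely overlap and that the marching reaches an interval containing $b$ rather than stalling short of it; finiteness of the subcover, hence of the pool of available suprema, is exactly what forces the strictly increasing suprema to terminate.
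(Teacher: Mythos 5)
Your proof is correct. The paper itself supplies no proof of Theorem \ref{0411005}; it only records the remark that the statement is equivalent to the Heine--Borel theorem, and your argument fills that gap in exactly the spirit of that remark: the forward implication is indeed immediate from Theorem \ref{0411002} together with the fact that a union of open sets is open (and an open set that is an interval is an open interval), while the converse is the classical chaining argument, with compactness of $[a,b]$ invoked precisely where the ``Heine--Borel'' name predicts. The delicate points are all handled: you correctly observe that $\sup J_i \leq b$ whenever $b \notin J_i$ (so the supremum stays inside the covered segment and can be recaptured by another member of the finite subcover), that openness forces $\sup J_i \notin J_i$ and yields genuine overlap of consecutive links, and that the strictly increasing suprema range over a finite set, forcing termination at a member containing $b$. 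Two trivial housekeeping remarks: the argument tacitly assumes the members of $\mathscr{A}$ are nonempty (as Definition \ref{0411001} effectively requires, since an empty member can never overlap anything), and the degenerate case $a=b$ is absorbed by your immediate stopping condition. Neither affects correctness.
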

\begin{flushleft}
$1^{0}$. The theorem is equivalent to the {\it Heine-Borel theorem}.
\end{flushleft}

\section{Appendix B}

{\flushleft \textbf{The fundamental theorem of systems of point masses}}

\begin{definition}\label{1213001}
Let $D$ be a subset of $V$, where $V$ is a real vector space. Define
\begin{align}
\nonumber
D_{n}^{*} = \left\{ \lambda_{1}\bm{x}_{1} + \cdots + \lambda_{n}\bm{x}_{n}
\left\vert\begin{array}{c}
\bm{x}_{1}, \ldots, \bm{x}_{n} \in D, \\
\lambda_{1}, \ldots, \lambda_{n} > 0, \sum_{i=1}^{n}\lambda_{i} = 1
\end{array}\right. \right\}.
\end{align}
\end{definition}
\begin{flushleft}
$1^{0}$. In particular, $D_{1}^{*} = D$. \\
$2^{0}$. $D_{n}^{*} \subseteq D_{n+1}^{*}$ for all $n \in \mathbb{N}_{+}$, and $D^{*} = \lim_{n \rightarrow +\infty} D_{n}^{*} = \bigcup_{n=1}^{+\infty} D_{n}^{*}$.
\end{flushleft}


\begin{theorem}[The Carath$\acute{ \textbf{e} }$odory's theorem]\label{0407035}\hfill \\
Let $D \subseteq \mathbb{R}^{n}$. Then $D_{n+1}^{*} = D^{*}$.
\end{theorem}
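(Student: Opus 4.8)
The plan is to prove the two inclusions separately. The inclusion $D_{n+1}^{*} \subseteq D^{*}$ is immediate, since $D^{*} = \bigcup_{k=1}^{+\infty} D_{k}^{*}$ by the remark following Definition \ref{1213001}. For the reverse inclusion $D^{*} \subseteq D_{n+1}^{*}$, the same remark reduces the task to showing $D_{m}^{*} \subseteq D_{n+1}^{*}$ for every $m$, and by the monotonicity $D_{k}^{*} \subseteq D_{k+1}^{*}$ the only case needing work is $m > n+1$. So I fix $\bm{x} = \sum_{i=1}^{m}\lambda_{i}\bm{x}_{i} \in D_{m}^{*}$ with $m > n+1$ and aim to rewrite it using strictly fewer points.

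First I would exploit affine dependence. The $m-1$ displacement vectors $\bm{x}_{2} - \bm{x}_{1}, \ldots, \bm{x}_{m} - \bm{x}_{1}$ lie in $\mathbb{R}^{n}$ and number $m-1 > n$, hence are linearly dependent. Unpacking a nontrivial dependence produces scalars $\mu_{1}, \ldots, \mu_{m}$, not all zero, with $\sum_{i=1}^{m}\mu_{i} = 0$ and $\sum_{i=1}^{m}\mu_{i}\bm{x}_{i} = \bm{0}$. Consequently the perturbed coefficients $\lambda_{i}(t) = \lambda_{i} - t\mu_{i}$ satisfy $\sum_{i=1}^{m}\lambda_{i}(t) = 1$ and $\sum_{i=1}^{m}\lambda_{i}(t)\bm{x}_{i} = \bm{x}$ for every real $t$, so I am free to slide $t$ without disturbing the represented point.

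Next I would choose the threshold that kills a coefficient. Since $\sum_{i}\mu_{i} = 0$ and the $\mu_{i}$ are not all zero, at least one $\mu_{i}$ is positive; set $t^{*} = \min\{ \lambda_{i}/\mu_{i} : \mu_{i} > 0 \}$. Then $\lambda_{i}(t^{*}) \geq 0$ for all $i$, with equality for at least one index, so discarding the vanishing coefficients expresses $\bm{x}$ as a convex combination of at most $m-1$ points of $D$ with strictly positive weights; that is, $\bm{x} \in D_{k}^{*}$ for some $k \leq m-1$, and by monotonicity $\bm{x} \in D_{m-1}^{*}$. Iterating (equivalently, downward induction on $m$) drives the number of points down to $n+1$, giving $\bm{x} \in D_{n+1}^{*}$ and completing the proof.

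I expect the linear-algebraic core --- extracting $\mu_{i}$ with both $\sum_{i}\mu_{i} = 0$ and $\sum_{i}\mu_{i}\bm{x}_{i} = \bm{0}$ from the dependence of $m-1 > n$ vectors in $\mathbb{R}^{n}$ --- to be the main substantive step; verifying that $t^{*}$ preserves nonnegativity while strictly shrinking the support is a routine minimum-selection argument, and the base case $m \leq n+1$ follows directly from the inclusion $D_{m}^{*} \subseteq D_{n+1}^{*}$.
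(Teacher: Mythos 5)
Your proof is correct. Note that the paper itself states Theorem \ref{0407035} without proof, treating Carath\'eodory's theorem as a known result, so there is no in-paper argument to compare against; what you have written is the classical affine-dependence proof. All the steps check out against the paper's definitions: the inclusion $D_{n+1}^{*} \subseteq D^{*}$ and the monotonicity $D_{k}^{*} \subseteq D_{k+1}^{*}$ are exactly the remarks following Definition \ref{1213001} (monotonicity holds because points may be repeated, so a weight can be split), the extraction of $\mu_{1}, \ldots, \mu_{m}$ with $\sum_{i}\mu_{i} = 0$ and $\sum_{i}\mu_{i}\bm{x}_{i} = \bm{0}$ from the linear dependence of the $m-1 > n$ vectors $\bm{x}_{i} - \bm{x}_{1}$ is the standard computation, and your choice $t^{*} = \min\{\lambda_{i}/\mu_{i} : \mu_{i} > 0\}$ (well defined since some $\mu_{i}$ must be positive when they sum to zero and are not all zero) keeps every coefficient nonnegative while annihilating at least one, after which discarding the zero weights and invoking monotonicity lets the downward induction terminate at $n+1$ points. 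It is worth observing that this same minimum-ratio device is what the paper uses in the proof of Theorem \ref{0907002}, where $\min\{ w_{s}/\lambda_{s} \}$ plays the role of your $t^{*}$; so your argument is stylistically consistent with the one place in Appendix B where the paper does carry out an explicit reduction.
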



\begin{definition}\label{0907001}
Let $\bm{x} \in \mathbb{R}^{n}$ be a vector, whose $i$-th element is $x_{i}$.

The $L_{0}$ norm of $\bm{x}$, denoted as $\| \bm{x} \|_{0}$, is the number of non-zero elements in $\bm{x}$, that is,
\begin{align}
\nonumber
\| \bm{x} \|_{0} = \sum_{i=1}^{n} 1_{ x_{i} \neq 0 }.
\end{align}
Strictly speaking, the $L_{0}$ norm is not a norm because it does not satisfy homogeneity.

The $L_{1}$ norm of $\bm{x}$, denoted as $\| \bm{x} \|_{1}$, is defined as $\| \bm{x} \|_{1} = \sum_{i=1}^{n}\vert x_{i} \vert$.
\end{definition}


\begin{theorem}\label{0907002}
Suppose that $\bm{a}_{1}, \ldots, \bm{a}_{k} \in \mathbb{R}^{n}$ ($k > n$).
Let $\bm{w} \in (0, +\infty)^{k}$ be a column vector. Then there are column vectors
\begin{align}
\nonumber
\bm{w}_{i} \in [0, +\infty)^{k}, 1 \leq \| \bm{w}_{i} \|_{0} \leq n+1
\end{align}
such that
\begin{align}
\nonumber
\left\{\begin{array}{c}
\bm{w} = \bm{w}_{1} + \cdots + \bm{w}_{m}, \\
\frac{ \bm{w}_{i}^{T} }{ \| \bm{w}_{i} \|_{1} }\left[\begin{array}{c}
\bm{a}_{1} \\
\vdots \\
\bm{a}_{k}
\end{array}\right] = \frac{ \bm{w}^{T} }{ \| \bm{w} \|_{1} }\left[\begin{array}{c}
\bm{a}_{1} \\
\vdots \\
\bm{a}_{k}
\end{array}\right], i = 1, \ldots, m,
\end{array}\right.
\end{align}
where $\frac{k}{n+1} \leq m \leq k-n$.
\end{theorem}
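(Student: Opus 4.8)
The plan is to establish the upper bound $m \le k-n$ by a greedy peeling construction driven by Carath\'eodory's theorem (Theorem \ref{0407035}), while observing that the lower bound $m \ge k/(n+1)$ is automatic for any admissible decomposition. Throughout, the displayed matrix identity merely asserts that each piece $\bm w_i$ has the same center of mass $\bar{\bm a} := \frac{\sum_{l=1}^k w_l \bm a_l}{\|\bm w\|_1}$ as the full system $\bm w$, so the whole problem is to split the mass vector $\bm w$ into nonnegative pieces of support at most $n+1$, each carrying the center of mass $\bar{\bm a}$.

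For a single peel, suppose I have reached a nonnegative weight vector $\bm v$ (initially $\bm v = \bm w$) whose support $S' = \{\, l : v_l > 0 \,\}$ has $|S'| > n+1$ and whose center of mass is still $\bar{\bm a}$. Then $\bar{\bm a}$ lies in the convex hull of $\{\bm a_l : l \in S'\}$, so Theorem \ref{0407035} yields a representation $\bar{\bm a} = \sum_{j \in S}\theta_j \bm a_j$ with $S \subseteq S'$, $|S| \le n+1$, $\theta_j > 0$ and $\sum_{j}\theta_j = 1$. Let $\bm u$ be the vector equal to $\theta_j$ on $S$ and $0$ off $S$, put $t = \min_{j \in S} v_j/\theta_j > 0$, and take the peeled piece to be $t\bm u$, replacing $\bm v$ by $\bm v - t\bm u$. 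The choice of $t$ makes $\bm v - t\bm u$ nonnegative and zero at the minimizing index, so its support drops by at least one; and since $\bm u$ and $\bm v$ both have center of mass $\bar{\bm a}$, linearity gives $\sum_l (v_l - tu_l)\bm a_l = (\|\bm v\|_1 - t\|\bm u\|_1)\bar{\bm a}$, so $\bm v - t\bm u$ again has center of mass $\bar{\bm a}$. Each produced piece $t\bm u$ satisfies $1 \le \|t\bm u\|_0 \le n+1$.

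Iterating strictly decreases the support by at least one per peel, and I stop as soon as the running vector has support at most $n+1$, declaring it the final piece. Because the support starts at $k$ and decreases by at least one until it first reaches a value $\le n+1$, there are at most $k-(n+1)$ peels, hence $m \le (k-n-1)+1 = k-n$; moreover while $|S'| > n+1$ the untouched indices $S' \setminus S$ are nonempty, so the support never collapses to $0$ and the final piece is genuinely nonzero. For the lower bound, in any decomposition $\bm w = \sum_{i=1}^m \bm w_i$ with $\bm w_i \ge 0$ and $\|\bm w_i\|_0 \le n+1$ each coordinate $l$ has $w_l > 0$ and therefore lies in $\operatorname{supp}\bm w_i$ for some $i$; thus $\{1,\dots,k\} = \bigcup_i \operatorname{supp}\bm w_i$ and $k \le \sum_i \|\bm w_i\|_0 \le m(n+1)$, i.e. $m \ge k/(n+1)$.

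The base case $k = n+1$ (take $m=1$, $\bm w_1 = \bm w$) and the arithmetic bookkeeping are immediate. The one delicate point --- and the main obstacle --- is to make each peel simultaneously preserve the exact center of mass and remain dominated by the available weight while eliminating a coordinate: fixing the center of mass forces the weights on $S$ to be proportional to the Carath\'eodory coefficients $\theta_j$, and the min-ratio scaling $t = \min_{j}v_j/\theta_j$ is precisely what reconciles this rigidity with nonnegativity of the remainder. Converting Carath\'eodory's static bound of $n+1$ points into the dynamic ``$-1$ per step'' that yields $m \le k-n$ is the crux of the argument.
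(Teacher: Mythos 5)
Your proof is correct and follows essentially the same route as the paper's: a Carath\'eodory representation of the common center of mass on a subset of at most $n+1$ indices, combined with the min-ratio scaling $t=\min_j v_j/\theta_j$ to peel off a piece while zeroing at least one coordinate of the remainder, iterated until the support falls to $n+1$ or fewer. You additionally spell out the iteration count for $m\le k-n$ and the covering argument for $m\ge k/(n+1)$, which the paper leaves implicit, but the key mechanism is the same.
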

\begin{proof}
By Theorem \ref{0407035}, without loss of generality we assume
\begin{align}
\nonumber
\left\{\begin{array}{c}
\sum_{s=1}^{ k_{1} }\lambda_{s}\bm{a}_{s} = \frac{ w_{1}\bm{a}_{1} + \cdots + w_{k}\bm{a}_{k} }{ w_{1} + \cdots + w_{k} }, \\
\lambda_{s} > 0, \sum_{s=1}^{ k_{1} }\lambda_{s} = 1,
\end{array}\right.
\end{align}
where $1 \leq k_{1} \leq n + 1$ and $w_{s}$ is the $s$-th element of $\bm{w}$, $s = 1, \ldots, k$. \\
Let $\min\left\{ \frac{ w_{1} }{ \lambda_{1} }, \ldots, \frac{ w_{ k_{1} } }{ \lambda_{ k_{1} } } \right\} = \frac{ w_{ k_{0} } }{ \lambda_{ k_{0} } }$. Then
\begin{align}
\nonumber
w_{s} - \frac{ w_{ k_{0} } }{ \lambda_{ k_{0} } }\lambda_{s} \geq 0, s = 1, \ldots, k_{1}.
\end{align}
Therefore,
\begin{align}
\nonumber
  &w_{1}\bm{a}_{1} + \cdots + w_{k}\bm{a}_{k} \\
\nonumber
= &\sum_{s=1}^{ k_{1} }\frac{ w_{ k_{0} } }{ \lambda_{ k_{0} } }\lambda_{s}\bm{a}_{s} +
\sum_{s=1}^{ k_{0}-1 }\left( w_{s} - \frac{ w_{ k_{0} } }{ \lambda_{ k_{0} } }\lambda_{s} \right)\bm{a}_{s} +
\sum_{ s = k_{0}+1 }^{ k_{1} }\left( w_{s} - \frac{ w_{ k_{0} } }{ \lambda_{ k_{0} } }\lambda_{s} \right)\bm{a}_{s} + \sum_{ s = k_{1}+1 }^{k} w_{s}\bm{a}_{s}.
\end{align}
This concludes the proof.
\end{proof}


\begin{theorem}[The fundamental theorem of systems of point masses]\label{0907003}\hfill \\
Every system of $k > n$ point masses in $\mathbb{R}^{n}$ can be decomposed into at most $k-n$ systems of $n+1$ point masses, and all these systems of $n+1$ point masses have the same center of mass as the original one.
\end{theorem}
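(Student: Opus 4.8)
The plan is to recognize Theorem \ref{0907003} as the physical reformulation of the already-proved Theorem \ref{0907002}, in exactly the way the ``physical interpretation'' remark after Theorem \ref{0219001} turns the algebraic decomposition into the mass-decomposition statement of Theorem \ref{0710002}. First I would fix the dictionary: a system of $k$ point masses in $\mathbb{R}^n$ is the data of weights $w_1, \ldots, w_k > 0$ placed at positions $\bm{a}_1, \ldots, \bm{a}_k \in \mathbb{R}^n$, and its center of mass is $\big( \sum_{s=1}^k w_s \bm{a}_s \big) / \sum_{s=1}^k w_s$, which in the notation of Theorem \ref{0907002} is the row $\frac{\bm{w}^T}{\| \bm{w} \|_1}$ applied to the column-stacked vector of the $\bm{a}_s$, where $\bm{w} = (w_1, \ldots, w_k)^T \in (0, +\infty)^k$.

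Next I would simply invoke Theorem \ref{0907002} with this $\bm{w}$ and these $\bm{a}_1, \ldots, \bm{a}_k$. It produces $\bm{w} = \bm{w}_1 + \cdots + \bm{w}_m$ with $\bm{w}_i \in [0, +\infty)^k$, $1 \le \| \bm{w}_i \|_0 \le n+1$, and $\frac{k}{n+1} \le m \le k-n$, together with the identity asserting that each $\bm{w}_i$ has the same weighted average of the $\bm{a}_s$ as $\bm{w}$ does. I then read off the conclusion: the support $S_i = \{ s : (\bm{w}_i)_s > 0 \}$ has $|S_i| = \| \bm{w}_i \|_0 \le n+1$, so the positive masses $(\bm{w}_i)_s$ at the sites $\bm{a}_s$, $s \in S_i$, constitute a system of at most $n+1$ point masses; the center-of-mass identity says this sub-system shares the center of mass of the original system, and the additivity $\bm{w} = \sum_{i=1}^m \bm{w}_i$ says that, site by site, these $m$ sub-systems recombine into the original one. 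Since $m \le k-n$, there are at most $k-n$ of them, which is the assertion.

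The work is essentially finished by Theorem \ref{0907002} --- whose engine is Carath\'eodory's theorem \ref{0407035} together with the ``minimal-ratio'' peeling that zeroes out one coordinate at a time --- so no analytic obstacle remains. The one point I would address explicitly is the gap between $\| \bm{w}_i \|_0 \le n+1$ and the phrase ``systems of $n+1$ point masses'': a piece carrying fewer than $n+1$ nonzero masses is to be regarded as a degenerate system of $n+1$ point masses, so that the sharp content is the count $m \le k-n$ rather than the exact cardinality of each piece. I would also note in passing that the peeling reduces the number of occupied sites by at least one at each stage, from $k$ down to at most $n+1$, which is precisely what yields the upper bound $m \le k-n$, while each piece meeting at most $n+1$ of the $k$ sites forces the lower bound $m \ge k/(n+1)$.
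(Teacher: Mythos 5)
Your proposal is correct and matches the paper's intent exactly: Theorem \ref{0907003} is stated immediately after Theorem \ref{0907002} precisely as its physical reformulation, with no separate argument beyond the dictionary between weight vectors and systems of point masses that you spell out. Your added remarks --- treating pieces with fewer than $n+1$ nonzero masses as degenerate systems, and tracing the bounds $k/(n+1) \leq m \leq k-n$ back to the peeling step --- are accurate and fill in details the paper leaves implicit.
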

\begin{flushleft}
$1^{0}$. We also call the theorem the {\it system of point masses decomposition theorem}.
\end{flushleft}


\begin{theorem}\label{0908009}
Suppose that $\bm{a}_{1}, \ldots, \bm{a}_{k} \in \mathbb{R}^{n}$ ($k > n$).
Let $\bm{w} \in (0, +\infty)^{k}$ be a column vector such that $\| \bm{w} \|_{1} = 1$.
Then there are column vectors
\begin{align}
\nonumber
\bm{w}_{i} \in [0, +\infty)^{k}, 1 \leq \| \bm{w}_{i} \|_{0} \leq n+1, \| \bm{w}_{i} \|_{1} = 1,
\end{align}
integer $\frac{k}{n+1} \leq m \leq k-n$, and numbers $\varpi_{1}, \ldots, \varpi_{m} > 0, \varpi_{1} + \cdots + \varpi_{m} = 1$ such that
\begin{align}
\nonumber
\left\{\begin{array}{c}
\bm{w} = \varpi_{1}\bm{w}_{1} + \cdots + \varpi_{m}\bm{w}_{m}, \\
\bm{w}_{i}^{T}\left[\begin{array}{c}
\bm{a}_{1} \\
\vdots \\
\bm{a}_{k}
\end{array}\right] = \bm{w}^{T}\left[\begin{array}{c}
\bm{a}_{1} \\
\vdots \\
\bm{a}_{k}
\end{array}\right], i = 1, \ldots, m.
\end{array}\right.
\end{align}
\end{theorem}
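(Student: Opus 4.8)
The plan is to obtain this statement as the normalized form of Theorem \ref{0907002}, whose hypotheses hold here since $\bm{w} \in (0,+\infty)^{k}$ and $k > n$. Write $\bm{A} = \left[\begin{array}{c}\bm{a}_{1} \\ \vdots \\ \bm{a}_{k}\end{array}\right]$ for the stacked block. First I would invoke Theorem \ref{0907002} to produce column vectors $\bm{v}_{1}, \ldots, \bm{v}_{m} \in [0,+\infty)^{k}$ with $1 \leq \|\bm{v}_{i}\|_{0} \leq n+1$ and $\frac{k}{n+1} \leq m \leq k-n$ such that $\bm{w} = \bm{v}_{1} + \cdots + \bm{v}_{m}$ and $\frac{\bm{v}_{i}^{T}}{\|\bm{v}_{i}\|_{1}}\bm{A} = \frac{\bm{w}^{T}}{\|\bm{w}\|_{1}}\bm{A}$ for each $i$. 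Because the present theorem adds the hypothesis $\|\bm{w}\|_{1} = 1$, the right-hand side of this equality is exactly $\bm{w}^{T}\bm{A}$.

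Next I would normalize each summand: set $\varpi_{i} = \|\bm{v}_{i}\|_{1}$ and $\bm{w}_{i} = \bm{v}_{i}/\varpi_{i}$ for $i = 1, \ldots, m$. Since $\|\bm{v}_{i}\|_{0} \geq 1$, each $\bm{v}_{i}$ has a nonzero (hence positive) entry, so $\varpi_{i} > 0$ and the quotient is well defined. By construction $\|\bm{w}_{i}\|_{1} = 1$ and $\|\bm{w}_{i}\|_{0} = \|\bm{v}_{i}\|_{0} \in [1, n+1]$, so the structural constraints on the $\bm{w}_{i}$ and the integer range for $m$ are inherited verbatim from Theorem \ref{0907002}.

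Finally I would verify the two displayed identities. Since all entries of the $\bm{v}_{i}$ are nonnegative and they sum to $\bm{w}$, taking $L_{1}$ norms gives $\sum_{i=1}^{m} \varpi_{i} = \sum_{i=1}^{m} \|\bm{v}_{i}\|_{1} = \|\bm{w}\|_{1} = 1$, so $\varpi_{1}, \ldots, \varpi_{m}$ is a probability vector. The decomposition then reads $\bm{w} = \sum_{i=1}^{m} \bm{v}_{i} = \sum_{i=1}^{m} \varpi_{i}\bm{w}_{i}$, which is the first identity. For the second, $\bm{w}_{i}^{T}\bm{A} = \frac{\bm{v}_{i}^{T}}{\|\bm{v}_{i}\|_{1}}\bm{A} = \bm{w}^{T}\bm{A}$, the last step using $\|\bm{w}\|_{1} = 1$.

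I do not expect a genuine obstacle here: all the combinatorial and dimensional content—the bound $\frac{k}{n+1} \leq m \leq k-n$ together with the Carath\'eodory-type sparsity $\|\bm{w}_{i}\|_{0} \leq n+1$—is already carried by Theorem \ref{0907002}. The remaining work is the routine bookkeeping of normalization, and the extra hypothesis $\|\bm{w}\|_{1} = 1$ enters at precisely two points: it converts the plain sum $\bm{w} = \sum_{i} \bm{v}_{i}$ into a convex combination $\sum_{i}\varpi_{i}\bm{w}_{i}$, and it identifies the common center of mass with $\bm{w}^{T}\bm{A}$.
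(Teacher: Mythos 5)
Your proposal is correct: the paper states Theorem \ref{0908009} without proof, evidently as the normalized restatement of Theorem \ref{0907002}, and your argument supplies exactly that routine normalization. The key bookkeeping steps — $\varpi_{i} = \|\bm{v}_{i}\|_{1} > 0$ from $\|\bm{v}_{i}\|_{0} \geq 1$, and $\sum_{i}\varpi_{i} = \|\bm{w}\|_{1} = 1$ from the nonnegativity of the entries — are all verified, so nothing is missing.
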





\begin{thebibliography}{9}

\bibitem{03}
Boyd, S., Vandenberghe, L.: Convex Optimization.
Cambridge University Press, New York (2004).
https://doi.org/10.1017/CBO9780511804441

\bibitem{06}
Grolous, J.: Un th{\'e}or{\`e}me sur les fonctions.
L'Institut. Journal universel des sciences et des Soci{\'e}t{\'e}s savantes en France, Premi{\`e}re section, Sciences Math{\'e}matiques 3(2), 401 (1875)

\bibitem{09}
H{\" o}lder, O.: {\" U}ber einen Mittelwerthsatz.
Nachr. Ges. Wiss. G{\" o}ttingen, 38-47 (1889)

\bibitem{10}
Jensen, J.L.W.V.: Sur les fonctions convexes et les in{\' e}galit{\' e}s entre les valeurs moyennes.
Acta Math., 30, 175-193 (1906).
https://doi.org/10.1007/BF02418571

\bibitem{05}
Mitrinovi{\'c}, D.S., Lackovi{\'c}, I.B.: Hermite and Convexity.
Aequat. Math., 28, 229-232 (1985).
https://doi.org/10.1007/BF02189414

\bibitem{08}
Mitrinovi{\'c}, D.S., Pe{\v c}ari{\'c}, J.E., Fink, A.m.: Then Centroid Method in Inequalities.
In: Classical and New Inequalities in Analysis. Mathematics and Its Applications (East European Series), vol 61. Springer, Dordrecht (1993).
https://doi.org/10.1007/978-94-017-1043-5\_27

\bibitem{07}
Mitrinovi{\'c}, D.S., Vasi{\'c}, P.M.: The Centroid Method in Inequalities.
Univ. Beograd. Publ. Elektrotehn. Fak. Ser. Mat. Fiz., 498-541, 3-16 (1975)

\bibitem{04}
Niculescu, C.P., Persson, L.E.: Old and New on the Hermite-Hadamard Inequality.
Real Anal. Exchange, 29, 663-685 (2003/2004)

\bibitem{02}
Simi{\'c}, S.: Best Possible Global Bounds for Jensen Functional.
Proc. Amer. Math. Soc., 138(7), 2457-2462 (2010).
https://doi.org/10.1090/S0002-9939-10-10353-0

\bibitem{01}
Simi{\'c}, S.: Some Refinements of Hermite-Hadamard Inequality and an Open Problem.
Kragujev. J. Math., 42(3), 349-356 (2018).
https://doi.org/10.5937/KgJMath1803349S

\end{thebibliography}


\end{document}